\pgfplotsset{compat=1.15}
\DeclareMathOperator{\Conf}{Conf}
\DeclareMathOperator{\id}{id}
\DeclareMathOperator{\height}{ht}
\DeclareMathOperator{\effsc}{effsc}
\DeclareMathOperator{\Alt}{Alt}
\DeclareMathOperator{\Hom}{Hom}
\DeclareMathOperator{\Sh}{Sh}
\DeclareMathOperator{\sgn}{sgn}
\DeclareMathOperator{\Sq}{Sq}
\DeclareMathOperator{\tr}{tr}
\DeclareMathOperator{\Gl}{Gl}
\DeclareMathOperator{\PBl}{PBl}
\DeclareMathOperator{\w}{w}
\DeclareMathOperator{\gr}{gr}
\newtheorem{theorem}{Theorem}[section]
\newtheorem{proposition}[theorem]{Proposition}
\newtheorem{corollary}[theorem]{Corollary}
\newtheorem{lemma}[theorem]{Lemma}
\theoremstyle{definition}
\newtheorem{definition}[theorem]{Definition}
\theoremstyle{remark}
\newtheorem{remark}[theorem]{Remark}
\newcommand{\Ftwo}{\ensuremath{\mathbb{F}_2}}
\newcommand{\Fp}{\ensuremath{\mathbb{F}_p}}
\newcommand{\WB}[1]{\ensuremath{W_{ B_{#1} }}}
\newcommand{\WD}[1]{\ensuremath{W_{ D_{#1} }}}
\title{The mod $ 2 $ cohomology of the infinite families of Coxeter groups of Type $ B $ and $ D $ as almost Hopf rings}
\author{Lorenzo Guerra\\
\scriptsize Univ. Lille, CNRS, UMR 8524 -- Laboratoire Paul Painlev\'e, F-59000 Lille, France \\
\scriptsize lorenzo.guerra@univ-lille.fr}
\date{}
\begin{document}

\maketitle

\begin{abstract}
We describe a Hopf ring structure on the direct sum of the cohomology groups $ \bigoplus_{n \geq 0} H^* \left( \WB{n}; \Ftwo \right) $ of the Coxeter groups of Type $ \WB{n} $, and an almost-Hopf ring structure on the direct sum of the cohomology groups $ \bigoplus_{n \geq 0} H^* \left( \WD{n}; \Ftwo \right) $ of the Coxeter groups of Type $ \WD{n} $, with coefficients in the field with two elements $ \Ftwo $.
We give presentations with generators and relations, determine additive bases and compute the Steenrod algebra action. The generators are described both in terms of a geometric construction by De Concini and Salvetti and their restriction to elementary abelian $ 2 $-subgroups.
\end{abstract}

\section{Introduction}

The Coxeter groups of Type $ \WB{n} $ and $ \WD{n} $ are two infinite families of finite reflection groups.
Coxeter groups are traditionally described via Coxeter diagrams, i.e., graphs in which each edge $ e $ has a weight $ m_e \geq 3 $.
Given such an object, the associated Coxeter group has a generator $ s_v $ for every vertex $ v $, with relations of the form $ s_v^2 = 1 $, $ \left( s_v s_w \right)^{m_e} = 1 $ for every edge $ e = \left( v,w \right) $, and $ \left( s_v s_w \right)^2 = 1 $ if $ v $ and $ w $ are not connected by an edge. For an exhaustive introduction to the geometry and topology of these gropus we refer to Davis' book \cite{Davis}.
The reflection groups of Type $ \WB{n} $ and $ \WD{n} $ are the finite Coxeter groups associated with the Coxeter diagrams in Figure \ref{fig:diagrammi Coxeter} below.
\begin{figure}[h] \label{fig:diagrammi Coxeter}
\centering
\definecolor{ffffff}{rgb}{1.,1.,1.}
\definecolor{xdxdff}{rgb}{0.49019607843137253,0.49019607843137253,1.}
\begin{tikzpicture}[line cap=round,line join=round,>=triangle 45,x=1.0cm,y=1.0cm]
\clip(-0.5,-1.5) rectangle (10.5,1.5);
\draw (0.,1.)-- (1.,0.);
\draw (1.,0.)-- (0.,-1.);
\draw (1.,0.)-- (2.,0.);
\draw [dash pattern=on 1pt off 1pt] (2.,0.)-- (3.,0.);
\draw (3.,0.)-- (4.,0.);
\draw (6.,0.)-- (7.,0.);
\draw (7.,0.)-- (8.,0.);
\draw [dash pattern=on 1pt off 1pt] (8.,0.)-- (9.,0.);
\draw (9.,0.)-- (10.,0.);
\draw (6.373175468227362,0.6355038535886468) node[anchor=north west] {$ 4 $};
\begin{scriptsize}
\draw [fill=black] (0.,1.) circle (2.5pt);
\draw[color=black] (0.31764643520257363,1.1670236433212295) node {$t_0$};
\draw [fill=black] (0.,-1.) circle (2.5pt);
\draw[color=black] (0.31764643520257363,-1.0641412674425685) node {$t_1$};
\draw [fill=black] (1.,0.) circle (2.5pt);
\draw[color=black] (1.171874668701368,0.4077096579889685) node {$t_2$};
\draw [fill=black] (2.,0.) circle (2.5pt);
\draw[color=black] (2.1779656992666148,0.4077096579889685) node {$t_3$};
\draw [fill=black] (3.,0.) circle (2.5pt);
\draw[color=black] (3.241005278731781,0.4077096579889685) node {$t_{n-2}$};
\draw [fill=black] (4.,0.) circle (2.5pt);
\draw[color=black] (4.247096309297028,0.4077096579889685) node {$t_{n-1}$};
\draw [fill=black] (6.,0.) circle (2.5pt);
\draw[color=black] (6.16436412226099,0.4077096579889685) node {$s_0$};
\draw [fill=black] (7.,0.) circle (2.5pt);
\draw[color=black] (7.170455152826236,0.4077096579889685) node {$s_1$};
\draw [fill=black] (8.,0.) circle (2.5pt);
\draw[color=black] (8.176546183391482,0.4077096579889685) node {$s_2$};
\draw [fill=black] (9.,0.) circle (2.5pt);
\draw[color=black] (9.239585762856649,0.4077096579889685) node {$s_{n-2}$};
\draw [fill=black] (10.,0.) circle (2.5pt);
\draw[color=black] (10.245676793421895,0.4077096579889685) node {$s_{n-1}$};
\end{scriptsize}
\end{tikzpicture}
\caption{Diagrams of Type $ D_n $ (left) and $ B_n $ (right)}
\end{figure}

The goal of this paper is to provide an effective description of the mod $ 2 $ cohomology of these groups. Other authors have previously computed these cohomology groups. Most notably, Swenson, in his thesis \cite{Swenson}, adapted techniques used by Hu'ng \cite{Hung:87} and Feshbach \cite{Feshbach:02}, stemming from the analysis of the restriction maps to elementary abelian $ 2 $-subgroups, to compute generators and relations for the mod $ 2 $ cohomology algebra of a finite reflection group. However, his presentation is involved and intrinsically recursive. Borrowing ideas from \cite{Sinha:12} and \cite{Sinha:17}, we exploit additional structures to provide a simpler description of the cup product. Our approach also has the advantage of being more easily readable from the well-known chain-level geometric and combinatoric description of a resolution for Coxeter groups by De Concini and Salvetti \cite{Salvetti:00}.

The sequences of Coxeter groups of Type $ B $ and $ D $ have standard embeddings $ \WB{n} \times \WB{m} \rightarrow \WB{n+m} $ and $ \WD{n} \times \WD{m} \rightarrow \WD{n+m} $ that are, in a certain sense, compatible. The homomorphisms induced by these maps on mod $ 2 $ cohomology define a coproduct $ \Delta $. The cohomology transfer maps associated with them determine a product $ \odot $.

In the $ B $ case, the resulting structure is modeled on that of the symmetric groups, the Coxeter groups of Type $ A $, as described by Giusti, Salvatore, and Sinha \cite{Sinha:12} (mod $ 2 $) and by the author \cite{Guerra:17} (modulo odd primes). Together with the usual cup product $ \cdot $, these maps form a ring in the category of $ \Ftwo $-coalgebras, i.e., a Hopf ring over $ \Ftwo $.
More explicitly, given a ring $ R $, a (graded) Hopf ring over $ R $ is a graded $ R $-module with a coproduct $ \Delta $ and two products $ \odot $ and $ \cdot $ such that:
\begin{itemize}
\item $ \left( A, \Delta, \odot \right) $ is a Hopf algebra, with an antipode $ S $.
\item $ \left( A, \Delta, \cdot \right) $ is a bialgebras over $ R $
\item if $ x,y,z \in A $ and $ \Delta \left( x \right) = \sum_i x_i' \otimes x_i'' $, then the following distributivity formula hold:
\[
	x \cdot \left( y \odot z \right) = \sum_i \left( -1 \right)^{\deg(y) \deg(x_i'')}   \left( x_i' \cdot y \right) \odot \left( x_i'' \cdot z \right)
\]
\end{itemize}

In the $ D $ case, $ \Delta $, $ \odot $, and $ \cdot $ satisfy the last two axioms in the definition of a Hopf ring, and $ \Delta $ and $ \cdot $ form a bialgebra. However, as we will explain later, $ \Delta $ and $ \odot $ do not form a bialgebra.
We call this weaker structure an \emph{almost-Hopf ring} over $ \Ftwo $.
Due to this fact, the study of the cohomology of $ \WD{n} $, with the cup product, the transfer product, and the coproduct, is more complicated. The reader will find similarities between the cohomology of $ \WD{n} $ and that of the alternating groups, as described by Giusti and Sinha \cite{Sinha:17}.
Such structures stem from the seminal work of Strickland and Turner \cite{Strickland-Turner}, in which the authors discovered an Hopf ring structure on the cohomology of symmetric groups, even with generalized cohomology theories.

The main results of this paper are Theorem \ref{teo:Bn} and Theorem \ref{teo:Dn}, stated in Section \ref{sec:RelAdd}, consisting of a presentation in terms of generators and relations of the mod $ 2 $ cohomology of the Coxeter groups of Type $ B_n $ as a Hopf ring and of Type $ D_n $ as an almost-Hopf ring respectively. For clarity reasons, the relations are spread out in a few lemmas that allow us to state and prove the relations concerning coproduct, transfer product, and cup product separately. Building on these core theorems, we also describe convenient additive bases for the cohomology of these groups, with a graphical description via skyline diagrams similar to that obtained for the symmetric group in \cite{Sinha:12}, and compute the Steenrod algebra action. Our formulation of the cohomology of $ \WB{n} $ and $ \WD{n} $ yields without additional effort many features of these cohomology algebras. For instance, Hepworth's homological stability results \cite{Hepworth:16} in these particular cases follow directly.

We obtain our presentation via three technical tools.
First, we exploit De Concini and Salvetti's geometric combinatorial model to realize such (almost) Hopf rings structures at the cochain level. Specializing their construction to the families of groups of our interest, we observe that a resolution for $ \WB{n} $ is obtained from the symmetrized version of the planar level trees used by Giusti and Sinha \cite{Sinha:17} for the symmetric groups. The cohomology of $ \WD{n} $ is governed by an oriented version of these objects.
We describe cochain representatives of the structural maps in detail. Our treatment follows the paper cited above closely. However, we note that while the transfer product is realized very similarly to the $ \Sigma_n $ case, coproducts are more complicated and require the combinatorial operation of ``pruning'' symmetric planar level trees.
This cochain-level description allows us to quickly retrieve some of our relations and give a more geometric flavor to our generating classes. For instance, they can be interpreted as Thom classes in a suitable sense.

Second, we use the existence of well-behaved maps between $ \WB{n} $, $ \WD{n} $, and $ \Sigma_n $. These homomorphisms preserve parts of our structures. Therefore, we exploit them to build our presentations on the known result for the cohomology of the symmetric groups. We provide a cochain-level description of these morphisms, and we determine both their action on generators and their relations to the coproduct and transfer product.

Third, we reconcile with Swenson's approach, and we investigate restrictions to elementary abelian $ 2 $-subgroups. The mod $ 2 $ cohomology of finite reflection groups is known to be detected by this family of subgroups. We effectively compute the action of these restriction maps on our additive bases. The multiplicative structure on the cohomology of (the invariant subalgebras of) such subgroups is known. Thus, these calculations allow us to deduce cup product relations that would be otherwise difficult to obtain.

We organize the paper as follows.
After describing the structures on the cohomology of $ \WB{n} $ and $ \WD{n} $ in Section 2, we devote the following two sections to developing our geometric tools. In Section 3, we review De Concini and Salvetti's construction, and we specialize it to $ \WB{n} $ and $ \WD{n} $. In Section 4, we investigate the combinatorics of pruning operations, and we retrieve cochain-level representatives of our structural and connecting homomorphisms. Section 5 is devoted to our main theorems. We define generators and we discuss relations between them. In this context, we also deduce from our presentation additive bases, and we discuss the relations between the cohomology of Coxeter groups of Type $ A $, $ B $, and $ C $. We postpone the proofs of the presentation theorem and some cup product relations. In Section 6, we turn our attention to the restriction to elementary abelian $ 2 $-subgroups. We review relevant results from Swenson's thesis, compute restriction maps, and use them to complete the proof of our cup-product relations. Section 7 is devoted to completing the proof of our main theorems. In Section $ 8 $, we calculate the Steenrod algebra action. The paper ends with an appendix providing a self-contained proof of the additive basis in the $ D $ case, not used in any other parts of the paper.

\subsection*{Acknowledgements}

Most of the contents of this paper are part of the author's Ph.D. thesis, written at Scuola Normale Superiore in Pisa. The author acknowledges full support from this institution. The author indebited with his Ph.D. advisor, prof. Mario Salvetti, for his guidance, and also thanks prof. Dev Sinha for helpful comments.

\newpage
\tableofcontents
\newpage

\section{(Almost) Hopf ring structures for the cohomology of $ \WB{n} $ and $ \WD{n} $}\label{sec:Hopf rings}

We begin this paper by describing in detail how the desired algebraic structures on the cohomology of Coxeter groups of Type $ B $ and $ D $ are obtained.
Throughout this paper, we use several combinatorial descriptions of the groups $ \WB{n} $ and $ \WD{n} $. We refer to Chapter 8 in \cite{Bjorner-Brenti} for a thorough treatment, and we recall below what we need for our purposes.

With reference to Figure \ref{fig:diagrammi Coxeter}, we recall that there is an inclusion $ j_n \colon \WD{n} \hookrightarrow \WB{n} $ defined by $ t_0 \mapsto s_0s_1s_0 $ and $ t_i \mapsto s_i $ if $ i > 0 $.
$ \WB{n} $ can be seen as the group of signed permutation on $ n $ numbers, that is, the group of bijective functions $ f $ from the set $ \left\{ -n, \dots, -1, 1, \dots, n \right\} $ into itself that satisfy $ f \left( -i \right) = - f \left( i \right) $ for every $ 1 \leq i \leq n $. Hence $ \WB{n} $ is naturally a subgroup of $ \Sigma_{2n} $, the symmetric group on $ 2n $ objects.
The image of $ j_n $ is $ \WB{n} \cap \Alt \left( 2n \right) $, the intersection of $ \WB{n} $ with the alternating group $ \Alt \left( 2n \right) $, the subgroup of even permutations in $ \Sigma_{2n} $.
Note that $ \Sigma_n $ can be identified with the parabolic subgroup of $ \WB{n} $ generated by $ s_1,\dots, s_{n-1} $, corresponding to the signed permutations on $ \{-n,\dots,n\} $ that preserve signs. There is also a standard projection $ \WB{n} \to \Sigma_n $, of which the previous inclusion is a section, whose kernel is the normal subgroup generated by $ s_0 $.
We observe that this provides an isomorphism between $ \WB{n} $ and the wreath product $ \Ftwo \wr \Sigma_n $, a semi-direct product of $ \Ftwo^n $ and $ \Sigma_n $. Therefore, the inclusions $ \Sigma_n \times \Sigma_m \rightarrow \Sigma_{n+m} $ extend naturally to monomorphisms $ \WB{n} \times \WB{m} \rightarrow \WB{n+m} $. These inclusions are associative and commutative up to conjugation.

Let $ A_B = \bigoplus_{n \geq 0} H^* \left( \WB{n}; \Ftwo \right) $. We define a coproduct $ \Delta $ and two product $ \cdot $ and $ \odot $ on $ A_B $ in the following way:
\begin{itemize}
\item $ \Delta $ is induced by the obvious monomorphisms $ \WB{n} \times \WB{m} \rightarrow \WB{n+m} $
\item $ \odot $ is induced by the cohomology transfer maps associated with these inclusions
\item $ \cdot $ is the usual cup product
\end{itemize}

Due to the associativity and the commutativity of the natural inclusions, these morphisms define an almost-Hopf ring structure. This is a general fact, as noticed in \cite{Sinha:17}.
In this case, however, $ A_B $ is a full Hopf ring.

\begin{proposition}
$ A_B $, with these structural morphisms, is a Hopf ring.
\end{proposition}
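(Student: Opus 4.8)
The plan is to isolate the single axiom that is not automatic. By the general principle recalled just above (see \cite{Sinha:17}), the associativity and commutativity up to conjugation of the inclusions $B_n\times B_m\to B_{n+m}$ already make $\odot$ a commutative associative unital product, $\Delta$ a cocommutative coassociative counital coproduct, $(A_B,\Delta,\cdot)$ a bialgebra, and the distributivity formula hold. Thus the only thing left to check is that $(A_B,\Delta,\odot)$ is a bialgebra, i.e.\ that $\Delta\colon(A_B,\odot)\to(A_B\otimes A_B,\odot)$ is a morphism of algebras; compatibility with the counit and the unit is trivial, since $H^*(B_0;\mathbb{Z}_2)=\mathbb{Z}_2$ sits in degree $0$ and $\Delta(1)=1\otimes1$.

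Fix homogeneous classes $x\in H^*(B_{n_1};\mathbb{Z}_2)$, $y\in H^*(B_{n_2};\mathbb{Z}_2)$ and put $n=n_1+n_2$. Let $\iota_{p,q}\colon B_p\times B_q\hookrightarrow B_n$ be the standard inclusion and $\Delta_{p,q}$ the component of $\Delta$ with values in $H^*(B_p)\otimes H^*(B_q)$, that is restriction along $\iota_{p,q}$ followed by the K\"unneth isomorphism. Since $\odot$ is the transfer along $\iota_{n_1,n_2}$ (precomposed with K\"unneth), the bialgebra identity I must prove amounts, component by component, to
\[
 \operatorname{res}^{B_n}_{B_p\times B_q}\bigl(\tr^{B_n}_{B_{n_1}\times B_{n_2}}(x\times y)\bigr)\;=\;\sum\,(x'\odot y')\otimes(x''\odot y'')
\]
for every $p+q=n$, where the sum runs over all $a+b=p$, $c+d=q$ with $a+c=n_1$, $b+d=n_2$, and $x'\otimes x''\in H^*(B_a)\otimes H^*(B_c)$, $y'\otimes y''\in H^*(B_b)\otimes H^*(B_d)$ are the K\"unneth components of $\Delta(x)$ and $\Delta(y)$. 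The natural tool is the double coset (Mackey) formula for $\operatorname{res}\circ\tr$, which rewrites the left-hand side as a sum over $(B_p\times B_q)\backslash B_n/(B_{n_1}\times B_{n_2})$ of composites $\tr\circ c_g^*\circ\operatorname{res}$; so the statement reduces entirely to understanding these double cosets, the subgroups $(B_p\times B_q)\cap g(B_{n_1}\times B_{n_2})g^{-1}$, and the conjugation maps $c_g^*$.

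The key point is the combinatorial claim that this double coset set is canonically the set of $2\times2$ matrices $\left(\begin{smallmatrix}a&b\\c&d\end{smallmatrix}\right)$ of non-negative integers with row sums $p,q$ and column sums $n_1,n_2$; that for a suitable permutation representative $g$ the intersection subgroup is the evident copy of $B_a\times B_b\times B_c\times B_d$ (sitting as $(B_a\times B_b)\times(B_c\times B_d)$ inside $B_p\times B_q$ and, after conjugation, as $(B_a\times B_c)\times(B_b\times B_d)$ inside $B_{n_1}\times B_{n_2}$); and that $c_g^*$ acts on cohomology by the K\"unneth reordering transposing the two middle tensor factors, with no further correction, being induced by a group isomorphism permuting disjoint blocks (and over $\mathbb{Z}_2$ there are no Koszul signs). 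To prove this I would use $B_n\cong\mathbb{Z}_2\wr\Sigma_n$ together with the observation that all the subgroups $B_p\times B_q$ and $B_{n_1}\times B_{n_2}$ contain the whole normal subgroup $\mathbb{Z}_2^n\subseteq B_n$; hence modding out by $\mathbb{Z}_2^n$ identifies the double coset set and the relevant subgroups with their images in $\Sigma_n$, where the description by matrices with prescribed margins and stabilizers $\Sigma_a\times\Sigma_b\times\Sigma_c\times\Sigma_d$ is classical, and everything lifts back because $\mathbb{Z}_2^n$ splits compatibly over the four blocks. Granting the claim, plugging into the Mackey formula turns each matrix $\left(\begin{smallmatrix}a&b\\c&d\end{smallmatrix}\right)$ into the summand $(x'\odot y')\otimes(x''\odot y'')$ coming from $\Delta_{a,c}(x)$ and $\Delta_{b,d}(y)$, which is exactly the right-hand side above.

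The main obstacle is the bookkeeping in this last step: choosing explicit representatives, matching how $B_a,B_b,B_c,B_d$ sit inside $B_{n_1},B_{n_2}$ versus inside $B_p,B_q$, and checking that the three maps in the Mackey composite really produce $\Delta_{a,c}(x)$, $\Delta_{b,d}(y)$, and then the two transfers giving $x'\odot y'$ and $x''\odot y''$. This is also precisely where the argument fails for $D_n$, and explains why $A_D$ is only an almost-Hopf ring: $D_p\times D_q$ does not contain all of $\mathbb{Z}_2^n\cap D_n$, the reduction to $\Sigma_n$-double cosets breaks down, the double coset set is genuinely larger, and the intersection subgroups need not be products of groups of type $D$. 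Finally, I would note that the remaining bialgebra conditions for $(A_B,\Delta,\odot)$ — associativity, coassociativity, unitality, counitality and their compatibilities with the unit and counit — are the formal consequences of functoriality of restriction and transfer and of the K\"unneth theorem already packaged into the almost-Hopf ring structure, so the double coset computation indeed upgrades $A_B$ to a full Hopf ring.
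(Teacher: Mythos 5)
Your proof is correct and follows essentially the same route as the paper: the almost-Hopf ring axioms are quoted from \cite{Sinha:17}, and the one remaining axiom — that $\Delta$ is multiplicative for $\odot$ — is verified by showing that restriction of a transfer decomposes over double cosets indexed by $2\times2$ matrices with prescribed margins, with intersection subgroups $B_a\times B_b\times B_c\times B_d$. The paper packages exactly this computation as a pullback square of finite coverings $\bigsqcup E(B_{n+m})/(B_p\times B_q\times B_r\times B_s)\to E(B_{n+m})/(B_n\times B_m)$ over $E(B_{n+m})/B_{n+m}$, which is the geometric form of your Mackey-formula argument (and note the paper's proof contains a typo, saying it remains to check $(A_B,\Delta,\cdot)$ where $(A_B,\Delta,\odot)$ is meant, as you correctly identify).
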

\begin{proof}
The almost-Hopf ring axioms hold by Theorem 2.3 in \cite{Sinha:17}. It remains only to prove that $ \left( A_B, \Delta, \odot \right) $ form a bialgebra. This claim follows from the fact (compare with \cite{Sinha:12}, Section 3) that this diagram is a pullback of finite coverings for all $ n,m \in \mathbb{N} $, where $ \pi $ indicates the projections.

\begin{center}
\begin{tikzcd}[column sep=2.5cm]
\bigsqcup_{\substack{p+q=n\\r+s=m}} \frac{ E \left( \WB{n+m} \right) }{ \WB{p} \times \WB{q} \times \WB{r} \times \WB{s}} \arrow{r}{\bigsqcup \pi_{p+q,r+s}} \ar[overlay]{d}{\bigsqcup \pi_{p+r,q+s}} & \frac{ E \left( \WB{n+m} \right) }{ \WB{n} \times \WB{m} } \ar[overlay]{d}{\pi_{n,m}} \\
\bigsqcup_{k+l=n+m} \frac{ E \left( \WB{n+m} \right) }{ \WB{k} \times \WB{l} } \ar[overlay]{r}{ \bigsqcup \pi_{k,l} } & \frac{ E \left( \WB{n+m} \right) }{ \WB{n+m} } \\
\end{tikzcd}
\end{center}
\end{proof}
We remark that, since $ A_B $ with $ \Delta $ and $ \odot $ is a conilpotent bialgebra, the existence of the antipode comes for free. This antipodal morphism does not play a role in our treatment; thus, we will not discuss it further.

Similarly, we can construct an additional almost-Hopf ring structure on the cohomology of the Coxeter groups of Type $ D_n $. Indeed, on the direct sum $ A_D = \bigoplus_{n \geq 0} H^* \left( \WD{n}; \Ftwo \right) $, we can define a coproduct $ \Delta $ and two products $ \odot $ and $ \cdot $ as done for $ A_B $.
However, these do not make $ A_D $ a full Hopf ring because, as we will see later, $ \left( A_D, \Delta, \odot \right) $ fails to be a bialgebra.

With essentially the same proof used for $ A_B $, we can prove the following easy proposition, which follows again from \cite[Theorem 2.3]{Sinha:17}.
\begin{proposition}
$ A_D $, with the coproduct and the two products defined before, is an almost-Hopf ring over $ \Ftwo $.
\end{proposition}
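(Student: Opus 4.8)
The plan is to reduce the statement to \cite[Theorem 2.3]{Sinha:17}, which produces an almost-Hopf ring out of any sequence of groups equipped with inclusions that are associative and commutative up to conjugation. So the proof consists only in checking that the family $\{D_n\}_{n\geq 0}$, together with the restrictions of the standard inclusions $B_n\times B_m\hookrightarrow B_{n+m}$, is such a sequence; the almost-Hopf ring axioms then follow formally, just as they did for $A_B$ in the preceding proposition --- the difference being that for $A_B$ one had the additional pullback square of coverings that upgraded the structure to a full Hopf ring, and, as explained later, no such square exists for $D$.

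First I would check that the inclusion $B_n\times B_m\hookrightarrow B_{n+m}$ restricts to $D_n\times D_m$. Viewing $B_k\leq\Sigma_{2k}$ as signed permutations and recalling $D_k=B_k\cap\Alt(2k)$, the product of an even signed permutation of the first $2n$ coordinates with an even signed permutation of the last $2m$ coordinates is again an even signed permutation of all $2(n+m)$ coordinates, so the image of $D_n\times D_m$ lies in $B_{n+m}\cap\Alt(2(n+m))=D_{n+m}$. Next, commutativity and associativity up to conjugation: the two maps $D_n\times D_m\to D_{n+m}$ given by the direct inclusion and by precomposing the inclusion $D_m\times D_n\to D_{n+m}$ with the flip differ by conjugation by the permutation $\sigma$ that swaps the block of the first $2n$ coordinates past the block of the last $2m$ coordinates (and the reassociation maps differ by analogous block permutations). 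Such a $\sigma$ preserves the signed structure and has sign $(-1)^{(2n)(2m)}=1$, so $\sigma\in B_{n+m}\cap\Alt(2(n+m))=D_{n+m}$; since inner automorphisms act trivially on mod $2$ cohomology, these maps and the associated transfers agree after passing to $H^*$. Hence $\Delta$ is coassociative and cocommutative and $\odot$ is associative and graded-commutative.

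Granting these facts, \cite[Theorem 2.3]{Sinha:17} applies verbatim and gives that $\odot$ and $\Delta$ are compatible in the weak sense of the almost-Hopf ring axioms, that $(A_D,\Delta,\cdot)$ is a bialgebra --- here one additionally invokes the Künneth isomorphism $H^*(D_n\times D_m;\mathbb{Z}_2)\cong H^*(D_n;\mathbb{Z}_2)\otimes H^*(D_m;\mathbb{Z}_2)$ to see that the group-homomorphism-induced $\Delta$ is multiplicative for the cup product --- and that the distributivity formula relating $\cdot$, $\odot$, and $\Delta$ holds. The last of these is the only genuinely non-formal ingredient, and it is established in \cite{Sinha:17} via a Mackey/double-coset analysis of the transfer maps; there is no real obstacle to importing it here, the only point needing attention being the parity computation that keeps the conjugating block permutations inside $D_{n+m}$ rather than merely in $B_{n+m}$ --- and the remark that, by triviality of inner automorphisms on cohomology, even this subtlety is immaterial.
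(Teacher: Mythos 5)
Your proposal is correct and follows the same route as the paper: the paper likewise disposes of this proposition by citing \cite[Theorem 2.3]{Sinha:17}, noting that the argument is the same as for $A_B$ minus the pullback square that would make $(A_D,\Delta,\odot)$ a bialgebra. Your explicit verification that the standard inclusions restrict to $D_n\times D_m\to D_{n+m}$ and that the conjugating block permutations are even (hence lie in $D_{n+m}$, so that conjugation by them is inner and trivial on cohomology) is exactly the hypothesis-check the paper leaves implicit.
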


As we remarked in the introduction, there is a similar result for the mod $ 2 $ cohomology of the symmetric groups, obtained by Giusti, Salvatore, and Sinha in \cite{Sinha:12}. We recall their statement here because we will build our computations upon it.
\begin{theorem}[\cite{Sinha:12}, Theorems 1.2 and 3.2] \label{teo:gruppi simmetrici}
$ A_\Sigma = \bigoplus_{n \geq 0} H^* \left( \Sigma_n; \Ftwo \right) $, together with a coproduct $ \Delta \colon A_\Sigma \rightarrow A_\Sigma \otimes A_\Sigma $ induced by the obvious inclusions $ \Sigma_n \times \Sigma_m \rightarrow \Sigma_{n+m} $, a product $ \odot \colon A_\Sigma \otimes A_\Sigma \rightarrow A_\Sigma $ given by the transfer maps associated with these inclusions, and a second product $ \cdot \colon A_\Sigma \otimes A_\Sigma \rightarrow A_\Sigma $ defined as the usual cup product, is a Hopf ring over $ \Ftwo $.

$ A_\Sigma $ is generated, as a Hopf ring, by classes $ \gamma_{k,n} \in H^{n \left( 2^k-1 \right)} \left( \Sigma_{n2^k}; \Ftwo \right) $ for $ k \geq 0, n \geq 1 $. The coproduct of these classes is given by the formula
\[
	\Delta \left( \gamma_{k,n} \right) = \sum_{l=0}^n \gamma_{k,l} \otimes \gamma_{k,n-l},
\]
the cup product of generators belonging to different components is $ 0 $ and
\[
	\gamma_{k,n} \odot \gamma_{k,m} = \left( \begin{array}{c} n+m \\ n \end{array} \right) \gamma_{k,n+m}.
\]
There are no more relations between these classes.
\end{theorem}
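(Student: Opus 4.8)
The plan is to follow the method of \cite{Sinha:12}, in four stages: (i) realise the three operations geometrically and verify the Hopf ring axioms; (ii) reduce the underlying bigraded $ \mathbb{Z}_2 $-vector space to Nakaoka's computation; (iii) identify the classes $ \gamma_{k,n} $ with explicit (co)homology classes and read off the three displayed formulas; and (iv) prove that these relations, together with the Hopf ring axioms, are complete.

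Stage (i) is formal. The operations $ \Delta $, $ \odot $, $ \cdot $ are induced, respectively, by the block inclusions $ \Sigma_n \times \Sigma_m \hookrightarrow \Sigma_{n+m} $, by the associated stable transfers, and by the diagonals $ \Sigma_n \to \Sigma_n \times \Sigma_n $. The bialgebra axioms for $ \left( A_\Sigma, \Delta, \odot \right) $ and the distributivity law are the general fact recalled above as \cite[Theorem 2.3]{Sinha:17}; both follow from the compatibility of transfers with restriction along cartesian squares of finite coverings, together with the projection formula. That $ \left( A_\Sigma, \Delta, \cdot \right) $ is a bialgebra follows exactly as for $ A_B $ above: the square of finite coverings of $ B\Sigma_{n+m} $ with deck groups $ \Sigma_p \times \Sigma_q \times \Sigma_r \times \Sigma_s $, $ \Sigma_n \times \Sigma_m $, $ \Sigma_k \times \Sigma_l $ and $ \Sigma_{n+m} $ is a pullback, and the double-coset formula applied to it gives the compatibility of $ \Delta $ with the cup product.

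For stages (ii)--(iii): since $ \bigsqcup_n B\Sigma_n $ is the free $ E_\infty $-space on a point, the Araki--Kudo--Dyer--Lashof computation (equivalently, Nakaoka's theorem) presents the Pontryagin ring $ \bigoplus_n H_* \left( B\Sigma_n; \mathbb{Z}_2 \right) $ as the polynomial algebra on $ \left[ 1 \right] \in H_0 \left( B\Sigma_1 \right) $ together with the admissible monomials $ Q^I \left[ 1 \right] $ of positive excess, where $ Q^I \left[ 1 \right] \in H_{|I|} \left( B\Sigma_{2^\ell} \right) $ for $ I $ of length $ \ell $. Since $ \Delta $ is dual to this product, dualising gives a monomial basis of each $ H^* \left( \Sigma_n \right) $, indexed by multisets of such sequences whose weights $ 2^\ell $ sum to $ n $. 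The shortest admissible positive-excess sequence of length $ k $ is $ \left( 2^{k-1}, 2^{k-2}, \dots, 1 \right) $, of degree $ 2^k - 1 $; writing $ \xi_k = Q^{2^{k-1}} \cdots Q^1 \left[ 1 \right] \in H_{2^k-1} \left( B\Sigma_{2^k} \right) $, we may take $ \gamma_{k,n} \in H^{n\left( 2^k-1 \right)} \left( \Sigma_{n2^k}; \mathbb{Z}_2 \right) $ to be the class dual to the monomial $ \xi_k^{\ast n} $, which is nonzero precisely because $ \xi_k $ is a polynomial generator. The three formulas then follow. The coproduct formula is dual to $ \xi_k^{\ast l} \ast \xi_k^{\ast\left( n-l \right)} = \xi_k^{\ast n} $, with no further terms appearing because $ \xi_k^{\ast n} $ is a single monomial. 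The vanishing of cup products between different components is built into the definition of $ \cdot $ on $ A_\Sigma $. Finally, the bialgebra axiom for $ \left( A_\Sigma, \Delta, \odot \right) $ dualises to the statement that the transfer coproduct on $ \bigoplus_n H_* \left( B\Sigma_n \right) $ is a morphism of Pontryagin algebras, so once one checks --- by a double-coset computation, conveniently done after restriction to the regularly embedded $ \mathbb{Z}_2^k $, whose transitivity forces the transfer of $ \xi_k $ to every proper $ \Sigma_a \times \Sigma_b $ to vanish --- that $ \xi_k $ is primitive for this coproduct, one gets that the transfer coproduct of $ \xi_k^{\ast N} $ equals $ \sum_j \binom{N}{j} \xi_k^{\ast j} \otimes \xi_k^{\ast\left( N-j \right)} $; dualising yields $ \gamma_{k,n} \odot \gamma_{k,m} = \binom{n+m}{n} \gamma_{k,n+m} $.

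Stage (iv) is the heart of the matter. The preceding steps produce a surjection of Hopf rings from the Hopf ring $ F $ presented by the symbols $ \gamma_{k,n} $ and the listed relations onto $ A_\Sigma $; surjectivity requires showing that cup and $ \odot $ products of the $ \gamma_{k,n} $ already span each $ H^* \left( \Sigma_n \right) $, which one does by rewriting an arbitrary Nakaoka monomial via the distributivity law. Injectivity is the real obstacle: one builds an explicit additive basis of $ F $ by \emph{skyline diagrams} --- stacks of columns, each column recording one $ \xi_k $, columns within a block combined by $ \odot $ and blocks in a fixed component juxtaposed by $ \cdot $ --- shows that the relations reduce every element of $ F $ to a $ \mathbb{Z}_2 $-span of such diagrams, and verifies that skyline diagrams of weight $ n $ are in bijection with the Nakaoka monomial basis of $ H^* \left( \Sigma_n \right) $ from stage (ii). Proving that the listed relations are exactly enough to carry out this reduction while collapsing nothing further --- equivalently, that the bigraded Poincaré series of $ F $ and of $ A_\Sigma $ agree term by term --- is the main difficulty, and the Hopf ring language is precisely what makes the final answer compact.
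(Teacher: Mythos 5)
This theorem is not proved in the paper at all: it is imported verbatim from Giusti--Salvatore--Sinha \cite{Sinha:12}, so there is no in-paper argument to compare against. Your outline is, in substance, the proof strategy of that source: formal verification of the Hopf ring axioms via pullbacks of coverings and the projection formula, identification of the additive structure with the dual of Nakaoka's polynomial algebra on admissible $Q^I[1]$ of positive excess, definition of $\gamma_{k,n}$ as the dual of $\xi_k^{\ast n}$ with $\xi_k = Q^{2^{k-1}}\cdots Q^1[1]$, and derivation of the three relations by dualizing (the coproduct from the multiplicativity of the monomial basis, the $\odot$-relation from primitivity of $\xi_k$ under the transfer coproduct together with the $(\Delta,\odot)$-bialgebra axiom). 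Those stages are correct as stated; in particular your observation that $\xi_k^j \otimes \xi_k^{N-j}$ can only occur in the transfer coproduct of the single monomial $\xi_k^{\ast N}$, because that coproduct is an algebra map on a free commutative algebra with primitive generators, is exactly what makes the binomial coefficient come out cleanly.

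The one place where your write-up is a program rather than a proof is stage (iv), and you say so yourself. Completeness is not a dimension count that ``one verifies'': the substantive content of \cite{Sinha:12} is (a) an explicit bijection between Hopf monomials whose gathered blocks have pairwise distinct profiles and the monomial basis dual to Nakaoka's (equivalently, multisets of admissible positive-excess words of total weight $n$), and (b) a linear-independence argument showing these Hopf monomials have triangular pairing with that basis (or, in the variant this paper adapts for $B_n$ in Section 6, pairwise distinguishable restrictions to maximal elementary abelian $2$-subgroups, where a gathered block restricts to a tensor power of a product of Dickson invariants). Without (a) and (b) the surjectivity of the map from the abstractly presented Hopf ring $F$ and the injectivity on the span of skyline diagrams are both unproven, and the phrase ``rewriting an arbitrary Nakaoka monomial via the distributivity law'' does not yet make sense, since Nakaoka monomials live in homology while the rewriting happens in cohomology. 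So: right route, correctly executed through the relations, with the decisive combinatorial-linear-algebra step named but not carried out.
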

$ \gamma_{0,n} \in H^0(\Sigma_n;\Ftwo) $ is the unit of the algebra $ H^*(\Sigma_n; \Ftwo) $ under the cup product. For this reason, we will often denote it with the symbol $ 1_n $ throughout the paper.

\section{Review of a geometric construction of De Con\-ci\-ni and Salvetti and Fox--Neuwirth-type cell structures} \label{sec:geometria}

\subsection{De Concini and Salvetti resolution}

In this section, we recall a geometric construction introduced by De Concini and Salvetti in \cite{Salvetti:00}, which we will require to describe the generators of the Hopf ring under consideration.

Given a finite reflection group $ G \leq \Gl_n \left( \mathbb{R} \right) $, there is a natural hyperplane arrangement $ \mathcal{A}_G $ in $ \mathbb{R}^n $ associated with $ G $, whose hyperplanes are the fixed points sets of reflections in $ G $.
The choice of a fundamental chamber $ C_0 $ of $ \mathcal{A}_G $ gives rise to a Coxeter presentation $ \left( G, S \right) $ for $ G $, whose set of generators $ S $ is composed by reflections with respect to hyperplanes that are supports of a face of $ C_0 $.
Every finite Coxeter group arises this way.

For any $ F \subseteq \mathbb{R}^n $, we can define:
\[
	\mathcal{A}_F = \left\{H \in \mathcal{A}_G: F \subseteq H \right\}
\]
$ \mathcal{A}_F $ gives rise to a stratification $ \Phi \left( \mathcal{A}_F \right) $ of $ \mathbb{R}^n $, in which the strata are the connected components of sets of the form $ L \setminus \bigcup_{H \in \mathcal{A}_F, H \not\supseteq L} H $, where $ L $ is the intersection of some of the hyperplanes of $ \mathcal{A}_F $.
Let $ \mathbb{R}^\infty $ be the direct limit of $ \mathbb{R}^m $ under the inclusions $ \mathbb{R}^m \hookrightarrow \mathbb{R}^m \times \{0\} \subseteq \mathbb{R}^{m+1} $. For all $ m \in \mathbb{N} \cup \left\{ \infty \right\} $, there is a stratification $ \Phi_m $ (different from the product stratification) of the topological space:
\[
	Y^{(m)}_G = \mathbb{R}^n \otimes \mathbb{R}^m \setminus \bigcup_{H \in \mathcal{A}_G} \left( H \otimes \mathbb{R}^m \right) = \left( \mathbb{R}^n \right)^m \setminus \bigcup_{H \in \mathcal{A}_G} H^m
\]
The strata in $ \Phi_m $ are defined as sets of the form $ F_1 \times \dots \times F_k \times \dots $, with $ F_k \in \Phi \left( \mathcal{A}_{F_{k-1}} \right) $ for $ k \geq 1 $. Here we put, by convention, $ F_0 = \left\{ 0 \right\} $.
In what follows, if there is no ambiguity, we will use the simpler notations $ Y^{(m)} $ and $ Y $ to indicate $ Y^{(m)}_G $ and $ Y^{(\infty)}_G $ respectively.

De Concini and Salvetti construct a regular $ G $-equivariant CW-complex $ X \subseteq Y $ that is ``dual'' to the stratification $ \Phi_\infty $, in the sense that for every stratum $ F \in \Phi_\infty $ of codimension $ d $, there exist a unique $ d $-dimensional cell in $ X $ that intersects $ F $, and they intersect transversally in a single point. For $ m < \infty $, the intersection $ X^{(m)} $ of $ X $ with $ Y^{(m)} $ is a subcomplex of $ X $, whose cells are dual to strata in $ \Phi_m $.
This construction is done equivariantly, in the sense that for every stratum $ F \in \Phi_\infty $ and every $ g \in G $, if $ \varphi \colon D^d \rightarrow X $ is the cell dual to $ F $ in $ X $, then $ (g. \textunderscore ) \circ \varphi \colon D^d \rightarrow X $ is the cell dual to $ g.F $.
The authors then show that $ X $ is a $ G $-equivariant strong deformation retract of $ Y $. Since $ Y $ is contractible and $ G $-free, the quotient $ X/G $ is a cellular model for the classifying space $ B \left( G \right) $ and the cellular chain complex $ C_*^G = C_*^{CW} \left( X \right) $ is a $ \mathbb{Z}\left[G \right] $-free resolution of $ \mathbb{Z} $.

The strata of $ \Phi_\infty $ have a more compact combinatorial description in terms of the Coxeter presentation.
For every $ s \in S $ generating reflection for $ G $, we let $ H_s $ be the hyperplane fixed by $ s $. $ H_s $ divides the space $ \mathbb{R}^n $ into two semi-spaces $ H_s^+ $ and $ H_s^- $. We let $ H_s^+ $ be the semi-space that contains the chosen fundamental chamber $ C_0 $.
To a flag $ \underline{\Gamma} = (S \supseteq \Gamma_1 \supseteq \Gamma_2 \supseteq \dots \supseteq \Gamma_k = \varnothing) $ of subsets of $ S $  we can associate a stratum $ F $ of $ \Phi_\infty $ such that $ x = \left( x_1, \dots, x_n \right)  \in \left( \mathbb{R}^\infty \right)^n $ belongs to $ F $ if  and only if the following condition is satisfied for every $ s \in S $ and every $ r \geq 1 $:
\[
	\left( (x_1)_r, \dots, (x_n)_r \right) \in H_s \mbox{ if } s \in \Gamma_r, \mbox{ } \left( (x_1)_r, \dots, (x_n)_r \right) \in H_s^+ \mbox{ if } s \in \Gamma_{r-1} \setminus \Gamma_r
\]
Thus, to a couple $ \left( \underline{\Gamma}, g \right) $, where $ \underline{\Gamma} $ is a flag as before and $ g \in G $, we can associate the stratum $ g.F $ obtained from the above $ F $ by applying $ g $.
This construction yields an algebraic-combinatorial description of the cellular chain complex of $ X $. The main theorem of De Concini and Salvetti's paper is the following.

\begin{theorem}[\cite{Salvetti:00}, Section 3] \label{teo:Salvetti}
Let $ \left( G, S \right) $ be a finite Coxeter group, and consider the set:
\[
	\left\{ \left( \underline{\Gamma}, \gamma \right): \gamma \in G, \underline{\Gamma} = \left( \Gamma_1 \supseteq \Gamma_2 \supseteq \dots \supseteq \Gamma_k \supseteq \dots \right), \Gamma_1 \subseteq S, \exists k: \Gamma_{k} = \varnothing \right\}
\]
The function described above is a bijection between this set and the set of strata in $ \Phi_\infty $ (and thus, by duality, with the set of cells in $ X $).
The codimension of the stratum (and the dimension of the corresponding dual cell) associated with $ \left( \underline{\Gamma}, \gamma \right) $ is equal to $ \sum_{r=1}^\infty | \Gamma_r | $, and the action of an element $ g \in G $ on strata and cells corresponds to the function $ \left( \underline{\Gamma}, \gamma \right) \mapsto \left( \underline{\Gamma}, g\gamma \right) $.

Let $ c \left( \underline{\Gamma}, \gamma \right) $ be the cell dual to the stratum corresponding to $ \left( \underline{\Gamma}, \gamma \right) $. The boundary homomorphism in $ C_*^{CW} \left( X \right) $ is given by the formula
\[
	\partial c \left( \underline{\Gamma}, \gamma \right) = \sum_{i \geq 1} \sum_{\tau \in \Gamma_i} \sum_{\substack{\beta \in W^{\Gamma_i \setminus \left\{ \tau \right\}}_{ \Gamma_i } \\ \beta^{-1} \Gamma_{i+1} \beta \subseteq \Gamma_i \setminus \left\{ \tau \right\}}} \left( -1 \right)^{\alpha \left( \underline{\Gamma},i,\tau,\beta \right)} c \left( \underline{\Gamma}', \gamma \beta \right)
\]
where $ \alpha $ is an integer number easily computed in terms of $ \underline{\Gamma} $, $ i $, $ \tau $, $ \beta $, $ \Gamma'_k = \Gamma_k $ for $ k < i $, $ \Gamma'_i = \Gamma_i \setminus \left\{ \tau \right\} $ and $ \Gamma'_k = \beta^{-1} \Gamma_k \beta $ if $ k > i $, and $ W^{T}_{T'} $, for $ T' \subseteq T \subseteq S $ is the set of minimal length coset representatives for the parabolic subgroup $ W_{T'} $ in $ W_{T} $.
\end{theorem}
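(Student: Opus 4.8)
I would treat the statement as essentially combinatorial: the regular $G$-CW complex $X$ and its being a $G$-equivariant deformation retract of $Y$ have already been recalled, so what remains is an explicit labelling of the strata of $\Phi_\infty$, the codimension formula, the description of the $G$-action, and the boundary map, following De Concini and Salvetti \cite{Salvetti:00}. The first step is the bijection, which I would prove by induction on the number of nonempty entries of a stratum $F = F_1 \times F_2 \times \cdots$. By definition $F_1$ is a relatively open face of the Coxeter arrangement $\mathcal{A}_G$, and the standard combinatorics of the Coxeter complex identifies such faces with pairs consisting of a type $\Gamma_1 \subseteq S$ and a coset $\gamma_1 W_{\Gamma_1}$; the half-space conditions $H_s^\pm$ in the statement record precisely this data. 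After translating by $\gamma_1^{-1}$, the subarrangement $\mathcal{A}_{F_1}$ of hyperplanes through $F_1$ becomes the parabolic arrangement $\mathcal{A}_{W_{\Gamma_1}}$, so a stratum of $\mathcal{A}_{F_1}$ is a $\gamma_1$-translate of a face of $\mathcal{A}_{W_{\Gamma_1}}$; applying the inductive hypothesis to the Coxeter system $(W_{\Gamma_1},\Gamma_1)$ produces a flag $\Gamma_2 \supseteq \Gamma_3 \supseteq \cdots$ inside $\Gamma_1$ together with an element of $W_{\Gamma_1}$. Composing the successive representatives yields the required pair $(\underline{\Gamma},\gamma)$; injectivity and surjectivity fall out by unwinding the same recursion, and one checks directly that the coordinatewise conditions in the statement cut out a nonempty stratum for every admissible pair.

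The codimension and the $G$-action follow quickly. Since $F = F_1 \times F_2 \times \cdots \subseteq (\mathbb{R}^n)^\infty$ has all but finitely many factors equal to $\mathbb{R}^n$, its codimension is $\sum_k \codim_{\mathbb{R}^n} F_k$; at level $k$ the stratum $F_k$ is cut out by the vanishing of the roots $\alpha_s$ with $s \in \Gamma_k$, which are simple roots of the Coxeter system $(W_{\Gamma_{k-1}},\Gamma_{k-1})$ and hence linearly independent, so $\codim F_k = |\Gamma_k|$ and $\codim F = \sum_r |\Gamma_r|$; transversality of $X$ with $\Phi_\infty$ transports this to the dimension of the dual cell. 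The group $G$ acts on $Y$ through the first tensor factor, so the defining conditions show at once that $g \cdot F(\underline{\Gamma},\gamma) = F(\underline{\Gamma},g\gamma)$, and since $X$ was built equivariantly with $c(\underline{\Gamma},\gamma)$ dual to $F(\underline{\Gamma},\gamma)$, the induced action on cells is $(\underline{\Gamma},\gamma)\mapsto(\underline{\Gamma},g\gamma)$.

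For the boundary homomorphism I would use that $X$ is regular and dual to $\Phi_\infty$: the codimension-one faces of $c(\underline{\Gamma},\gamma)$ are the cells dual to the codimension-$(d-1)$ strata $F'$ with $F \subseteq \overline{F'}$, each occurring with incidence number $\pm 1$ by transversality. I would then localize, using that the structure of $\Phi_\infty$ near $F$ is governed, level by level, by the parabolic arrangements $\mathcal{A}_{W_{\Gamma_{k-1}}}$, so that the incident strata are obtained by relaxing a single defining condition of $F$. Concretely, let $i$ be the first level at which $F'$ differs from $F$; a face of type $\Gamma_i$ can only lie in the closure of a face of type $\Gamma_i \setminus \{\tau\}$ for some $\tau \in \Gamma_i$, and passing to that larger face enlarges the arrangement governing the deeper levels from $\mathcal{A}_{W_{\Gamma_i}}$ to $\mathcal{A}_{W_{\Gamma_i\setminus\{\tau\}}}$; re-expressing levels $>i$ inside the larger arrangement amounts to conjugating $\Gamma_{i+1},\Gamma_{i+2},\dots$ by a minimal-length representative $\beta$ of a coset of $W_{\Gamma_i\setminus\{\tau\}}$ in $W_{\Gamma_i}$, which is legitimate exactly when $\beta^{-1}\Gamma_{i+1}\beta \subseteq \Gamma_i\setminus\{\tau\}$, and which replaces $\gamma$ by $\gamma\beta$. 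A short count using the local structure of $\Phi_\infty$ near $F$ shows these exhaust the incident strata and that each occurs once, which gives the asserted list of terms $c(\underline{\Gamma}',\gamma\beta)$.

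The one genuinely delicate point, and the place where I expect most of the work to lie, is the sign $(-1)^{\alpha(\underline{\Gamma},i,\tau,\beta)}$. I would fix orientations of all cells once and for all by choosing a linear order on $S$, hence on the simple roots at every parabolic level, and using the product orientation induced by the factors of $(\mathbb{R}^n)^\infty$; then $\alpha$ is read off by tracking the permutation of this ordered basis produced by deleting $\tau$ at level $i$ and conjugating the deeper levels by $\beta$, together with a contribution of $\ell(\beta)$ counting the hyperplanes of $\mathcal{A}_{W_{\Gamma_i}}$ crossed in the re-coordinatisation. This is a finite but fiddly orientation computation; it is also the only part that is unnecessary for the present paper, since over $\mathbb{Z}_2$ the signs disappear and only the enumeration of boundary terms from the previous paragraph is needed.
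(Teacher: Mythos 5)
The paper offers no proof of this statement: it is quoted verbatim from \cite{Salvetti:00}, Section 3, so there is nothing internal to compare against. Your sketch follows the same route as the original argument --- the iterated face/parabolic-coset correspondence for the bijection and the codimension count, equivariance of the construction for the $G$-action, and the identification of the star of a face of the Coxeter complex with the Coxeter complex of the corresponding parabolic subgroup (whence the minimal coset representatives $\beta$ and the condition $\beta^{-1}\Gamma_{i+1}\beta\subseteq\Gamma_i\setminus\{\tau\}$) for the enumeration of boundary terms --- and you correctly note that the sign $\alpha$, which the statement itself leaves unspecified, is immaterial for the mod $2$ computations of this paper.
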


We remark that in the case of Coxeter groups of Type $ B $ or $ D $, minimal coset representatives are explicitly known. For a complete description, we refer, for instance, to \cite{Papi}.

\subsection{Alexander duality and Fox-Neuwirth complexes}

We recall an alternative description of $ C_*^G $. This description has been exposed in \cite{Sinha:13}, where it is investigated in much detail in the $ A_n $ case. As observed in that paper, for every $ 1 \leq m \leq \infty $, the strata of $ \Phi_m $
are the interiors of cells in a $ G $-equivariant cell structure on the Alexandroff compactification $ \left( Y^{(m)} \right)^+ = Y^{(m)} \cup \left\{ \infty \right\} $.

Denote its augmented ($ G $-equivariant) cellular chain complex with the symbol $ \widetilde{FN}^m_G $. Its cells are the closures $ e \left( F \right) $ of strata $ F \in \Phi_m $ (together with the base point $ \left\{ * \right\} $) and, from the construction of $ X^{(m)} $ as a CW-complex dual to $ \Phi_m $ (details in \cite{Salvetti:00}), $ e \left( F \right) $ is contained in the boundary of $ e \left( F' \right) $ if and only if the cell of $ X $ dual to $ F $ contains the cell dual to $ F' $ in its boundary. This fact implies that the complex $ \widetilde{FN}^m_G $ is, up to a shift of degrees, the dual of $ C^{CW}_* \left( X^{(m)} \right) $, at least modulo $ 2 $ (in general, there are differences in some signs due to orientations).
Explicitly, the closure in $ \widetilde{FN}^m_G $ of a stratum of dimension $ d $ correspond to the dual of a chain in $ C^{CW}_* \left( X^{(m)} \right) $ of dimension $ nm-d $. In the remaining section of this paper, we will always implicitly assume this shift, and we will grade $ \widetilde{FN}^m_G $ to match the corresponding dimension of the dual cell.

In particular, $ \widetilde{FN}^m_G $ calculates the cohomology of $ Y^{(m)} $ and is therefore acyclic up to dimension $ nm-2 $. Alternatively, we can see this, as explained in \cite{Sinha:13}, by observing that the Atiyah duality theorem implies that the Spanier dual of $ Y^{(m)} $ is $ \left( Y^{(m)} \right)^+ $.

Passing to the limit for $ m \rightarrow \infty $, we obtain an acyclic $ \Ftwo $-complex $ \widetilde{FN}_G \otimes \Ftwo $, dual to $ C_*^{CW} \left( X \right) \otimes \Ftwo $, for which a basis $ \left\{ e \left( S \right) \right\}_{S \in \Phi_{\infty}} $ is parametrized by strata in $ \Phi_{\infty} $. The degree of $ e \left( S \right) $ as a cochain of $ X $ is equal to the codimension of $ F $. This is an equivariant cochain model for $ E \left( G \right) $. In particular, the quotient $ FN_G \otimes \Ftwo = \widetilde{FN}_G/G \otimes \Ftwo $ calculates $ \tilde{H}^* \left( G; \Ftwo \right) $.
In the following, when we need to stress the Coxeter group $ G $ involved, we will use the heavier notation $ \Phi_{\infty,G} $ instead of $ \Phi_{\infty} $.

This description of the cochain complex $ FN_G $ calculating the cohomology of $ G $ fits particularly well with a chain-level interpretation of duality via intersection theory that we will occasionally use in proofs and that we briefly recall here.
Given a manifold $ X $ and an immersion $ i \colon W \to X $ of a codimension $ d $ manifold in $ X $, we say that a smooth singular chain in $ X $ is transverse to $ i $ if, for every simplex $ \sigma \colon \Delta^k \to X $ of the chain, $ \sigma $ is transverse on every face of $ \Delta^k $ and subface, in the sense of manifolds with corners. It can be proved that the subcomplex consisting of chains that are transverse to $ i $ is chain equivalent to the full one.
To every $ d $-dimensional singular simplex $ \sigma \colon \Delta^d \rightarrow X $ transverse to $ i $ we can associate the element $ \tau_W \left( \sigma \right) \in \Ftwo $ given by the mod $ 2 $ cardinality of $ \sigma^{-1} \left( W \right) $.
This procedure defines a cochain dual in the complex dual to the chain complex of singular chains transverse to $ i $. If $ i $ is a proper embedding, $ \tau_W $ is a cocycle and defines a cohomology class.
The most important constructions in cohomology can be understood geometrically using this model. In particular, if $ f \colon Y \rightarrow X $ is transverse to $ i $, then $ f^\# ( \tau_W ) = \tau_{f^{-1}(W)} $. The reader will find a complete reference of this approach to cohomology in \cite{Friedmann-Medina-Sinha}.

In our particular context, each stratum $ S \in \Phi_{\infty} $ defines such a cochain $ \tau_S $. We understand the coboundary of $ \tau_S $ as $ \tau_{\partial(S)} $, so we can identify $ \widetilde{FN}^*_G $, at least modulo $ 2 $, with the cochain complex spanned by $ \tau_S $ for strata $ S \in \Phi_\infty $. Suppose $ W \subseteq Y^{(\infty)}_G $ is a proper submanifold of codimension $ d $ obtained as a union of strata. In that case, its associated cochain $ \tau_W $ is the sum of $ \tau_S $ for strata $ S \subseteq W $ of minimal codimension, and $ \delta(\tau_W) = 0 $. If, in addition, the action of $ G $ preserves $ W $, then, passing to the quotient, its image $ \overline{W} \subseteq \frac{Y^{(\infty)}_G}{G} $ defines a Thom class represented in $ FN^*_G $ by the sum of strata contained in $ \overline{W} $. This construction is made precise in \cite[Definition 4.6]{Sinha:12}.

\subsection{The special case of Coxeter groups of Type B}

We conclude this section by further investigating the cases of our interest $ G = \WB{n} $ and, in the following subsection, $ G = \WD{n} $.
The strata of $ \Phi_m $ for the symmetric group $ \Sigma_n $ can be described in terms of leveled trees, as shown in \cite{Sinha:13} using ideas dating back to Vassiliev \cite{Vassiliev:92}. A straightforward adaptation of these ideas shows that, in the case of the Coxeter groups of Type $ B_n $, we can describe them in terms of symmetric leveled trees. This interpretation encodes geometrically and combinatorially the structure of $ \WB{n} $ as a wreath product of $ \Sigma_n $ with a cyclic group of order $ 2 $. Below we provide the precise definitions.

First, we observe that, since $ \WB{n} $ is generated by a set $ S = \left\{ s_0, \dots, s_{n-1}\right\} $ of $ n $ fundamental reflections as described in Figure \ref{fig:diagrammi Coxeter}, the Fox-Neuwirth complex $ \widetilde{FN}^*_{\WB{n}} $ has a $ \mathbb{Z}\left[ \WB{n} \right] $-basis $ \{e(\underline{a}\} $ indexed by $ n $-tuples of non-negative integer numbers $ \left( a_0, \dots, a_{n-1} \right) $.

The reflection arrangement associated with $ \WB{n} $ can be described as $ \mathcal{A}_{B_n} = \left\{ H_{i,j}^\pm \right\}_{1 \leq i < j \leq n } \cup \left\{ H_i^0 \right\} $ where:
\begin{align*}
	H_{i,j}^\pm &= \left\{ \underline{x} \in \mathbb{R}^n: x_i = \pm x_j \right\} \\
	H_i^0 &= \left\{ \underline{x} \in \mathbb{R}^n: x_i = 0 \right\}
\end{align*}
Moreover, $ s_0 $ can be identified with the reflection with respect to $ H_1^0 $ and, for every $ i > 0 $, $ s_i $ with the reflection with respect to $ H_{i,i+1}^+ $.
Thus the basis element corresponding to $ \underline{a} = \left( a_0, \dots, a_{n-1} \right) $ is described as the stratum
\begin{align*}
	e \left( \underline{a} \right) = \Big\{ &\left( \underline{x_1}, \dots, \underline{x}_n \right) \in \left( \mathbb{R}^\infty \right)^n: \forall 1 \leq i \leq n-1, \forall 1 \leq j \leq a_i: \left( x_i \right)_j = \left( x_{i+1} \right)_j,\\ &\left( x_i \right)_{a_i+1} < \left( x_{i+1} \right)_{a_i+1}, \forall 1 \leq k \leq a_0: \left( x_1 \right)_k = 0, \left( x_1 \right)_{a_0+1} > 0 \Big\}.
\end{align*}

Passing to the quotient by the action of $ \WB{n} $, we see that $ FN^*_{\WB{n}} $ has a $ \mathbb{Z} $-basis constituted by $ \left[ a_0: \dots: a_{n-1} \right] = \left[ e \left( a_0, \dots, a_{n-1} \right) \right] $.

The differential on $ FN^*_{\WB{n}} $ is complicated, but it is combinatorially accessible via a description of its basis in terms of trees.
\begin{definition}\label{def:symmetric trees}
A \emph{signed depth-ordering} is a sequence of labeled inequalities of the form $ \Gamma = (0 <_{a_0} i_1 <_{a_1} \dots <_{a_{n-1}} i_n) $, where $ i_k \in \{-n,\dots,-1,1,\dots,n\} $ for all $ 1 \leq k \leq n $, and these indices have pairwise different absolute values. By convention, we let $ i_0 = 0 $.

A \emph{planar level tree} is a planarly embedded tree $ T $ satisfying the following conditions:
\begin{itemize}
\item it has a root vertex embedded in $ (0,0) $ and all the other vertices having their second coordinate (the ``height'') equal to a positive integer
\item two edges connected by an edge have heights whose difference is 1
\item the height along the unique minimal path from the root to every leaf is always increasing
\end{itemize}

A \emph{planar level tree with labels in $ I $} is a couple $ (T,\lambda) $ as follows. $ T $ is a planar level tree, and $ \lambda $ is a bijective labeling of the leaves of $ T $ with elements of $ I $.

A \emph{symmetric planar level tree} is a planar level tree invariant under the reflection $ r $ along the $ \bm{y} $ axis and having an odd number of leaves.

An \emph{antisymmetric planar level tree with labels in $ \{-n,\dots,n\} $} is a labeled planar level tree $ (T,\lambda) $ with labels in $ \{-n,\dots, n\} $ such that $ T $ is symmetric, and two leaves that correspond to each other under the application of $ r $ have labels opposite to each other.

The antisymmetric planar level tree associated with a depth ordering $ \Gamma $ is the antisymmetric planar level tree $ T_{\Gamma} $, unique up to isotopy, defined by the following properties:
\begin{itemize}
\item the labels of the leaves, from left to right, are $ -i_n,\dots,-i_1,0,i_1,\dots,i_n $
\item the leaves labeled $ i_{k-1},i_k $, for $ 1 \leq k \leq n $, are separated by a vertex of height $ a_k $ but not by vertices of height less than $ a_k $
\end{itemize}

Let $ k \geq 0 $. The \emph{$ k $-symmetrization} $ S_k(T) $ (respectively $ \tilde{S}_k(T) $) of a planar level tree $ T $ (with labels in $ \{1,\dots,n\} $) is a symmetric planar level tree $ S $ (respectively, antisymmetric planar level tree with labels in $ \{-n,\dots,n\} $) obtained by the following procedure. Glue $ T $ from the right to a vertical linear planar level tree lying into the $ \bm{y} $ axis up to height $ k $. Then, add the mirror image of such tree under $ r $ to obtain a symmetric planar level tree (choosing the unique antisymmetric labeling that extends the labeling of $ T $ in the labeled case).
\end{definition}

There is a free action of $ \WB{n} $ on the set antisymmetric planar level trees with labels in $ \{-n,\dots,n\} $ given by interpreting elements of $ \WB{n} $ as signed permutations and permuting labels accordingly.
We always assume that the edges of a level tree are oriented so that there is a unique oriented path from the root vertex to each leaf.

Similarly to the symmetric group case, we have the following immediate proposition.
\begin{proposition}\label{prop:FN basis}
The function $ \Gamma \mapsto T_{\Gamma} $ is a bijection between the set of signed depth-orderings with $ n $ labels and the set of isotopy classes of antisymmetric planar level trees with labels in $ \{-n,\dots,n\} $.
Furthermore, to $ \Gamma =(0 <_{a_0} i_1 <_{a_1} \dots <_{a_{n-1}} i_n) $ is associated a stratum $ \sigma e(\underline{a}) \in \Phi_{\infty,\WB{n}}$, where $ \sigma(k) = i_k $, $ \underline{a} = (a_0,\dots,a_{n-1}) $, and this provides a $ \WB{n} $-equivariant additive basis of $ \widetilde{FN}^*_{\WB{n}} $ labeled by signed depth-orderings or, equivalently, by isotopy classes of antisymmetric planar level trees with labels in $ \{-n,\dots,n\} $.
$ \WB{n} $ acts on this basis by permuting labels. Consequently, an additive basis for $ FN^*_{\WB{n}} $ is given by symmetric planar level trees with $ 2n+1 $ leaves.
\end{proposition}

An example of an antisymmetric planar level tree $ (T,\lambda) $, with labels in $ [-3,3] $, is given in Figure \ref{fig:antisymmetric tree} below. The signed depth-ordering associated with that is $ \Gamma = (0 <_1 -2 <_0 -3 <_1 1) $ and the corresponding stratum is $ \sigma e([1,0,1]) $, where $ \sigma(1) = -2 $, $ \sigma(2) = -3 $ and $ \sigma(3) = 1 $.

\begin{figure}[h]
\caption{An example of antisymmetric planar level tree with labels in $ [-3,3] $.}
\label{fig:antisymmetric tree}
\begin{center}
\begin{tikzpicture}[line cap=round,line join=round,>=triangle 45,x=1cm,y=1cm]
\begin{axis}[
x=1cm,y=1cm,
axis lines=middle,
grid style=dashed,
ymajorgrids=true,
xmajorgrids=false,
xmin=-3.5,
xmax=3.5,
ymin=-0.5,
ymax=4.9,
xtick=\empty,
yticklabels={,,}
]
\clip(-5.826666666666665,-1.58) rectangle (7.213333333333337,6.1);
\draw [line width=1pt] (0,0)-- (0,1);
\draw [line width=1pt] (-2,1)-- (0,0);
\draw [line width=1pt] (0,0)-- (2,1);
\draw [line width=1pt] (2,1)-- (2,2);
\draw [line width=1pt] (0,1)-- (0,2);
\draw [line width=1pt] (0,2)-- (0,3);
\draw [line width=1pt] (0,3)-- (0,4);
\draw [line width=1pt] (-2,1)-- (-2,2);
\draw [line width=1pt] (0,1)-- (-1,2);
\draw [line width=1pt] (-1,2)-- (-1,3);
\draw [line width=1pt] (-1,3)-- (-1,4);
\draw [line width=1pt] (0,1)-- (1,2);
\draw [line width=1pt] (1,2)-- (1,3);
\draw [line width=1pt] (1,3)-- (1,4);
\draw [line width=1pt] (2,2)-- (2,3);
\draw [line width=1pt] (2,3)-- (2,4);
\draw [line width=1pt] (-2,2)-- (-2,3);
\draw [line width=1pt] (-2,3)-- (-2,4);
\draw [line width=1pt] (-2,1)-- (-3,2);
\draw [line width=1pt] (-3,2)-- (-3,3);
\draw [line width=1pt] (-3,3)-- (-3,4);
\draw [line width=1pt] (2,1)-- (3,2);
\draw [line width=1pt] (3,2)-- (3,3);
\draw [line width=1pt] (3,3)-- (3,4);
\draw (-3.3333333333333317,4.646666666666671) node[anchor=north west] {$-1$};
\draw (-3.3333333333333317,4.646666666666671) node[anchor=north west] {$-1$};
\draw (2.8266666666666693,4.673333333333338) node[anchor=north west] {$1$};
\draw (-0.1733333333333311,4.673333333333338) node[anchor=north west] {$0$};
\draw (1.6266666666666691,4.646666666666671) node[anchor=north west] {$-3$};
\draw (-2.16,4.673333333333338) node[anchor=north west] {$3$};
\draw (-1.1733333333333313,4.673333333333338) node[anchor=north west] {$2$};
\draw (0.626666666666669,4.646666666666671) node[anchor=north west] {$-2$};
\begin{scriptsize}
\draw [fill=black] (0,0) circle (1.5pt);
\draw [fill=black] (0,1) circle (1.5pt);
\draw [fill=black] (-2,1) circle (1.5pt);
\draw [fill=black] (2,1) circle (1.5pt);
\end{scriptsize}
\end{axis}
\end{tikzpicture}
\end{center}
\end{figure}

We observe that we can use Proposition \ref{prop:FN basis} to reinterpret operations on (symmetric) level trees in terms of $ n $-tuples or (signed) depth-orderings. For instance, the $ k $-symmetrization of trees provides a linear map $ S_k \colon FN^*_{\Sigma_n} \to FN^*_{\WB{n}} $ that we can interpret as $ [a_1:\dots:a_{n-1}] \mapsto [k:a_1:\dots:a_{n-1}] $.

We can now describe the differential in terms of this basis.
\begin{definition} {\rm (from \cite{Sinha:13})} $ \, \, \, $\label{def:vertex permutation}
Let $ (T,\lambda) $ be a planar level tree. Let $ v $ be an internal vertex. Let $ E(v) $ be the set of edges whose source vertex is $ v $. The planar embedding of $ T $ induces an order on $ E(v) $, defined from left to right. A vertex permutation of $ (T,\lambda) $ at $ v $ is another planar level tree that is isomorphic to $ (T,\lambda) $ as a labeled tree but with a different planar embedding that differs from the original one only by the ordering on $ E(v) $.

Given a planar level tree $ (T,\lambda) $ and an internal vertex $ v $, let $ (e,f) $, $ e<f $, be a couple of adjacent edges in $ E(v) $. Let $ u_e $, $ u_f $ be the targets of $ e $ and $ f $, respectively. Let $ \sigma $ be a shuffle of the two sets $ E(u_e) $ and $ E(u_f) $. Let $ d_{e,f,\sigma}(T,\lambda) $ be the planar level tree obtained by gluing together $ e $ and $ f $, with common target $ \overline{u} $, and then applying the vertex permutation that permutes the edges in $ E(\overline{u}) $ by $ \sigma $.
\end{definition}

Recall that, in the $ A_n $ case, the differential in $ \widetilde{FN}^* $ of the basis element corresponding to a planar level tree with labels $ (T,\lambda) $ is given by the sum over $ (v,\sigma) $ as above of $ d_{v,\sigma}(T,\lambda) $. Similarly, we have the following proposition, which essentially states that a symmetrization of the previous construction gives the differential in the $ B_n $ case.
\begin{proposition}\label{prop:tree differential}
With the correspondence provided by Proposition \ref{prop:FN basis}, the differential of the cochain complex $ \widetilde{FN}^*_{\WB{n}} \otimes \Ftwo $ is given in terms of antisymmetric level trees with labels by $ d(T,\lambda) = \sum_{(e,f,\sigma)}\sum_{(e',f',\tau)} d_{e,f,\sigma}d_{e',f',\tau}(T,\lambda) $, where the sum is over sextuples $ (e,f,\sigma,e',f',\tau) $ such that $ d_{e,f,\sigma}d_{e',f',\tau}(T,\lambda) $ is again an antisymmetric planar level tree.
Equivalently, $ d(T,\lambda) $ is obtained by performing an operation $ d_{e,f,\sigma} $ starting from a couple of adjacent vertices $ (e,f) $ lying into the positive half-plane $ \{(x,y): x \geq 0\} $, and then perform the mirror operation on the mirror pair of adjacent edges $ (e',f') $ in the negative half-plane. If we call such symmetric operation $ d^S_{e,f,\sigma} $, we have that
\[
d(T,\lambda) = \sum_{(e,f)} \sum_{\sigma} d^S_{e,f,\sigma}(T,\lambda),
\]
where the sum is over couples $ (e,f) $ of adjacent edges in the positive half-plane and shuffles $ \sigma $ of the two sets of vertices incident to the target of $ e $ and $ f $, respectively.
\end{proposition}

We can equivalently express this construction using planar level trees $ T $ with $ n+1 $ leaves labeled by $ (-n,\dots,-1,0,1\dots,n) $, with labels having pairwise different absolute values, such that the leftmost leaf has label $ 0 $. We recover the corresponding antisymmetric level tree as follows. We choose a representative of the isotopy class of $ (T,\lambda) $ in which the entire oriented path from the root vertex to the label $ 0 $ lies on the $ \bm{y} $ axis. Then we merge $ T $ with its mirror image along $ \bm{y} $ with opposite labels. In this case, the differential is given by contracting a couple of adjacent edges and shuffling. When the result is a tree whose leftmost leaf is not labeled by $ 0 $, we replace the part of the tree belonging to the negative half-plane $ \{(x,y): x \leq 0\} $ with its mirror image in the positive half-plane, with opposite labels, and shuffle the corresponding edges in all possible ways.

\subsection{The special case of Coxeter groups of Type D}

We now turn to the description of the complex $ FN_{\WD{n}}^* $.
Once again, since this Coxeter group has $ n $ fundamental reflections $ t_0, \dots, t_{n-1} $, a $ \mathbb{Z} \left[ \WD{n} \right] $-basis for $ \widetilde{FN}_{\WD{n}}^* $ is indexed by $ n $-tuples of non-negative integers $ \underline{a} = \left( a_0, \dots, a_{n-1} \right) $.

The inclusion $ j_n \colon \WD{n} \rightarrow \WB{n} $ identifies the reflection arrangement associated with $ \WD{n} $ with the sub-arrangement of $ \mathcal{A}_{\WB{n}} $ composed by the hyperplanes $ H_{i,j}^\pm $, for $ 1 \leq i < j \leq n $. $ t_i = s_i $ for $ 1 \leq i \leq n $, while $ t_0 $ is the reflection along $ H_{1,2}^- $.
Thus the basis element of $ \widetilde{FN}_{\WD{n}}^* $ corresponding to $ \underline{a} $ is described as the stratum
\begin{align*}
	&e \left( \underline{a} \right) = \Big\{ \left( \underline{x}_1, \dots, \underline{x}_n \right) \in \left( \mathbb{R}^\infty \right)^n: \forall 1 \leq i \leq {n-1}, 1 \leq j \leq a_i: \left( x_i \right)_j = \left( x_{i+1} \right)_j,\\
	&\left( x_i \right)_{a_i+1} < \left( x_{i+1} \right)_{a_i+1}, \forall 1 \leq k \leq a_0: \left( x_2 \right)_k = -\left( x_1 \right)_k, \left( x_2 \right)_{a_0+1} > - \left( x_1 \right)_{a_0+1} \Big\}.
\end{align*}

Passing to the quotient by the action of $ \WD{n} $, we see that $ FN^*_{\WD{n}} $ has a $ \mathbb{Z} $-basis constituted by $ \left[ a_0: \dots a_{n-1} \right] = \left[ e \left( a_0, \dots, a_{n-1} \right) \right] $.

The complex $ \frac{ \widetilde{FN}_{\WB{n}}^* }{ j_n \left( \WD{n} \right) } $ also calculates the cohomology of $ \WD{n} $. Therefore, there is a cochain equivalence $ \varphi \colon FN^*_{\WD{n}} \rightarrow \frac{ \widetilde{FN}_{\WB{n}}^* }{ j_n \left( \WD{n} \right) } $ between the two resolutions. We conclude this section by computing an explicit formula for $ \varphi $ that we will use to perform cochain-level computation in the following sections.
For instance, we will prove the relations for coproduct of transfer products of Hopf ring generators by mapping them to $ \frac{ \widetilde{FN}_{\WB{n}}^* }{ j_n \left( \WD{n} \right) } $, where their expressions are closer to the $ B_n $ case. As a notational convention, we denote this cochain complex with $ {FN'}_{\WD{n}}^{*} $.

First, we observe that $ \left[ \WB{n}: j_n \left( \WD{n} \right) \right] = 2 $, thus $ j_n \left( \WD{n} \right) $ is a normal subgroup of $ \WB{n} $. The two cosets of $ j_n \left( \WD{n} \right) $ in $ \WB{n} $ are represented by the identity and $ s_0 $, the only fundamental reflection of $ \WB{n} $ that is not contained in $ j_n \left( \WD{n} \right) $.
Thus, given a $ \mathbb{Z} \left[ \WB{n} \right] $-basis $ \mathcal{B} $ for $ \widetilde{FN}^*_{\WB{n}} $, the classes of $ x $ and $ s_0.x $, where $ x \in \mathcal{B} $, provide a $ \mathbb{Z} $-basis for $ {FN'}^{*}_{\WD{n}} $.
Let $ \mathcal{B} $ be the basis defined above in terms of $ n $-tuples or equivalently of symmetric planar level trees, parametrized by $ n $-tuples of non-negative integers $ \underline{a} = \left( a_0, \dots, a_{n-1} \right) $. We denote by $ \left[a_0: \dots: a_{n-1} \right]^+ $ and $ \left[ a_0: \dots: a_{n-1} \right]^- $ the cochains in $ {FN'}^{*}_{\WD{n}} $ arising from the basis element corresponding to $ \underline{a} $ and $ s_0 \underline{a} $.

The complex $ {FN'}^*_{\WD{n}} $ also has a description in terms of trees.
\begin{definition}\label{def:tree orientation}
Let $ T $ be a symmetric planar level tree with $ 2n+1 $ leaves. An \emph{orientation} of $ T $ is the choice of an element of $ \frac{L}{\sim} $, where
$ L $ is the set of antisymmetric labelings of $ T $ with labels in $ \{-n,\dots,n\} $, and $ \sim $ is the equivalence relation defined by $ \lambda \sim \lambda' \Leftrightarrow \exists \sigma \in \Alt(2n+1): \lambda' = \lambda \sigma $.
An \emph{oriented symmetric planar level tree} is a couple $ (T,\mathcal{O}) $, where $ T $ is a symmetric planar level tree and $ \mathcal{O} $ is an orientation of $ T $.
\end{definition}
Note that if two antisymmetric labelings of a symmetric planar level tree $ T $ differ by a permutation $ \sigma \in \Sigma_{\{-n,\dots,n\}} $, then $ \sigma $ must fix $ 0 $ and act as a signed permutation on $ \{-n,\dots,-1,1,\dots,n\} $. Hence, an orientation of $ T $ is the choice of an antisymmetric labeling up to the action of $ j_n(\WD{n}) $. Since the index $ [\WB{n}:j_n(\WD{n})] $ is $ 2 $, there are two possible orientations for a symmetric planar level tree $ T $, determined by the parity of the number of negative labels of leaves in the positive half-plane. In particular, we can identify an orientation $ \mathcal{O} $ with a sign $ + $ or $ - $, corresponding to labelings with an even or odd number of positively labeled leaves in the positive half-plane, respectively.

Moreover, from the fact that $ \Alt(2n+1) $ is normal in $ \Sigma_{2n+1} $, it follows that if $ T $ is a symmetric planar level tree, $ \lambda $ is a labeling of $ T $ and $ \sigma(T) $ is a vertex permutation of $ T $ at a vertex $ v $, then the orientation of the permuted labeled tree $ \sigma(T,\lambda) $ only depends on the orientation determined by $ \lambda $. Therefore, the rule for the differential in $ \widetilde{FN}^*_{\WB{n}} $ induces a formula for the differential in $ {FN'}^*_{\WD{n}} $ in terms of trees. Hence, we have the following description.
\begin{proposition}\label{prop:FN D}
$ {FN'}^*_{\WD{n}} $ can be described as the cochain complex having additive basis indexed by oriented symmetric planar level trees with $ 2n+1 $ leaves, with differential induced by the symmetric tree differential in $ \widetilde{FN}^*_{\WB{n}} $ by keeping track of orientations.
\end{proposition}
The reader is encouraged to compare this description with the notion of ``charged'' configuration used in \cite{Sinha:17}.

\section{Geometry and combinatorics: chain level formulas} \label{sec:geometric description}

We devote this section to developing some formulas that will allow us to perform calculations at the (co)chain level. These computations will be needed at points, especially when retrieving relations. We first compute some connecting maps between the Fox-Neuwirth complexes of Coxeter groups of Type $ A $, $ B $, and $ D $. Then, we provide cochain representatives of the structural maps of our almost-Hopf ring structures.

\subsection{The connecting homomorphisms} \label{subsec:connecting}

As $ \widetilde{FN}^*_{\WD{n}} $ and $ \widetilde{FN'}^*_{\WD{n}} $ are both free resolutions of $ \mathbb{Z} $ as a $ \mathbb{Z}[\WD{n}] $-module, they need to be $ \WD{n} $-equivariantly cochain equivalent. 
We begin by providing a formula for an explicit equivalence $ \varphi $ relating the two models $ FN^*_{\WD{n}} $ and $ {FN'}^*_{\WD{n}} $.
\begin{lemma} \label{lem:CHE}
There is a cochain homotopy equivalence $ \varphi^* \colon FN^*_{\WD{n}} \rightarrow {FN'}^*_{\WD{n}} $ defined by the formula:
\[
\varphi^* \left[ a_0: \dots:a_{n-1} \right] = \left\{ \begin{array}{l} \left[ a_0: a_1: a_2: \dots: a_{n-1} \right]^+ \quad\quad\mbox{if } a_0 < a_1 \\
\left[a_0: a_1: a_2: \dots: a_{n-1} \right]^+ + \left[ a_1: a_0: a_2: \dots: a_{n-1} \right]^- \\
\quad\quad\mbox{if } a_0 = a_1 \\
\left[ a_1: a_0: a_2: \dots: a_{n-1} \right]^- \quad\quad\mbox{if } a_0 > a_1 \\ \end{array} \right.
\]
induced by the inclusion $ Y^{(\infty)}_{\WB{n}} \subseteq Y^{(\infty)}_{\WD{n}} $ and yielding the identity in cohomology.
\end{lemma}
\begin{proof}
We observe that the inclusion $ Y^{(\infty)}_{\WB{n}} \subseteq Y^{(\infty)}_{\WD{n}} $ is a $ \WD{n} $-equivariant homotopy equivalence. Moreover, the inverse image in $ Y^{(\infty)}_{\WB{n}} $ of each stratum of $ \Phi_{\infty,\WD{n}} $ is a union of strata in $ \Phi_{\infty,\WB{n}} $. Thus, passing to quotients, this yields a map $ \varphi \colon \frac{ Y^{(\infty)}_{\WB{n}} }{ \WD{n} } \to \frac{Y^{(\infty)}_{\WD{n}}}{ \WD{n} } $ that induces a well-defined map between the cochain complexes $ \varphi^* \colon FN^*_{\WD{n}} \to {FN'}^*_{\WD{n}} $.

We now check that $ \varphi^* $ satisfies the given formulas. It is sufficient to consider the finite approximations $ \varphi^{(d)} \colon \frac{ Y^{(d)}_{\WB{n}} }{ \WD{n} } \to \frac{Y^{(d)}_{\WD{n}}}{ \WD{n} } $. For any given stratum $ S = e \left( a_0, \dots, a_{n-1} \right) $ for $ \WD{n} $, since $ \varphi^{(d)} $, being a $ 0 $-codimensional immersion, is transverse to $ S $, we have that $ ( \varphi^{(d)} )^* \left( \tau_S \right) = \tau_{(\varphi^{(d)})^{-1} ( S )} $.
We now distinguish three cases:
\begin{itemize}
	\item if $ a_0 < a_1 $, then $ ( \varphi^{(d)} )^{-1} ( S ) = e \left( a_0, a_1, a_2, \dots, a_{n-1} \right) $
	\item if $ a_0 > a_1 $, then $ ( \varphi^{(d)} )^{-1} ( S ) = s_0.e \left( a_1, a_0, a_2, \dots, a_{n-1} \right) $
	\item if $ a_0 = a_1 $, then $ ( \varphi^{(d)} )^{-1} ( S ) $ is the union of the cells $ e \left( a_0, \dots, a_{n-1} \right) $, $ s_0.e \left( a_0, \dots, a_{n-1} \right) $ and strata of bigger codimension.
\end{itemize}
This implies that $ \varphi^* $ has the desired form.
\end{proof}

We also consider the following group homomorphisms:
\begin{itemize}
\item the standard inclusion $ j \colon \Sigma_n \to \WB{n} $ already considered in the previous section
\item the involution $ c_{s_0} \colon \WD{n} \to \WD{n} $ given by conjugation by $ s_0 $, the unique generating reflection of $ \WB{n} $ that does not belong to $ \WD{n} $, that fixes $ t_i $ for $ 2 \leq i < n $ and switches $ t_0 $ and $ t_1 $.
\item the two inclusions $ i_+, i_- \colon \Sigma_n \to \WD{n} $ given, in terms of the Coxeter generators $ t_0, \dots, t_n $ of Figure \ref{fig:diagrammi Coxeter}, by $ i_{\pm} \left( i, i+1 \right) = t_i $ if $ i \geq 2 $, $ i_{+} \left( 1,2 \right) = t_1 $ and $ i_{-} \left( 1,2 \right) = t_0 $
\end{itemize}
We denote by $ \iota \colon H^*(\WD{n}; \Ftwo) \to H^*(\WD{n}; \Ftwo) $ the morphism induced by $ c_{s_0} $ on cohomology.

We note that the two following properties hold by construction:
\begin{itemize}
\item $ \pi j = \id_{\Sigma_n} $
\item $ \pi \circ i_{+} = \pi \circ i_{-} = \id_{\Sigma_n} $, where $ \pi \colon \WD{n} \to \Sigma_n $ is the composition of the inclusion $ j \colon \WD{n} \to \WB{n} $ with the projection $ \WB{n} \to \Sigma_n $
\item $ c_{s_0} \circ i_{+} = i_{-} $
\end{itemize}

We compute cochain representatives of $ \iota $ in the following lemmas.
\begin{lemma} \label{lem:involuzione geometrica}
$ \iota $ is induced by the cochain-level map $ \iota^\# \colon FN^*_{\WD{n}} \rightarrow FN^*_{\WD{n}} $ described as follows:
\[
	\iota^\# \left[ a_0: a_1: a_2: \dots: a_{n-1} \right] = \left[ a_1: a_0: a_2: \dots: a_{n-1} \right]
\]
\end{lemma}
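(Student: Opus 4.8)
The plan is to realise $\iota$ geometrically through the action on $Y^{(\infty)}_{D_n}$ of the extra reflection $s_0\in B_n$, which normalises $D_n$ and induces the automorphism $c_{s_0}$ recorded just before the statement. First I would observe that $s_0$, acting on $\mathbb R^n\subseteq\mathbb R^n\otimes\mathbb R^\infty$ as the sign change $x_1\mapsto -x_1$, preserves the arrangement $\mathcal A_{D_n}=\{H^\pm_{i,j}\}$: it fixes each $H^\pm_{i,j}$ with $1\notin\{i,j\}$ and exchanges $H^+_{1,j}$ with $H^-_{1,j}$. Hence $s_0$ acts on $Y^{(\infty)}_{D_n}$, freely (being the restriction of the free $B_n$-action) and preserving the stratification $\Phi_\infty$. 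Moreover $s_0$ fixes the fundamental chamber $C_0=\{|x_1|<x_2<\cdots<x_n\}$ of $D_n$ setwise — either by direct inspection, or by the general fact that an isometry normalising a reflection group and carrying walls of $C_0$ to walls of $C_0$ must fix $C_0$. Since $s_0 H_s=H_{c_{s_0}(s)}$ always holds and $s_0 C_0=C_0$, we get $s_0 H^+_s=H^+_{c_{s_0}(s)}$, with no flip of half-spaces; consequently $s_0$ carries the stratum $\gamma\cdot F(\underline\Gamma)$ attached to a pair $(\underline\Gamma,\gamma)$ as in Theorem \ref{teo:Salvetti} to the stratum attached to $(c_{s_0}(\underline\Gamma),c_{s_0}(\gamma))$, where $c_{s_0}$ is applied entrywise to the flag $\underline\Gamma$ and to $\gamma\in D_n$.

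Next I would note that $y\mapsto s_0.y$ is equivariant with respect to $c_{s_0}$ (using $s_0^{-1}=s_0$ and the normality of $D_n$ in $B_n$), so it descends to a self-homeomorphism $\bar s_0$ of $Y^{(\infty)}_{D_n}/D_n$ modelling $B c_{s_0}$, and correspondingly induces an automorphism of the Fox--Neuwirth complex $FN^*_{D_n}=(\widetilde{FN}_{D_n}/D_n)\otimes\mathbb Z_2$ that computes $\iota$. This automorphism is automatically a cochain map, being induced by a self-map of the stratified space — so, unlike for the coproduct and transfer product in the previous lemmas, no separate compatibility with the differential needs to be checked.

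It then remains to identify this automorphism on the basis $\{[a_0:\dots:a_{n-1}]\}$. Comparing the inequalities defining $e(\underline a)$ with the conditions in Section \ref{sec:geometria}, one sees that $e(\underline a)$ corresponds to the pair $(\underline\Gamma,\mathrm{id})$ with $\Gamma_r=\{t_i:a_i\ge r\}$ for every $r\ge 1$. Applying $c_{s_0}$, which swaps $t_0\leftrightarrow t_1$ and fixes $t_i$ for $i\ge 2$, transforms this into the flag $\Gamma'_r=\{t_j:b_j\ge r\}$ with $b_0=a_1$, $b_1=a_0$, $b_j=a_j$ for $j\ge 2$, which is exactly the flag attached to $e(a_1,a_0,a_2,\dots,a_{n-1})$; the group element stays $\mathrm{id}$. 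Passing to the $D_n$-quotient (and using that the permutation of basis elements is an involution, so the $\mathbb Z_2$-dual map agrees with it) yields $\iota^\#[a_0:a_1:a_2:\dots:a_{n-1}]=[a_1:a_0:a_2:\dots:a_{n-1}]$, as claimed.

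I expect the only delicate point to be the assertion that $s_0$ fixes $C_0$ and therefore does not flip any half-space $H^+_s$: this is precisely what guarantees that $s_0\cdot F(\underline\Gamma)=F(c_{s_0}(\underline\Gamma))$ on the nose, rather than up to a correcting element of $D_n$ (which could be reabsorbed in the quotient but would clutter the bookkeeping). Once this is granted, everything else is a direct unwinding of the dictionary between tuples $\underline a$ and pairs $(\underline\Gamma,\gamma)$.
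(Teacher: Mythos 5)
Your argument is correct, but it takes a different route from the paper. The paper works entirely at the algebraic level of the De Concini--Salvetti complex: it defines the map on cells by applying $c_{s_0}$ entrywise to flags, $e(\Gamma_1\supseteq\dots)\mapsto e(c_{s_0}(\Gamma_1)\supseteq\dots)$, and justifies compatibility with the differential by the combinatorial observation that $c_{s_0}$ permutes the fundamental reflections and hence satisfies $c_{s_0}\left(W^{\Gamma'}_{\Gamma}\right)=W^{c_{s_0}(\Gamma')}_{c_{s_0}(\Gamma)}$, so the boundary formula of Theorem \ref{teo:Salvetti} is preserved; dualizing gives $\iota^{\#}$, and the fact that any $c_{s_0}$-equivariant chain self-map of the free resolution $C_*^{D_n}$ computes $\iota$ is the standard comparison argument. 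You instead realize the map geometrically, by letting $s_0$ act on $Y^{(\infty)}_{D_n}$: your key verification is that $s_0$ preserves $\mathcal{A}_{D_n}$ and the fundamental chamber $C_0$, hence sends $H_s^+$ to $H^+_{c_{s_0}(s)}$ with no half-space flip, so that strata transform by $(\underline{\Gamma},\gamma)\mapsto(c_{s_0}(\underline{\Gamma}),c_{s_0}(\gamma))$ on the nose. This buys you the compatibility with the differential for free (a stratification-preserving homeomorphism induces a cellular map) and makes explicit why the induced map on cohomology is $\iota$, via the self-map of $Y^{(\infty)}_{D_n}/D_n$ modelling $Bc_{s_0}$; the paper's version is shorter but leaves both of those points to standard homological algebra. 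Your final translation from flags to tuples, swapping $a_0$ and $a_1$, matches the dictionary in Section \ref{sec:geometria} and is correct.

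One small inaccuracy: the parenthetical claim that $s_0$ acts \emph{freely} on $Y^{(\infty)}_{D_n}$ ``being the restriction of the free $B_n$-action'' is false --- $H_1^0$ is not in $\mathcal{A}_{D_n}$, so points with first coordinate identically zero lie in $Y^{(\infty)}_{D_n}$ and are fixed by $s_0$; only the $D_n$-action is free there. Fortunately nothing in your argument uses this: what you need is that $s_0$ preserves $Y^{(\infty)}_{D_n}$ and its stratification, is $c_{s_0}$-equivariant, and that $D_n$ (not $\langle s_0\rangle$) acts freely on the contractible space $Y^{(\infty)}_{D_n}$, all of which hold.
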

\begin{proof}
Since the image under $ c_{s_0} $ of a fundamental reflection for $ W = \WD{n} $ is again a fundamental reflection, for every $ \Gamma' \subseteq \Gamma \subseteq \left\{ t_0, \dots, t_{n-1} \right\} $, the set of minimal-length coset representatives satisfy $ c_{s_0} \left( W^{\Gamma'}_{\Gamma} \right) = W^{c_{s_0} \left( \Gamma' \right)}_{c_{s_0} \left( \Gamma \right)} $.
Thus the following defines a $ c_{s_0} $-equivariant chain map $ C^{\WD{n}}_* \rightarrow C^{\WD{n}}_* $:
\[
	e \left( \Gamma_1 \supseteq \dots \supseteq \Gamma_k \supseteq \dots \right) \mapsto e \left( c_{s_0} \left( \Gamma_1 \right) \supseteq \dots \supseteq c_{s_0} \left( \Gamma_k \right) \supseteq \dots \right)
\]
This yields, dually, the desired cochain map $ FN^*_{\WD{n}} \rightarrow FN^*_{\WD{n}} $.
\end{proof}

We can also describe $ \iota $ in terms of $ {FN'}^{*}_{\WD{n}} $. The proof of the following lemma is straightforward.
\begin{lemma} \label{lem:involuzione geometrica2}
$ \iota $ is induced by the cochain-level map $ {\iota'}^{\#} \colon {FN'}^{*}_{\WD{n}} \rightarrow {FN'}^{*}_{\WD{n}} $ described as follows:
\begin{align*}
	{\iota'}^{\#} \left[ a_0: \dots: a_{n-1} \right]^+ &= \left[ a_0: \dots: a_{n-1} \right]^- \\
	{\iota'}^{\#} \left[ a_0: \dots: a_{n-1} \right]^- &= \left[ a_0: \dots: a_{n-1} \right]^+ \\
\end{align*}
In terms of oriented symmetric planar level trees, the map $ {\iota'}^{\#} $ acts on $ (T,\mathcal{O}) $ by replacing $ \mathcal{O} $ with the opposite orientation.
\end{lemma}

The following identity is also proved by direct inspection.
\begin{lemma}\label{lem:CHEiota}
The following diagram commutes:
\begin{center}
\begin{tikzcd}
FN^*_{\WD{n}} \arrow{r}{\varphi^*} \arrow{d}{\iota^{\#}} & {FN'}^*_{\WD{n}} \arrow{d}{{\iota'}^{\#}}\\
FN^*_{\WD{n}} \arrow{r}{\varphi^*} & {FN'}^*_{\WD{n}} \\
\end{tikzcd}
\end{center}
\end{lemma}

The formulas for the other connecting maps follow from a general remark.
\begin{lemma}
Let $ G $ be a Coxeter group, with Coxeter generators $ S = \{s_0,\dots,s_n\} $ and $ H \leq G $ be a parabolic subgroup, generated by a subset $ T = \{s_{i_0},\dots, s_{i_m}\} \subseteq S $. The inclusion $ H \hookrightarrow G $ is represented at the chain level by the chain map $ C^H_* \to C^G_* $ given by $ c(\underline{\Gamma},\gamma) \mapsto c(\underline{\Gamma},\gamma) $, for flags $ \underline{\Gamma} = (\Gamma_0 \supseteq \Gamma_1 \supseteq \dots \supseteq \Gamma_k \supseteq \varnothing) $ with $ \Gamma_0 \subseteq T \subseteq S $ and elements $ \gamma \in H $.

Dually, it is represented at the cochain level by the cochain map $ FN^*_G \to FN^*_H $ given by
\[
[e(a_0,\dots,a_n)] \in C^H_* \mapsto \left\{ \begin{array}{ll}
[e(a_{i_0},\dots,a_{i_m})] & \mbox{if } \forall j \notin \{i_0,\dots,i_m\}: a_j = 0 \\
0 & \mbox{otherwise}
\end{array} \right. .
\]
\end{lemma}
\begin{proof}
Since the inclusion of parabolic subgroups preserves minimal coset representatives, the De Concini-Salvetti boundary formula of Theorem \ref{teo:Salvetti} implies that the given linear morphism $ C^H_* \to C^G_* $ is an $ H $-equivariant chain map.
Dualizing this yields the cochain formula between Fox-Neuwirth complexes.
\end{proof}

As particular cases of this lemma, we retrieve cochain formulas for our connecting homomorphisms:
\begin{corollary} \label{cor:connecting hom cochain}
The following statements are true.
\begin{enumerate}
\item The linear morphism $ j^{\#} \colon FN^*_{\WB{n}} \to FN^*_{\Sigma_n} $ given by
\[
[a_0,\dots,a_{n-1}] \mapsto \left\{ \begin{array}{ll}
[a_1,\dots,a_{n-1}] & \mbox{if } a_0 = 0 \\
0 & \mbox{if } a_0 > 0
\end{array} \right.
\]
represents $ j $ at the cochain level.
\item The linear morphism $ i_+^{\#} \colon FN^*_{\WD{n}} \to FN^*_{\Sigma_n} $ given by
\[
[a_0,\dots,a_{n-1}] \mapsto \left\{ \begin{array}{ll}
[a_1,\dots,a_{n-1}] & \mbox{if } a_0 = 0 \\
0 & \mbox{if } a_0 > 0
\end{array} \right.
\]
represents $ i_+ $ at the cochain level.
\item The linear morphism $ i_-^{\#} \colon FN^*_{\WD{n}} \to FN^*_{\Sigma_n} $ given by
\[
[a_0,\dots,a_{n-1}] \mapsto \left\{ \begin{array}{ll}
[a_0,a_2,\dots,a_{n-1}] & \mbox{if } a_1 = 0 \\
0 & \mbox{if } a_1 > 0
\end{array} \right.
\]
represents $ i_+ $ at the cochain level.
\end{enumerate}
\end{corollary}

\subsection{Structural morphisms: $ A_B $}

We want to describe the almost-Hopf ring structures presented in Section \ref{sec:Hopf rings} in our geometric context.
We begin with the coproduct map i $ A_B $. In contrast with the symmetric group case, the cochain-level map inducing the coproduct is relatively complicated. Its underlying combinatorics is built upon elementary steps that we, mindful of the botanic analogy, suggestively call ``prunings.''

\begin{definition} \label{def:pruning}
Let $ T $ be a planar level tree. An \emph{elementary $ k $-pruning} of $ T $ is a planar level tree $ T' $ obtained by the following procedure. Choose an internal vertex $ v $ of $ T $ of height $ k $, and consider on $ E(v) $ the order induced by the planar embedding. Let $ 1 \leq l < |E(v)| $, consider the $ l $ biggest elements $ e_1,\dots,e_l $ of $ E(v) $ with respect to this order, and let $ v'_i $ be the target of $ e_i $. $ T'' $ is the subtree of $ T $ spanned by $ v $ and all vertices that can be reached from one of the $ v'_i $ through an oriented path. $ T' $ is the complementary subtree of $ T'' $ in $ T $. We call the planarly embedded subtree $ T'' $ the \emph{scrap} of the elementary $ k $-pruning. An elementary $ k $-pruning is said to be \emph{minimal} if $ l = 1 $.
A \emph{$ k $-pruning} is a couple $ (T',T'') $ constructed as follows:
\begin{itemize}
\item $ T' $ is obtained from a sequence of elementary $ k $-prunings $ T \leadsto T'_1 \leadsto T'_2 \leadsto \dots \leadsto T'_j = T' $ performed on pairwise different vertices $ v_1,\dots,v_j $ of $ T $, with scraps $ T''_1, \dots, T''_j $
\item $ T'' $ is a planar level tree obtained by joining these scrap subtrees along a vertex $ w $ of height $ k $ and performing a vertex permutation at $ w $ that shuffles the edges of the scraps
\end{itemize}

Let $ T $ be a symmetric planar level tree. An \emph{elementary symmetric $ k $-pruning} of $ T $ is a tree $ T' $ obtained as follows. Apply to $ T $ an elementary (non-symmetric) $ k $-pruning whose scrap $ T'' $ does not contain the central leaf belonging to the $ \bm{y} $ axis. Then, remove the image of the subtree of $ T'' $ under the reflection $ r $ along the vertical axis. $ T'' $ is called the \emph{scrap} of the elementary symmetric pruning. An elementary symmetric $ k $-pruning is said to be \emph{minimal} if it is obtained from a minimal elementary $ k $-pruning.
A \emph{symmetric $ k $-pruning} is a couple $ (T',T'') $, where:
\begin{itemize}
\item $ T' $ is obtained from a sequence of elementary $ k $-prunings $ T \leadsto T'_1 \leadsto T'_2 \leadsto \dots \leadsto T'_j = T' $ performed on pairwise different vertices of $ T $, with scraps $ T''_1, \dots, T''_j $
\item $ T'' $ is a non-symmetric planar level tree obtained by joining the scrap subtrees to a vertex $ w $ of height $ k $ and performing a vertex permutation at $ w $ that shuffles the edges of the scraps
\end{itemize}
\end{definition}

We note that elementary $ k $-prunings at different vertices commute, both in the symmetric and non-symmetric cases. Hence, a $ k $-pruning or symmetric $ k $-pruning is uniquely determined by the set of elementary $ k $-prunings that compose it, independently of the order in which they are performed.

There is also an alternative way to define (symmetric) $ k $-prunings in terms of minimal $ k $-prunings instead of elementary ones. A (symmetric) $ k $-pruning is obtained by performing a sequence of minimal elementary (symmetric) $ k $-prunings, not necessarily at pairwise different vertices, and then joining the scraps at a vertex of height $ k $ without shuffling the edges.

We now consider three linear morphisms that we will need to define the cochain-level coproduct map:
\begin{itemize}
\item the $ k $-pruning map $ P_k \colon FN^*_{\WB{n}} \otimes \mathbb{F}_2 \to \bigoplus_{a+b=n} FN^*_{\WB{a}} \otimes FN^*_{\WB{b}} \otimes \mathbb{F}_2$ that maps a symmetric planar level tree $ T $ to the sum $ \sum T' \otimes S_k(T'') $ over all the possible symmetric $ k $-prunings $ (T',T'') $ of $ T $
\item the minimal $ k $-pruning map $ P_k^{\min} \colon FN^*_{\WB{n}} \otimes \mathbb{F}_2 \to \bigoplus_{a+b=n} FN^*_{\WB{a}} \otimes FN^*_{\WB{b}} \otimes \mathbb{F}_2$ that maps a symmetric planar level tree $ T $ to the sum $ \sum T' \otimes S_k(T'') $ over all the possible minimal elementary symmetric $ k $-prunings $ (T',T'') $ of $ T $
\item the concatenation map $ C \colon FN^*_{\WB{n}} \otimes FN^*_{\WB{m}} \otimes \mathbb{F}_2 \to FN^*_{\WB{n+m}} $ such that $ C([a_0:\dots:a_{n-1}] \otimes [b_0:\dots:b_{m-1}]) = [a_0:\dots:a_{n-1}:b_0:\dots:b_{m-1}] $
\end{itemize}
The map $ P_k $ is exemplified in Figure \ref{fig:pruning}. We understand $ C $ at the level of symmetric planar level trees as the function given by the following procedure. Take a couple of such objects $ (T,S) $. Cut $ S $ along its central vertical axis. Finally, glue the right piece of $ S $ onto the right side of $ T $ and the left part onto its left side to obtain a new symmetric planar level tree.
We remark that these linear morphisms are degree-preserving, but they are not chain maps.

\begin{figure}[hbtp]
\caption{The map $P_1 $, defined as the sum of all possible symmetric $ 1 $-pruning, on a given symmetric planar level tree.}
\label{fig:pruning}
\begin{center}
\def \globalscale {2.000000}
\begin{tikzpicture}[y=0.80pt, x=0.80pt, yscale=-\globalscale, xscale=\globalscale, inner sep=0pt, outer sep=0pt]

  \draw[color=black] (80,15) node {$\mapsto$};
  \draw[color=black] (165,15) node {$\otimes$};
  \draw[color=black] (185,15) node {$+$};
  \draw[color=black] (55,45) node {$\otimes$};
  \draw[color=black] (95,45) node {$+$};
  \draw[color=black] (145,45) node {$\otimes$};
  \draw[color=black] (195,45) node {$+$};
  \draw[color=black] (65,75) node {$\otimes$};
  \draw[color=black] (95,75) node {$+$};
  \draw[color=black] (125,75) node {$\otimes$};
  \draw[color=black] (195,75) node {$+$};
  \draw[color=black] (25,105) node {$\otimes$};
  \draw[color=black] (95,105) node {$+$};
  \draw[color=black] (145,105) node {$\otimes$};
  \draw[color=black] (195,105) node {$+$};
  \draw[color=black] (45,135) node {$\otimes $};

  \path[draw=black,line cap=butt,line join=miter,line width=0.212pt] (5.0000,5.0000) -- (5.0000,15.0000) -- (20.0000,20.0000) -- (35.0000,25.0000) -- (50.0000,20.0000) -- (65.0000,15.0000) -- (65.0000,5.0000) -- (65.0000,5.0000) -- (65.0000,5.0000);

  \path[draw=black,line cap=butt,line join=miter,line width=0.212pt] (10.0000,5.0000) -- (10.0000,10.0000) -- (20.0000,15.0000) -- (20.0000,20.0000) -- (20.0000,20.0000) -- (20.0000,20.0000);

  \path[draw=black,line cap=butt,line join=miter,line width=0.212pt] (60.0000,5.0000) -- (60.0000,10.0000) -- (50.0000,15.0000) -- (50.0000,20.0000);

  \path[draw=black,line cap=butt,line join=miter,line width=0.212pt] (15.0000,5.0000) -- (20.0000,10.0000) -- (20.0000,15.0000) -- (20.0000,15.0000);

  \path[draw=black,line cap=butt,line join=miter,line width=0.212pt] (55.0000,5.0000) -- (50.0000,10.0000) -- (50.0000,15.0000) -- (50.0000,15.0000);

  \path[draw=black,line cap=butt,line join=miter,line width=0.212pt] (20.0000,5.0000) -- (20.0000,10.0000) -- (20.0000,10.0000);

  \path[draw=black,line cap=butt,line join=miter,line width=0.212pt] (50.0000,5.0000) -- (50.0000,10.0000) -- (50.0000,10.0000);

  \path[draw=black,line cap=butt,line join=miter,line width=0.212pt] (35.0000,5.0000) -- (35.0000,25.0000);

  \path[draw=black,line cap=butt,line join=miter,line width=0.212pt] (25.0000,5.0000) -- (25.0000,10.0000) -- (35.0000,20.0000) -- (45.0000,10.0000) -- (45.0000,5.0000) -- (45.0000,5.0000);

  \path[draw=black,line cap=butt,line join=miter,line width=0.212pt] (40.0000,5.0000) -- (40.0000,15.0000) -- (40.0000,15.0000);

  \path[draw=black,line cap=butt,line join=miter,line width=0.212pt] (30.0000,5.0000) -- (30.0000,15.0000);

  \path[draw=black,line cap=butt,line join=miter,line width=0.212pt] (95.0000,5.0000) -- (95.0000,15.0000) -- (110.0000,20.0000) -- (125.0000,25.0000) -- (140.0000,20.0000) -- (155.0000,15.0000) -- (155.0000,5.0000) -- (155.0000,5.0000) -- (155.0000,5.0000);

  \path[draw=black,line cap=butt,line join=miter,line width=0.212pt] (100.0000,5.0000) -- (100.0000,10.0000) -- (110.0000,15.0000) -- (110.0000,20.0000) -- (110.0000,20.0000) -- (110.0000,20.0000);

  \path[draw=black,line cap=butt,line join=miter,line width=0.212pt] (150.0000,5.0000) -- (150.0000,10.0000) -- (140.0000,15.0000) -- (140.0000,20.0000);

  \path[draw=black,line cap=butt,line join=miter,line width=0.212pt] (105.0000,5.0000) -- (110.0000,10.0000) -- (110.0000,15.0000) -- (110.0000,15.0000);

  \path[draw=black,line cap=butt,line join=miter,line width=0.212pt] (145.0000,5.0000) -- (140.0000,10.0000) -- (140.0000,15.0000) -- (140.0000,15.0000);

  \path[draw=black,line cap=butt,line join=miter,line width=0.212pt] (110.0000,5.0000) -- (110.0000,10.0000) -- (110.0000,10.0000);

  \path[draw=black,line cap=butt,line join=miter,line width=0.212pt] (140.0000,5.0000) -- (140.0000,10.0000) -- (140.0000,10.0000);

  \path[draw=black,line cap=butt,line join=miter,line width=0.212pt] (125.0000,5.0000) -- (125.0000,25.0000);

  \path[draw=black,line cap=butt,line join=miter,line width=0.212pt] (115.0000,5.0000) -- (115.0000,10.0000) -- (125.0000,20.0000) -- (135.0000,10.0000) -- (135.0000,5.0000) -- (135.0000,5.0000);

  \path[draw=black,line cap=butt,line join=miter,line width=0.212pt] (130.0000,5.0000) -- (130.0000,15.0000) -- (130.0000,15.0000);

  \path[draw=black,line cap=butt,line join=miter,line width=0.212pt] (120.0000,5.0000) -- (120.0000,15.0000);

  \path[draw=black,line cap=butt,line join=miter,line width=0.212pt] (175.0000,5.0000) -- (175.0000,25.0000) -- (175.0000,25.0000);

  \path[draw=black,dash pattern=on 0.21pt off 1.69pt,line cap=butt,line join=miter,line width=0.212pt,miter limit=4.00] (5.0000,20.0000) -- (200.0000,20.0000) -- (200.0000,20.0000);

  \path[draw=black,dash pattern=on 0.21pt off 1.70pt,line cap=butt,line join=miter,line width=0.212pt,miter limit=4.00] (200.0000,25.0000) -- (5.0000,25.0000);

  \path[draw=black,dash pattern=on 0.21pt off 1.69pt,line cap=butt,line join=miter,line width=0.212pt,miter limit=4.00] (200.0000,10.0000) -- (5.0000,10.0000);

  \path[draw=black,dash pattern=on 0.21pt off 1.69pt,line cap=butt,line join=miter,line width=0.212pt,miter limit=4.00] (5.0000,5.0000) -- (200.0000,5.0000);

  \path[draw=black,line cap=butt,line join=miter,line width=0.212pt] (4.9458,35.0000) -- (4.9458,45.0000) -- (19.9458,50.0000) -- (24.9458,55.0000) -- (29.9458,50.0000) -- (44.9458,45.0000) -- (44.9458,35.0000) -- (44.9458,35.0000) -- (44.9458,35.0000);

  \path[draw=black,line cap=butt,line join=miter,line width=0.212pt] (9.9458,35.0000) -- (9.9458,40.0000) -- (19.9458,45.0000) -- (19.9458,50.0000) -- (19.9458,50.0000) -- (19.9458,50.0000);

  \path[draw=black,line cap=butt,line join=miter,line width=0.212pt] (39.9458,35.0000) -- (39.9458,40.0000) -- (29.9458,45.0000) -- (29.9458,50.0000);

  \path[draw=black,line cap=butt,line join=miter,line width=0.212pt] (14.9458,35.0000) -- (19.9458,40.0000) -- (19.9458,45.0000) -- (19.9458,45.0000);

  \path[draw=black,line cap=butt,line join=miter,line width=0.212pt] (34.9458,35.0000) -- (29.9458,40.0000) -- (29.9458,45.0000) -- (29.9458,45.0000);

  \path[draw=black,line cap=butt,line join=miter,line width=0.212pt] (19.9458,35.0000) -- (19.9458,40.0000) -- (19.9458,40.0000);

  \path[draw=black,line cap=butt,line join=miter,line width=0.212pt] (29.9458,35.0000) -- (29.9458,40.0000) -- (29.9458,40.0000);

  \path[draw=black,line cap=butt,line join=miter,line width=0.212pt] (24.9458,35.0000) -- (24.9458,55.0000);

  \path[draw=black,dash pattern=on 0.21pt off 1.69pt,line cap=butt,line join=miter,line width=0.212pt,miter limit=4.00] (5.0000,50.0000) -- (200.0000,50.0000) -- (200.0000,50.0000);

  \path[draw=black,dash pattern=on 0.21pt off 1.70pt,line cap=butt,line join=miter,line width=0.212pt,miter limit=4.00] (200.0000,55.0000) -- (5.0000,55.0000);

  \path[draw=black,dash pattern=on 0.21pt off 1.69pt,line cap=butt,line join=miter,line width=0.212pt,miter limit=4.00] (5.0000,45.0000) -- (200.0000,45.0000);

  \path[draw=black,dash pattern=on 0.21pt off 1.69pt,line cap=butt,line join=miter,line width=0.212pt,miter limit=4.00] (200.0000,40.0000) -- (5.0000,40.0000);

  \path[draw=black,dash pattern=on 0.21pt off 1.69pt,line cap=butt,line join=miter,line width=0.212pt,miter limit=4.00] (5.0000,35.0000) -- (200.0000,35.0000);

  \path[draw=black,line cap=butt,line join=miter,line width=0.212pt] (75.4208,35.0000) -- (75.4208,55.0000) -- (75.4208,55.0000) -- (75.4208,55.0000);

  \path[draw=black,line cap=butt,line join=miter,line width=0.212pt] (65.4208,35.0000) -- (65.4208,40.0000) -- (75.4208,50.0000) -- (85.4208,40.0000) -- (85.4208,35.0000);

  \path[draw=black,line cap=butt,line join=miter,line width=0.212pt] (80.4208,35.0000) -- (80.4208,45.0000);

  \path[draw=black,line cap=butt,line join=miter,line width=0.212pt] (70.4208,35.0000) -- (70.4208,45.0000) -- (70.4208,45.0000);

  \path[draw=black,line cap=butt,line join=miter,line width=0.212pt] (120.1832,35.0000) -- (120.1832,55.0000) -- (120.1832,55.0000) -- (120.1832,55.0000);

  \path[draw=black,line cap=butt,line join=miter,line width=0.212pt] (110.1832,35.0000) -- (110.1832,40.0000) -- (120.1832,50.0000) -- (130.1832,40.0000) -- (130.1832,35.0000);

  \path[draw=black,line cap=butt,line join=miter,line width=0.212pt] (125.1832,35.0000) -- (125.1832,45.0000);

  \path[draw=black,line cap=butt,line join=miter,line width=0.212pt] (115.1832,35.0000) -- (115.1832,45.0000) -- (115.1832,45.0000);

  \path[draw=black,line cap=butt,line join=miter,line width=0.212pt] (105.1832,35.0000) -- (105.1832,50.0000) -- (120.1832,55.0000) -- (135.1832,50.0000) -- (135.1832,35.0000) -- (135.1832,35.0000);

  \path[draw=black,line cap=butt,line join=miter,line width=0.212pt] (170.0000,55.0000) -- (170.0000,35.0000);

  \path[draw=black,line cap=butt,line join=miter,line width=0.212pt] (175.0000,35.0000) -- (175.0000,45.0000) -- (170.0000,50.0000) -- (165.0000,45.0000) -- (165.0000,35.0000);

  \path[draw=black,line cap=butt,line join=miter,line width=0.212pt] (175.0000,45.0000) -- (180.0000,40.0000) -- (180.0000,35.0000);

  \path[draw=black,line cap=butt,line join=miter,line width=0.212pt] (180.0000,40.0000) -- (185.0000,35.0000);

  \path[draw=black,line cap=butt,line join=miter,line width=0.212pt] (165.0000,45.0000) -- (160.0000,40.0000) -- (160.0000,35.0000);

  \path[draw=black,line cap=butt,line join=miter,line width=0.212pt] (160.0000,40.0000) -- (155.0000,35.0000);

  \path[draw=black,dash pattern=on 0.21pt off 1.69pt,line cap=butt,line join=miter,line width=0.212pt,miter limit=4.00] (5.0000,15.0000) -- (200.0000,15.0000) -- (200.0000,15.0000);

  \path[draw=black,dash pattern=on 0.21pt off 1.69pt,line cap=butt,line join=miter,line width=0.212pt,miter limit=4.00] (4.8168,80.0000) -- (199.8168,80.0000) -- (199.8168,80.0000);

  \path[draw=black,dash pattern=on 0.21pt off 1.70pt,line cap=butt,line join=miter,line width=0.212pt,miter limit=4.00] (199.8168,85.0000) -- (4.8168,85.0000);

  \path[draw=black,dash pattern=on 0.21pt off 1.69pt,line cap=butt,line join=miter,line width=0.212pt,miter limit=4.00] (4.8168,75.0000) -- (199.8168,75.0000);

  \path[draw=black,dash pattern=on 0.21pt off 1.69pt,line cap=butt,line join=miter,line width=0.212pt,miter limit=4.00] (199.8168,70.0000) -- (4.8168,70.0000);

  \path[draw=black,dash pattern=on 0.21pt off 1.69pt,line cap=butt,line join=miter,line width=0.212pt,miter limit=4.00] (4.8168,65.0000) -- (199.8168,65.0000);

  \path[draw=black,line cap=butt,line join=miter,line width=0.212pt] (5.0000,65.0000) -- (5.0000,70.0000) -- (15.0000,75.0000) -- (15.0000,80.0000) -- (15.0000,80.0000) -- (15.0000,80.0000);

  \path[draw=black,line cap=butt,line join=miter,line width=0.212pt] (55.0000,65.0000) -- (55.0000,70.0000) -- (45.0000,75.0000) -- (45.0000,80.0000);

  \path[draw=black,line cap=butt,line join=miter,line width=0.212pt] (10.0000,65.0000) -- (15.0000,70.0000) -- (15.0000,75.0000) -- (15.0000,75.0000);

  \path[draw=black,line cap=butt,line join=miter,line width=0.212pt] (50.0000,65.0000) -- (45.0000,70.0000) -- (45.0000,75.0000) -- (45.0000,75.0000);

  \path[draw=black,line cap=butt,line join=miter,line width=0.212pt] (15.0000,65.0000) -- (15.0000,70.0000) -- (15.0000,70.0000);

  \path[draw=black,line cap=butt,line join=miter,line width=0.212pt] (45.0000,65.0000) -- (45.0000,70.0000) -- (45.0000,70.0000);

  \path[draw=black,line cap=butt,line join=miter,line width=0.212pt] (30.0000,65.0000) -- (30.0000,85.0000);

  \path[draw=black,line cap=butt,line join=miter,line width=0.212pt] (20.0000,65.0000) -- (20.0000,70.0000) -- (30.0000,80.0000) -- (40.0000,70.0000) -- (40.0000,65.0000) -- (40.0000,65.0000);

  \path[draw=black,line cap=butt,line join=miter,line width=0.212pt] (35.0000,65.0000) -- (35.0000,75.0000) -- (35.0000,75.0000);

  \path[draw=black,line cap=butt,line join=miter,line width=0.212pt] (25.0000,65.0000) -- (25.0000,75.0000);

  \path[draw=black,line cap=butt,line join=miter,line width=0.212pt] (15.0000,80.0000) .. controls (30.0000,85.0000) and (30.0000,85.0000) .. (30.0000,85.0000) -- (45.0000,80.0000);

  \path[draw=black,line cap=butt,line join=miter,line width=0.212pt] (75.0000,65.0000) -- (75.0000,75.0000) -- (80.0000,80.0000) -- (85.0000,75.0000) -- (85.0000,65.0000);

  \path[draw=black,line cap=butt,line join=miter,line width=0.212pt] (80.0000,65.0000) -- (80.0000,85.0000);

  \path[draw=black,line cap=butt,line join=miter,line width=0.212pt] (105.0000,65.0000) -- (105.0000,80.0000) -- (110.0000,85.0000) -- (115.0000,80.0000) -- (115.0000,65.0000);

  \path[draw=black,line cap=butt,line join=miter,line width=0.212pt] (110.0000,65.0000) -- (110.0000,85.0000);

  \path[draw=black,line cap=butt,line join=miter,line width=0.212pt] (135.0000,65.0000) -- (140.0000,70.0000) -- (145.0000,75.0000);

  \path[draw=black,line cap=butt,line join=miter,line width=0.212pt] (185.0000,65.0000) -- (180.0000,70.0000) -- (175.0000,75.0000);

  \path[draw=black,line cap=butt,line join=miter,line width=0.212pt] (140.0000,65.0000) -- (140.0000,70.0000) -- (145.0000,75.0000) -- (145.0000,75.0000);

  \path[draw=black,line cap=butt,line join=miter,line width=0.212pt] (180.0000,65.0000) -- (180.0000,70.0000) -- (175.0000,75.0000) -- (175.0000,75.0000);

  \path[draw=black,line cap=butt,line join=miter,line width=0.212pt] (145.0000,65.0000) -- (145.0000,75.0000) -- (145.0000,75.0000);

  \path[draw=black,line cap=butt,line join=miter,line width=0.212pt] (175.0000,65.0000) -- (175.0000,70.0000) -- (175.0000,75.0000);

  \path[draw=black,line cap=butt,line join=miter,line width=0.212pt] (160.0000,65.0000) -- (160.0000,85.0000);

  \path[draw=black,line cap=butt,line join=miter,line width=0.212pt] (150.0000,65.0000) -- (150.0000,70.0000) -- (160.0000,80.0000) -- (170.0000,70.0000) -- (170.0000,65.0000) -- (170.0000,65.0000);

  \path[draw=black,line cap=butt,line join=miter,line width=0.212pt] (165.0000,65.0000) -- (165.0000,75.0000) -- (165.0000,75.0000);

  \path[draw=black,line cap=butt,line join=miter,line width=0.212pt] (155.0000,65.0000) -- (155.0000,75.0000);

  \path[draw=black,line cap=butt,line join=miter,line width=0.212pt] (145.0000,75.0000) -- (160.0000,80.0000) -- (175.0000,75.0000);

  \path[draw=black,dash pattern=on 0.21pt off 1.69pt,line cap=butt,line join=miter,line width=0.212pt,miter limit=4.00] (4.8168,110.0000) -- (199.8168,110.0000) -- (199.8168,110.0000);

  \path[draw=black,dash pattern=on 0.21pt off 1.70pt,line cap=butt,line join=miter,line width=0.212pt,miter limit=4.00] (199.8168,115.0000) -- (4.8168,115.0000);

  \path[draw=black,dash pattern=on 0.21pt off 1.69pt,line cap=butt,line join=miter,line width=0.212pt,miter limit=4.00] (4.8168,105.0000) -- (199.8168,105.0000);

  \path[draw=black,dash pattern=on 0.21pt off 1.69pt,line cap=butt,line join=miter,line width=0.212pt,miter limit=4.00] (199.8168,100.0000) -- (4.8168,100.0000);

  \path[draw=black,dash pattern=on 0.21pt off 1.69pt,line cap=butt,line join=miter,line width=0.212pt,miter limit=4.00] (4.8168,95.0000) -- (199.8168,95.0000);

  \path[draw=black,line cap=butt,line join=miter,line width=0.212pt] (4.9874,95.0000) -- (4.9874,110.0000) -- (9.9874,115.0000) -- (14.9874,110.0000) -- (14.9874,95.0000);

  \path[draw=black,line cap=butt,line join=miter,line width=0.212pt] (9.9874,95.0000) -- (9.9874,115.0000);

  \path[draw=black,line cap=butt,line join=miter,line width=0.212pt] (45.0000,95.0000) -- (50.0000,100.0000);

  \path[draw=black,line cap=butt,line join=miter,line width=0.212pt] (75.0000,95.0000) -- (70.0000,100.0000);

  \path[draw=black,line cap=butt,line join=miter,line width=0.212pt] (50.0000,95.0000) -- (50.0000,100.0000) -- (55.0000,105.0000) -- (55.0000,105.0000);

  \path[draw=black,line cap=butt,line join=miter,line width=0.212pt] (70.0000,95.0000) -- (70.0000,100.0000) -- (65.0000,105.0000) -- (65.0000,105.0000);

  \path[draw=black,line cap=butt,line join=miter,line width=0.212pt] (55.0000,95.0000) -- (55.0000,105.0000) -- (55.0000,105.0000);

  \path[draw=black,line cap=butt,line join=miter,line width=0.212pt] (65.0000,95.0000) -- (65.0000,100.0000) -- (65.0000,105.0000);

  \path[draw=black,line cap=butt,line join=miter,line width=0.212pt] (59.9871,95.0000) -- (59.9871,115.0000);

  \path[draw=black,line cap=butt,line join=miter,line width=0.212pt] (35.0000,100.0000) -- (40.0000,105.0000) -- (59.9871,110.0000) -- (80.0000,105.0000) -- (85.0000,100.0000) -- (85.0000,100.0000);

  \path[draw=black,line cap=butt,line join=miter,line width=0.212pt] (80.0000,95.0000) -- (80.0000,105.0000) -- (80.0000,105.0000);

  \path[draw=black,line cap=butt,line join=miter,line width=0.212pt] (40.0000,95.0000) -- (40.0000,105.0000);

  \path[draw=black,line cap=butt,line join=miter,line width=0.212pt] (55.0000,105.0000) -- (59.9871,110.0000) -- (65.0000,105.0000);

  \path[draw=black,line cap=butt,line join=miter,line width=0.212pt] (35.0000,100.0000) -- (35.0000,95.0000) -- (35.0000,95.0000) -- (35.0000,95.0000) -- (35.0000,95.0000);

  \path[draw=black,line cap=butt,line join=miter,line width=0.212pt] (85.0000,100.0000) -- (85.0000,95.0000);

  \path[draw=black,line cap=butt,line join=miter,line width=0.212pt] (120.0000,115.0000) -- (120.0000,95.0000);

  \path[draw=black,line cap=butt,line join=miter,line width=0.212pt] (115.0000,95.0000) -- (115.0000,105.0000) -- (115.0000,110.0000) -- (120.0000,115.0000) -- (125.0000,110.0000) -- (125.0000,95.0000) -- (125.0000,95.0000) -- (125.0000,95.0000);

  \path[draw=black,line cap=butt,line join=miter,line width=0.212pt] (125.0000,105.0000) -- (130.0000,100.0000) -- (130.0000,95.0000);

  \path[draw=black,line cap=butt,line join=miter,line width=0.212pt] (130.0000,100.0000) -- (135.0000,95.0000);

  \path[draw=black,line cap=butt,line join=miter,line width=0.212pt] (115.0000,105.0000) -- (110.0000,100.0000) -- (110.0000,95.0000);

  \path[draw=black,line cap=butt,line join=miter,line width=0.212pt] (110.0000,100.0000) -- (105.0000,95.0000);

  \path[draw=black,line cap=butt,line join=miter,line width=0.212pt] (155.0000,95.0000) -- (155.0000,105.0000) -- (170.0000,110.0000) -- (185.0000,105.0000) -- (185.0000,95.0000);

  \path[draw=black,line cap=butt,line join=miter,line width=0.212pt] (160.0000,95.0000) .. controls (160.0000,100.0000) and (160.0000,100.0000) .. (160.0000,100.0000) -- (170.0000,110.0000) -- (180.0000,100.0000) -- (180.0000,95.0000);

  \path[draw=black,line cap=butt,line join=miter,line width=0.212pt] (165.0000,95.0000) -- (165.0000,105.0000) -- (165.0000,105.0000);

  \path[draw=black,line cap=butt,line join=miter,line width=0.212pt] (175.0000,95.0000) -- (175.0000,105.0000);

  \path[draw=black,line cap=butt,line join=miter,line width=0.212pt] (170.0000,95.0000) -- (170.0000,115.0000);

  \path[draw=black,dash pattern=on 0.21pt off 1.69pt,line cap=butt,line join=miter,line width=0.212pt,miter limit=4.00] (4.8168,140.0000) -- (199.8168,140.0000) -- (199.8168,140.0000);

  \path[draw=black,dash pattern=on 0.21pt off 1.70pt,line cap=butt,line join=miter,line width=0.212pt,miter limit=4.00] (199.8168,145.0000) -- (4.8168,145.0000);

  \path[draw=black,dash pattern=on 0.21pt off 1.69pt,line cap=butt,line join=miter,line width=0.212pt,miter limit=4.00] (4.8168,135.0000) -- (199.8168,135.0000);

  \path[draw=black,dash pattern=on 0.21pt off 1.69pt,line cap=butt,line join=miter,line width=0.212pt,miter limit=4.00] (199.8168,130.0000) -- (4.8168,130.0000);

  \path[draw=black,dash pattern=on 0.21pt off 1.69pt,line cap=butt,line join=miter,line width=0.212pt,miter limit=4.00] (4.8168,125.0000) -- (199.8168,125.0000);

  \path[draw=black,line cap=butt,line join=miter,line width=0.212pt] (20.0000,145.0000) -- (20.0000,125.0000);

  \path[draw=black,line cap=butt,line join=miter,line width=0.212pt] (15.0000,125.0000) -- (15.0000,135.0000) -- (15.0000,140.0000) -- (20.0000,145.0000) -- (25.0000,140.0000) -- (25.0000,125.0000) -- (25.0000,125.0000) -- (25.0000,125.0000);

  \path[draw=black,line cap=butt,line join=miter,line width=0.212pt] (25.0000,135.0000) -- (30.0000,130.0000) -- (30.0000,125.0000);

  \path[draw=black,line cap=butt,line join=miter,line width=0.212pt] (30.0000,130.0000) -- (35.0000,125.0000);

  \path[draw=black,line cap=butt,line join=miter,line width=0.212pt] (15.0000,135.0000) -- (10.0000,130.0000) -- (10.0000,125.0000);

  \path[draw=black,line cap=butt,line join=miter,line width=0.212pt] (10.0000,130.0000) -- (5.0000,125.0000);

  \path[draw=black,line cap=butt,line join=miter,line width=0.212pt] (65.0000,125.0000) -- (65.0000,135.0000) -- (70.0000,140.0000) -- (75.0000,135.0000) -- (75.0000,125.0000);

  \path[draw=black,line cap=butt,line join=miter,line width=0.212pt] (60.0000,125.0000) -- (60.0000,135.0000) -- (70.0000,140.0000) -- (80.0000,135.0000) -- (80.0000,125.0000);

  \path[draw=black,line cap=butt,line join=miter,line width=0.212pt] (70.0000,125.0000) -- (70.0000,145.0000);

  \path[draw=black,line cap=butt,line join=miter,line width=0.212pt] (55.0000,125.0000) -- (55.0000,130.0000) -- (60.0000,135.0000);

  \path[draw=black,line cap=butt,line join=miter,line width=0.212pt] (80.0000,135.0000) -- (85.0000,130.0000) -- (85.0000,125.0000);

\end{tikzpicture}
\end{center}
\end{figure}

In the $ A_n $ case, we can define a similar $ k $-pruning map $ P'_k $ by summing all non-symmetric $ k $-prunings. For $ k = 0 $, $ P'_0 $ is a chain map, and it is shown in \cite{Sinha:13} to induce the coproduct in cohomology. This statement is not true in the $ B_n $ case because the differential of an antisymmetric planar level tree with labels behaves badly near the central ``trunk'' labeled $ 0 $. Nevertheless, at each level $ k $, away from this central stem, this is essentially true. For this intuitive reason, we must define our cochain-level coproduct map differently: prune a symmetric planar level tree at every level and tensor it with a symmetric planar level tree whose principal $ k $-blocks, as defined below in Definition \ref{def:principal block}, are the scraps of the performed prunings. To prove this statement, we need some preliminary calculations.

Suppose that a symmetric planar level tree $ T $ corresponds to $ [a_0: \dots :a_{n-1}] \in FN_{\WB{n}} $. In that case, consider the set of couples of adjacent edges $ (e,f) $ (with $ e<f$) in $ T $ having the same source vertex and belonging to the positive half-plane $ \{(x,y): x \geq 0\} $. This set in bijective correspondence with $ \{0,\dots,n-1\} $, and the height of the common vertex of the couple $ (e,f) $ corresponding to $ i $ is $ a_i $ . This bijection is explicitly given by counting the leaves in the positive half-plane that lie on the left of $ e $. For $ 0 \leq i \leq n-1 $, we denote with $ d_i(T) $ or equivalently with $ d_{e,f} $ the sum of the addends $ d^S_{e,f,\sigma} $ of the differential $ d $, as expressed in Proposition \ref{prop:tree differential}, in which a vertex shuffle constructed from the couple $ (e,f) $ corresponding to $ i $ appear. Thus, we have $ d(T) = \sum_{i=0}^{n-1} d_i(T) $.
\begin{lemma} \label{lem:pruning maps differential}
Let $ T $ be a symmetric planar level tree corresponding to $ [a_0:\dots:a_{n-1}] $. Let $ m_{k} $ be the smallest index such that $ a_{m_{k}} = k $. Let $ I $ be the trivial symmetric planar level tree. Then the following statements are true:
\begin{enumerate}
\item the pruning maps and the differential satisfy the equality
\[
P_k d + dP_k + (\id \otimes d_{0})(P_k - \id \otimes I) = (\id \otimes d_{m_{k-1}}) (\id \otimes C) (P_k \otimes \id) P_{k-1}^{\min}.
\]
\item $ P_k (T) = T \otimes I $ if $ a_i < k $ for all $ 0 \leq i < n $
\item for all $ \bm{a} = [a_0:\dots.a_{n-1}] $ and $ \bm{b} = [b_0:\dots:b_{m-1}] $ such that $ b_0 < \min\{a_0,\dots,a_{n-1}\} $, we have $ d_i C(\bm{a} \otimes \bm{b}) = C (d_i \otimes \id) (\bm{a} \otimes \bm{b}) $ for $ 0 \leq i < n $ and $ d_i C(\bm{a} \otimes \bm{b}) = C(\id \otimes d_{i-n}) (\bm{a} \otimes \bm{b}) $ for $ n < i <n+m $, and also for $ i = n $ if $ b_0 < \min\{a_0,\dots,a_{n-1}\}-1 $
\item $ (\id \otimes C) (P_k^{\min} \otimes \id) P_k (T) = P_k(T) - T \otimes I $
\item $ C (C \otimes \id) = C(\id \otimes C) $
\end{enumerate}
\end{lemma}
\begin{proof}
The statements from 2 to 5 are easy. Regarding 2, if $ a_i < k $ for all $ i $, $ T $ has no vertex of height $ k $ with more than one outgoing edge. Thus the only possible symmetric $ k $-pruning is the trivial one. Regarding 3, the bijection $ \varphi \colon \{0,\dots,n-1\} \sqcup \{0,\dots,m-1\} \to \{0,\dots,n+m-1\} $ that shifts elements of $ \{0,\dots,m-1\} $ by $ n $ yields a bijection between pairs $ (e,f) $ of adjacent edges of the symmetric planar level tree $ T $ corresponding to $ C(\bm{a} \otimes \bm{b}) $ and those of the symmetric planar level trees $ T' $ and $ T'' $ corresponding to $ \bm{a} $ and $ \bm{b} $ respectively. If $ b_0 < \min\{a_0,\dots,a_{n-1}\} $, then for all $ i \in \{0,\dots,n+m-1\} $, with the only possible exception of $ i=n $, this bijection preserves $ E(v_{e_i}) $ and $ E(v_{f_i}) $, where $ v_{e_i} $ and $ v_{f_i} $ are the target vertices of the corresponding pair of edges $ (e_i,f_i) $. The edges in $ E(v_{e_i}) $ and $ E(v_{f_i}) $ of the corresponding pair come either both from $ T' $ or both from $ T'' $. Hence $ d_i C(\bm{a} \otimes \bm{b}) = d_{\varphi^{-1}(i)} \bm{a} \otimes \bm{b} $. If $ b_0 < \min\{a_0,\dots,a_{n-1}\} -1 $ the same is also true for the edges $ e_n $ and $ f_n $, so the equality is satisfied also in this case. 4 is immediate from the definition of $ k $-prunings and the combinatorics of shuffles, and 5 is obvious.

On the contrary, 1 is more complicated and requires a more detailed proof.
As a notational convention, let $ d^h_l = \sum_{i: a_i = l} d_i $, the sum of the contributions to the differential coming from vertices at height $ k $. We compare $ d^h_l P_k(T) $ with $ P_k d^h_l(T) $. We consider different cases depending on the difference between $ k $ and $ l $.
\begin{itemize}
\item If $ l > k $, $ d^h_l $ is computed by gluing together a pair $ (e,f) $ of adjacent edges of height bigger than $ k $ (and its mirror pair) and performing a shuffle at the new target vertex. These operations only change a connected subtree whose vertices all have height bigger than $ k $, and, by construction, $ k $-prunings commute with such operations. Hence $ d^h_lP_k = P_k d^h_l $.
\item If $ l = k $, then we can write $ d^h_k P_k(T) = \sum_{(T',T'')} \sum_{(e,f)} d_{e,f} (T' \otimes S_k(T'')) $, where the sum is over symmetric $ k $-prunings $ (T',T'') $ of $ T $ and pairs of adjacent edges $ (e,f) $ in the positive half-plane with a common source vertex of height $ k $ in $ T' $ or $ S_k(T'') $. We also note that $ S_k(T'') $ has a unique vertex $ w $ of height $ k $.
There is an obvious bijection
\[
\bigsqcup_{v \in V(T): h(v) = k} E(v) \leftrightarrow \bigsqcup_{u \in V(T'): h(v) = k} E(u) \sqcup \left( E(w) \setminus \{e_0\} \right)
\]
that maps an edge to its image in $ T' $ (if it is not pruned away) or in $ S_k(T'') $ (if it is), and that arises from the fact that, for elementary prunings, $ T = T' \cup T'' \cup r(T'') $. $ e_0 $ is the unique edge belonging to the central vertical stem whose source vertex is $ w $. Moreover, this bijection preserves the properties of belonging to the positive and negative half-plane. Therefore, we can write the summation above expressing $ d^h_k P_k(T) $ as the sum of three pieces:
\begin{itemize}
\item The first piece is the sum of the terms corresponding to $ (e,f) $ such that $ (e,f) $ come from adjacent edges in $ T $. These terms correspond to symmetric $ k $-prunings of $ d^S_{e,f,\sigma}(T) $, for shuffles $ \sigma $ at the common vertex of $ e $ and $ f $. Hence, their sum yields $ P_k d^h_k(T) $.
\item The second piece is the sum of the terms corresponding to $ (e,f) $ in $ S_k(T'') $ such that $ e \not= e_0 $ and $ (e,f) $ do not come from adjacent vertices of $ T $. Under this condition, the symmetric vertex permutation $\sigma(S_k(T'')) $ of $ S_k(T'') $ at $ w $ that switched the positions of $ e $ and $ f $ still produces a shuffle of the scraps of the elementary prunings involved in $ (T',T'') $. Every tree in $ d_{(e,f)}(S_k(T'')) $ cancel out with a tree in $ d_{(f,e)}(\sigma(S_k(T''))) $. Hence, this second piece is $ 0 $.
\item The third piece is given by the terms corresponding to $ (e,f) $ with $ e = e_0 $. These terms yield $ (\id \otimes d_0) P_k(T) $.
\end{itemize}
Finally, we deduce that $ d^h_k P_k(T) = (\id \otimes d_0) P_k(T) + P_k d^h_k(T) $.
\item If $ l = k-1 $, $ P_k d^h_{k-1}(T) = \sum_{(e,f)} \sum_{(T',T'')} T' \otimes S_k(T'') $, where the sum is taken over couples $ (e,f) $ of adjacent edges in $ T $ whose common source $ v $ has height $ k-1 $, and symmetric $ k $-prunings $ (T',T'') $ of trees in $ d_{(e,f)}(T) $. Let $ v_e $ and $ v_f $ be the targets of $ e $ and $ f $, respectively. By construction, $ d_{(e,f)}(T) $ glues $ v_e $ and $ v_f $ to a single vertex $ \overline{v} $, such that $ E(\overline{v}) = E(v_e) \sqcup E(v_f) $, suitably shuffled. Let $ A $ be the set of edges removed by the corresponding elementary symmetric prunings at $ \overline{v} $ and at $ r(\overline{v}) $, the mirror vertex of $ \overline{v} $ (which might coincide).
We retroeve symmetric $ k $-prunings for which $ E(v_e) \not\subseteq A $ and $ E(v_f) \not\subseteq A $ from symmetric $ k $-prunings $ (T',T'') $ of $ T $ by applying $ d_{(e,f)} $ to $ T' $.
Now assume that $ v $ is not on the central stem of the tree. If $ e \not= \min(E(v)) $, it is the successor of an edge $ g \in E(v) $, and the terms of $ P_k d_{e,f}(T) $ for which $ E(v_e) \subseteq A $ cancel out with the terms of $ P_k d_{g,e}(T) $ for which $ E(v_e) \subseteq A $. Similarly, all the terms for which $ E(v_f) \subseteq A $ and $ f \not= \max(E(v)) $ cancel out. The only remaining terms are those in which we remove an entire subtree corresponding to $ \min(E(v)) $ (which is the mirror image of $ \max(E(r(v)) $). If $ v $ belongs to the central axis, we must slightly modify the argument to take into account only edges in the positive half-plane and shows that the surviving terms are those in which an entire subtree stemming from $ \max(E(v)) $ is removed. The sum of all these elements is exactly equal to the correcting term $ (\id \otimes d_{m_{k-1}}) (\id \otimes C) (P_k \otimes \id) P_{k-1}^{\min}(T) $.
We deduce that
\[
d^h_{k-1}P_k(T) = P_k d^h_{k-1}(T) + (\id \otimes d_{m_{k-1}}) (\id \otimes C) (P_k \otimes \id) P_{k-1}^{\min}(T).
\]
\item If $ l < k-1 $, since $ k $-prunings only depend on the part of the tree above height $ k $ and $ d^h_l $ does not change it, the same argument used for $ l > k $ shows that $ d^h_l P_k = P_k d^h_l $.
\end{itemize}
Combining the equalities obtained in these cases yields 1.
\end{proof}

We are now ready to construct a cochain representative of the coproduct map $ H^*(\WB{n}) \to \bigoplus_{i=0}^n H^*(\WB{i}) \otimes H^*(\WB{n-i}) $.

\begin{proposition} \label{prop:coprodotto geometrico}
Let $ \Delta_k \colon FN^*_{\WB{n}} \otimes \mathbb{F}_2 \to \bigoplus_{i=0}^n FN^*_{\WB{i}} \otimes FN^*_{\WB{n-i}} \otimes \mathbb{F}_2 $ be the linear maps defined recursively by the following formulas:
\begin{itemize}
\item $ \Delta_0 = P_0 $
\item for $ k > 0 $, $ \Delta_k = (\id \otimes C)(P_k \otimes \id) \Delta_{k-1} $
\end{itemize}
The following statements are true:
\begin{enumerate}
\item the limit $ \Delta = \varinjlim \Delta_k $ exists
\item $ \Delta $ is a cochain map
\item $ \Delta $ represents the cohomology coproduct map at the cochain level
\end{enumerate}
\end{proposition}
\begin{proof}
\begin{enumerate}
\item Let $ \bm{a} \in FN^*_{\WB{n}} $ and let $ m = \max\{a_0,\dots,a_{n-1}\} $. Statement 2 of Lemma \ref{lem:pruning maps differential} guarantees that $ \Delta_k(\bm{a}) = \Delta_m(\bm{a}) $ for all $ k>m $. Thus, the sequence $ \{\Delta_k\}_{k=0}^\infty $ stabilizes and consequently has a limit.
\item We first observe that Lemma \ref{lem:pruning maps differential} (4) and (5) imply that $ (\id \otimes C)(P^{\min}_k \otimes \id) \Delta_k = \Delta_k - \Delta_{k-1} $ for all $ k \geq 0 $, with the convention that $ \Delta_{-1}(T) = T \otimes I $. Combining this remark with Lemma \ref{lem:pruning maps differential} (3) and (5), we obtain that, for all $ k\geq1$,
\begin{align*}
&\-\-\-(\id \otimes C)(\id \otimes d_{m_{k-1}} \otimes \id)(\id \otimes C \otimes \id )(P_k \otimes \id \otimes \id )(P_{k-1}^{\min} \otimes \id) \Delta_{k-1} \\
&= (\id \otimes d_{m_{k-1}}) (\id \otimes C)(\id \otimes C \otimes \id)(P_k \otimes \id \otimes \id)(P_{k-1}^{\min} \otimes \id) \Delta_{k-1} \\
&= (\id \otimes d_{m_{k-1}}) (\id \otimes C)(P_k \otimes C)(P_{k-1}^{\min} \otimes \id) \Delta_{k-1} \\
&= (\id \otimes d_{m_{k-1}}) (\id \otimes C)(P_k \otimes \id)(\Delta_{k-1} - \Delta_{k-2}).
\end{align*}
We use this to prove by induction on $ k $ that $ \Delta_k d = d \Delta_k + ( \id \otimes d_0)(\Delta_k - \Delta_{k-1}) $. For $ k = 0 $ this identity is the content of the first statement of Lemma \ref{lem:pruning maps differential}. For $ k>0 $, we deduce from the identity above and the previous lemma that:
\begin{align*}
\Delta_k d &= (\id \otimes C)(P_k \otimes \id) \Delta_{k-1} d \\
&= (\id \otimes C) ( P_k \otimes \id) d \Delta_{k-1} + (\id \otimes C)(P_k \otimes d_0)(\Delta_{k-1} - \Delta_{k-2}) \\
&= (\id \otimes C)(P_k \otimes \id)d \Delta_{k-1} + (\id \otimes C)d(P_k \otimes \id)(\Delta_{k-1} - \Delta_{k-2}) \\
&\quad + (d - d_{m_{k-1}})(\id \otimes C)(P_k \otimes \id)(\Delta_{k-1} - \Delta_{k-2}) \\
&= (\id \otimes C)d (P_k \otimes \id)\Delta_{k-1} \\
&\quad + (\id \otimes C)(\id \otimes d_0 \otimes \id) [(P_k - \id \otimes I) \otimes \id] \Delta_{k-1} \\
&\quad + (\id \otimes C) d (P_k \otimes \id ) ( \Delta_{k-1} - \Delta_{k-2}) \\
&\quad + d(\id \otimes C)(P_k \otimes \id)(\Delta_{k-1} - \Delta_{k-2}) \\
&= (\id \otimes C)d (P_k \otimes \id) \Delta_{k-2} + (\id \otimes d_0)(\id \otimes C)[(P_k - \id \otimes I) \otimes \id ] \Delta_{k-1} \\
&\quad + d \Delta_k - d(1 \otimes C)(P_k \otimes \id)\Delta_{k-2} \\
&= d \Delta_k - (\id \otimes d_0)(\Delta_k - \Delta_{k-1}) + (\id \otimes C) d (P_k \otimes \id)\Delta_{k-2} \\
&\quad - d (\id \otimes C) (P_k \otimes \id) \Delta_{k-2} \\
&= d \Delta_k - (\id \otimes d_0)(\Delta_k - \Delta_{k-1}).
\end{align*}
To justify the last equality, we observe that $ (P_k \otimes \id)\Delta_{k-2} $ is a sum of terms of the form $ \bm{c} \otimes \bm{a} \otimes \bm{b} $ with $ b_0 < \min\{a_i\} - 1 $, and we apply the stronger clause of Lemma \ref{lem:pruning maps differential} (3).

Now the identity $ d \Delta = \Delta d $ follows by passing to the limit, and using that the sequence $ \{ \Delta_k \}_{k=0}^\infty $ stabilizes.
\item Consider the dg-module $ U $ over $ \Ftwo $ with basis given by symmetric planar level trees with antisymmetric labels in any finite subset $ I \subseteq \mathbb{N} $, not necessarily $ \{-n,\dots,n\} $, with the symmetric tree differential. Note that  $ \bigoplus_{n \geq 0} \widetilde{FN}^*_{\WB{n}} \otimes \mathbb{F}_2 $ embeds in $ U $ in the obvious way. We observe that the linear maps $ P_k $, $ P^{\min}_k $, and $ C $ lift to linear maps $ \tilde{P}_k, \tilde{P}^{\min}_k \colon U \to U \otimes U $ and $ \tilde{C} \colon U \otimes U \to U $. $ \tilde{P}_k $ and $ \tilde{P}^{\min}_k $ are still defined via prunings, but we additionally keep track of the labels of the subtrees involved. We compute $ \tilde{C} $ on $ T' \otimes T'' $ by splitting $ T'' $ symmetrically along the vertical axis, keeping labels, and symmetrically attach the two parts to $ T' $ to obtain a new basis element of $ U $. Lemma \ref{lem:pruning maps differential} still holds for this labeled version of the morphisms by the same proof. Consequently, there is a labeled version $ \tilde{\Delta} \colon U \to U \otimes U $ of $ \Delta $, constructed recursively via finite approximations $ \tilde{\Delta}_k $, that still commutes with the differential.
Note that we can also embed $ \widetilde{FN}^*_{\WB{n}} \otimes \widetilde{FN}^*_{\WB{m}} \otimes \mathbb{F}_2 $ into $ U \otimes U $ by  keeping the labels of trees in $ \widetilde{FN}^*_{\WB{n}} $ and relabeling trees in $ \widetilde{FN}^*_{\WB{m}} $ via the bijection $ \{0,\dots,m-1\} \to \{n,\dots,n+m-1\} $ that raises numbers by $ n $. There is also a projection $ U \times U \to \widetilde{FN}^*_{\WB{n}} \otimes \widetilde{FN}^*_{\WB{m}} \otimes \mathbb{F}_2 $ that maps every basis element of $ U \otimes U $ that do not belong to $ \widetilde{FN}^*_{\WB{n}} \otimes \widetilde{FN}^*_{\WB{m}} \otimes \mathbb{F}_2 $ to $ 0 $.
By induction, we easily see that restricting each $ \tilde{\Delta}_k $ for all $ k $ (and, consequently, $ \tilde{\Delta} $) to $ \widetilde{FN}^*_{\WB{n}} \otimes \widetilde{FN}^*_{\WB{m}} \otimes \mathbb{F}_2 $ and composing with this projection we obtain linear maps $ \bigoplus_{n \geq 0} \widetilde{FN}^*_{\WB{n}} \otimes \mathbb{F}_2 \to \bigoplus_{n \geq 0} \widetilde{FN}^*_{\WB{n}} \otimes \bigoplus_{n \geq 0} \widetilde{FN}^*_{\WB{n}} \otimes \mathbb{F}_2 $ that are equivariant with respect to the monomorphisms $ \WB{n} \times \WB{m} \to \WB{n+m} $ and satisfy the same formal relation with respect to the differential.
By identifying $ FN^*_{\WB{n}} $ with the invariant subspace $ (\widetilde{FN}^*_{\WB{n}})^{\WB{n}} $, the limit map $ \tilde{\Delta} $ restricts to $ \Delta $, which is thus a cochain-level realization of the coproduct map.
\end{enumerate}
\end{proof}

We now turn our attention to the transfer map. We need a preliminary definition.
\begin{definition}{\rm (partially from \cite{Sinha:13})} $ \, \, \, $ \label{def:principal block}
Let $ \bm{a} = \left[ a_0: \dots: a_{n-1} \right] \in FN^*_{\WB{n}} $ be as defined above. In what follows, we assume, by convention, that $ a_{-1} = a_{n} = 0 $. We say that the chain $ \left[ a_i: \dots: a_j \right] $ is a \emph{$ k $-block} of $ \bm{a} $ if $ \forall i \leq l \leq j: \mbox{ } a_l > k $ and $ \max\left\{a_{i-1},a_{j+1} \right\} \leq k $. We say that a $ k $-block $ \left[a_i: \dots: a_j \right] $ of $ \bm{a}  $ is \emph{principal} if, in addition, $ \min_{0 \leq r < i} a_r = k $. We denote by $ \PBl_k(\bm{a}) $ the tuple of the principal $ k $-blocks of $ \bm{a} $, ordered from left to right.
\end{definition}
For example, $ \bm{a} = \left[ 3:2:3:1:2:1:3:2:0:3 \right] $ has four $ 1 $-blocks: $ B_{1,1} = \left[ 3:2:3 \right] $, $ B_{1,2} = \left[ 2 \right] $, $ B_{1,3} = \left[3:2\right] $, and $ B_{1,4} = \left[3\right] $. $ \PBl_1(\bm{a}) = (B_{1,2},B_{1,3}) $.

Note that a basis element $ \bm{a} $ is uniquely determined by $ \{ \PBl_k(\bm{a}) \}_{k=0}^\infty $, the collection of its principal blocks. To retrieve $ \bm{a} $ from these data, we can use the following procedure. First, for all $ k  \geq 0 $, add an entry equal to $ k $ before each principal $ k $-block and concatenate all such tuples to obtain $ \bm{a}_k $. Then, we obtain $ \bm{a} $ as the concatenation of $ \dots, \bm{a}_k, \bm{a}_{k-1},\dots, \bm{a}_0 $. This sequence is necessarily finite because for $ k > \max_{i=0}^{n-1} a_i $ $ \PBl(\bm{a}) $ is the empty $ 0 $-uple.
With this method, we can construct a basis element $ \bm{a} $ from an eventually empty collection of tuples $ \{ B_k \} $, where the entries of $ B_k $ are tuples of natural numbers strictly bigger than $ k $. 

We also observe that $ k $-blocks can be retrieved from the corresponding symmetric level tree $ T $. They are given by the connected components of $ T \cap \{(x,y) \in \mathbb{R}^2: x \geq 0, y > k \} $. Interpreted this way, a $ k $-block is principal if and only if it does not intersect the central vertical axis but is contained in the $ (k-1) $-block intersecting it.

\begin{proposition} \label{prop:transfer geometrico B}
Given $ \bm{a} \in FN^*_{\WB{n}} $, $ \bm{b} \in FN^*_{\WB{m}} $ and $ k \geq 0 $, let $ n_{a,k} $ and $ n_{b,k} $ be the lengths of $ \PBl_k(\bm{a}) $ and $ \PBl_k(\bm{b}) $, respectively. Given a sequence $ \underline{\sigma} = \{ \sigma_k \}_{k=0}^{\infty} $ of permutations $ \sigma_k \in \Sigma_{n_{a,k}+n_{b,k}} $, define $ \underline{\sigma}(\bm{a},\bm{b}) $ as the unique basis elements of $ FN^*_{\WB{n+m}} $ such that, for all $ k \geq 0 $, $ \PBl_k(\underline{\sigma}(\bm{a},\bm{b})) = \sigma_k(\PBl_k(\bm{a}),\PBl_k(\bm{b})) $, where $ ( \PBl_k(\bm{a}), \PBl_k(\bm{b})) $ is the concatenated $ (n_{a,k}+n_{b,k}) $-tuple and $ \sigma_k $ acts on $ (n_{a,k}+n_{b,k}) $-tuples by permuting the entries.
Let $ \odot \colon FN^*_{\WB{n}} \otimes FN^*_{\WB{m}} \otimes \Ftwo \rightarrow FN^*_{\WB{n+m}} \otimes \Ftwo $ be the homomorphism that maps $ \bm{a} \otimes \bm{b} $ to the sum $ \sum_{\underline{\sigma}}\underline{\sigma}(\bm{a},\bm{b}) $ over sequences of permutations $ \underline{\sigma} = \{ \sigma_k \}_{k=0}^\infty $ such that $ \sigma_k $ is a $ (n_{a,k},n_{b,k}) $-shuffle for all $ k \geq 0 $. Informally, $ \bm{a} \odot \bm{b} $ is the sum of basis elements whose principal $ k $-blocks are obtained by shuffling the principal $ k $-blocks of $ \bm{a} $ and $ \bm{b} $.
This defines a morphism of complexes that induces the transfer product in cohomology.
\end{proposition}
\begin{proof}
The reflection arrangement of $ \WB{n} \times \WB{m} $, with its product reflection action on $ \mathbb{R}^n \times \mathbb{R}^m $, is $ \mathcal{A}_{B_n} \times \mathcal{A}_{B_m} = \{ H \times \mathbb{R}^m \}_{H \in \mathcal{A}_{B_n}} \cup \{ \mathbb{R}^n \times H' \}_{H' \in \mathcal{A}_{B_m}} $. Being it a sub-arrangement of $ \mathcal{A}_{B_{n+m}} $, we have a natural inclusion $ Y^{(\infty)}_{\WB{n+m}} \to Y^{(\infty)}_{\mathcal{A}_{B_n} \times \mathcal{A}_{B_m}} \cong Y^{(\infty)}_{\WB{n}} \times Y^{(\infty)}_{\WB{m}} $. We can explicitly obtain such inclusion by splitting a configuration of $ n+m $ points into the two sub-configurations consisting of its first $ n $ points and its last $ m $ points, respectively, and relabeling the indices of the second one. This map is a $ \WB{n} \times \WB{m} $-equivariant homotopy equivalence.

Therefore, passing to quotients, this yields a map $ \pi \colon \frac{ Y^{(\infty)}_{\WB{n+m}} }{ \WB{n} \times \WB{m} } \to \frac{ Y^{(\infty)}_{\WB{n}} }{ \WB{n} } \times \frac{ Y^{(\infty)}_{\WB{m}} }{ \WB{m} } $ that models the standard homotopy equivalence $ B \left( \WB{n} \times \WB{m} \right) \simeq B \left( \WB{n} \right) \times B \left( \WB{m} \right) $.
Moreover, the obvious quotient map $ \pi' \colon \frac{ Y^{(\infty)}_{\WB{n+m}} }{ \WB{n} \times \WB{m} } \to \frac{ Y^{(\infty)}_{\WB{n+m}} }{ \WB{n+m} } $ is a covering model for $ B \left( \WB{n} \times \WB{m} \right) \to B \left( \WB{n+m} \right) $.

Let $ x = \left[ a_0: \dots: a_{n-1} \right] \otimes \left[ b_0: \dots: b_{m-1} \right] $ be a basis element for the Fox-Neuwirth complex $ FN^*_{B_n} \otimes FN^*_{B_m} \otimes \Ftwo $. Let $ \sigma $ be a smooth singular simplex transverse to our strata. By construction, the evaluation of $ \left[a_0: \dots:a_{n-1} \right] \odot \left[ b_0: \dots: b_{m-1} \right] $ on $ \sigma $ is the sum of the evaluations of $ x $ on $ \pi \left( \tilde{\sigma} \right) $, as $ \tilde{\sigma} $ varies among all liftings of $ \sigma $. A direct calculation shows that some $ \pi \left( \tilde{\sigma} \right) $ intersects the stratum corresponding to $ x $ if and only if $ \sigma $ intersects some stratum $ e \left( \bm{c} \right) $, where the $ k $-principal blocks of $ \bm{c} $ are obtained by shuffling the $ k $-principal blocks of $ \bm{a} $ and $ \bm{b} $. This proves the lemma.
\end{proof}

We conclude the treatment of the structural maps on the cohomology of $ \WB{n} $ with some potentially helpful remarks. Since we will not use these facts in this paper, we will not provide complete statements nor proofs of these last claims. Nevertheless, it should be straightforward, although notationally heavy, to fill in the details.
\begin{remark}
\begin{enumerate}
\item The transfer and coproduct maps commute already at the cochain level. To see this, you can observe that, by construction, $ \Delta(\bm{a}) $ is a sum of tensors $ \bm{a}' \otimes \bm{a}'' $ where $ \PBl_k(\bm{a}'') $ is given by the leftovers of symmetric $ k $-prunings of $ \bm{a} $, suitably shuffled, and that the pruning map $ P_k $ itself commute with $ \odot $.
\item The same constructions of the coproduct map in terms of prunings and the transfer map in terms of principal block shuffles can be generalized to the cohomology with integral coefficients. In these cases, additional signs that we can compute from those appearing in \ref{teo:Salvetti} are required.
\end{enumerate}
\end{remark}

\subsection{Structural morphisms: $ A_D $}

The coproduct and the transfer product for $ \WD{n} $ are described geometrically, similarly to what we did for $ \WB{n} $. However, some complications arise. For example, we can not repeat the proof of Proposition \ref{prop:transfer geometrico B} as it is for $ FN^*_{\WD{n}} $, because, in this case, a product of strata $ S\times S'\subseteq Y^{(\infty)}_{\WD{n}} \times Y^{(\infty)}_{\WD{m}} $ is not necessarily the closure of a union of strata in $ Y^{(\infty)}_{\WD{n+m}} $.
However, these ideas adapt well to the cochain complex $ {FN'}_{\WD{n}}^* $, which we will use in the following as a cochain model. We can retrieve the identities we need in $ FN^*_{\WD{n}} $ by using the equivalence $ \varphi $ of Lemma \ref{lem:CHE}.

We can now state the formulas parallel to Proposition \ref{prop:coprodotto geometrico} and Proposition \ref{prop:transfer geometrico B} for $ \WD{n} $.
First, we consider the following oriented versions of the pruning and concatenation maps. Given a symmetric $ k $-pruning $ (T',T'') $ of a symmetric planar level tree $ T $, let $ \mathcal{O} $ and $ \mathcal{O}' $ be orientations of $ T $ and $ T' $ respectively. Fix an antisymmetric labeling $ \lambda' $ of $ T' $ inducing $ \mathcal{O}' $, and an antisymmetric labeling $ \lambda $ of $ T $ inducing $ \mathcal{O} $ such that its restriction to $ T' $, seen as a subtree of $ T $, is $ \lambda' $. By keeping track of the labels of scraps, $ \lambda $ induces an antisymmetric labeling $ \lambda'' $ on $ S_k(T'') $ and, consequently, an orientation $ \mathcal{O}'' $. Unless the $ k $-pruning is trivial, it is always possible to find such labelings $ \lambda $ and $ \lambda' $, and the resulting orientation $ \mathcal{O}'' $ only depends on $ \mathcal{O} $ and $ \mathcal{O}' $.
\begin{definition}
Let $ k \in \mathbb{N} $ and let $ (T,\mathcal{O}) $ be an oriented symmetric planar level tree. An \emph{oriented $ k $-pruning} of $ (T,\mathcal{O}) $ is a quadruple $ (T', \mathcal{O}',T'',\mathcal{O''}) $ where:
\begin{itemize}
\item $ (T',T'') $ is a $ k $-pruning of $ T $
\item $ \mathcal{O}' $ is an orientation of $ T' $
\item $ \mathcal{O}'' $ is the orientation of $ S_k(T'') $ determined from $ \mathcal{O} $ and $ \mathcal{O}' $ via the procedure above
\end{itemize}
An oriented $ k $-pruning of $ (T',\mathcal{O}',T'',\mathcal{O}'') $ of $ T $ is called \emph{positive} (respectively \emph{negative}) if $ \mathcal{O}' $ is the positive (respectively negative) orientation of $ T' $.
\end{definition}
Given a non-trivial $ k $-pruning $ (T',T'') $ of $ T $, there are precisely two ways to extend it to an oriented $ k $-pruning $ (T',\mathcal{O},T'',\mathcal{O}'') $, one positive and one negative.

We now mimic the construction we produced for $ \WB{n} $ to describe the coproduct. We thus consider the following maps:
\begin{itemize}
\item the positive and negative $ k $-pruning maps $ P^{+}_k, P^{-}_k \colon \bigoplus_{n \geq 0} {FN'}^*_{\WD{n}} \otimes \Ftwo \to \bigoplus_{n \geq 0} {FN'}^*_{\WD{n}} \otimes \bigoplus_{m \geq 0} {FN'}^*_{\WD{m}} \otimes \Ftwo $ given by the formula $ P^{\pm}_k (T) = \sum_{(T',\mathcal{O}',T'',\mathcal{O}'')} (T',\mathcal{O}') \otimes (S_k(T''),\mathcal{O}'') $, where the sum runs over all positive and negative oriented $ k $-prunings of $ T $, respectively
\item $ \hat{C} \colon {FN'}^*_{\WD{n}} \otimes {FN'}^*_{\WD{m}} \otimes \Ftwo \to {FN'}^*_{\WD{n+m}} \otimes \Ftwo $, the oriented concatenation map,  given by the formulas $ \hat{C} \left((\bm{a},+) \otimes (\bm{b},+) \right) = \left(C(\bm{a} \otimes \bm{b}),+\right) $, $ \hat{C} \left((\bm{a},+) \otimes (\bm{b},-) \right) = \left(C(\bm{a} \otimes \bm{b}),-\right) $, $ \hat{C} \left((\bm{a},-) \otimes (\bm{b},+) \right) = \left(C(\bm{a} \otimes \bm{b}),-\right) $, $ \hat{C} \left((\bm{a},-) \otimes (\bm{b},-) \right) = \left(C(\bm{a} \otimes \bm{b}),+\right) $.
\end{itemize}
We can also define $ \Delta^{+}_k, \Delta^{-}_k \colon {FN'}^*_{\WD{n}} \otimes \Ftwo \to \bigoplus_{i=0}^n {FN'}^*_{\WD{i}} \otimes {FN'}^*_{\WD{n-i}} \otimes \Ftwo $ by the recursive formulas
\begin{itemize}
\item $ \Delta^{\pm}_0 = P^{\pm}_0 $,
\item $ \Delta^{\pm}_k = (\id \otimes \hat{C}) ( P^{\pm}_k \otimes \id) \Delta^{\pm}_{k-1} $ if $ k \geq 1 $.
\end{itemize}
Let $ \Delta $ be the direct limit $ \varinjlim_k (\Delta_k^{+} + \Delta_k^{-}) $.

\begin{proposition} \label{prop:coprodotto geometrico D}
The oriented pruning coproduct $ \Delta $ is a cochain map and induces the coproduct $ \Delta \colon A_D \to A_D \otimes A_D $ in cohomology.
\end{proposition}
\begin{proof}
It is enough to observe that, looking at the proof of Proposition \ref{prop:coprodotto geometrico}, we can obtain the map $ \Delta \colon {FN'}^*_{\WD{n}} \otimes \Ftwo \to \bigoplus_{i=0}^n {FN'}^*_{\WD{i}} \otimes {FN'}^*_{\WD{n-i}} \otimes \Ftwo $ from $ \tilde{\Delta} \colon \widetilde{FN}^*_{\WB{n}} \otimes \Ftwo \to \bigoplus_{i=0}^n \widetilde{FN}^*_{\WB{i}} \otimes \widetilde{FN}^*_{\WB{n-i}} \otimes \Ftwo $ by restricting to $ \WD{n} $-invariants.
\end{proof}

\begin{proposition} \label{prop:transfer geometrico D}
Let $ \bm{a}^{\pm} $ and $ \bm{b}^{\pm} $ be generic basis elements of $ {FN'}^*_{\WD{n}} $ and $ {FN'}^*_{\WD{m}} $ respectively, where $ \bm{a} $ (respectively $ \bm{b} $) is defined by an $ n $-tuple $ \underline{a} $ (respectively an $ m $-tuple $ \underline{b} $) of non-negative integers.
Let $ \odot \colon {FN'}^*_{\WD{n}} \otimes {FN'}^*_{D_m} \otimes \Ftwo \rightarrow {FN'}^*_{\WD{n+m}} \otimes \Ftwo $ be the homomorphism that maps $ \bm{a}^{\pm} \otimes \bm{b}^{\pm} $ to the sum of all elements $ \bm{c}^{\pm} $, such that the principal $ k $-blocks of $ \bm{c} $ are obtained by shuffling the principal $ k $-blocks of $ \bm{a} $ and $ \bm{b} $ for all $ k \geq 0 $, and the sign of $ \bm{c} $ is deduced from the signs of $ \bm{a} $ and $ \bm{b} $ by applying the multiplication sign rule $ (+,+) \mapsto + $, $ (+,-) \mapsto - $, $(-,+) \mapsto - $, and $ (-,-) \mapsto + $.
This map is a morphism of complexes and induces the transfer product in cohomology.
\end{proposition}
\begin{proof}
The proof is essentially the same as that of Proposition \ref{prop:transfer geometrico B}.
\end{proof}

\section{The almost-Hopf ring presentations}

This section contains the statements of the Hopf ring presentation for $ A_B $ and the almost-Hopf ring presentation for $ A_D $. We thus state our main theorems, whose proof will be postponed until Section \ref{sec:proof} because we still need to develop some necessary algebraic machinery.
In the first subsection, we construct our generators, providing cochain representatives and a geometric interpretation. In the second one, we explain our relations and state Theorems \ref{teo:Bn} and \ref{teo:Dn}. We then apply these results to extract combinatorially accessible additive bases for $ A_B $ and $ A_D $ in Subsection \ref{sec:RelAdd}. Finally, the last subsection is devoted to the link between all these almost-Hopf ring structures.

\subsection{Generators}

We define certain cohomology classes that we will later prove to generate our (almost-)Hopf rings.
We begin with $ A_B $.
\begin{definition} \label{def:cochain generators B}
In $ FN^*_{\WB{n}} $, the following cochains are defined for $ k \geq 0 $, $ m > 0 $, and $ n > 0 $:
\begin{itemize}
\item $ \displaystyle \gamma_{k,m} = [0:\underbrace{\underbrace{1:1:\dots:1}_{2^k -1 \mbox{ times }}: 0: \underbrace{1:1:\dots:1}_{2^{k}-1 \mbox{ times}}: 0: \dots: 0: \underbrace{1:1:\dots:1}_{2^k-1 \mbox{ times}}}_{m \mbox{ times}}] $
\item $ \displaystyle \delta_n = [ \underbrace{1:1: \dots: 1}_{n \mbox{ times }} ] $
\end{itemize}
\end{definition}

A direct calculation shows that both $ \gamma_{k,m} $ and $ \delta_n $ have trivial differential, and thus define cohomology classes $ \gamma_{k,n} \in H^{m(2^k-1)}(\WB{m2^k},\Ftwo) $ and $ \delta_n \in H^n(\WB{n}; \Ftwo) $, that we still denote, with a slight abuse of notation, with the same symbols. While the proof of this fact is entirely straightforward, we provide a proof for the sake of completeness.
\begin{lemma} \label{lem:coboundary generators}
$ \gamma_{k,m} $ and $ \delta_n $ are cocycles in $ FN^*_{\WB{m2^k}} \otimes \Ftwo $ and $ FN^*_{\WB{n}} \otimes \Ftwo $, respectively.
\end{lemma}
\begin{proof}
$ \gamma_{k,m} $ is represented by the symmetric planar level tree in figure \ref{fig:gamma tree}.
We prove that $ d_i(\gamma_{k,m}) = 0 $ for all $ 0 \leq i < m2^k $ by considering different cases:
\begin{itemize}
\item If $ i \not= l2^k $, $ 0 \leq l < m $, the addend $ d_i $ of the differential identifies two edges adjacent in a vertex $ v_j $, $ 1 \leq j \leq m $, and performs a vertex shuffle at the new vertex. Exactly two possible vertex shuffles yield the same tree. Hence $ d_i(\gamma_{k,m}) = 0 $.
\item If $ i = l2^k $ for some $ 1 \leq l < m $, then $ d_i(\gamma_{k,m}) $ is obtained by gluing together $ v_l $ and $ v_{l+1} $ and shuffling the outgoing edges of these two vertices. Since all these shuffles yield the same tree, and there is an even number of them (precisely $ \left( \begin{array}{c} 2^{k+1} \\ 2^k \end{array} \right) $), we have again that $ d_i(\gamma_{k,m}) = 0 $.
\item If $ i = 0 $, $ v_1 $ and its mirror vertex are glued to the central axis of the tree, and the corresponding outgoing edges are permuted with a symmetric shuffle. Again, there is an even number of them (precisely $ 2^{2^k} $), and thus $ d_0(\gamma_{k,m}) = 0 $.
\end{itemize}

$ \delta_n $ is represented by a symmetric planar level tree with $ 2n+1 $ leaves and a single internal vertex of height $ 1 $. The same proof used in the second case of $ \gamma_{k,m} $ shows that $ d_i (\delta_n) = 0 $ for all $ 0 \leq i < n $.
\end{proof}

\begin{figure}[h]
\caption{The planar symmetric level tree representing $ \gamma_{k,m} $.}\label{fig:gamma tree}
\begin{center}
\begin{tikzpicture}[line cap=round,line join=round,>=triangle 45,x=1cm,y=1cm]
\begin{axis}[
x=1cm,y=1cm,
axis lines=middle,
grid style=dashed,
ymajorgrids=true,
xmajorgrids=false,
xmin=-4,
xmax=4,
ymin=-0.9,
ymax=3.5,
xtick=\empty,
yticklabels={,,}
]
\draw [line width=1pt] (0,2)-- (0,0);
\draw [line width=1pt] (1,1)-- (0,0);
\draw [line width=1pt] (0,0)-- (3,1);
\draw [line width=1pt] (0.5,2)-- (1,1);
\draw [line width=1pt] (1,2)-- (1,1);
\draw [line width=1pt] (1,1)-- (1.5,2);
\draw [line width=1pt] (2.5,2)-- (3,1);
\draw [line width=1pt] (3,1)-- (3,2);
\draw [line width=1pt] (3.5,2)-- (3,1);
\draw [line width=1pt] (-0.5,2)-- (-1,1);
\draw [line width=1pt] (-1,2)-- (-1,1);
\draw [line width=1pt] (-1,1)-- (-1.5,2);
\draw [line width=1pt] (-1,1)-- (0,0);
\draw [line width=1pt] (0,0)-- (-3,1);
\draw [line width=1pt] (-2.5,2)-- (-3,1);
\draw [line width=1pt] (-3,1)-- (-3,2);
\draw [line width=1pt] (-3.5,2)-- (-3,1);
\draw (1.65,2.29) node[anchor=north west] {$\dots$};
\draw (0.35,2.95) node[anchor=north west] {$\overbrace{\quad\quad\quad}^{2^k}$};
\draw (2.35,2.95) node[anchor=north west] {$\overbrace{\quad\quad\quad}^{2^k}$};
\draw (-1.65,2.95) node[anchor=north west] {$\overbrace{\quad\quad\quad}^{2^k}$};
\draw (-3.65,2.95) node[anchor=north west] {$\overbrace{\quad\quad\quad}^{2^k}$};
\draw (-2.35,2.29) node[anchor=north west] {$\dots$};
\draw (0.1,3.4) node[anchor=north west] {$\overbrace{\quad\quad\quad\quad\quad\quad\quad\quad\quad}^{m}$};
\draw (-3.9,3.4) node[anchor=north west] {$\overbrace{\quad\quad\quad\quad\quad\quad\quad\quad\quad}^{m}$};
\draw (1,1) node[anchor=north west] {$v_1$};
\draw (3,1) node[anchor=north west] {$v_m$};
\draw (0,0) node[anchor=north west] {$v_0$};
\begin{scriptsize}
\draw [fill=black] (0,0) circle (1.5pt);
\draw [fill=black] (0,1) circle (1.5pt);
\draw [fill=black] (1,1) circle (1.5pt);
\draw [fill=black] (3,1) circle (1.5pt);
\draw [fill=black] (-3,1) circle (1.5pt);
\draw [fill=black] (-1,1) circle (1.5pt);
\end{scriptsize}
\end{axis}
\end{tikzpicture}
\end{center}
\end{figure}

Another possible point of confusion is that the symbol $ \gamma_{k,m} $ is used in \cite{Sinha:13} to indicate a class in $ H^{m(2^k-1)}(\Sigma_{m2^k}; \Ftwo) $. The class we define is the image of this cohomology class of the symmetric group in $ H^{m(2^k-1)}(\WB{m2^k};\Ftwo) $ via the map induced by the projection $ \pi \colon \WB{m2^k} \to \Sigma_{m2^k} $, as we will prove later (Proposition \ref{prop:BS}).

We can interpret all the cohomology classes that we defined above geometrically.
\begin{proposition}\label{prop:geometry generators}
The following statements are true:
\begin{enumerate}
\item Consider the proper submanifold $ \Gamma_{k,m} $ of $ \frac{Y^{(\infty)}_{\WB{m2^k}}}{\WB{m2^k}} $ consisting of $ 2^m $ points that can be partitioned into $ m $ sets of $ 2^k $ points, where all the points in the same subset share the first coordinate. Then $ \gamma_{k,m} $ is the Thom class of $ \Gamma_{k,m} $ in $ Y^{(\infty)}_{\WB{m2^k}} $, in the sense of \cite[Definition 4.6]{Sinha:12}.
\item Consider the vector bundle $ \eta \colon E(\WB{n}) \times_{\WB{n}} \mathbb{R}^n \to B(\WB{n}) $, where $ \WB{n} $ acts on $ \mathbb{R}^n $ via its irreducible reflection representation. Then $ \delta_n $ is the $ n $-dimensional Stiefel-Whitney class of $ \eta $ (the non-oriented version of the Euler class).
\end{enumerate}
\end{proposition}
\begin{proof}
The description of $ \gamma_{k,n} $ is a direct consequence of the conclusions of the geometric arguments of the previous section.

Regarding the second point, consider the vector bundle $ \eta \colon E(\eta) \to B(\eta) $ above, with zero section $ \sigma_0 \colon B(\eta) \to E(\eta) $, and let $ T(\eta) \in H^n(E(\eta),E(\eta) \setminus \sigma_0(B(\eta))) $ be its Thom class. Define
\[
	\tilde{X}_n = \left\{ \left( \underline{x}_1, \dots, \underline{x}_n \right) \in Y^{(\infty)}_{B_n}: \left( x_1 \right)_1 = \dots = \left( x_n \right)_1 = 0 \right\}.
\]
We observe that $ \tilde{X}_n $ is a proper submanifold of $ Y^{(\infty)}_{\WB{n}} $ and that the Thom class of the image $ X_n $ of $ \tilde{X}_n $ in $ \frac{Y^{(\infty)}}{\WB{n}} $ is $ \delta_n $.
We observe that the normal bundle of $ X_n $ in $ \frac{Y^{(\infty)}}{\WB{n}} $ is isomorphic to $ \eta|_{X_n} $.
Since restriction of vector bundles to subspaces preserve Thom classes, we deduce that, if we take $ \frac{Y^{(\infty)}}{\WB{n}} $ as a model for $ B(\WB{n}) $, then $ j^* (l^*)^{-1} k^*(T(\eta)) = \delta_n $, where:
\begin{itemize}
\item $ k \colon \left(E(\eta|_{X_n}),E(\eta|_{X_n}) \setminus \sigma_0(X_n)\right) \to \left( E(\eta), E(\eta) \setminus \sigma_0(B(\eta)) \right) $,
\item $ l \colon \left(E(\eta|_{X_n}),E(\eta|_{X_n}) \setminus \sigma_0(X_n)\right) \to \left( B(\WB{n}), B(\WB{n}) \setminus X_n \right) $ is a tubular neighborhood of $ X_n $ in $ B(\eta) $,
\item and $ j \colon \left(B(\WB{n}),\varnothing\right) \to \left( B(\WB{n}), B(\WB{n}) \setminus X_n \right) $.
\end{itemize}
Note that the induced map $ l^* $ in cohomology is invertible by excision.

Let $ \Phi \colon H^* \left( B \left( \eta \right); \Ftwo \right) \rightarrow H^* \left( E \left( \eta \right), E \left( \eta \right) \setminus \sigma_0 \left( B \left( \eta \right) \right); \Ftwo \right) $ be the Thom isomorphism. We recall that $ \Phi(\alpha) = p^*(\alpha) \cup T(\eta) $, where $ p \colon E(\eta) \to B(\eta) $ is the projection.
We know, for instance from Milnor and Stasheff's book \cite[page 91]{Milnor-Stasheff}, that the Thom class $ T \left( \eta \right) $ and $ w_n \left( \eta \right) $ are linked by the formula:
\[
	w_n \left( \eta \right) = \Phi^{-1} \left( \Sq^n \left( T \left( \eta \right) \right) \right) = \Phi^{-1} \left( T \left( \eta \right)^2 \right)
\]
Hence, to prove that $ w_n \left( \eta \right) = \delta_n $, it is sufficient to show that $ i^* \left( T \left( \eta \right) \right) = p^*j^*(l^*)^{-1}k^* \left( T \left( \eta \right) \right) $, where $ i $ is the obvious inclusion map $ i \colon \left( E \left( \eta \right), \varnothing \right) \rightarrow \left( E \left( \eta \right), E \left( \eta \right) \setminus \sigma_0 \left( B \left( \eta \right) \right) \right) $.

To prove this claim, we first observe that we can use a slightly different model for $ B(\eta) $. We recall that there is a tubular neighborhood $ \tilde{N} $ of $ \tilde{X}_n $ in $ Y^{(\infty)}_{\WB{n}} $ determined by an embedding of the total space of the normal bundle. Explicitly, we can define the embedding by the formula
\[
(\underline{x}_1,\dots,\underline{x}_n) \times (\lambda_1,\dots,\lambda_n) \in \tilde{X}_n \times \mathbb{R}^n \mapsto (\underline{x}_1 + \lambda_1 \bm{e}_1, \dots, \underline{x}_n + \lambda_n \bm{e}_1),
\]
where $ \bm{e}_1 $ is the first element of the canonical basis of $ \mathbb{R}^\infty $. Hence
\begin{align*}
\tilde{N} = \Big\{ (\underline{x}_1,\dots,\underline{x}_n): &\forall 1 \leq i < j \leq n: (\underline{x}_i - (x_{i})_{1} \bm{e}_1) \not= \pm (\underline{x}_j - (x_{j})_{1} \bm{e}_1), \\
&\forall 1 \leq i \leq n: (\underline{x}_i - (x_{i})_{1} \bm{e}_1) \not= 0 \Big\}.
\end{align*}
Note that the action of $ \WB{n} $ preserves $ \tilde{N} $, that $ \tilde{N} $ is provided with a stratification induced from that on $ Y^{(\infty)}_{\WB{n}} $ by restriction. Moreover, every stratum of $ N $ is obtained from a stratum of $ Y^{(\infty)}_{\WB{n}} $ by removing an infinite-codimensional affine subspace. It follows that $ \tilde{N} \to Y^{(\infty)}_{\WB{n}} $ is a homotopy equivalence. $ \tilde{N} $ is still contractible, and therefore we can use its quotient $ N = \frac{\tilde{N}}{\WB{n}} $ as an alternative model for $ B(\WB{n}) $. In this model, the inclusion $ l $ is an isomorphism. Thus we do not need to worry about excision maps, and this simplifies the argument.
The claim now follows by observing that $ i $ and $ kjp $ are homotopic. An explicit homotopy is the following:
\begin{align*}
&\left((\underline{x}_1,\dots,\underline{x}_n) , \underline{\lambda} , t \right) \in \tilde{N} \times_{\WB{n}} \mathbb{R}^n \times [0,1] \mapsto \\
&\left( (\underline{x}_1-(1-t)(x_{1})_{1} \bm{e}_1,\dots,\underline{x}_n-(1-t)(x_{n})_{1} \bm{e}_1), ((1-t)\underline{\lambda} + t ((x_{1})_{1},\dots,(x_{n})_{1})) \right)
\end{align*}
\end{proof}

We now turn our attention to $ \WD{n} $.
First, we give the following definition.
\begin{definition}
Let $ n \geq 1 $, $ m \geq 0 $. We define $ \delta_{n;m}^0 \in H^{n} \left( \WD{n+m}; \Ftwo \right) $ as the restriction of $ \delta_n \odot 1_m \in H^* \left( \WB{n+m}; \Ftwo \right) $ to the cohomology of $ \WD{n+m} $. We also let $ \delta_{0:m}^0 $ be the unique non-zero class in $ H^0 \left( \WD{m}; \Ftwo \right) $ for all $ m \geq 0 $.
\end{definition}

We will require some other generators that do not arise as restrictions of cohomology classes of $ \WB{n} $.

\begin{definition}
Given $ k,m \geq 1 $, we define two cochains in $ FN^{m2^k}_{\WD{n}} $:
\begin{itemize}
\item $ \gamma_{k,m}^+ = [ 0: \underbrace{\underbrace{1: \dots: 1}_{2^k-1 \mbox{ times}}: 0: \underbrace{1: \dots: 1}_{2^k-1 \mbox{ times}}: 0: \dots: \underbrace{1: \dots:1}_{2^k-1 \mbox{ times}}}_{m \mbox{ times}} ] $
\item $ \gamma_{k,m}^- = [ 1: 0 : \underbrace{\underbrace{1: \dots:1 }_{2^k-2 \mbox{ times}}: 0: \underbrace{1: \dots:1}_{2^k-1 \mbox{ times}}: 0: \dots: \underbrace{1: \dots: 1}_{2^k-1 \mbox{ times}}}_{m \mbox{ times}} ] $
\end{itemize}
\end{definition}

\begin{lemma}
$ \gamma_{k,m}^+ $ and $ \gamma_{k,m}^- $ are cocycles.
\end{lemma}
\begin{proof}
The cochain equivalence $ \varphi^* $ of Lemma \ref{lem:CHE} maps $ \gamma_{k,m}^{\pm} $ to
\[
[0:\underbrace{\underbrace{1:1:\dots:1}_{2^k -1 \mbox{ times }}: 0: \underbrace{1:1:\dots:1}_{2^{k}-1 \mbox{ times}}: 0: \dots: 0: \underbrace{1:1:\dots:1}_{2^k-1 \mbox{ times}}}_{m \mbox{ times}}]^{\pm}.
\]
The same proof used for Lemma \ref{lem:coboundary generators}, with the additional requirement of keeping track with orientations, shows that these cochains in $ {FN'}^{m2^k}_{\WD{n}} $ are cocycles. As $ \varphi^* $ is injective, $ \gamma_{k,m}^{\pm} $ must also be a cocycle.

An alternative proof can be obtained by directly using the De Concini and Salvetti description of the boundary in $ C_*^{\WD{n}} $ and dualizing.
\end{proof}
A consequence of the previous lemma is that $ \gamma_{k,m}^+ $ and $ \gamma_{k,m}^- $ represent cohomology classes, that, once again, we denote with the same symbols with a slight abuse of notation.

To adapt our notation to Giusti and Sinha's one for the alternating groups, we will refer to $ \gamma_{k,m}^+ $ (respectively $ \gamma_{k,m}^- $) for some $ k,m $ as positively (respectively negatively) charged generators, and to $ \delta_{n:m}^0 $ for some $ n,m $ as neutral generators.

\subsection{Relations} \label{sec:RelAdd}

This subsection is devoted to deriving algebraic relations between the generators defined above. We will mainly obtain the relations as a consequence of the results in Section \ref{sec:geometric description}.

We first focus on $ A_B $.
We can retrieve in $ A_B $ the same relations among the classes $ \gamma_{k,m} $ that appear in Giusti, Salvatore, and Sinha's Theorem \ref{teo:gruppi simmetrici}. 
\begin{proposition}\label{cor:gamma}
The following formulas hold in $ A_B $:
\[
	\Delta \left( \gamma_{k,m} \right) = \sum_{i+j=m} \gamma_{k,i} \otimes \gamma_{k,j}
\]
\[
	\gamma_{k,n} \odot \gamma_{k,m} = \left( \begin{array}{c} n+m \\ n \end{array} \right) \gamma_{k,n+m}
\]
\end{proposition}
\begin{proof}
We use the chain level formulas computed in Propositions \ref{prop:coprodotto geometrico} and \ref{prop:transfer geometrico B}.

To compute the coproduct, we represent $ \gamma_{k,m} $ by the symmetric planar level tree depicted in Figure \ref{fig:gamma tree}. Note that $ P_l(\gamma_{k,m}) $ is trivial for $ l \geq 2 $ and that the $ 0 $-pruning map gives $ P_0(\gamma_{k,m}) = \sum_{i+j=m} \gamma_{k,i} \otimes \gamma_{k,j} $. Therefore it is enough to prove that $ P_1(\gamma_{k,n}) = \gamma_{k,n} \otimes I $, where $ I $ is the trivial symmetric level tree, for all $ k \geq 0 $ and $ n > 0 $. Consider a $ 1 $-pruning $ (T',T'') $ of $ \gamma_{k,m} $. Every vertex $ v_i $ ($ 1 \leq i \leq m $) as depicted in figure \ref{fig:gamma tree} corresponds to a vertex $ u_i $ of height $ 1 $ in $ T' $. Let $ 2^k-n_i $ be the number of outgoing edges of $ u_i $ for some integer $ 0 \leq n_i < 2^k $.
We can obtain the pruning $ (T',T'') $ from $ \gamma_{k,m} $ by applying a sequence of elementary $ 1 $-prunings at each vertex $ v_i $ and their mirror vertices $ r(v_i) $ that prunes away $ a_i $ outgoing edges from $ v_i $ and $ n_i-a_i $ outgoing edges from $ r(v_i) $, for some $ 0 \leq a_i \leq n_i $.
Therefore, summing over all the possible shuffles of leftovers, whose number is $ \frac{(\sum_{i=1}^m n_i)!}{\prod_{i=1}^m a_i! \prod_{i=1}^m (n_i - a_i)!} $, we deduce that $ (T',S_1(T'')) $ appears in $ P_1(\gamma_{k,m}) $ with coefficient
\[
\sum_{0 \leq a_1 \leq n_1,\dots, 0 \leq a_m \leq n_m} \frac{(\sum_{i=1}^m n_i)!}{\prod_{i=1}^m a_i! \prod_{i=1}^m (n_i - a_i)!} = \frac{(\sum_{i=1}^m n_i)!}{\prod_{i=1}^m n_i!} \prod_{i=1}^m 2^{n_i}.
\]
This number is even unless $ n_i = 0 $ for all $ 1 \leq i \leq m $, yielding the trivial $ 1 $-pruning.

The transfer product formula follows directly from the application of the cochain-level map of Proposition \ref{prop:transfer geometrico B}, by observing that $ \gamma_{k,m} $ has $ m $ principal $ 0 $-blocks all equal to $ [1,\dots,1] $, where the entry $ 1 $ is repeated $ 2^k-1 $ times, and that it has no principal $ l $-block for $ l \geq 1 $. Thus $ \gamma_{k,n} \odot \gamma_{k,m} $ is given by a single basis element in $ {FN}^*_{\WB{2^k(n+m)}} $ (representing $ \gamma_{k,n+m} $) counted as many times as the number of $ (n,m) $-shuffles, that is the binomial coefficient appearing in the equation.
\end{proof}

We can obtain coproduct formulas for $ \delta_n $ via the same geometric description. The following is again a consequence of the formulas in Lemmas \ref{prop:coprodotto geometrico} and \ref{prop:transfer geometrico B}.
\begin{proposition}\label{cor:delta}
\[
	\Delta \left( \delta_n \right) = \sum_{k+l=n} \delta_k \otimes \delta_l
\]
\[
	\delta_n \odot \delta_m = \left( \begin{array}{c} n+m \\n \end{array} \right) \delta_{n+m}
\]
\end{proposition}
\begin{proof}
Since all the entries of $ \delta_n $ are equal to $ 1 $, the cochain-level coproduct map on $ \delta_n $ reduces to the $ 1 $-pruning map $ P_1 $ and provides the desired formula. We compute the transfer product as in Proposition \ref{cor:gamma}, by observing that $ \delta_n $ has no principal $ l $-blocks for $ l \not= 1 $, and has $ n $ principal $ 1 $-blocks all empty.
\end{proof}

These relations will suffice to describe $ A_B $ completely. Our main result, which we will prove in Section \ref{sec:proof}, is the following.
\begin{theorem} \label{teo:Bn}
The Hopf ring $ A_B $ is generated by classes $ \gamma_{k,n} $ (with $ k \geq 0, n > 0 $) and $ \delta_n $ (with $ n > 0 $) with the relations described in Propositions \ref{cor:gamma} and \ref{cor:delta}, together with the following additional relations:\\
{\centering \emph{the product $ \cdot $ of generators from different components is $ 0 $}} \\
{\centering \emph{$ \gamma_{0,n} $ is the $ \cdot $-product unit of the $ n^{th} $ component}}
\end{theorem}

We now turn our attention to $ A_D $.
A trick borrowed from \cite[page 9]{Sinha:17} can be used to simplify the presentation of this almost-Hopf ring. We recall that there is an involution $ \iota \colon A_D \to A_D $. We can define $ A'_D $ to be the bigraded $ \Ftwo $-vector space defined by $ ( A'_D )_{n,d} = H^d \left( \WD{n}; \Ftwo \right) $ if $ (n,d) \not= (0,0) $ and $ ( A'_D )_{0,0} = \Ftwo \left\{ 1^+, 1^- \right\} $. We can embed $ A_D $ as a vector space in $ A'_D $ by identifying the non-zero class in $ H^0 \left( \WD{0}; \Ftwo \right) $ with $ 1^+ + 1^- $.

\begin{lemma}\label{lem:involution on structural maps}
The following statements are true in $ A_D $:
\begin{enumerate}
\item $ \iota(x \odot y) = \iota(x) \odot y = x \odot \iota(y) $
\item $ (\iota \otimes \id) \Delta(x) = (\id \otimes \iota) \Delta(x) = \Delta \iota(x) $
\item $ \iota(x \cdot y) = \iota(x) \cdot \iota(y) $
\end{enumerate}
\end{lemma}
\begin{proof}
\begin{enumerate}
\item $ \odot $ is commutative, and the following diagram induces a pullback of finite coverings at the level of classifying spaces.
\begin{center}
\begin{tikzcd}
\WD{n} \times \WD{m} \ar{r} \ar{d}{c_{s_0} \times \id} & \WD{n+m} \ar{d}{c_{s_0}} \\
\WD{n} \times \WD{m} \ar{r} & \WD{n+m} \\
\end{tikzcd}
\end{center}
\item This follows from the cocommutativity of $ \Delta $ and the commutativity of the diagram above.
\item It is the cohomological consequence of the diagonal map being equivariant with respect to the conjugation $ c_{s_0} $.
\end{enumerate}
\end{proof}

\begin{proposition} \label{prop:HR extension}
Write the coproduct of every element $ x \in A_D $ in $ A_D $ as $ x \otimes 1_0 + \overline{\Delta}(x) + 1_0 \otimes x $, so $ \overline{\Delta} $ is the reduced coproduct.
By letting $ 1^- \cdot 1^+ = 0 $, $ 1^- \cdot 1^- = 1^- $, $ 1^+ \cdot 1^+ = 1^+ $, $ 1^- \odot 1^- = 1^+ $ and $ \Delta(1^{\pm}) = 1^+ \otimes 1^{\pm} + 1^- \otimes 1^{\mp} $, the almost-Hopf ring structure on $ A_D $ extends to an almost-Hopf ring structure on $ A'_D $ such that $ 1^- \odot x = \iota \left( x \right) $, $ 1^- \cdot x = 0 $ and $ \Delta(x) = 1^+ \otimes x + 1^- \otimes \iota(x) + \overline{\Delta}(x) + x \otimes 1^+ + \iota(x) \otimes 1^- $ for every $ x \in A'_D $ of positive degree.
\end{proposition}
\begin{proof}
sing the formulas in the statement of this proposition, we can extend $ \odot $ and $ \cdot $ uniquely to two commutative products on $ A'_D $ and $ \Delta $ to a unique cocommutative coproduct on $ A'_D $.
The coassociativity of $ \Delta $ follows from point 3 of Lemma \ref{lem:involution on structural maps}. The associativity of $ \cdot $ on $ A'_D $ is obvious. The bialgebra structure of $ A'_D $ with $ \cdot $ and $ \Delta $ follows from the bialgebra structure on $ A_D $ and point 2 of the previous lemma. Moreover, the fact that the transfer product with $ 1^- $ is associative follows from point 1.
Hopf ring distributivity with classes involving a transfer product with $ 1^- $ follows again from point 3 of the result referenced above.
\end{proof}

Instead of determining a presentation for $ A_D $, we calculate a presentation for $ A'_D $ because we can write it more concisely. For example, $ \gamma_{k,m}^- = 1^- \odot \gamma_{k,m}^+ $ in $ A'_D $, thus he formulas for $ \gamma_{k,m}^- $ arise as a direct consequence of the formulas for $ \gamma_{k,m}^+ $ and the almost-Hopf ring structure of $ A'_D $. The two approaches are equivalent.

\begin{proposition} \label{lem:cop tr D}
Let $ k,m \geq 1 $ and $ n \geq 0 $. The following coproduct formulas hold in $ A'_D $, where $ \gamma_{k,m-l}^- = 1^- \odot \gamma_{k,m-l}^+ $:
\begin{align*}
\Delta \left( \gamma_{k,m}^+ \right) &= \sum_{l=0}^m \gamma_{k,l}^+ \otimes \gamma_{k,m-l}^+ + \gamma_{k,l}^- \otimes \gamma_{k,m-l}^- \\
\Delta \left( \delta_{n:m}^0 \right) &= \sum_{i=0}^n \sum_{j=0}^m \delta_{i:j}^0 \otimes \delta_{n-i:m-j}^0 \\
\end{align*}
Moreover, the transfer product in $ A'_D $ satisfies the following formulas for every choice of indexes:
\begin{align*}
\gamma_{k,a}^+ \odot \gamma_{k,b}^+ &= 
\left( \begin{array}{c} a+b \\a \end{array} \right) \gamma_{k,a+b}^+ \\
b \odot b' &= 0 \mbox{ if $ b $ and $ b' $ are cup products of neutral generators (i.e. $ \delta^0_{n:m} $)} \\
\delta_{n:m}^0 \odot 1^- &= \delta_{n:m}^0 \\
\end{align*}
\end{proposition}
\begin{proof}
Note that $ \gamma_{k,m}^- = 1_- \odot \gamma_{k,m}^+ = \iota(\gamma_{k,m}^+) $ as a direct consequence of Lemma \ref{lem:involuzione geometrica} and the definition of the cochain representatives of these classes.
The coproduct formulas for $ \gamma_{k,m}^+ $ 
follow from Lemma \ref{lem:CHE} and Lemma \ref{prop:coprodotto geometrico D}. More precisely, we observe that mapping $ \gamma_{k,m}^+ $ into $ {FN'}^*_{\WD{m2^k}} \otimes \Ftwo $ via $ \varphi^* $ yields a cohomology class represented by the same symmetric planar level tree of Figure \ref{fig:gamma tree}, with positive orientation. The same proof of Proposition \ref{cor:gamma} holds in this case by keeping track of orientations.

The coproduct formula for $ \delta_{n:m}^0 $ is a consequence of Proposition \ref{cor:delta}, the Hopf ring properties of $ A_B $, and the fact that the restriction map $ \rho \colon A_B \rightarrow A_D $ preserves coproducts.

Regarding transfer product, we prove the first identity using Proposition \ref{prop:transfer geometrico D} precisely in the same way as the second part of Proposition \ref{cor:gamma}.

Let $ \rho \colon A_B \rightarrow A_D $ be the restriction map. For every $ x \in H^* \left( \WB{n}; \Ftwo \right) $ and $ y \in H^* \left( \WB{m}; \Ftwo \right) $, we can prove that $ \rho \left( x \right) \odot \rho \left( y \right) = 0 $ in $ H^* \left( \WD{n+m}; \Ftwo \right) $ with the same argument used in \cite[Proposition 3.14]{Sinha:17}.
Essentially, it is sufficient to observe that both the restriction $ H^* \left(\WB{n} \times \WB{m}; \Ftwo\right) \to H^* \left( \WD{n} \times \WD{m} ; \Ftwo \right) $ and the transfer $ H^* \left( \WD{n} \times \WD{m}; \Ftwo \right) \to H^* \left( \WD{n+m}; \Ftwo \right) $ factor through the cohomology of the subgroup $ G = \WD{n+m} \cap \left( \WB{n} \times \WB{m} \right) $, and that the composition
\[
H^* \left( G; \Ftwo \right) \stackrel{res}{\to} H^* \left( \WD{n} \times \WD{m}; \Ftwo \right) \stackrel{tr}{\to} H^* \left( G; \Ftwo \right)
\]
is $ 0 $ for mod $ 2 $ coefficients because $ \WD{n} \times \WD{m} $ has even index in $ G $.
In particular, non-trivial transfer products of blocks obtained by cup-multiplying neutral generators must be $ 0 $.
The last relation also follows from the invariance of $ \delta_{n:m}^0 $ with respect to the involution $ \iota $.
\end{proof}

After these coproduct and transfer product formulas, we will also need some cup product relations. Since the Fox-Neuwirth--type cell complex does not behave well with cup products, we found that it is simpler to obtain these formulas via restriction to elementary abelian subgroups.
This approach is fruitful because of a detection theorem for these subgroups. We postpone the proof of the following proposition to Section \ref{sec:restrizione}, where we will explain this in detail.

\begin{proposition} \label{lem:relations cup}
Let $ \gamma_{k,m}^- = 1^- \odot \gamma_{k,m}^+ $ as an element of $ A'_D $. Then the following formulas hold in $ A_D $:
\begin{enumerate}
\item $\forall n,m,k \geq 1, h \geq 2: \gamma_{k,n}^+ \cdot \gamma_{h,m}^- = 0 $
\item $ \forall m \geq 1: \gamma_{1,m}^+ \gamma_{1,m}^- = ( \gamma_{1,m-1}^+ )^2 \odot \delta_{2:0}^0 $
\item the $ \cdot $ product of generators belonging to different components is $ 0 $ and $ \delta_{0:m}^0 $ is the $ \cdot $-product unit of the $ m^{th} $ component
\item $ \forall m \geq 0: \delta_{1:m}^0 = 0 $
\item $ \forall k > 0, m,n \geq 0: \delta_{n:m}^0 \cdot \gamma_{k,\frac{n+m}{2^k}}^+ = \delta_{n:0}^0 \cdot \gamma_{k,\frac{n}{2^k}}^+ \odot \gamma_{k,\frac{m}{2^k}}^+ $, where we understand that $ \gamma_{k,r}^+ = 0 $ if $ r $ is not an integer
\end{enumerate}
\end{proposition}

The last relation we require involves the behavior of the coproduct with the transfer product. We need a preliminary remark.
Let $ b \in A'_D $ be an element obtained as a cup-product of positively and neutrally charged generators (i,e, $ \gamma_{k,m}^+ $ or $ \delta^0_{n:m} $), with at least one positively charged generator. Note that, by Propositions \ref{lem:cop tr D} and \ref{lem:relations cup}, $ \Delta(b) $ can be written as a sum $ \sum_i b_i' \otimes b_i'' $ where $ b_i' $ and $ b_i'' $ are elements obtained as iterated transfer products of elements of the same form, or the images of such elements via the involution $ \iota = 1^- \odot \_ $. We let $ \Delta'(b) $ be that sum restricted only to addends $ b_i' \otimes b_i'' $ in which the involution is not performed to obtain $ b_i' $. Being $ \Delta $ $ (\iota \otimes \iota) $-invariant, this intuitively amounts to keeping half of the addends of the coproduct in $ A'_D $.
\begin{proposition}[cf. \cite{Sinha:17}, Theorem 3.21] \label{lem:transfer mezzo}
Let $ \tau \colon \alpha \otimes \beta \in A'_D \otimes A'_D \mapsto \beta \otimes \alpha \in A'_D \otimes A'_D $ be the map that exchanges the two factors.
For all $ b \in A'_D $ be the cup-product of positively and neutrally charged generators, with at least a positively charged generator appearing, and for all $ x \in A'_D $, we have that
\[
    \Delta \left( b \odot x \right) = \left( \odot \otimes \odot \right) \circ \left( \id \otimes \tau \otimes \id \right) \left( \Delta' \left( b \right) \otimes \Delta \left( x \right) \right),
\]
where $ \Delta' $ is the expression described above.
\end{proposition}
The proof of the analog of this proposition is done in \cite{Sinha:17} by a careful examination of certain spectral sequences. It can be done this way also for $ A_D $. Still, we decide to argue here using detection by elementary abelian subgroups that for finite Coxeter groups comes for free and leads to a shorter proof.
Therefore, we postpone the proof of this proposition until the next section.

We can now state our presentation theorem for $ A'_D $.
\begin{theorem} \label{teo:Dn}
$ A'_D $ is generated, as an almost-Hopf ring, by the classes $ \delta_{n:m}^0 $ ($n \geq 2$, $m \geq 0$), $ \gamma_{k,m}^+ $ ($k,m \geq 1$) and $ 1^- $ defined above, under the relations described in Propositions \ref{lem:cop tr D}, \ref{lem:relations cup} and \ref{lem:transfer mezzo} and the relations $ 1^- \odot 1^- = 1^+ $, $ 1^- \cdot 1^- = 1^- $, $ 1^+ \cdot 1^- = 0 $ and $ \Delta(1^-) = 1^+ \otimes 1^- + 1^- \otimes 1^+ $ coming from Proposition \ref{prop:HR extension}.
\end{theorem}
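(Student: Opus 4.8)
The plan is to mimic the strategy used for $A_\Sigma$ in \cite{Sinha:12} and for $A_B$ in Theorem \ref{teo:Bn}: build a surjection from the abstractly presented object onto $A'_D$ and then prove injectivity by a bidegreewise dimension count. Let $\mathcal{A}$ be the almost-Hopf ring presented by the symbols $\delta_{n:m}^0$ ($n\geq 2$, $m\geq 0$), $\gamma_{k,m}^+$ ($k,m\geq 1$) and $1^-$, subject to the relations of Lemmas \ref{lem:cop tr D}, \ref{lem:relations cup}, \ref{lem:transfer mezzo} and to $1^-\odot 1^-=1^+$. Since each of these relations holds in $A'_D$ — this is exactly the content of those lemmas, together with Proposition \ref{prop:HR extension} — there is a well-defined morphism of almost-Hopf rings $\Psi\colon\mathcal{A}\to A'_D$. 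First I would check that $\Psi$ is surjective. As observed just before Lemma \ref{lem:transfer mezzo}, Hopf monomials span $A_D$ over $\mathbb{Z}_2$; the cleanest way to see this is at the level of the Fox--Neuwirth cochain model: using the cochain representatives of Proposition \ref{prop:interpretazione geometrica} and of the classes $\gamma_{k,m}^\pm,\delta_{n:m}^0$, together with the principal-block/shuffle descriptions of $\Delta$ and $\odot$ (Lemmas \ref{lem:coprodotto geometrico D}, \ref{lem:transfer geometrico D}, transported via the equivalence $\varphi^*$ of Lemma \ref{lem:CHE}), one decomposes every basis cochain $[a_0:\dots:a_{n-1}]$ and $s_0[a_0:\dots:a_{n-1}]$ of ${FN'}^*_{D_n}$, modulo coboundaries, as a transfer product of cup products of generators; the extra decoration $s_0^\varepsilon$ is absorbed by $1^-$ because $1^-\odot x=\iota(x)$. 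Together with the inclusion of $1^-$ among the generators this gives surjectivity of $\Psi$.

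Next I would establish an upper bound $\dim_{\mathbb{Z}_2}(\mathcal{A})_{n,d}\leq N_{n,d}$, where $N_{n,d}$ counts \emph{reduced} Hopf monomials of bidegree $(n,d)$: transfer products $b_1\odot\dots\odot b_r$ of gathered blocks with pairwise distinct profiles, no block containing both a \qquotation{$+$} and a \qquotation{$-$} generator, and no block of the form $\delta_{1:m}^0\cdots$. The relations carry out this reduction: the vanishing of the $\cdot$-product of generators from different components forces each gathered block into a single component; $\delta_{1:m}^0=0$, $\gamma_{k,n}^+\cdot\gamma_{h,m}^-=0$ for $h\geq 2$, the identity $\gamma_{1,m}^+\gamma_{1,m}^-=(\gamma_{1,m-1}^+)^2\odot\delta_{2:0}^0$ and the $\delta_{n:m}^0\cdot\gamma_{k,l}^+$ relation of Lemma \ref{lem:relations cup} rewrite any cup product of generators in a fixed component as a transfer product of blocks of the three allowed shapes; the binomial-coefficient transfer formulas of Lemma \ref{lem:cop tr D} merge equal blocks and detect vanishing mod $2$; and Lemma \ref{lem:transfer mezzo}, together with the coproduct formulas of Lemma \ref{lem:cop tr D}, shows that $\Delta$ of a reduced monomial is again a sum of reduced monomials, so the span of reduced monomials is an almost-Hopf subring of $\mathcal{A}$ and hence all of $\mathcal{A}$.

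Finally I would show that the reduced Hopf monomials map to linearly independent elements of $A'_D$; combined with surjectivity of $\Psi$ this forces $\Psi$ to be an isomorphism, proving the theorem (and incidentally identifying the additive basis). I would establish this independence by restriction to elementary abelian $2$-subgroups, invoking the detection result Theorem \ref{teo:Quillen iso} and the explicit restriction formulas of Section \ref{sec:restrizione}: distinct reduced monomials are separated there because the \qquotation{$+$}/\qquotation{$-$} decorations and the block structure remain visible after restriction, in the spirit of Proposition \ref{prop:gamma+-}. An alternative is a Poincaré-series comparison — computing $\sum_{n,d}N_{n,d}t^d u^n$ combinatorially from the description of reduced monomials and matching it against the Poincaré series of $\bigoplus_n H^*(D_n;\mathbb{Z}_2)$, which one can extract from the index-$2$ relationship between $A_D$ and $A_B$.

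I expect the last step to be the main obstacle. Because $(A'_D,\Delta,\odot)$ is only an \emph{almost}-Hopf ring, one cannot simply read off a PBW-type basis from the Hopf-ring axioms as in \cite{Sinha:12}; the linear independence of reduced monomials in $A'_D$ genuinely relies on the elementary-abelian computations of Section \ref{sec:restrizione}, which in turn use Lemmas \ref{lem:relations cup} and \ref{lem:transfer mezzo}. Thus the real work is verifying that no relations beyond those listed are hiding — equivalently, that the restriction maps are injective on the span of reduced monomials — rather than in the (essentially formal) normal-form reduction.
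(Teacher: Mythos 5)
Your overall architecture (present abstractly, map to $A'_D$, prove surjectivity, prove injectivity via a normal form plus detection on elementary abelian $2$-subgroups) matches the paper's, and your identification of Theorem \ref{teo:Quillen iso} as the engine for linear independence is correct. However, your surjectivity step has a genuine gap. You propose to see that Hopf monomials span $A_D$ by decomposing each Fox--Neuwirth basis cochain $[a_0:\dots:a_{n-1}]$ and $s_0[a_0:\dots:a_{n-1}]$, modulo coboundaries, as a transfer product of \emph{cup products} of generators. But the Fox--Neuwirth model has no usable cochain-level cup product --- the paper says explicitly that this complex \qquotation{does not behave well with respect to cup product}, which is precisely why all cup-product information (Lemma \ref{lem:relations cup}) is extracted via restriction to elementary abelian subgroups. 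So you cannot identify which cup product of generators a given block cochain represents without already invoking the detection machinery; the \qquotation{cleanest way} you describe is circular or at best incomplete. The Poincar\'e-series fallback you mention is also not routine: since $[B_n:D_n]=2$ and we work mod $2$, the dimensions of $H^*(D_n;\mathbb{Z}_2)$ are not read off from those of $H^*(B_n;\mathbb{Z}_2)$ by any elementary index-$2$ argument, and you do not carry out the count.

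The paper closes this gap differently: surjectivity is proved by showing that every element $\alpha$ of the Quillen inverse limit $\mathcal{F}^*_{D_n}$ lies in the image, using that $q_{D_n}$ is an isomorphism (Theorem \ref{teo:Quillen iso}). Concretely, one first strips off the contribution of partitions containing $1$ via Theorem \ref{teo:Bn}, then uses the positive/negative monomial decomposition of $H^*\left(A_{(2)^m};\mathbb{Z}_2\right)$ and the operator $g_m$ to build from $\alpha$ a compatible family $\beta\in\mathcal{F}^*_{B_n}$, realizes $\beta$ by a sum of Hopf monomials in $A_B$ via Theorem \ref{teo:Bn}, decorates with $+$, and corrects by the $c_{s_0}^*$-conjugate. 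None of this appears in your sketch, and the positive/negative bookkeeping (checking that $h_2$ and $d_1\otimes d_1$ agree on $A_{(4)}\cap A_{(2,2)}$, so that $\beta$ is actually a compatible family) is where the real content of surjectivity lies. Your injectivity sketch is closer to the paper's, though you should note that the paper's argument is not a direct separation of reduced monomials: it first applies $\id+\iota$ to kill the neuter part, reduces the non-neuter part to the $A_B$ statement, and then handles the neuter part using the injectivity of the restriction from the subalgebra generated by $e_2,\dots,e_m$ to $H^*\bigl(\widehat{A}_{(1)^m};\mathbb{Z}_2\bigr)$ from Theorem \ref{thm:subgroups D}.
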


\subsection{Additive bases}

We describe here additive bases for $ A_B $ and $ A_D $. In this subsection, we assume that the statements of Theorems \ref{teo:Bn} and \ref{teo:Dn} are true. They do not rely logically on the existence of such bases in $ A_B $ and $ A_D $. Thus this choice does not invalidate their proof.

We begin with $ A_B $.
\begin{definition}[cf. \cite{Sinha:12}]
A \emph{gathered block} in $ A_B $ is an element of the form
\[
    b = \delta_m^{t_0} \prod_{k=1}^n \gamma_{k,\frac{m}{2^k}}^{t_k}
\]
where $ m $ is a positive integer, $ 2^n $ divides $ m $, and $ n $ is the maximal index such that $ \gamma_{n,\frac{m}{2^{n}}} $ appears in $ b $ with a non-zero exponent. 
The \emph{profile} of $ b $ is the $ \left( n+1 \right) $-tuple $ \left( t_0, \dots, t_n \right) $.
We also allow $ n = 0 $: in this case, $ b = \delta_m^{t_0} $ for some $ t_0 \geq 0 $.

A \emph{Hopf monomial} is a transfer product of gathered blocks $ x = b_1 \odot \dots \odot b_r $. We denote with $ \mathcal{M}_B $ the set of Hopf monomials whose constituent gathered blocks have pairwise different profiles.
\end{definition}

Note that, given a possible profile $ (t_0, \dots, t_n) $, for all $ l \geq 1 $, there is a unique gathered block $ b $ in the $ (l2^n)^{th} $ component having that profile. As a notational convention, we denote it $ b_{l,\mathcal{t}} $.

We can describe elements of $ \mathcal{M}_B $ graphically.
We represent $ \gamma_{k,n} $ as a rectangle of width $ n2^k $ and height $ 1 - 2^{-k} $ and $ \delta_n $ as a rectangle of width $ n $ and height $ 1 $. The width of a box is the number of the component to which the class belongs. Its area is its cohomological dimension.
We understand the cup product of two generators as stacking the corresponding boxes on top of the other. In contrast, their transfer product corresponds graphically to placing them next to each other horizontally. The profile of a gathered block is described by the height of the rectangles of the corresponding column.
Thus, we can represent every gathered block as a column made of boxes with the same width. Hence, an element of $ \mathcal{M}_B $ is a diagram consisting of columns placed next to each other, such that there are not two columns that consist of rectangles of the same height.
Following the notation in Giusti--Salvatore--Sinha \cite{Sinha:12}, we call these objects \emph{$ B $-skyline diagrams} or, more concisely, \emph{skyline diagrams} where it is clear that we are considering the Hopf ring $ A_B $.

As in \cite{Sinha:12}, the coproduct and the two products in $ \mathcal{M}_B $ have a graphical description, derived from our relations:
\begin{itemize}
\item We divide rectangles corresponding to $ \delta_n $ or $ \gamma_{k,n} $ in $ n $ equal parts via vertical dashed lines. The coproduct is then given by dividing along all vertical lines (dashed or not) of full height and then partitioning the new columns into two to make two new skyline diagrams.
\item The transfer product of two skyline diagrams is given by placing them next to each other and merging every two columns with constituent boxes of the same heights, with a coefficient of $ 0 $ if the widths of these columns share a $ 1 $ in their binary expansion.
\item To compute the cup product of two diagrams, we consider all possible ways to split each into columns, along vertical lines (dashed or not) of full height. We then match columns of each in all possible ways up to automorphism and stack the resulting matched columns to build a new diagram.
\end{itemize}
We depict some examples of calculations with skyline diagrams in Figure \ref{fig:skyline}.

\begin{figure}[htb]
\begin{center}
\begin{tikzpicture}[scale=0.8]
\draw (-0.5,0.25) node {$ \Delta $};
\draw (0,0) rectangle (2,0.5);
\draw (0,0.5) rectangle (2,0.75);
\draw[dashed] (0.5,0) -- (0.5,0.5);
\draw[dashed] (1,0) -- (1,0.75);
\draw[dashed] (1.5,0) -- (1.5,0.5);
\draw (2.5,0.25) node {$ = $};
\draw (3,0) rectangle (5,0.5);
\draw (3,0.5) rectangle (5,0.75);
\draw[dashed] (3.5,0) -- (3.5,0.5);
\draw[dashed] (4,0) -- (4,0.75);
\draw[dashed] (4.5,0) -- (4.5,0.5);
\draw (5.5,0.25) node {$ \otimes $};
\draw (6,0.25) node {$ 1 $};
\draw (6.5,0.25) node {$ + $};
\draw (7,0) rectangle (8,0.5);
\draw (7,0.5) rectangle (8,0.75);
\draw[dashed] (7.5,0) -- (7.5,0.5);
\draw (8.5,0.25) node {$ \otimes $};
\draw (9,0) rectangle (10,0.5);
\draw (9,0.5) rectangle (10,0.75);
\draw[dashed] (9.5,0) -- (9.5,0.5);
\draw (10.5,0.25) node {$ + $};
\draw (11,0.25) node {$ 1 $};
\draw (11.5,0.25) node {$ \otimes $};
\draw (12,0) rectangle (14,0.5);
\draw (12,0.5) rectangle (14,0.75);
\draw[dashed] (12.5,0) -- (12.5,0.5);
\draw[dashed] (13,0) -- (13,0.75);
\draw[dashed] (13.5,0) -- (13.5,0.5);
\draw (1,-0.5) node {$ \delta_4 \gamma_{1,2} $};
\draw (4,-0.5) node {$ \delta_4 \gamma_{1,2} $};
\draw (7.5,-0.5) node {$ \delta_2 \gamma_{1,1} $};
\draw (9.5,-0.5) node {$ \delta_2 \gamma_{1,1} $};
\draw (13,-0.5) node {$ \delta_4 \gamma_{1,2} $};

\draw (0,-3) rectangle (1,-2.5);
\draw (0,-2.5) rectangle (1,-2.25);
\draw[dashed] (0.5,-3) -- (0.5,-2.5);
\draw (1,-3) rectangle (2,-2.5);
\draw[dashed] (1.5,-3) -- (1.5,-2.5);
\draw (2.5,-2.75) node {$ \odot $};
\draw (3,-3) rectangle (5,-2.5);
\draw (3,-2.5) rectangle (5,-2.25);
\draw[dashed] (3.5,-3) -- (3.5,-2.5);
\draw[dashed] (4,-3) -- (4,-2.25);
\draw[dashed] (4.5,-3) -- (4.5,-2.5);
\draw (5.5,-2.75) node {$ = $};
\draw (6,-3) rectangle (9,-2.5);
\draw (6,-2.5) rectangle (9,-2.25);
\draw[dashed] (6.5,-3) -- (6.5,-2.5);
\draw[dashed] (7,-3) -- (7,-2.25);
\draw[dashed] (7.5,-3) -- (7.5,-2.5);
\draw[dashed] (8,-3) -- (8,-2.25);
\draw[dashed] (8.5,-3) -- (8.5,-2.5);
\draw (9,-3) rectangle (10,-2.5);
\draw[dashed] (9.5,-3) -- (9.5,-2.5);
\draw (1,-3.5) node {$ \delta_2 \gamma_{1,1} \odot \delta_2 $};
\draw (4,-3.5) node {$ \delta_4 \gamma_{1,2} $};
\draw (8,-3.5) node {$ \delta_6 \gamma_{1,3} $};

\draw (0,-6) rectangle (1,-5.5);
\draw[dashed] (0.5,-6) -- (0.5,-5.5);
\draw (1,-6) -- (2,-6);
\draw (2.5,-5.75) node {$ \cdot $};
\draw (3,-6) rectangle (3.5,-5.5);
\draw (3.5,-6) rectangle (4.5,-5.75);
\draw (4.5,-6) -- (5,-6);
\draw (5.5,-5.75) node {$ = $};
\draw (6,-6) rectangle (6.5,-5.5);
\draw (6,-5.5) rectangle (6.5,-5);
\draw (6.5,-6) rectangle (7.5,-5.75);
\draw (7.5,-6) rectangle (8,-5.5);
\draw (8.5,-5.75) node {$ + $};
\draw (9,-6) rectangle (9.5,-5.5);
\draw (9.5,-6) rectangle (10.5,-5.5);
\draw (9.5,-5.5) rectangle (10.5,-5.25);
\draw[dashed] (10,-6) -- (10,-5.5);
\draw (10.5,-6) -- (11,-6);
\draw (1,-6.5) node {$ \delta_2 \odot 1_2 $};
\draw (4,-6.5) node {$ \delta_1 \odot \gamma_{1,1} \odot 1_1 $};
\draw (7,-6.5) node {$ \delta_1^2 \odot \gamma_{1,1} \odot \delta_1 $};
\draw (10,-6.5) node {$ \delta_1 \odot \delta_2 \gamma_{1,1} \odot 1_1 $};
\end{tikzpicture}
\caption{Computations via skyline diagrams}\label{fig:skyline}
\end{center}
\end{figure}

\begin{proposition}[cf. \cite{Sinha:12}, Proposition 6.4] \label{prop:basis B}
$ \mathcal{M}_B $ is an additive basis for $ A_B $ and $ \Delta $, $ \odot $ and $ \cdot $ of basis elements are computed graphically via the algorithmic procedures described above.
\end{proposition}
\begin{proof}
We first prove the correctness of the graphical interpretation of the structural morphisms.
The coproduct of a gathered block $ b_{m,\underline{t}} $ with profile $ (t_0,\dots,t_n) $ is of the form
\[
\Delta(b_{m,\underline{t}}) = \sum_{i+j=m, i} b_{i,\underline{t}} \otimes b_{j,\underline{t}}.
\]
We prove this formula by induction on the number of cup-product generators constituting $ b_{i,\underline{t}} $: for single generators $ \delta_m $ or $ \gamma_{k,\frac{m}{2^k}} $ the formula appears in the set of relations for $ A_B $, and the induction step is a consequence of the bialgebra structure formed by $ \cdot $ and $ \Delta $. Thus, the graphical procedure for the calculation of the coproduct is correct on single-column skyline diagrams. As a general skyline diagram represents the transfer product of its columns, the general algorithm is justified because $ \Delta $ and $ \odot $ form a bialgebra.

Regarding $ \odot $, the transfer product of two Hopf monomials corresponds to the horizontal juxtaposition of the corresponding skyline diagrams. Thus, we only need to justify the merging of columns. In formulas, this reads as follows. Fix a profile $ \underline{t} = (t_0,\dots, t_n) $, with $ t_k \geq 0 $ for $ 0 \leq k < n $ and $ t_n > 0 $. Then
\[
b_{i,\underline{t}} \odot b_{j,\underline{t}} = \left( \begin{array}{c} i+j \\ i \end{array} \right) b_{i+j, \underline{t}}.
\]
Again, we prove this by induction on $ r = t_0 + \dots + t_n $. For $ r = 1 $, gathered blocks with profile $ \underline{t} $ are single generators, and the formula above is exactly our transfer product relation among them. For $ r > 1 $, the induction step is proved by combining the coproduct formula for $ \gamma_{l,(i+j)2^{n-l}} $, Hopf ring distributivity, and the fact that cup products of elements in different components is $ 0 $ to deduce that $ \gamma_{l,(i+j)2^{n-l}} (b_{i,\underline{t}} \odot b_{j,\underline{t}}) = (b_{i,\underline{t}} \cdot \gamma_{l,i2^{n-l}}) \odot ( b_{j,\underline{t}} \cdot \gamma_{l,j2^{n-l}}) $, or the analogous formula with $ \delta_{i+j} $ in place of $ \gamma_{n,(i+j)2^{n-l}} $ if $ n=0 $.

The $ \cdot $-product algorithm above graphically encodes Hopf ring distributivity.

Finally, we prove that $ \mathcal{M}_B $ is an additive basis for $ A_B $. We consider the bigraded vector space $ V $ over $ \Ftwo $ with skyline diagrams or, equivalently, $ \mathcal{M}_B $ as a basis. Define linear maps $ \odot, \cdot \colon V \otimes V \to V $ and $ \Delta \colon V \to V \otimes V $ by computing their values on basis elements via the algorithm above.
Note that these maps define a Hopf ring structure on $ V $.
There is a map $ V \to A_B $ that realizes every Hopf monomial as the corresponding element of $ A_B $. Since the procedures to compute the structural morphisms on $ \mathcal{M}_B $ are deduced from the Hopf ring structure of $ A_B $ and the relations of Theorem \ref{teo:Bn}, this map is a morphism of Hopf rings.
We also note that $ V $ is generated as a Hopf ring by single rectangles, corresponding to $ \gamma_{k,n} $ and $ \delta_n $, and that the relations of Theorem \ref{teo:Bn} are satisfied in $ V $. Since $ A_B $ is presented by such generators and relations, it follows that the map $ V \to A_B $ is an isomorphism.
\end{proof}

We now construct an additive basis for $ A_D $, assuming Theorem \ref{teo:Dn}.
The first step is to identify the subalgebra of $ A_D $ under the cup product generated by neutral generators. Let $ \tilde{\mathcal{B}}^0 $ be the set of Hopf monomials $ x \in A_B $ of the form $ x = \delta_{k_1}^{a_1} \odot \dots \odot \delta_{k_r}^{a_r} $, ordered with $ a_1 > \dots > a_r $ and $ k_1 \geq 2 $. These correspond to skyline diagrams in which only boxes of height $ 1 $ appear and in which the highest column has width strictly bigger than $ 1 $.

\begin{lemma} \label{lemma:0}
Every element of $ \tilde{\mathcal{B}}^0 \cap H^* \left( \WB{n}; \Ftwo \right) $ lies in the cup product subalgebra generated by $ \delta_n, \delta_{n-1} \odot 1_1, \dots, \delta_1 \odot 1_{n-1} $. Moreover, the images in $ A_D $ of elements of $ \tilde{\mathcal{B}}^0 $ are a vector space basis for the sub-almost-Hopf ring generated by elements of the form $ \delta_{n:m}^0 $ for $ n,m \geq 0 $.
\end{lemma}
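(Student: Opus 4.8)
The plan is to prove the two assertions separately. The first is internal to $A_B$: I would argue, by induction on the number $r$ of transfer factors and on the cohomological degree, that every $\delta_{k_1}^{a_1}\odot\cdots\odot\delta_{k_r}^{a_r}$ with $a_1>\cdots>a_r\ge 1$ and $k_1\ge 2$ lies in the cup subalgebra $R_n$ of $H^*(B_n;\mathbb{Z}_2)$ generated by $\delta_n,\delta_{n-1}\odot 1_1,\dots,\delta_1\odot 1_{n-1}$. The only tools needed are the distributivity axiom of the Hopf ring $A_B$, the formulas $\Delta(\delta_n)=\sum_{p+q=n}\delta_p\otimes\delta_q$ and $\delta_n\odot\delta_m=\binom{n+m}{n}\delta_{n+m}$ of Corollary~\ref{cor:delta}, and the vanishing of cup products of classes from different components (Theorem~\ref{teo:Bn}). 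From these one gets $(\delta_j\odot 1_{n-j})\cdot\delta_n=\delta_j^2\odot\delta_{n-j}$ and, more generally, that cup-multiplying by a staircase class ``grows one column''; iterating, $\delta_{k_1}^{a_1}\odot\cdots\odot\delta_{k_r}^{a_r}$ is rewritten as a cup product of the classes $\delta_{W_s}^{a_s-a_{s+1}}\odot 1_{n-W_s}$ (with $W_s=k_1+\cdots+k_s$ and $a_{r+1}=0$), up to transfer products of fewer $\delta$-columns that lie in $R_n$ by induction; and each $\delta_W^{c}\odot 1_{n-W}$ is itself a polynomial in the staircase classes, since in the expansion of $(\delta_W\odot 1_{n-W})^{c}$ every correction term beyond $\delta_W^{c}\odot 1_{n-W}$ carries a factor $\delta_j\odot\delta_j=\binom{2j}{j}\delta_{2j}=0$. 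This is entirely parallel to the analysis of the polynomial part of $A_\Sigma$ in \cite{Sinha:12}.

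For the second assertion, the first step is to observe that the sub-Hopf ring generated by the $\delta_{n:m}^0$ is, as a graded vector space, spanned by cup products of such classes together with their $\odot 1_k$-translates: by Lemma~\ref{lem:cop tr D} the transfer product of two cup products of ``$0$''-generators vanishes, $\odot 1^-$ acts as the involution $\iota$ and fixes every $\delta_{n:m}^0$, and the coproduct of a cup product of $\delta^0$'s is a sum of tensor products of cup products of $\delta^0$'s, so neither $\Delta$ nor $\odot$ (beyond adjoining empty points) produces anything new. Since $\rho\colon A_B\to A_D$ is a homomorphism for the cup product, the first assertion shows that $\rho$ carries each element of $\tilde{\mathcal{B}}^0\cap H^*(B_n)$ into this span — explicitly $\delta_{k_1}^{a_1}\odot\cdots\odot\delta_{k_r}^{a_r}\mapsto\prod_s\rho(\delta_{W_s}^{a_s-a_{s+1}}\odot 1_{n-W_s})$ — and conversely each generator $\delta_{n':m'}^0$ (hence also, after the bookkeeping, its $\odot 1_k$-translates) is $\rho$ of a staircase class. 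Using the relation $\delta_{1:m}^0=0$ of Lemma~\ref{lem:relations cup}, one checks — by the same distributivity computations used above — that $\rho$ annihilates every skyline Hopf monomial with only height-one boxes whose tallest column has width~$1$, the point being that such a monomial is divisible in $R_n$ by $\delta_1\odot 1_{n-1}$, whose $\rho$-image is $\delta_{1:n-1}^0=0$. Combining these facts, $\rho$ maps $R_n$ onto the linear span of $\rho(\tilde{\mathcal{B}}^0\cap H^*(B_n))$, and that span is precisely the component-$n$ part of the sub-Hopf ring in question.

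It remains to prove that $\rho(\tilde{\mathcal{B}}^0\cap H^*(B_n))$ is linearly independent. Here I would restrict to the elementary abelian subgroup $V\cong\mathbb{Z}_2^{n-1}$ of $D_n$ of even sign changes, with $H^*(V;\mathbb{Z}_2)=\mathbb{Z}_2[x_1,\dots,x_n]/(x_1+\cdots+x_n)$; since $\mathrm{res}^{D_n}_V\circ\rho=\mathrm{res}^{B_n}_V$ and $V\subseteq\mathbb{Z}_2^n\subseteq B_n$, an iterated Mackey/double-coset computation for the transfer products gives directly $\mathrm{res}^{B_n}_V(\delta_{k_1}^{a_1}\odot\cdots\odot\delta_{k_r}^{a_r})=\sum_{(S_1,\dots,S_r)}\prod_{l=1}^r\prod_{i\in S_l}x_i^{a_l}$, the sum over ordered set partitions of $\{1,\dots,n\}$ with $|S_l|=k_l$, reduced modulo $(x_1+\cdots+x_n)$. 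After eliminating $x_n$ and passing to the graded–lexicographic order, the leading monomial of this polynomial records exactly the multiset $\{(k_i,a_i)\}$, so distinct elements of $\tilde{\mathcal{B}}^0$ restrict to polynomials with distinct leading monomials and are therefore linearly independent. (The same computation with all exponents equal to $1$ identifies the $\delta_{n:m}^0$ with the elementary symmetric polynomials modulo $(x_1+\cdots+x_n)$, recovering the polynomial structure of the relevant subalgebra.) The \emph{main obstacle} is the bookkeeping in the second step: because $\rho$ is not a homomorphism for $\odot$, the fact that $\rho$ sends $R_n$ onto exactly the span of $\rho(\tilde{\mathcal{B}}^0)$ — and nothing more — has to be extracted from the relation $\delta_{1:m}^0=0$ together with the Mackey analysis of $\rho$, rather than by a purely formal Hopf-ring argument.
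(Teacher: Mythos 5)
Your first half is fine, and it is essentially the paper's own argument read in the opposite direction: the paper orders $\overline{\mathcal{B}}^0$ by the exponent vector $\varepsilon_n$ and observes that monomials in the staircase classes $u_k=\delta_k\odot 1_{n-k}$ expand triangularly with prescribed leading terms, which is the same triangular system your column-growing induction inverts. The genuine gap is in the second half, exactly at the step you flag as the main obstacle: the claim that a skyline monomial of height-one boxes whose tallest column has width $1$ is divisible by $\delta_1\odot 1_{n-1}$ in $R_n$, hence killed by $\rho$. This is false. Already in $H^6(B_3;\mathbb{Z}_2)$ one has
\[
\delta_1^3\odot\delta_1^2\odot\delta_1=(\delta_1\odot 1_2)\cdot(\delta_2\odot 1_1)\cdot\delta_3+\delta_3^2,
\]
because the correction term produced by distributivity is $\delta_2^2\odot\delta_1^2$, which equals $\delta_3^2$ (two columns of the same profile gather); one can confirm the identity by restricting to the sign subgroup $\mathbb{Z}_2^3\le B_3$, where it reads $m_{(3,2,1)}=e_1e_2e_3+e_3^2$, and to $A_{(2,1)}$. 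Consequently $\rho(\delta_1^3\odot\delta_1^2\odot\delta_1)=(\delta_{3:0}^0)^2\neq 0$, since its restriction to the even sign-change subgroup is $\left(x_1^2x_2+x_1x_2^2\right)^2$. Triangularity controls only the leading term: the lower-order terms of an element of $\overline{\mathcal{B}}^0\setminus\tilde{\mathcal{B}}^0$ need not lie in the ideal generated by $\delta_1\odot 1_{n-1}$, so your identification of $\rho(R_n)$ with the span of $\rho(\tilde{\mathcal{B}}^0)$, and with it the basis statement, does not follow as written.

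What your argument actually needs is weaker and is recoverable: $\rho(R_n)$ is spanned by the images of the $u_1$-free staircase monomials; these images are linearly independent because $\bar e_2,\dots,\bar e_n$ are algebraically independent in the cohomology of the even sign-change subgroup (this is the sound core of your restriction-to-$V$ step, and is where the detection input of Theorem \ref{thm:subgroups D} enters); and the passage from these images to $\rho(\tilde{\mathcal{B}}^0)$ is unitriangular, because for $x\in\tilde{\mathcal{B}}^0$ the polynomial expressing $x$ in the $u_k$ has a $u_1$-free leading monomial while all lower monomials either contain $u_1$ (and die) or are strictly smaller $u_1$-free monomials (in the example above, $\rho(\delta_2^3\odot 1_1)=\rho(u_2^3)+\rho(u_3^2)$: a nonzero but triangular correction). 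Note also that your leading-monomial claim after eliminating $x_n$ is asserted rather than proved; the algebraic-independence-plus-unitriangularity formulation avoids it. The paper's proof does not make the divisibility claim: it runs the triangularity argument globally for the first assertion and then deduces the basis statement from $\delta_{1:n-1}^0=0$ together with this kind of independence.
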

\begin{proof}
Let $ \overline{\mathcal{B}}^0 $ be the set of Hopf monomials $ x \in A_B $ of the form $ x = \delta_{k_1}^{a_1} \odot \dots \odot \delta_{k_r}^{a_r} $ ordered with $ a_1 > \dots > a_r $, without the condition $ k_1 \geq 2 $.
We can define an injective function $ \varepsilon_n \colon \overline{\mathcal{B}}^0 \cap H^* \left( \WB{n}; \Ftwo \right) \rightarrow \mathbb{N}^n $ given by
\[
\varepsilon_n ( \delta_{k_1}^{a_1} \odot \dots \odot \delta_{k_r}^{a_r} ) = ( \underbrace{a_1, \dots, a_1}_{k_1 \mbox{\tiny{ times }}}, a_2, \dots, a_{r-1}, \underbrace{a_r, \dots, a_r}_{k_r \mbox{\tiny{ times }}} )
\]
By identifying $ \mathcal{\overline{B}}^0 \cap H^* \left( \WB{n}; \Ftwo \right) $ with a subset of $ \mathbb{N}^n $ this way, the lexicographic ordering on $ \mathbb{N}^n $ induces a total order on $ \overline{\mathcal{B}}^0 $.
We observe that $ \prod_{i=1}^n \left( \delta_{i} \odot 1_{n-{i}} \right)^{a_i} $ is a linear combination of elements of $ \overline{\mathcal{B}}^0 $. In this linear combination, the biggest non-zero Hopf monomial in the previously defined total order corresponds to $ \left( \sum_{i=1}^n a_i, \sum_{i=2}^n a_i \dots, a_{n-1} + a_n, a_n \right) $. Moreover, this belongs to $ \tilde{\mathcal{B}}^0 $ if and only if $ a_1 = 0 $, i.e. if and only if $ \delta_1 \odot 1_{n-1} $ does not appear as a factor.
Since these are all different, $ \delta_n, \delta_{n-1} \odot 1_1, \dots, \delta_1 \odot 1_{n-1} $ generate, under the cup product, a polynomial subalgebra with basis $ \overline{\mathcal{B}}^0 \cap H^*(\WB{n}; \Ftwo) $. By Proposition \ref{lem:relations cup}, the kernel of the restriction map to $ H^*(\WD{n}; \Ftwo) $ on this subalgebra is the ideal generated by $ \delta_1 \odot 1_{n-1} $. Consequently, the images of elements of $ \tilde{\mathcal{B}}^0 $ in $ A'_D $ are a basis for the cup product subalgebra generated by the elements $ \delta_{n:m}^0 $. Since the transfer products of these elements are trivial and this subalgebra is closed under coproduct by Proposition \ref{lem:cop tr D}, this is a sub-almost-Hopf ring.
\end{proof}

\begin{definition} \label{def:gathered block D}
We call a \emph{neutral gathered block} in $ A_D $ an element $ b \in A'_D $ obtained as the image in $ A'_D $ of an element of the set $ \tilde{B}^0 $ considered in the previous lemma. A \emph{positively charged gathered block}, or simply \emph{positive gathered block}, is an element of the form form $ b = (\delta_{2^nm:0}^0)^{t_0} \prod_{i=1}^n (\gamma_{k,m2^{n-k}}^+)^{t_k} $, for some $ n,m \geq 1 $, $ t_k \geq 0 $ for $ 0 \leq k < n $ and $ t_n > 0 $.
The \emph{profile} of $ b $ is $ (t_0,\dots,t_n) $.
A \emph{negatively charged gathered block}, or simply \emph{negative gathered block}, is an element of the form form $ b = (\delta_{2^nm:0}^0)^{t_0} \prod_{i=1}^n (\gamma_{k,m2^{n-k}}^+)^{t_k} $, for some $ n,m \geq 1 $, $ t_k \geq 0 $ for $ 0 \leq k < n $ and $ t_n > 0 $. The \emph{profile} of $ b $ is $ (t_0,\dots,t_n) $.
A \emph{Hopf monomial} is a transfer product of gathered blocks.
\end{definition}

Note that, given a possible profile $ \underline{t} = (t_0, \dots, t_n) $, for all $ l \geq 1 $, there is a unique positively (respectively negatively) charged gathered block in the $ (l2^n)^{th} $ component having that profile. As a notational convention, we denote it $ b^+_{l,\underline{t}} $ (respectively $ b^-_{l,\underline{t}} $).
Moreover, we stress that we require that a positively charged generator and a negatively charged one do not appear in the same gathered block. This is not a restriction since, due to Proposition \ref{lem:relations cup}, a cup product of two such generators is $ 0 $, or we can write it as a transfer product of gathered blocks. Therefore Hopf monomials generate $ A'_D $ as an $ \Ftwo $-vector space.

We also define a filtration of $ A'_D $ that we will use to extract an additive basis from this set of (linear) generators.
\begin{definition}\label{def:filtration D}
Define the \emph{weight} of a neutral gathered block $ b $ as $ \w(b) = 0 $. Define the weight of a positively or negatively charged gathered block $ b^{\pm}_{l,\underline{t}} $, with profile $ \underline{t} = (t_0,\dots, t_n) $, $ n \geq 1 $, as $ \w(b_{l,\underline{t}}^{\pm}) = l2^{n-1}t_1 $. Define the weight of a Hopf monomial $ x = b_1 \odot \dots \odot b_r $ as the sum $ \w(x) = \w(b_1) + \dots + \w(b_r) $ of the weights of its constituent gathered blocks.
Define the \emph{weight filtration} as the increasing filtration $ F(A'_D) = \{ F_n(A'_D) \}_{n=0}^\infty $ of $ A'_D $ such that $ F_n(A'_D) $ is the linear span of Hopf monomials in $ A'_D $ of weight at most $ n $.
\end{definition}

We first compute formulas for the coproduct and transfer product of gathered blocks in $ A'_D $. These are essentially the charged versions of the corresponding identities in $ A_B $, except for gathered blocks involving the generators $ \gamma_{1,n}^{\pm} $, for which this is true only in the graded space $ \gr_F(A'_D) $ associated with the weight filtration. Complete formulas in $ A'_D $ are complicated and can be retrieved recursively on the filtration $ F $.

\begin{lemma}\label{lem:transfer coproduct basis D}
Let $ n \geq 1 $. Let $ \underline{t} = (t_0,\dots,t_n) $ with $ t_k \geq 0 $ for all $ 0 \leq k < n $ and $ t_n > 0 $. In any almost-Hopf ring satisfying the relations of Theorem \ref{teo:Dn} the following statements are true for all $ i,j > 0 $:
\begin{enumerate}
\item $ 1^- \odot b_{i,\underline{t}}^+ = b_{i,\underline{t}}^- $ and $ 1^- \odot b_{i,\underline{t}}^- = b_{i,\underline{t}}^+ $
\item if $ n > 1 $, the coproduct satisfies $ \Delta(b^+_{m,\underline{t}}) = \sum_{i+j=n} \left( b^+_{i,\underline{t}} \otimes b^+_{j,\underline{t}} + b^-_{i,\underline{t}} \otimes b^-_{j,\underline{t}} \right) $ and $ \Delta(b^-_{m,\underline{t}}) = \sum_{i+j=n} \left( b^+_{i,\underline{t}} \otimes b^-_{j,\underline{t}} + b^-_{i,\underline{t}} \otimes b^+_{j,\underline{t}} \right) $
\item if $ n > 1 $, $ \odot $ satisfies $ b_{i,\underline{t}}^+ \odot b_{j,\underline{t}}^+ = b_{i,\underline{t}}^- \odot b_{j,\underline{t}}^- = \left( \begin{array}{c} i+j \\ i \end{array} \right) b_{i+j,\underline{t}}^+ $ and $ b_{i,\underline{t}}^+ \odot b_{j,\underline{t}}^- = b_{i,\underline{t}}^- \odot b_{j,\underline{t}}^+ = \left( \begin{array}{c} i+j \\ i \end{array} \right) b_{i+j,\underline{t}}^- $
\item for all neutral gathered block $ b^0 $, $ b_{i,\underline{t}}^+ \odot b^0 = b_{i,\underline{t}}^- \odot b^0 $
\item for all profiles $ \underline{u} $, possibly different from $ \underline{t} $, $ b_{i,\underline{t}}^+ \odot b_{j,\underline{u}}^+ = b_{i,\underline{t}}^- \odot b_{j,\underline{u}}^- $, $ b_{i,\underline{t}}^+ \odot b_{j,\underline{u}}^- = b_{i,\underline{t}}^- \odot b_{j,\underline{u}}^+ $
\item if $ n > 1 $, for all profiles $ \underline{u} $, $ b_{i,\underline{t}}^+ \cdot b_{j,\underline{u}}^- = b_{i,\underline{t}}^- \cdot b_{j,\underline{u}}^+ = 0 $
\item for all neutral gathered block $ b' $, $ b_{i,\underline{t}}^- \cdot b' = 1^- \odot ( b_{i,\underline{t}}^+ \cdot b') $
\end{enumerate}
Moreover, 2, 3 and 6 are true in $ \gr_{F}(A'_D) $ even if $ n = 1 $.
\end{lemma}
\begin{proof}
\begin{enumerate}
\item Recall that, by definition, $ \gamma_{k,m}^- = 1^- \odot \gamma_{k,m}^+ $. Combining the link between transfer product and coproduct provided by Proposition \ref{lem:transfer mezzo} with the coproduct formula for $ 1^- $ and $ \gamma_{k,m}^+ $, we deduce that
\[
\Delta(\gamma^-_{k,m}) = \sum_{i+j=m} \left( \gamma_{k,i}^+ \otimes \gamma_{k,j}^- + \gamma_{k,i}^- \otimes \gamma_{k,j}^+ \right),
\]
with the convention that $ \gamma_{k,0}^{\pm} = 1^{\pm} $.
Then, we can prove that $ 1^- \odot b_{i,\underline{t}}^+ = b_{i,\underline{t}}^- $ by induction on the number of cup-product factors of the involved gathered block. If $ b_{i,\underline{t}}^+ $ is a single generator $ \gamma_{k,m}^+ $, the statement holds by definition. The induction step
\[
(b_{i,\underline{t}}^- \cdot \gamma_{k,\frac{i2^n}{2^k}}^-) = (1^- \odot b_{i,\underline{t}}^+) \cdot \gamma_{k,\frac{i2^n}{2^k}}^- = 1^- \odot (b_{i,\underline{t}}^+ \cdot \gamma_{k,\frac{i2^n}{2^k}}^+)
\]
is deduced from Hopf ring distributivity and the coproduct formula derived above for $ \gamma^-_{k,m} $, using that $ 1^- \cdot 1^- = 1^- $, $ 1^- \cdot 1^+ = 0 $ and that the cup product of elements in different components is zero.
The statement for negatively charged gathered blocks is obtained from its analog for positively charged ones by taking the transfer product of both members of the identity with $ 1^- $ and using the relation $ 1^- \odot 1^- = 1^+ $.
\item We begin with the case of positively charged gathered blocks $ b^+_{m,\underline{t}} $. We proceed, again, by induction on the number of $ \cdot $ generators appearing in the expression of $ b^+_{m,\underline{t}} $. If $ b^+_{m,\underline{t}} $ is a single generator, then the statement holds by the coproduct identities of Proposition \ref{lem:cop tr D}. The induction step
follows from the fact that $ \cdot $ and $ \Delta $ form a bialgebra, and relations 1,2,3 of Proposition \ref{lem:relations cup}. For instance, for $ k \geq 2 $, we explicitly have
\begin{align*}
&\Delta(b^+_{m,\underline{t}} \cdot \gamma_{k,\frac{m2^n}{2^k}}^+) = (\cdot \otimes \cdot)( \id \otimes \tau \otimes \id) \Bigg[ \sum_{i+j=m} \left( b^{+}_{i,\underline{t}} \otimes b^{+}_{j,\underline{t}} + b^-_{i,\underline{t}} \otimes b^-_{j,\underline{t}} \right) \\
&\quad \otimes \sum_{r+s=\frac{m2^n}{2^k}} \left( \gamma_{k,r}^+ \otimes \gamma_{k,s}^+ + \gamma_{k,r}^- \otimes \gamma_{k,s}^- \right) \Bigg] \\
&= \sum_{i+j=m} \Bigg( b^+_{i,\underline{t}} \cdot \gamma_{k,\frac{i2^n}{2^k}}^+ \otimes b^+_{j,\underline{t}} \cdot \gamma_{k,\frac{j2^n}{2^k}}^+ +
b^-_{i,\underline{t}} \cdot \gamma_{k,\frac{i2^n}{2^k}}^- \otimes b^-_{j,\underline{t}} \cdot \gamma_{k,\frac{j2^n}{2^k}}^- \\
&\quad + b^+_{i,\underline{t}} \cdot \gamma_{k,\frac{i2^n}{2^k}}^- \otimes b^+_{j,\underline{t}} \cdot \gamma_{k,\frac{j2^n}{2^k}}^- +
b^-_{i,\underline{t}} \cdot \gamma_{k,\frac{i2^n}{2^k}}^+ \otimes b^-_{j,\underline{t}} \cdot \gamma_{k,\frac{j2^n}{2^k}}^+ \Bigg) \\
&= \sum_{i+j=m} \left( b^+_{i,\underline{t}} \cdot \gamma_{k,\frac{i2^n}{2^k}}^+ \otimes b^+_{j,\underline{t}} \cdot \gamma_{k,\frac{j2^n}{2^k}}^+ +
b^-_{i,\underline{t}} \cdot \gamma_{k,\frac{i2^n}{2^k}}^- \otimes b^-_{j,\underline{t}} \cdot \gamma_{k,\frac{j2^n}{2^k}}^- \right). \\
\end{align*}

We only need to be careful when $ k_1 > 1 $ because $ \gamma_{1,l}^+ \gamma_{1,l}^- $ is not necessarily $ 0 $. Note that for $ k \geq 2 $ we have by Hopf ring distributivity $ \gamma_{k,r}^{\pm} \gamma_{1,2^{k-1}r}^+ \gamma_{1,2^{k-1}r}^- = \gamma_{k,r}^{\pm} \left( (\gamma_{1,2^{k-1}r-1}^+ )^2 \odot \delta_{2:0}^0 \right) = 0 $, because the coproduct of $ \gamma_{k,r}^{\pm} $  does not have an addend $ x' \otimes x'' $ with the component of $ x'' $ equal to $ 2 $. This observation guarantees that, if $ n > 1 $, the mixed-charge terms vanish.
Even if $ n = 1 $, we obtain the additional terms by applying relation 2 of Proposition \ref{lem:relations cup} to expressions of this form, and this procedure lowers weights. Thus, the desired formula holds in $ \gr_{F}(A'_D) $ in this case.

The formulas for negatively charged gathered blocks are, once again, obtained by applying the transfer product with $ 1^- $.
\item The formula is easily deduced from the coproduct formulas 2 by induction on the number of $ \cdot $-product generators appearing in $ b_{i,\underline{t}}^{\pm} $. In the case $ n = 1 $, we use the obvious fact that $ \odot $ preserves the weight filtration to deduce that the desired formula holds in the graded space.
\item This is a combination of 1 and the relations $ 1^- \odot \delta_{n:m}^0 = \delta_{n:m}^0 $.
\item This is a combination of 1 and the relations $ 1^{\pm} \odot 1^{\pm} = 1^+ $, $ 1^{\pm} \odot 1^{\mp} = 1^- $.
\item If $ n > 1 $, it follows directly from relation 1 of Proposition \ref{lem:relations cup}. If $ n = 1 $, assume that  $ b_{i,\underline{t}}^+ \in F_a $ and $ b_{j,\underline{u}}^- \in F_b $. Relation $ 2 $ of Proposition \ref{lem:relations cup} provides a way to write $ b_{i,\underline{t}}^+ \cdot b_{j,\underline{u}}^- $ as a product of the form $ \left((\gamma_{1,l-1}^+)^2 \odot \delta_{2:0}^0 \right) \cdot b_{i,\underline{t}'}^+ \cdot b_{j,\underline{u}'}^- $ for some $ l \geq 1 $, where $ b_{i,\underline{t}'}^+ \in F_{a-l} $ and $ b_{i,\underline{u}'} \in F_{b-l} $. By relation $ 5 $ of the same Proposition, these $ \cdot $ products preserve the weight filtration. Therefore the statement is true in $ \gr_F(A'_D) $.
\item We argue as we did for point 1, combining the formula given by point 1 with the relation $ 1^- \odot \delta_{n:m} = \delta_{n:m} $, which implies that neutral gathered blocks are invariant by the action of $ 1^- \odot \_ $.
\end{enumerate}
\end{proof}

Using this lemma, we can use Hopf monomials in the additive basis for $ A_B $ to construct basis elements of $ A_D $ by adding charges.
\begin{proposition} \label{prop:basis D}
Let $ \mathcal{M}_B $ be the Hopf monomial basis for $ A_B $ of Proposition \ref{prop:basis B}. Let $ \tilde{\mathcal{B}}^0 \subseteq \mathcal{M}_B $ be as in Lemma \ref{lemma:0}. Let $ \tilde{\mathcal{B}}^{c} $ be the subset of $ \mathcal{M}_B $ consisting of non-trivial Hopf monomials in which every constituent gathered block has profile $ \underline{t} = (t_0,\dots, t_n) $ with $ n \geq 1 $.
Let $ \mathcal{M}_D = \mathcal{B}^0 \sqcup \mathcal{B}^+ \sqcup \mathcal{B}^- \subseteq A'_D $, where:
\begin{itemize}
\item $ \mathcal{B}^+ = \{ x^+ = \bigodot_{i=1}^k b_{l_i, \underline{t}_i}^+ \}_{x = \bigodot_{i=1}^k b_{l_i,\underline{t}_i} \in \tilde{\mathcal{B}}^{c}} \cup \{1^+ \} $
\item $ \mathcal{B}^- = \{ x^- = \bigodot_{i=1}^{k-1} b_{l_i, \underline{t}_i}^+ \odot b_{l_k,\underline{t}_k}^- \}_{x = \bigodot_{i=1}^k b_{l_i,\underline{t}_i} \in \tilde{\mathcal{B}}^c} \cup \{ 1^- \} $
\item $ \mathcal{B}^0 = \{ x^0 = \rho(y) \odot z^+ \}_{x = y \odot z, y \in \tilde{\mathcal{B}}^0 \setminus \{ 1_0 \}, z \in \tilde{\mathcal{B}}^{c} \cup \{1_0 \}} $
\end{itemize}
$ \mathcal{M}_D $ is an additive basis for $ A'_D $.
\end{proposition}

Before providing a proof of this statement, we make a remark that clarifies the cumbersome identity of Proposition \ref{lem:transfer mezzo}.
\begin{remark}\label{rem:transfer coproduct on basis}
Proposition \ref{prop:basis D} provides a direct sum decomposition of $ A'_D $ as an $ \Ftwo $-vector space with three addends $ V^+ $, $ V^- $ and $ V^0 $, defined as the linear span of $ \mathcal{B}^+ $, $\mathcal{B}^- $ and $ \mathcal{B}^0 $ respectively. Note that the involution $ \iota = 1^- \odot \_ $ switches $ V^+ $ and $ V^- $ and fixes all elements of $ V^0 $ by Lemma \ref{lem:transfer coproduct basis D}. We can consider the linear projection $ \pi \colon V \to V^+ $ defined as the identity on $ V^+ $ and as $ 0 $ on $ V^- $ and $ V^0 $. With this notation, we can rewrite Proposition \ref{lem:transfer mezzo} as follows:
\[
\Delta(b \odot x) = (\odot \otimes \odot) (\pi \otimes \tau \otimes \id) (\Delta \otimes \Delta)(b \otimes x)
\]
for all $ x \in A'_D $ and $ b $ charged gathered block.

A further reduction can be performed. We can consider the free $ \mathbb{F}_4 $-module $ \tilde{V} $ with basis $ \mathcal{M}_D $ and define a $ \mathbb{Z} $-linear map $ \tilde{\Delta} \colon A'_D \otimes A'_D \to \tilde{V} $ as follows. Given $ x,y \in \mathcal{M}_D $, first compute the expansions of the coproducts $ \Delta(x) = \sum_i x_i' \otimes x_i'' $ and $ \Delta(y) = \sum_j y_j' \otimes y_j'' $ on the basis $ \mathcal{M}_D \otimes \mathcal{M}_D $ of $ A'_D \otimes A'_D $. Then, let
\[
\tilde{\Delta}(x \otimes y) = \sum_i \sum_j (x_i' \odot y_j') \otimes (x_i'' \odot y_j'')
\]
where, this time, the sum is computed in $ \tilde{V} $. Recall that both $ \odot $ and $ \Delta $ are $ \iota \otimes \iota $-invariant. Hence, each addend appears twice, except possibly the elements of the form $ (x_i' \odot y_j') \otimes (x_i'' \odot y_j'') $ where $ x_i' $, $ x_i'' $, $ y_j' $, and $ y_j'' $ are all fixed by $ \iota $. But this implies that these classes belong to $ \mathcal{B}^0 $, and thus their transfer product is zero. Consequently, such addends do not appear in the summation.
This implies that $ \tilde{\Delta}(x \otimes y) $ is killed by the multiplication by $ 2 $, and thus $ \tilde{\Delta} $ extends linearly to a map as desired. The image of $ \tilde{\Delta} $ is contained in the image of the embedding $ \xi \colon A'_D \hookrightarrow \tilde{V} $ that maps every $ x \in \mathcal{M}_D $ to $ 2x \in \tilde{V} $.
We can rephrase Proposition \ref{lem:transfer mezzo} by saying that $ (\Delta \odot) \colon x \otimes y \in A'_D \otimes A'_D \mapsto \Delta(x \odot y) \in A'_D $ is the unique linear map satisfying $ \xi \circ ( \Delta \odot) = \tilde{\Delta} $.
We immediately see that this statement is equivalent to the formulation above when $ x $ or $ y $ is a charged gathered block. If both $ x $ and $ y $ belong to $ \mathcal{B}^0 $, $ \tilde{\Delta}(x \otimes y) = 0 $ because the transfer product of two neutral gathered blocks is always zero. The general case follows by induction on the number of $ \odot $-factors in the Hopf monomials involved.
\end{remark}

\begin{proof}[Proof of Proposition \ref{prop:basis D}]
We can write every element in an almost-Hopf ring with generators $ \gamma_{k,n}^+ $, $ 1^- $, $ \delta_{n:m}^0 $ satisfying the relations of Theorem $ \ref{teo:Dn} $ as a linear combination of addends in $ \mathcal{M}_D $ due to Lemmas \ref{lemma:0} and \ref{lem:transfer coproduct basis D}, therefore $ \mathcal{M}_D $ is a set of linear generators for $ A'_D $.

The fact that Hopf monomials in $ \mathcal{M}_D $ are linearly independent is a byproduct of the proof of Theorem \ref{teo:Dn}. It is nevertheless possible to provide a fully independent proof that a basis for the almost-Hopf ring with the presentation of Theorem \ref{teo:Dn} has an additive basis given by $ \mathcal{M}_D $. We will not provide it here because we do not need it in any part of this paper, but the interested reader can find it in the appendix.
\end{proof}

\subsection{Comparison between $ A_{\Sigma} $, $ A_B $, and $ A_D $} \label{subsec:comparison}

In this subsection, we compute the action of the connecting homomorphisms on the elements of the additive bases determined in the previous subsection.

We first start with the link between $ A_{\Sigma} $ and $ A_B $. We recall that there are a natural injection $ j \colon \Sigma_n \to \WB{n} $ and a natural projection $ \pi \colon \WB{n} \to \Sigma_n $, providing linear maps linking $ A_B $ and $ A_{\Sigma} $. We begin by analyzing the relationship between $ A_{\Sigma} $ and $ A_B $.

\begin{proposition} \label{prop:SB}
Let $ j \colon \Sigma_n \rightarrow \WB{n} $ and $ \pi \colon \WB{n} \rightarrow \Sigma_n $ be the natural homomorphisms. The induced maps $ j^* \colon A_B \rightarrow A_\Sigma $ and $ \pi^* \colon A_\Sigma \rightarrow A_B $ are Hopf-ring homomorphisms.
\end{proposition}
\begin{proof}
It is obvious from the fact that the following diagrams are pullbacks of finite coverings, where $ p $ indicates covering maps.
\begin{center}
\begin{tikzcd}
\frac{ \Conf_{n+m} \left( \left( 0, +\infty \right)^\infty \right) }{ \Sigma_n \times \Sigma_m } \arrow{r}{j} \arrow{d}{p} & \frac{ Y^{(\infty)}_{\WB{n+m}} }{ \WB{n} \times \WB{m} } \arrow{d}{p} & \frac{ E \left( \Ftwo \right)^{n+m} \times E \left( \Sigma_{n+m} \right) }{ \left( \Ftwo \wr \Sigma_n \right) \times \left( \Ftwo \wr \Sigma_m \right)}\arrow{r}{\pi} \arrow{d}{p} & \frac{ E \left( \Sigma_{n+m} \right) }{ \Sigma_n \times \Sigma_m } \arrow{d}{p} \\
\frac{ \Conf_{n+m} \left( \left( 0, +\infty \right)^\infty \right) }{ \Sigma_{n+m} } \arrow{r}{j} & \frac{ Y^{(\infty)}_{\WB{n+m}} }{ \WB{n+m} } & \frac{ E \left( \Ftwo \right)^{n+m} \times E \left( \Sigma_{n+m} \right) }{ \Ftwo \wr \Sigma_{n+m} } \arrow{r}{\pi} & \frac{ E \left( \Sigma_{n+m} \right) }{ \Sigma_{n+m} }
\end{tikzcd}
\end{center}
\end{proof}

The following proposition is a direct consequence of Corollary \ref{cor:connecting hom cochain} and Proposition \ref{prop:SB}
\begin{proposition} \label{prop:restriction Bsigma}
With reference to the notation of Theorem \ref{teo:gruppi simmetrici}, $ j^*(\gamma_{k,n}) = \gamma_{k,n} $, and $ j^*(\delta_n) = 0 $.
More generally, given a $ B $-skyline diagram $ x \in \mathcal{M}_B $, $ j^*(x) $ is zero if $ x $ contains a rectangle of height $ 1 $. Otherwise, it is obtained by interpreting $ x $ as a skyline diagram in $ A_{\Sigma} $.
\end{proposition}

We can now use our algebraic description to compute the action of $ \pi^* $ on generators.
\begin{proposition} \label{prop:BS}
$ \pi^*(\gamma_{k,n}) = \gamma_{k,n} $. For a skyline diagram $ x \in A_{\Sigma} $, $ \pi^*(x) $ is obtained by interpreting $ x $ as a $ B $-skyline diagram without rectangles of height $ 1 $.
\end{proposition}
\begin{proof}
We proceed by induction on $ n $.
If $ n = 1 $, since $ \pi \circ j = \id $, $ \pi^* $ is injective. Hence $ \pi^*(\gamma_{k,1}) $ is a non-zero class in $ H^{2^k-1}(\WB{2^k}; \Ftwo) $. Thanks to Proposition \ref{prop:SB}, $ \pi^*(\gamma_{k,1}) $ is primitive. From our description of $ A_B $ in terms of skyline diagrams, formalized with the statement of Proposition \ref{prop:basis B}, we see that the only non-trivial primitive of $ A_B $ in the right component and cohomological degree is $ \gamma_{k,1} $.
For $ n > 1 $, Proposition \ref{prop:SB}  guarantees that $ \pi^* $ preserves coproducts. Hence we inductively have that $ \pi^*(\gamma_{k,n}) + \gamma_{k,n} $ is primitive. However, there are no non-zero primitive in that bidegree, thus $ \pi^*(\gamma_{k,n}) = \gamma_{k,n} $.
\end{proof}

We now turn to $ A_D $. There is a restriction map $ \rho \colon A_B \to A_D $ induced by the inclusions $ \WD{n} \hookrightarrow \WB{n} $. Moreover, we recall that we have natural injections $ i_+,i_- \colon \Sigma_n \to \WD{n} $ determining maps $ A_D \to A_{\Sigma} $ and an involution $ \iota \colon A_D \to A_D $ induced on $ H^*(\WD{n}; \Ftwo) $ by the conjugation with $ s_0 \in \WB{n} $. We analyzed these maps in Subsection \ref{subsec:connecting}.

First, we explain the relation between $ \gamma_{k,m}^+ $ and $ \gamma_{k,m}^- $ and the natural maps between $ \WD{n} $, $ \WB{n} $, and $ \Sigma_n $. 

\begin{proposition} \label{prop:gamma+-}
The following formulas hold for all $ n,k \geq 1 $ and $ m \geq 0 $:
\begin{align*}
i_+^* \left( \gamma_{k,n}^+ \right) &= \gamma_{k,n}, \quad i_+^* \left( \gamma_{k,n}^- \right) = 0 \\
i_-^* \left( \gamma_{k,n}^+ \right) &= 0, \quad i_-^* \left( \gamma_{k,n}^- \right) = \gamma_{k,n} \\
i_+^*(\delta_{n:m}^0) &= i_-^*(\delta_{n:m}^0) = 0
\end{align*}
More generally, with reference to Proposition \ref{prop:basis D}, $ i_+^* $ (respectively $ i_-^* $) is zero on all neutral or negatively charged (respectively neutral or positively charged) Hopf monomials. We obtain the value of positively (respectively negatively) charged Hopf monomials under $ i_+^* $ (respectively $ i_-^* $) by forgetting the charge to get a Hopf monomial in $ \mathcal{M}_B $ and then applying $ j^* $ as described in Proposition \ref{prop:restriction Bsigma}.
\end{proposition}
\begin{proof}
The formulas involving $ \gamma_{k,n}^\pm $ are a direct consequence of Corollary \ref{cor:connecting hom cochain} and the form of the chain representative of $ \gamma_{k,n} \in FN^*_{\Sigma_n} \otimes \Ftwo $ retrieved in \cite[Definition 4.9]{Sinha:13}. To deduce that $ i_+^*(\delta_{n:m}^0) = 0 $, we recall that $ \delta_{n:m}^0 = \rho(\delta_n \odot 1_m) $ and that the composition $ \Sigma_n \stackrel{i_+}{\hookrightarrow} \WD{n} \hookrightarrow \WB{n} $ is equal to $ j $. By Proposition \ref{prop:restriction Bsigma} $ j^*(\delta_n \odot 1_m) = 0 $, therefore $ i_+^*(\delta_{n:m}^0 ) = 0 $. The same is also true for $ i_-^*(\delta_{n:m}^0) $ because $ i_- $ is obtained by composing $ i_+ $ with the conjugation with an element of $ \WB{n} $, whose action is trivial on elements coming from $ A_B $.
\end{proof}

Since we identify the involution $ \iota $ with the transfer product with $ 1^- $, the following proposition is essentially a restatement of the description of the previous subsection.
\begin{proposition}
If $ x^0 $ is a neutral Hopf monomial in $ \mathcal{M}_D $, then $ \iota(x^0) = x^0 $. If $ x^{\pm} $ is a charged Hopf monomial in $ \mathcal{M}_D $, we get $ \iota(x^{\pm}) $ by inverting the charge.
\end{proposition}

To complete the description of the homomorphisms connecting our structures, we need to compute the restriction $ \rho \colon A_B \to A_D $ and transfer $ \tr \colon A_D \to A_B $ maps. To do this, we need to establish preliminary identities.
\begin{lemma} \label{lem:identities transfer restriction}
For all $ x,x' \in A_D $ and for all $ y \in A_B $, the following identities are satisfied:
\begin{enumerate}
\item $ \displaystyle \rho(\tr(x) \odot y) = x \odot \rho(y) $
\item $ \tr(x) \cdot \tr(x') = \tr(x \cdot x' + \iota(x) \cdot x') $
\item $ \tr(x \cdot \rho(y)) = \tr(x) \cdot y $
\item $ \tr(x \odot x') = \tr(x) \odot \tr(x') $
\end{enumerate}
\end{lemma}
\begin{proof}
The first statement follows from the fact that this commutative diagram induces a pullback of covering spaces at the level of classifying spaces:
\begin{center}
\begin{tikzcd}
\WD{n} \times \WD{m} \arrow{r} \arrow{d} & \WD{n} \times \WB{m} \arrow{d} \\
\WD{n+m} \arrow{r} & \WB{n+m}
\end{tikzcd}
\end{center}

As regarding the second statement, since the conjugation by $ s_0 $ is an endomorphism of the covering $ B(\WD{n} \times \WD{n}) \to B(\WD{2n} \cap (\WB{n} \times \WB{n})) $, then $ \tr_{\WD{n} \times \WD{n}}^{\WD{2n} \cap (\WB{n} \times \WB{n})} c_{s_0}^* = c_{s_0}^* \tr_{\WD{n} \times \WD{n}}^{\WD{2n} \cap (\WB{n} \times \WB{n})} $. Moreover, the classifying space functor applied to the following square produces a diagram homotopy equivalent to a pullback of covering, where $ d $ and $ d' $ are diagonal maps:
\begin{center}
\begin{tikzcd}
\WD{n} \arrow{r}{d} \arrow{d}{j} & \WD{n} \times \WD{n} \arrow{d} \\
\WB{n} \arrow{r}{d'} & \WD{2n} \cap ( \WB{n} \times \WB{n} )
\end{tikzcd}
\end{center}
Hence $ \tr d^*= d'^* \tr^{\WD{2n} \cap( \WB{n} \times \WB{n})}_{\WD{n} \times \WD{n}} $.
These facts imply that, denoting with $ d $ the diagonal subgroups,
\begin{align*}
&\tr(x) \cdot \tr(x') = \rho_{d(\WB{n}}^{\WB{n} \times \WB{n}} \tr_{\WD{n} \times \WD{n}}^{\WB{n} \times \WB{n}}(x \otimes x') \\
&= \rho_{d(\WB{n})}^{\WD{2n} \cap ( \WB{n} \times \WB{n})} \rho_{\WD{2n} \cap ( \WB{n} \times \WB{n})}^{\WB{n} \times \WB{n}} \tr_{\WD{2n} \cap (\WB{n} \times \WB{n})}^{\WB{n} \times \WB{n}} \tr_{\WD{n} \times \WD{n}}^{\WD{2n} \cap (\WB{n} \times \WB{n})}(x \otimes x') \\
&= d'^* (\id + c_{s_0}^*) \tr_{\WD{n} \times \WD{n}}^{\WD{2n} \cap (\WB{n} \times \WB{n})} (x \otimes x') \\
&= d'^* \tr_{\WD{n} \times \WD{n}}^{\WD{2n} \cap (\WB{n} \times \WB{n})} (\id + c_{s_0}^*)(x \otimes x') \\
&= \tr d^* (\id + c_{s_0}^*) (x \otimes x') \\
&=  \tr(x \cdot x' + \iota(x) \cdot x')
\end{align*}

Similarly, the last two statements follow from the diagrams below, where the vertical maps of the first one are the diagonal morphisms:
\begin{center}
\begin{tikzcd}
\WD{n} \arrow{r} \arrow{d} & \WB{n} \arrow{d} & \WD{n} \times \WD{m} \arrow{r} \arrow{d} & \WD{n+m} \arrow{d} \\
\WD{n} \times \WB{n} \arrow{r} & \WB{n} \times \WB{n} & \WB{n} \times \WB{m} \arrow{r} & \WB{n+m} \\
\end{tikzcd}
\end{center}
\end{proof}

\begin{proposition} \label{prop:transfer D}
The transfer map $ \tr \colon A_D \to A_B $ is such that $ \tr(\gamma_{k,n}^\pm) = \gamma_{k,n} $ and $ \tr(\delta_{n:m}^0) = 0 $.
More generally, if $ b_{l,\underline{t}}^{\pm} $ is a charged gathered block with profile $ \underline{t} = (t_0,t_1,\dots,t_n) $, then $ \tr(b_{l,\underline{t}}^{\pm}) = b_{l,\underline{t}} $ if $ n \geq 2 $, while if $ n = 1 $ the transfer of gathered blocks is computed inductively by the formula
\[
b_{l,\underline{t}} = \sum_{a=0}^{\left\lfloor \frac{t_1}{2} \right\rfloor} \tr \left(b_{l,(t_0,t_1-a)}^\pm (\gamma_{1,l}^{\mp})^a \right).
\]
The transfer of every neutral gathered block is $ 0 $, and we realize the transfer of a Hopf monomial as the transfer product of the transfer of its constituent gathered blocks.
\end{proposition}
\begin{proof}
The statement for generators is a direct consequence of their definition at the cochain level. The general claim for Hopf monomials in $ A_D $ follows directly from Lemma \ref{lem:identities transfer restriction}.
\end{proof}

\begin{proposition}
$ \rho(\gamma_{k,n}) = \gamma_{k,n}^+ + \gamma_{k,n}^- $ for all $ n,k \geq 1 $. Moreover, $ \rho(\delta_m) = \delta_{m:0}^0 $ for $ n \geq 2 $ and $ \rho(\delta_1) = 0 $.
More generally, for every Hopf monomial $ x \in \mathcal{M}_B $, $ \rho(x) $ can be computed as follows. If $ x = b_{l,\underline{t}} $ is a gathered block with profile $ \underline{t} = (t_0, \dots, t_n) $, we have that
\[
\rho(x) = \left\{ \begin{array}{ll}
x^0 & \mbox{if } n = 0\\
\sum_{a=0}^{t_1} \left( \begin{array}{c} t_1 \\ a \end{array} \right) b_{l,(t_0,a)}^+ (\gamma_{1,l}^-)^{t_1-a} & \mbox{if } n = 1 \\
b_{l,\underline{t}}^+ + b_{l,\underline{t}}^- & \mbox{if } n \geq 2
\end{array} \right. .
\]
The restriction of a Hopf monomial $ x $ with a constituent gathered block in $ \tilde{\mathcal{B}}^0 $ is $ x^0 $. We calculate the restriction of a Hopf monomial $ x \in \tilde{\mathcal{B}}^c $ as follows. First, replace every constituent gathered block in $ x $ with the sum of the positively or neutrally charged elements of its restriction. Then, write the resulting linear combination as a sum of Hopf monomials in $ A_D $. Finally, add to that the negatively charged counterpart of every positively charged Hopf monomials appearing in the sum.
\end{proposition}
\begin{proof}
Using the cochain-level representative of $ \gamma_{k,n} $ introduced in Definition \ref{def:cochain generators B}, we immediately see that its restriction is represented in $ {FN'}^*_{\WD{n2^k}} $ by the sum of two elements obtained by providing this cochain with the two possible orientations. These elements correspond to cochain representatives of $ \gamma_{k,n}^+ $ and $ \gamma_{k,n}^- $ via the cochain equivalence $ \varphi $ of Lemma \ref{lem:CHE}. The formulas for $ \delta_m $ are a consequence of the generators' definition in $ A_D $ and relation 4 of Proposition \ref{lem:relations cup}.
\end{proof}

We conclude this section with a short description of the Gysin sequence of the double cover $ \WD{n} \to \WB{n} $. In \cite{Sinha:17}, Giusti and Sinha adopt the analysis of a similar Gysin exact sequence as the starting point to compute the cohomology of the alternating groups as an almost-Hopf ring. While we retrieve that as a byproduct of our algebraic description, we stress that Giusti and Sinha's approach could be used in our framework as an alternative method to deduce relations in $ A'_D $. Indeed, a direct consequence of the following proposition is that $ \mathcal{M}_D = \mathcal{B}^0 \sqcup \mathcal{B}^+ \sqcup \mathcal{B}^- $ is the polarized basis arising from a Gysin decomposition in the sense of \cite{Sinha:17}.

\begin{proposition}[cf. \cite{Sinha:17}, Section 3]
The restriction $ \rho \colon A_B \to A'_D $ and the transfer $ \tr \colon A'_D \to A_D $ fit into the Gysin sequence
\begin{center}
\begin{tikzcd}
\dots \arrow{r}{\partial_{k-1}} & H^k(\WB{n}; \Ftwo) \ar{r}{\rho_k} & H^k(\WD{n}; \Ftwo) \ar{r}{\tr_k} & { } \\
\ar{r} & H^k(\WB{n}; \Ftwo) \ar{r}{\partial_k} & H^{k+1}(\WB{n}; \Ftwo) \ar{r}{\rho_{k+1}} & \dots, \\ 
\end{tikzcd}
\end{center}
where $ \partial $ is the multiplication with $ \delta_1 \odot 1_{n-1} $. It can be described on skyline diagrams by the operation of replacing each column corresponding to $ \delta_k^m $ with the diagram corresponding to $ \delta_1^{m+1} \odot \delta_{k-1}^m $.
\end{proposition}
\begin{proof}
By a general fact, the connecting homomorphism $ \partial $ is the multiplication with the Euler class $ e $ of the double covering. In the case $ n = 1 $, this covering is isomorphic to the universal double covering $ S^{\infty} \to \mathbb{P}^\infty(\mathbb{R}) $, and its Euler class is $ \delta_1 $. For bigger $ n $, the Euler class is $ \delta_1 \odot 1_{n-1} $ because it is the only class in the right degree that restricts to $ \delta_1 $.

$ \tr \circ \rho = 0 $ because we are working modulo $ 2 $. Therefore the transfer of a neutral gathered block is $ 0 $. If $ b = b_{l,\underline{t}}^\pm $ is a charged gathered block, then the restriction of $ \tr(b) $ must be $ b + \iota(b) $, and the multiplication with $ \delta_1 \odot 1_{n-1} $ must kill $ \tr(b) $. These two conditions force $ \tr(b) = b_{l,\underline{t}} $. Since $ \tr $ preserves the transfer product $ \odot $, the formula for a general Hopf monomial follows.
\end{proof}

\section{Restriction to elementary abelian subgroups} \label{sec:restrizione}

We recall here some theorems from Swenson's thesis \cite{Swenson}, which constitute the formal framework in which we will calculate the cohomology of $ \WB{n} $ and $ \WD{n} $. We will then exploit these theorems to determine the restriction of our generators in $ A_B $ and $ A_D $ to elementary abelian $ 2 $-subgroups. This yields the restriction of all the cohomology of the groups $ \WB{n} $ and $ \WD{n} $ to maximal elementary abelian subgroups, because the structural morphisms of our almost-Hopf rings behave in a predictable way: cup products and coproducts are preserved by such restriction, while the relation with transfer product is determined via double cosets formulas, as stated in Adem and Milgram's book \cite[Section II.6]{Adem-Milgram}.

\subsection{Quillen's theorem for finite reflection groups}
The relevance of these restriction maps is encompassed by a result by Quillen \cite{Quillen:71,Quillen:72}, which we state here.
Let $ G $ be a finite group and $ \mathcal{F} $ a family of subgroups. Let $ \theta_g \colon H^* \left( K; \Fp \right) \rightarrow H^* \left( gKg^{-1}; \Fp \right) $ be the conjugation homomorphism. Define
\[
	\mathcal{F}^n = \left\{ \left\{ f_K \right\}_{K \in \mathcal{F}}, f_K \in H^n \left( K; \Fp \right): \forall K,K': g^{-1}Kg \subseteq K' \Rightarrow f_K = \theta_g^* \left( f_{K'} \right)|_K \right\}.
\]
Alternatively, we can consider $ \mathcal{F} $ as a category in which:
\[
	\Hom \left( K, K' \right) = \left\{ g: g^{-1}Kg \subseteq K' \right\}
\]
Then $ \mathcal{F}^n $ is the inverse limit of the functor $ H^n $ from $ \mathcal{F} $ into the category of $ \Fp $-vector spaces. In other words, $ \mathcal{F}^* $ consists of collections of cohomology classes of groups in $ \mathcal{F} $ that are stable under restrictions and conjugation by elements of $ G $.
Observe that $ \mathcal{F}^* = \bigoplus_n \mathcal{F}^n $ has a natural ring structure.

\begin{theorem}{\rm\cite[Theorem 6.2, page 564]{Quillen:71}}$\mspace{9mu}$
Let $ G $ be a finite group. Let $ \mathcal{F}^* $ be as before. The map $ q_G \colon H^* \left( G; \Fp \right) \rightarrow \mathcal{F}^* $ given by $ q_G \left( f \right) = \left\{ f|_K \right\}_K $ is a well-defined ring homomorphism. Moreover, if $ \mathcal{F} $ is the family of elementary abelian $ p $-subgroups, then the kernel and cokernel of $ q_G $ are nilpotent.
\end{theorem}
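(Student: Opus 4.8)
This is precisely Quillen's theorem \cite[Theorem 6.2]{Quillen71}, so for a self-contained account one would reproduce Quillen's argument; here is its plan. That $ q_G $ is a well-defined ring homomorphism is immediate: restriction commutes with conjugation and with subgroup inclusions, which is exactly the compatibility condition cutting out $ \mathcal{F}^* $ as an inverse limit, and restriction is multiplicative. The content is therefore that $ \ker q_G $ is a nilpotent ideal and that $ \im q_G $ contains a power $ y^{p^n} $ of every $ y \in \mathcal{F}^* $, i.e. that $ q_G $ is an F-isomorphism.

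First I would reduce to $ p $-groups. If $ P \leq G $ is a Sylow $ p $-subgroup, then $ \operatorname{res}^G_P $ is a split monomorphism by the transfer (since $ [G:P] $ is prime to $ p $) and every elementary abelian $ p $-subgroup of $ G $ is subconjugate to $ P $; chasing the resulting square identifies $ \ker q_G $ with a subspace of $ \ker q_P $, and reduces surjectivity modulo nilpotents to the analogous statement for $ P $ together with a $ G $-invariance condition controlled by the same transfer. So one may take $ G = P $.

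For the kernel I would run the Quillen--Venkov induction on $ |P| $. If $ P $ is elementary abelian there is nothing to prove. Otherwise, given $ x $ restricting to zero on every elementary abelian subgroup, for each index-$ p $ (normal) subgroup $ M_i \triangleleft P $ with class $ u_i \in H^1(P;\mathbb{Z}_p) $ the inductive hypothesis applied to $ M_i $ makes $ x|_{M_i} $ nilpotent, so the Gysin sequence of the cover $ BM_i \to BP $ puts a power $ x^{k} $ in the ideal generated by $ u_i $ (for $ p $ odd, by $ u_i $ and $ \beta u_i $, modulo the nilpotent class $ u_i $). Multiplying over all $ i $ and invoking Serre's theorem --- that in a non-elementary-abelian $ p $-group a suitable product of Bocksteins of degree-one classes is nilpotent --- yields that $ x $ is nilpotent. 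Since $ H^*(BP;\mathbb{Z}_p) $ is Noetherian (Evens--Venkov), this nil ideal is nilpotent.

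The cokernel is the hard part, and there I would pass to equivariant cohomology. Fix a finite-dimensional free $ G $-CW approximation of $ EG $ and, more generally, study $ H^*_G(Y) $ for $ G $-CW complexes $ Y $, the comparison map now landing in $ \varprojlim_A H^*_A(Y^A) $ over elementary abelian $ A $. One proves this comparison is an F-isomorphism by induction over the cells of $ Y $: the local model $ H^*_G(G/H \times D^n) \cong H^*(BH) $ is handled by the statement for the proper subgroup $ H $ --- the case $ H = G $, $ Y = \operatorname{pt} $ being the one under proof, so the induction runs simultaneously on $ |G| $ --- and the pieces are assembled by a Mayer--Vietoris spectral sequence whose F-isomorphism property survives the colimits involved. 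The mechanism producing a genuine class on $ BG $ from a compatible family, after raising to a $ p $-power, is a multiplicative transfer (norm) along the finite covers coming from the $ G $-set structure of the cells of $ Y $. Specializing to $ Y = \operatorname{pt} $ (so $ Y^A = \operatorname{pt} $ and $ H^*_A(\operatorname{pt}) = H^*(BA) $) gives the theorem. I expect the cokernel half to be the main obstacle: the kernel statement is a soft spectral-sequence induction, whereas surjectivity modulo nilpotents is invisible algebraically and genuinely requires the equivariant machinery together with the norm construction.
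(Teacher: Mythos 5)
This statement is quoted in the paper as an external classical result, attributed to \cite[Theorem~6.2]{Quillen71}, and the paper gives no proof of it; there is therefore nothing internal to compare your argument against. Your sketch is a faithful outline of the standard proof(s): the well-definedness and multiplicativity of $q_G$ are indeed immediate from naturality of restriction; the nilpotence of the kernel is exactly the Quillen--Venkov induction (Gysin sequence of the index-$p$ cover plus Serre's theorem on products of Bocksteins, with Evens--Venkov Noetherianity upgrading a nil ideal to a nilpotent one); and the cokernel statement is the genuinely hard half, handled by Quillen via equivariant cohomology of a $G$-space with elementary abelian isotropy and a multiplicative norm to produce $p$-th powers of compatible families in the image. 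Two points are glossed over but are standard: in the Sylow reduction for surjectivity you must check that the class produced on $P$ is \emph{stable} in the Cartan--Eilenberg sense before descending to $G$ (your parenthetical \qquotation{$G$-invariance condition} is exactly this), and the cell-by-cell Mayer--Vietoris induction requires the uniformity of the F-isomorphism exponents to survive the colimit. Since this is a cited theorem rather than a claim of the paper, a reference in place of the sketch would be equally acceptable; as a reconstruction of Quillen's argument your plan is correct in structure and identifies the right difficulty.
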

Hence elementary abelian $ p $-subgroups give much information on the $ \Fp $-cohomology of a group.
In the case of a finite reflection group, an even stronger property holds.
\begin{theorem} {\rm\cite[Theorem 11, page 2]{Swenson}}$\mspace{9mu}$ \label{teo:Quillen iso}
If $ G $ is a finite reflection group and $ \mathcal{F} $ is the family of elementary abelian $ p $-subgroups of $ G $, then $ q_G $ is an isomorphism.
\end{theorem}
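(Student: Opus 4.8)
The plan is to deduce this from Quillen's theorem quoted above. That theorem already gives that $\ker q_G$ and $\coker q_G$ are nilpotent, so it suffices to prove that both vanish. Two preliminary reductions simplify the problem. First, mod $2$ cohomology and the formation of the category $\mathcal{F}$ are both multiplicative under direct products: a K\"unneth argument identifies $q_{G_1 \times G_2}$ with $q_{G_1} \otimes q_{G_2}$, since every elementary abelian $2$-subgroup of $G_1 \times G_2$ is the product of its two projections; as every finite reflection group is a direct product of irreducible ones, we may assume $G$ is irreducible. Second, one may pass to a Sylow $2$-subgroup $P \leq G$: by the Cartan--Eilenberg stable element theorem, $H^*(G; \mathbb{Z}_2)$ is the ring of $G$-stable elements of $H^*(P; \mathbb{Z}_2)$, and similarly $\mathcal{F}^*_G$ is the ring of stable elements of $\mathcal{F}^*_P$ (every elementary abelian $2$-subgroup being subconjugate to $P$); since $q$ is natural, $q_G$ is an isomorphism once $q_P$ is. This is profitable because Sylow $2$-subgroups of finite Coxeter groups are highly constrained --- each embeds with odd index into a Sylow $2$-subgroup of a product of symmetric groups $\Sigma_n$ and dihedral groups $I_2(m)$ (for example $W(D_8) \leq W(E_8)$ has index $135$, so the two groups share a Sylow $2$-subgroup), hence is, up to passing to such subgroups, an iterated wreath product of copies of $\mathbb{Z}_2$.

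For injectivity of $q_G$ one must show that no nonzero class of $H^*(G; \mathbb{Z}_2)$ restricts to zero on every elementary abelian $2$-subgroup, i.e.\ that the essential ideal is trivial. For $G = \Sigma_n$ this is classical: Nakaoka's description exhibits $H^*(\Sigma_n; \mathbb{Z}_2)$ inside $\bigoplus_E H^*(E; \mathbb{Z}_2)$, so it is detected on elementary abelians; the wreath-product computation and the transfer for the index-$2$ inclusion $D_n \hookrightarrow \mathbb{Z}_2 \wr \Sigma_n = B_n$ carry detection over to $B_n$ and $D_n$, and the dihedral groups are handled directly. For the finitely many exceptional types $E_6, E_7, E_8, F_4, H_3, H_4$ (and the family $I_2(m)$) one combines the odd-index Sylow reductions above with an induction on $|G|$: the centralizer in a finite reflection group of any involution --- hence of any elementary abelian $2$-subgroup --- is again a finite reflection group, so detection for the proper centralizers feeds into detection for $G$ via Quillen-stratification bookkeeping, the base case $G$ elementary abelian being vacuous.

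For surjectivity of $q_G$ one must show that every compatible family $\{f_K\}_{K \in \mathcal{F}}$ is of the form $\{x|_K\}$ for a global class $x$. After the reduction to $P$, this says $\mathcal{F}^*_P$ is no larger than $H^*(P; \mathbb{Z}_2)$. For $P$ an iterated wreath product of $\mathbb{Z}_2$'s one compares the two sides degree by degree, pitting the known additive basis of $H^*(P; \mathbb{Z}_2)$ against an explicit enumeration of the elementary abelian subgroups of $P$ together with their fusion, and concluding by a dimension count in each cohomological degree. Equivalently, one may run the whole argument through Quillen's stratification theorem, which realizes $\im q_G$ as the subring cut out by the stratified-compatibility conditions, and then check that for reflection groups these impose nothing beyond the conditions already defining $\mathcal{F}^*$.

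The step I expect to be the main obstacle is surjectivity: detection is a soft property and propagates cleanly along products, the Sylow reduction, and the centralizer induction, whereas showing that the cokernel is actually zero --- not merely nilpotent --- demands matching the size of $H^*(G; \mathbb{Z}_2)$ with that of the inverse limit. For the infinite families this is supplied by the classical computation of $H^*(\Sigma_n; \mathbb{Z}_2)$ and its wreath-product consequences; for the exceptional Coxeter groups it cannot be extracted from structure theory alone, and this is where the argument must resort to an explicit case analysis.
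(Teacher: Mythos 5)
First, a point of comparison: the paper does not prove this statement at all --- it is quoted as a black box from Swenson's thesis and only ever \emph{used} (to transport computations from elementary abelian $2$-subgroups back to $H^*(B_n;\mathbb{Z}_2)$ and $H^*(D_n;\mathbb{Z}_2)$). So there is no internal argument to measure yours against, and your proposal has to stand on its own. As written, it does not.

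The central gap is the Sylow reduction. The stable-element argument does show that $\mathcal{F}^*_G$ is the ring of $G$-stable elements of $\mathcal{F}^*_P$ and hence that $q_G$ is an isomorphism \emph{provided} $q_P$ is; but this replaces the theorem by the strictly stronger assertion that the Quillen map is an isomorphism for the Sylow $2$-subgroups themselves (iterated wreath products of $\mathbb{Z}_2$ and index-two subgroups thereof). That assertion is at least as hard as the original one, it is nowhere established in your sketch (the ``degree by degree dimension count'' is announced, not performed --- no additive basis of $\mathcal{F}^*_P$ is produced, and no feature of these particular $2$-groups is identified that would make $q$ an isomorphism, which it certainly is not for a general $2$-group: already for $\mathbb{Z}_4$ it is neither injective nor surjective), and since $q_G$ can be an isomorphism while $q_P$ is not (stability can cut down source and target compatibly), you may well be reducing to a false statement. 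The injectivity argument via centralizers is also unsound: the centralizer of an involution in a finite reflection group is in general \emph{not} a reflection subgroup (in $\Sigma_4$ the centralizer of $(12)(34)$ is a dihedral group of order $8$ not generated by reflections of $\Sigma_4$), and Quillen stratification over centralizers in any case only returns an F-isomorphism --- precisely the nilpotent-kernel-and-cokernel statement you started from --- so it cannot upgrade to an actual isomorphism. A smaller slip: an elementary abelian $2$-subgroup of $G_1\times G_2$ need not be the product of its projections (diagonals); the K\"unneth identification of the limits survives, but only via the observation that the limit is controlled by the maximal elementary abelians, which \emph{are} products. What the theorem actually rests on, and what Swenson supplies, is an explicit computation of both $H^*(G;\mathbb{Z}_2)$ and the inverse limit, carried out over the classification of finite Coxeter groups; your proposal correctly predicts that surjectivity is the crux, but none of the computations that would settle it are carried out.
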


\subsection{Restriction from $ A_B $} \label{subsec:res B}
For the reasons explained in the previous subsection, Swenson has calculated the elementary abelian $ 2 $-subgroups of $ \WB{n} $.
Before stating his result, we need to recall the structure of elementary abelian $ 2 $-groups of the symmetric group $ \Sigma_n $ on $ n $ objects. The relevant calculations are reviewed in \cite{Adem-Milgram}. $ \Sigma_n $ admits a transitive elementary abelian $ 2 $-subgroup if and only if $ n = 2^k $. In this case, all these subgroups are conjugated in $ \Sigma_n $ to the image $ V_k $ of the homomorphism $ \rho_k \colon \Ftwo^k \hookrightarrow \Sigma_{2^k} $ given by the regular action of $ \Ftwo^k $ on itself.
More generally, a maximal elementary abelian $ 2 $-subgroup of $ \Sigma_n $ is conjugated to a direct product
\[
	V_{k_1} \times \dots \times V_{k_r} \hookrightarrow \Sigma_{2^{k_1}} \times \dots \times \Sigma_{2^{k_r}} \hookrightarrow \Sigma_{2^{k_1}+ \dots + 2^{k_r}}
\]
Hence, conjugacy classes of maximal elementary abelian $ 2 $-subgroups in $ \Sigma_n $ are parametrized by partitions $ \pi $ of $ n $ such that every element of $ \pi $ is an integral power of $ 2 $ and the multiplicity of $ 1 = 2^0 $ in $ \pi $ is at most $ 1 $.

To simplify further notation, we borrow from Swenson's thesis the following definition.
\begin{definition}[from \cite{Swenson}]
Let $ n \in \mathbb{N} $. We say that a partition $ \pi $ of $ n $ is \emph{admissible} if it consists only of parts that are integral powers of $ 2 $.
\end{definition}

The main results about elementary abelian $ 2 $-subgroups in $ \WB{n} $ is the following:
\begin{proposition} {\rm\cite[page 22]{Swenson}}$\mspace{9mu}$ \label{prop:subgroups B}
Let $ A_1, A_2 \leq \WB{n} $ be maximal elementary abelian  $ 2 $-subgroups. Then:
\begin{itemize}
\item $ \tilde{A}_i = A_i \cap \Sigma_n \leq \Sigma_n $ is conjugated to a subgroup of the form $ V_{k_1} \times \dots \times V_{k_r} $, with $ k_i \geq 0 \mbox{ } \forall i $.
\item $ A_1 $ and $ A_2 $ are conjugated in $ \WB{n} $ if and only if $ \tilde{A}_1 $ and $ \tilde{A}_2 $ are conjugated in $ \Sigma_n $.
\end{itemize}
In particular, conjugacy classes of maximal elementary abelian $ 2 $-subgroups in $ \WB{n} $ are parametrized by admissible partitions $ \pi $.
Moreover, if we denote by $ A_\pi $ the subgroup corresponding to a partition $ \pi $, we have that $ A_{(2^k)} = V_k \times C_k $, where $ C_k \cong \Ftwo $ is the center of $ \WB{2^k} $ and, more generally, if $ m_i $ is the multiplicity of $ 2^i $ in a partition $ \pi $, then $ A_{\pi} $ is isomorphic to the direct product $ \prod_i{A_{(2^i)}^{m_i}} $.
Let $ d_{2^i - 1}, \dots, d_{2^i-2^{i-1}} $ be the Dickson invariants in $ H^* \left( V_i; \Ftwo \right) \hookrightarrow H^* \left( A_{(2^i)}; \Ftwo \right) $ and define
\[
	f_{2^i} = \prod_{y \in H^1 \left( V_i; \Ftwo \right)} \left( x + y \right)
\]
where $ x \in H^1 \left( A_{2^i}; \Ftwo \right) $ is the linear dual to the non-trivial element in the $ C_i $-factor of $ A_{2^i} $.
There is a natural isomorphism:
\[
	\left[ H^* \left(A_\pi; \Ftwo \right) \right]^{N_{\WB{n}} \left( A_{\pi}\right)} \cong \bigotimes_i \left( \Ftwo \left[ f_{2^i}, d_{2^i-1}, \dots, d_{2^i-2^{i-1}} \right]^{\otimes^{m_i}} \right)^{\Sigma_{m_i}}
\]
\end{proposition}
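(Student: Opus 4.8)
The subgroup-theoretic part of the statement is due to Swenson, and the natural route is to exploit the wreath product description $ B_n = \mathbb{Z}_2 \wr \Sigma_n = \mathbb{Z}_2^n \rtimes \Sigma_n $. Given an elementary abelian $ A \leq B_n $, set $ \bar{A} = \pi(A) \leq \Sigma_n $ and decompose $ \{ 1, \dots, n \} $ into the $ \bar{A} $-orbits $ O_1, \dots, O_s $. The restriction of $ A $ to a single orbit $ O_j $ of size $ \ell $ is a transitive elementary abelian subgroup of $ \mathbb{Z}_2 \wr \Sigma_\ell $; since its image in $ \Sigma_\ell $ is transitive and elementary abelian we must have $ \ell = 2^{k_j} $ with that image conjugate to $ V_{k_j} $. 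For a transitive elementary abelian $ A_O \leq B_{2^k} $ one analyzes the extension $ 1 \to \mathbb{Z}_2^{2^k} \to B_{2^k} \to \Sigma_{2^k} \to 1 $: the base intersection $ A_O \cap \mathbb{Z}_2^{2^k} $ is $ V_k $-invariant and pointwise fixed under conjugation, hence lies in the line of constant vectors, which is exactly $ C_k = Z(B_{2^k}) $; and any elementary abelian complement of $ \mathbb{Z}_2^{2^k} $ in $ \pi^{-1}(V_k) $ is conjugate to the standard copy $ V_k \hookrightarrow \Sigma_{2^k} \hookrightarrow B_{2^k} $, because $ H^1(V_k; \mathbb{Z}_2[V_k]) = H^1(1; \mathbb{Z}_2) = 0 $ by Shapiro's lemma. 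Hence, up to conjugacy, the transitive elementary abelian subgroups of $ B_{2^k} $ are $ V_k $ and $ A_{(2^k)} = V_k \times C_k $, the latter being self-centralizing and therefore maximal.

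Now let $ A \leq B_n $ be maximal elementary abelian. Since $ A $ is contained in the subgroup $ \prod_j B_{O_j} $ of $ B_n $, maximality forces $ A_{O_j} = A_{(2^{k_j})} $ on each orbit (otherwise one could adjoin the central element of $ B_{O_j} $, which commutes with $ A $) and $ A = \prod_j A_{O_j} $ (a proper subdirect product of these independent abelian pieces could always be enlarged). Thus $ A \cong \prod_i A_{(2^i)}^{m_i} $ with $ m_i $ the number of orbits of size $ 2^i $, which is the claimed parametrization by partitions $ \pi $ of $ n $ into powers of $ 2 $ (with no restriction on the multiplicity of $ 1 $, since $ A_{(1)} = C_0 $ is nontrivial). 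As $ A_{(2^k)} \cap \Sigma_{2^k} = V_k $ we obtain $ \tilde{A}_\pi \cong \prod_i V_i^{m_i} $, proving the first bullet, and the $ \Sigma_n $-orbit partition of $ \tilde{A}_\pi $ is precisely $ \pi $. Since the orbit partition is a complete conjugacy invariant for products of regular elementary abelian subgroups and, for fixed $ n $, it determines $ \pi $, both $ A_1 \sim_{B_n} A_2 $ and $ \tilde{A}_1 \sim_{\Sigma_n} \tilde{A}_2 $ are equivalent to $ \pi_1 = \pi_2 $, giving the second bullet.

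For the cohomology formula one first pins down the normalizer. A direct computation shows that $ A_{(2^k)} $ is self-centralizing in $ B_{2^k} $ and that $ W_k := N_{B_{2^k}}(A_{(2^k)})/A_{(2^k)} $ acts on $ H^*(A_{(2^k)}; \mathbb{Z}_2) = \mathbb{Z}_2[v_1, \dots, v_k, x] $ (where $ x $ is dual to $ C_k $) through the affine group $ \mathrm{AGL}_k(\mathbb{F}_2) = \mathbb{F}_2^k \rtimes \mathrm{GL}_k(\mathbb{F}_2) $: the $ \mathrm{GL}_k $-part acts linearly on $ \langle v_1, \dots, v_k \rangle $ and fixes $ x $, realized by elements $ (0,\sigma) $ with $ \sigma \in N_{\Sigma_{2^k}}(V_k) $; the translation part fixes the $ v_i $ and sends $ x \mapsto x + \ell $ for $ \ell \in H^1(V_k) $, realized by suitable elements $ (\epsilon, u) $ with $ u \in V_k $. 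Taking invariants in two stages — first the translations, under which the orbit of $ x $ has product $ f_{2^k} = \prod_{y \in H^1(V_k)}(x + y) $ and which yield the polynomial ring $ \mathbb{Z}_2[v_1, \dots, v_k, f_{2^k}] $, then $ \mathrm{GL}_k(\mathbb{F}_2) $, which fixes $ f_{2^k} $ and whose invariant ring on $ \mathbb{Z}_2[v_1, \dots, v_k] $ is the Dickson algebra $ \mathbb{Z}_2[d_{2^k-1}, \dots, d_{2^k-2^{k-1}}] $ — gives $ H^*(A_{(2^k)})^{W_k} = \mathbb{Z}_2[f_{2^k}, d_{2^k-1}, \dots, d_{2^k-2^{k-1}}] $. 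Finally $ N_{B_n}(A_\pi) = \prod_i \bigl( N_{B_{2^i}}(A_{(2^i)}) \wr \Sigma_{m_i} \bigr) $, since conjugation preserves orbit sizes and the only remaining freedom is to permute equal-sized blocks; combining this with the K\"unneth formula, with the fact that $ (-)^G $ commutes with tensor products over a field, and with the identity $ \left( H^{\otimes m} \right)^{G \wr \Sigma_m} = \left( \left( H^G \right)^{\otimes m} \right)^{\Sigma_m} $, one reads off the stated isomorphism.

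The main obstacle is the precise determination of $ N_{B_n}(A_\pi) $ and of its action on cohomology: one must check that the normalizer of a single block $ A_{(2^k)} $ realizes the \emph{full} affine group $ \mathrm{AGL}_k(\mathbb{F}_2) $ — in particular the translation subgroup responsible for the class $ f_{2^k} $, which amounts to producing, for every linear map $ \psi \colon V_k \to C_k $, a vector $ \epsilon \in \mathbb{Z}_2^{2^k} $ with $ \epsilon + v \cdot \epsilon = \psi(v) $ for all $ v \in V_k $ — and that no further automorphisms mixing distinct blocks $ B_{2^i} $ arise. Once the normalizer is fixed, the remaining invariant theory is the classical Dickson computation together with the elementary orbit-product calculation for the translation subgroup.
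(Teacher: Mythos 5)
First, note that the paper does not prove this proposition at all: it is quoted from Swenson's thesis, so there is no internal proof to compare with. Your argument is therefore an independent derivation, and its overall architecture is sound and close to the standard one: the wreath-product analysis reducing to transitive pieces, the Shapiro-lemma argument showing that a transitive elementary abelian subgroup of $B_{2^k}$ is conjugate to $V_k$ or to $V_k\times C_k$, the maximality argument forcing the product decomposition over orbits, and the two-stage invariant computation (orbit product for the translations, then Dickson invariants, then the wreath/K\"unneth identity $\left(H^{\otimes m}\right)^{G\wr\Sigma_m}=\left(\left(H^G\right)^{\otimes m}\right)^{\Sigma_m}$) all work.

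Two points need attention. The first is the one you yourself flag as the \emph{main obstacle} but do not carry out: that $N_{B_{2^k}}\left(A_{(2^k)}\right)$ realizes the full group $\mathrm{AGL}_k\left(\mathbb{F}_2\right)$ and nothing more. This is in fact easy to close, and you should close it: for the upper bound, every conjugation fixes the central element $z$ generating $C_k$, and the stabilizer of $z$ in $\Aut\left(A_{(2^k)}\right)\cong \Gl_{k+1}\left(\mathbb{F}_2\right)$ has order $2^k\left|\Gl_k\left(\mathbb{F}_2\right)\right|$, i.e.\ is exactly the affine group acting as you describe on cohomology; for the lower bound, the linear part comes from $N_{\Sigma_{2^k}}\left(V_k\right)$, and for a given homomorphism $\psi\colon V_k\to C_k$ the base vector $\epsilon$ with coordinates $\epsilon_u=\psi\left(u\right)$, $u\in V_k$, satisfies $\epsilon+v\cdot\epsilon=\psi\left(v\right)\mathbf{1}$ for all $v\in V_k$, so all translations are realized. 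You should also record that each $A_\pi$ (not just a single block) is self-centralizing, so that every partition really does occur as a maximal class; the blockwise centralizer computation extends immediately.

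The second point is a genuine flaw in how you handle the two bullets. Your computation of $\tilde{A}_\pi=A_\pi\cap\Sigma_n$ is only for the standard representative, and the intersection with $\Sigma_n$ is \emph{not} a conjugation invariant: conjugating $A_{(4)}=V_2\times C_2\leq B_4$ by the sign change $e_1$ on the first coordinate sends $\left(0,v\right)$ to $\left(e_1+v\cdot e_1,v\right)$, and for $v\neq 1$ the vector $e_1+v\cdot e_1$ has weight $2$, so the conjugate subgroup meets $\Sigma_4$ trivially while $A_{(4)}\cap\Sigma_4=V_2$. Hence the step ``both $A_1\sim_{B_n}A_2$ and $\tilde{A}_1\sim_{\Sigma_n}\tilde{A}_2$ are equivalent to $\pi_1=\pi_2$'' does not follow from what you proved (and, read literally with intersections, the second bullet is false, comparing the above conjugate of $A_{(4)}$ with the base subgroup $\mathbb{Z}_2^4$). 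The statement, and your argument, work if $\tilde{A}_i$ is taken to be the image of $A_i$ under the projection $B_n\to\Sigma_n$ (which coincides with the intersection for the standard $A_\pi$): images transform by conjugation, their orbit partition is the invariant you want, and then both bullets and the parametrization by partitions follow exactly as you argue. You should make that reading explicit rather than silently identifying intersection and image.
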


We can calculate the restriction of our generating classes $ \gamma_{k,n} $ and $ \delta_n $ to these abelian subgroups.
The calculation for $ \gamma_{k,n} $ has been essentially carried out by Giusti, Salvatore, and Sinha \cite{Sinha:12}. We state here the result.
\begin{proposition}{\rm\cite[Corollary 7.6, page 189]{Sinha:12}}$\mspace{9mu}$ \label{prop:restrizione gamma}
Let $ l,n \geq 1 $. Let $ \pi $ be a partition of $ n2^l $ consisting of powers of $ 2 $, $ \pi = \left( 2^{k_1}, \dots, 2^{k_r} \right) $. Then:
\[
	\gamma_{l,n}|_{A_\pi} = \left\{ \begin{array}{ll} \otimes_{i=1}^r d_{2^{k_i}-2^{k_i-l}} & \mbox{if } k_i \geq l \mbox{ }\forall 1 \leq i \leq r \\
	0 & \mbox{otherwise} \end{array} \right.
\]
\end{proposition}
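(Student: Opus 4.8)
The plan is to recognize this as \cite[Corollary 7.6]{Sinha:12} translated into the present setting, and to carry out that translation in two elementary reduction steps: passing from $B_{n2^l}$ to $\Sigma_{n2^l}$, and then using the coproduct to localize to a single block $A_{(2^k)}$.

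\emph{Reduction to the symmetric group.} By definition $\gamma_{l,n} \in H^*(B_{n2^l};\mathbb{Z}_2)$ is $\pi^*$ of the class $\gamma_{l,n} \in H^*(\Sigma_{n2^l};\mathbb{Z}_2)$ of Theorem \ref{teo:gruppi simmetrici}. By Proposition \ref{prop:subgroups B} we have $A_\pi = \prod_i A_{(2^{k_i})}$ with $A_{(2^{k_i})} = V_{k_i} \times C_{k_i}$, and the restriction of $\pi$ to $A_\pi$ is the projection onto $\tilde A_\pi = \prod_i V_{k_i} \cong A_\pi \cap \Sigma_{n2^l}$ with kernel $\prod_i C_{k_i}$; this projection is split by the inclusion $\tilde A_\pi \hookrightarrow A_\pi$. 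Hence $\gamma_{l,n}|_{A_\pi} = (\pi|_{A_\pi})^*\bigl(\gamma_{l,n}|_{\tilde A_\pi}\bigr)$, and under the split injection $(\pi|_{A_\pi})^*$ each factor $H^*(V_{k_i};\mathbb{Z}_2)$ maps isomorphically onto its image, the subalgebra of $H^*(A_{(2^{k_i})};\mathbb{Z}_2)$ in which, by Proposition \ref{prop:subgroups B}, the Dickson invariants $d_{2^{k_i}-1},\dots,d_{2^{k_i}-2^{k_i-1}}$ already live. So it is enough to compute $\gamma_{l,n}|_{\tilde A_\pi}$ inside $H^*(V_{k_1};\mathbb{Z}_2)\otimes\dots\otimes H^*(V_{k_r};\mathbb{Z}_2)$.

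\emph{Reduction via the coproduct.} The inclusion $\tilde A_\pi \hookrightarrow \Sigma_{n2^l}$ factors through $\Sigma_{2^{k_1}}\times\dots\times\Sigma_{2^{k_r}}$; since $\Delta$ is induced by the standard inclusions of symmetric groups, the restriction of $\gamma_{l,n}$ to $\Sigma_{2^{k_1}}\times\dots\times\Sigma_{2^{k_r}}$ is the corresponding component of the iterated coproduct, which by Theorem \ref{teo:gruppi simmetrici} equals $\sum_{a_1+\dots+a_r=n}\gamma_{l,a_1}\otimes\dots\otimes\gamma_{l,a_r}$. A summand indexed by $(a_1,\dots,a_r)$ can lie in $H^*(\Sigma_{2^{k_1}})\otimes\dots\otimes H^*(\Sigma_{2^{k_r}})$ only if $a_i 2^l = 2^{k_i}$ for every $i$, which forces $k_i \geq l$ and $a_i = 2^{k_i-l}$, and leaves at most one such summand. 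Therefore $\gamma_{l,n}|_{A_\pi} = 0$ as soon as some $k_i < l$, while if all $k_i \geq l$ then
\[
	\gamma_{l,n}|_{\tilde A_\pi} \;=\; \bigotimes_{i=1}^r \gamma_{l,2^{k_i-l}}|_{V_{k_i}} .
\]
So the proposition follows once the block case $\gamma_{l,2^j}|_{V_{j+l}} = d_{2^{j+l}-2^j}$ ($j \geq 0$) is established.

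\emph{The block case and the main obstacle.} This last identity is the substance of \cite[Corollary 7.6]{Sinha:12}. The two classes have the same degree $2^{j+l}-2^j = 2^j(2^l-1)$; for $j=0$ one checks that $\gamma_{l,1}|_{V_l}$ is the nonzero class $\prod_{0 \neq v \in H^1(V_l;\mathbb{Z}_2)} v = d_{2^l-1}$, using the geometric (Stiefel--Whitney-type) description of $\gamma_{l,1}$ and the detection of $\gamma_{l,1}$ by $V_l$; for $j \geq 1$ one exploits the wreath-product presentation $V_{j+l} \cong \mathbb{Z}_2 \wr V_{j+l-1}$ of the regular subgroup together with the compatibility of the family $\{\gamma_{l,\bullet}\}$ with the wreath structure — again the coproduct and transfer-product formulas of Theorem \ref{teo:gruppi simmetrici} — and the fact that the $N_{\Sigma_{2^{j+l}}}(V_{j+l})$-invariants in $H^*(V_{j+l};\mathbb{Z}_2)$ are one-dimensional in degree $2^{j+l}-2^j$, spanned by $d_{2^{j+l}-2^j}$, to conclude by induction on $j$. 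The main obstacle is exactly this block computation: proving $\gamma_{l,2^j}|_{V_{j+l}} \neq 0$ (so that, being invariant of the correct degree, it must be the Dickson class) and, for $j=0$, pinning down $\gamma_{l,1}|_{V_l} = d_{2^l-1}$ from the definition of $\gamma_{l,1}$. The two reduction steps above are, by contrast, formal manipulations of the coproduct and of the subgroup structure recorded in Proposition \ref{prop:subgroups B}.
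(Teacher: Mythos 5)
Your argument is correct and follows the paper's (implicit) route: the paper offers no proof, simply citing \cite{Sinha:12}, and your two reductions --- splitting $\pi|_{A_\pi}\colon A_\pi \to \tilde A_\pi$ off the kernel $\prod_i C_{k_i}$ so that the Dickson classes of $H^*(V_{k_i};\mathbb{Z}_2)$ are carried into $H^*(A_{(2^{k_i})};\mathbb{Z}_2)$ as in Proposition \ref{prop:subgroups B}, then localizing via the coproduct to the single-block identity $\gamma_{l,2^j}|_{V_{j+l}} = d_{2^{j+l}-2^j}$ --- are exactly the translation that citation presupposes. You correctly isolate the block nonvanishing as the only substantive input, which is precisely the content of the cited corollary.
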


\begin{proposition} \label{prop:restrizione delta}
Let $ n \geq 0 $. Let $ \pi = \left( 2^{k_1}, \dots, 2^{k_r} \right) $ be an admissible partition. The restriction of $ \delta_n $ to the cohomology of the maximal elementary abelian $ 2 $-subgroup $ A_{\pi} $ is equal to $ \otimes_{i=1}^r f_{2^{k_i}} $. Moreover, $ \delta_n $ is the unique class in $ H^n \left( \WB{n}; \Ftwo \right) $ that has this property for every $ \pi $.
\end{proposition}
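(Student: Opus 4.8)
The plan is to read the restriction off the definition $ \delta_n = w_n \left( \eta \right) $, where $ \eta = E \left( B_n \right) \times_{B_n} \mathbb{R}^n \to B \left( B_n \right) $ and $ B_n = \mathbb{Z}_2 \wr \Sigma_n $ acts on $ \mathbb{R}^n $ by signed permutations, and then to deduce the uniqueness clause from the detection Theorem \ref{teo:Quillen iso}. Since restriction is a ring map commuting with Stiefel--Whitney classes, $ \delta_n|_{A_\pi} = w_n \left( \eta|_{A_\pi} \right) $, and $ \eta|_{A_\pi} $ is the Borel construction $ EA_\pi \times_{A_\pi} \mathbb{R}^n $ of the restricted representation, so the whole computation amounts to decomposing $ \mathbb{R}^n $, as a representation of $ A_\pi $, into one-dimensional pieces.

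First I would reduce to a single block. For $ \pi = \left( 2^{k_1}, \dots, 2^{k_r} \right) $ one has $ A_\pi = \prod_{i=1}^r A_{(2^{k_i})} $ by Proposition \ref{prop:subgroups B}, and the inclusion $ A_\pi \hookrightarrow B_n $ is compatible with the block inclusion $ \prod_i B_{2^{k_i}} \hookrightarrow B_n $; hence there is an $ A_\pi $-equivariant splitting $ \mathbb{R}^n = \bigoplus_i \mathbb{R}^{2^{k_i}} $ in which the $ i $-th factor acts only on the $ i $-th summand. Consequently $ \eta|_{A_\pi} $ is an external Whitney sum of pullbacks of bundles $ \eta_i $ over $ B A_{(2^{k_i})} $, and since the top Stiefel--Whitney class is multiplicative under Whitney sums, the Künneth isomorphism gives $ w_n \left( \eta|_{A_\pi} \right) = \bigotimes_i w_{2^{k_i}} \left( \eta_i \right) $. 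It therefore suffices to handle $ \pi = \left( 2^k \right) $.

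The heart of the argument is this single-block case. Here $ A_{(2^k)} = V_k \times C_k $, with $ V_k \cong \mathbb{Z}_2^k $ acting on $ \mathbb{R}^{2^k} = \mathbb{R}\left[ \mathbb{Z}_2^k \right] $ through the regular representation and $ C_k $, the center of $ B_{2^k} $, acting by $ -\id $. Over $ \mathbb{R} $ the regular representation of $ \mathbb{Z}_2^k $ splits as the sum of its $ 2^k $ one-dimensional characters, so as a $ V_k \times C_k $-representation $ \mathbb{R}^{2^k} $ breaks into one-dimensional summands indexed by the characters $ y \in H^1 \left( V_k; \mathbb{Z}_2 \right) $, the summand labelled by $ y $ giving a real line bundle with first Stiefel--Whitney class $ x + y $ (here $ x \in H^1 \left( C_k; \mathbb{Z}_2 \right) $ is the generator). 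Multiplying these, $ w_{2^k} \left( \eta_{(2^k)} \right) = \prod_{y \in H^1 \left( V_k; \mathbb{Z}_2 \right)} \left( x + y \right) $, which is exactly $ f_{2^k} $ by the definition in Proposition \ref{prop:subgroups B}; together with the reduction step this yields $ \delta_n|_{A_\pi} = \bigotimes_i f_{2^{k_i}} $.

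For uniqueness I would simply invoke Theorem \ref{teo:Quillen iso}: $ q_{B_n} $ is injective, so any class in $ H^n \left( B_n; \mathbb{Z}_2 \right) $ is determined by its restrictions to all elementary abelian $ 2 $-subgroups, and by Proposition \ref{prop:subgroups B} every such subgroup lies in a conjugate of some $ A_\pi $; thus a class restricting to $ \delta_n|_{A_\pi} $ on every $ A_\pi $ restricts to $ \delta_n $ on every elementary abelian $ 2 $-subgroup, and hence equals $ \delta_n $. The main obstacle is the middle step: getting the representation-theoretic bookkeeping for $ A_{(2^k)} = V_k \times C_k $ exactly right --- in particular pinning down the action of the central factor $ C_k $ and matching the characters of $ V_k $ with the appropriate classes in $ H^1 \left( V_k; \mathbb{Z}_2 \right) $ --- so that the product of the line-bundle classes is literally the expression defining $ f_{2^k} $; once that is in place, everything else is formal.
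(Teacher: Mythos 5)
Your proposal is correct and follows essentially the same route as the paper: both identify $\delta_n$ as $w_n$ of the Borel construction on the reflection representation, reduce to a single block $A_{(2^k)} = V_k \times C_k$, split the regular representation of $V_k$ (twisted by the sign action of the central factor) into line bundles with first Stiefel--Whitney classes $x+y$ for $y \in H^1\left(V_k;\mathbb{Z}_2\right)$, multiply to get $f_{2^k}$, and invoke Theorem \ref{teo:Quillen iso} for uniqueness. The only cosmetic difference is that you index the characters of $V_k$ directly by elements of $H^1\left(V_k;\mathbb{Z}_2\right)$ whereas the paper indexes them by subsets $S$ of the generators, with $y = \sum_{i\in S} y_i$.
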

\begin{proof}
We observe that the restrictions of a cohomology class to $ A_\pi $ for all the partitions $ \pi $ of $ n $ determine its restriction to every elementary abelian $ 2 $-subgroup (not necessarily maximal). Hence, by Theorem \ref{teo:Quillen iso}, a class that satisfies the condition in the statement for every $ \pi $ is necessarily unique.

Let $ U_n = \mathbb{R}^n $ be the reflection representation of $ \WB{n} $. Recall that, if $ n=2^k $ and $ \pi = \left( 2^k \right) $, then $ A_\pi = V_k \times C_k $, where $ C_k = \langle t \rangle $ is a cyclic group of order $ 2 $, the center of $ \WB{n} $, and $ V_k = \langle v_1, \dots, v_k \rangle \leq \Sigma_{2^k} $ is the subgroup defined above. $ H^*(A_{\pi}; \Ftwo) $ is polynomial on degree $ 1 $ elements $ x,y_1,\dots,y_k $, the linear duals to $ t,v_1,\dots,v_k $ respectively. Given $ a \in A_{\pi} \setminus \{1\} $, let $ \varepsilon_a $, $ \sgn_a $, and $ \mathbb{R}\langle a \rangle $ be the $ 1 $-dimensional trivial representation, the signum representation, and the regular representation of $ \langle a \rangle \cong \Ftwo $, respectively.
We first observe that, since $ t $ acts on $ U_n $ as the multiplication by $ -1 $, $ U_n|_{A_\pi} \cong \sgn_t \otimes U_n|_{V_k} $. Moreover, the inclusion of $ V_k $ in $ \Sigma_{2^k} $ is given by the regular representation, hence:
\[
	U_n|_{V_k} \cong \bigotimes_{i=1}^k \mathbb{R}\langle v_i \rangle \cong \bigoplus_{S \subseteq \left\{1,\dots,k \right\}} \bigotimes_{i=1}^k U_{S,i}
\]
where $ U_{S,i} $ is equal to $ \sgn_{v_i} $ if $ i \in S $, to $ \varepsilon_{v_i} $ if $ i \notin S $.
Thus, with the notation used before in this document, the Stiefel-Whitney class of $ U_n|_{A_\pi} $ is:
\[
	\prod_{ S \subseteq \left\{ 1, \dots, n \right\} } \left( 1 + x + \sum_{i \in S} y_i \right)
\]
Its $ n $-dimensional part is exactly $ f_{2^k} $. Hence, the thesis for $ \pi = \left( 2^k \right) $ follows from the naturality of the characteristic classes and Proposition \ref{prop:geometry generators}

In the case of a general admissible partition $ \pi = \left( 2^{k_1}, \dots, 2^{k_r} \right) $, the proposition follows from the fact that $ A_{\pi} \cong \prod_{i=1}^r A_{(2^{k_i})} $ and $ U_n|_{A_{\pi}} \cong \oplus_{i=1}^r U_{2^{k_i}}|_{A_{(2^{k_i} )}} $.
\end{proof}

To complete the calculation of the restriction morphisms from $ A_B $ to maximal elementary abelian $ 2 $-subgroups, we need to describe how such maps behave with the structural morphisms of $ A_B $. Restrictions preserve cup products, and, regarding the coproduct, there is nothing to say because every maximal elementary abelian subgroup of $ \WB{n} \times \WB{m} $ is itself a maximal elementary abelian subgroup of $ \WB{n+m} $. The only non-trivial behavior occurs with the transfer product. We describe it in the following proposition.

\begin{proposition}\label{lem:transfer abelian subgroups}
Let $ x,y \in A_B $ be in positive components $ n $ and $ m $ respectively. Let $ \pi =(2^{k_1},\dots, 2^{k_r}) $ be an admissible partition of $ n+m $. For all $ I \subseteq \{1,\dots,r\} $, write $ I = \{i_1,\dots,i_s\} $ with $ i_1 < \dots < i_s $ and let $ \pi_I = (2^{k_{i_1}},\dots,2^{k_{i_s}}) $. Then
\[
(x \odot y)|_{A_{\pi}} = \sum_{I,J} \tau_{I,J} ( x|_{A_{\pi_I}} \otimes y|_{A_{\pi_J}}),
\]
where the sum runs over all partition $ \{1,\dots,r\} = I \sqcup J $ of $ \{1,\dots,r\} $ into two subsets such that $ \sum_{i\in I}2^{k_i} = n $ (and, consequently, $ \sum_{j\in J} 2^{k_j} = m $), and $ \tau_{I,J} \colon H^*(A_{\pi_I}; \Ftwo) \otimes  H^*(A_{\pi_J}; \Ftwo) \to H^*(A_{\pi}; \Ftwo) $ is the obvious permutation of tensor factors.
\end{proposition}
\begin{proof}
We begin by assuming that $ r = 1 $, thus $ \pi = (2^k) $ for some $ k $ and $ n+m=2^k $. Then, since $ A_{\pi} $ acts transitively on $ \{1,\dots,2^k\} $, no conjugate of $ A_{\pi} $ in $ \WB{n+m} $ is contained in $ \WB{n} \times \WB{m} $. Being $ A_{\pi} $ abelian, the classically known property stated in \cite[Proposition 5.6, page 69]{Adem-Milgram} implies that the transfer map $ H^*(A_{\pi} \cap \sigma (\WB{n} \times \WB{m}) \sigma^{-1}; \Ftwo) \to H^*(A_{\pi}; \Ftwo) $ is identically zero. Eilenberg's double coset formula then guarantees that the composition of the restriction with the transfer product $ H^*(\WB{n}; \Ftwo) \otimes H^*(\WB{m}; \Ftwo) \stackrel{\odot}{\rightarrow} H^*(\WB{n+m}; \Ftwo) \rightarrow H^*(A_{\pi}; \Ftwo) $ is zero. Thus $ (x \odot y)|_{A_{(2^k)}} = 0 $.

In the general case, the restriction of $ x \odot y $ to this subgroup factors through the $ r $-fold coproduct. By the calculations above, addends in this coproduct for which a factor is a non-trivial transfer product restrict to $ 0 $.
Since $ \odot $ and $ \Delta $ form a bialgebra structure on $ A_B $, the other addends have the desired form.
\end{proof}

\subsection{Restriction from $ A_D $ and proof of relations}

We can adapt the argument to calculate the restriction to elementary abelian subgroups of generators also in the $ D_n $ case.
First, we state the analog of Proposition \ref{prop:subgroups B}. Recall that a partition $ \pi $ of $ n $ is admissible if and only if it consists of parts that are powers of $ 2 $.

\begin{theorem}{\rm\cite[Theorem 5.4.3, page 40]{Swenson}}$\mspace{9mu}$ \label{thm:subgroups D}
Let $ \pi $ be an admissible partition of $ n $. Let $ m_1 $ and $ m_2 $ be the multiplicities of $ 1 $ and $ 2 $ in $ \pi $. We write $ \pi = \left(1\right)^{m_1} \cup \left( 2 \right)^{m_2} \cup \pi' $.
Let $ A_{\pi} \leq \WB{n} $ the maximal elementary abelian $ 2 $-subgroup corresponding to $ \pi $ and let $ \widehat{A}_{\pi} = A_{\pi} \cap \WD{n} $.
Then $ \widehat{A}_{\pi} $ is maximal as an elementary abelian subgroup of $ \WD{n} $ if and only if $ m_1 \not= 2 $. Moreover:
\begin{itemize}
\item If $ m_1 > 0 $, then $ \widehat{A}_{\pi} = \ker\left( \sum \colon \Ftwo^{m_1} \rightarrow \Ftwo \right) \times A_{(2)^{m_2} \cup \pi'} $. If $ e_1, \dots, e_{m_1} $ are the elementary symmetric functions in
$ H^* \left( \Ftwo^{m_1}; \Ftwo \right) = H^* \left( A_{(1)^{m_1}}; \Ftwo \right) $, we define $ \bar{e}_i = e_i + e_1 e_{i-1} $ if $ 2 \leq i < m $ and $ \bar{e}_m = e_1 e_{m-1} $.
There is an isomorphism between the invariant subalgebra $ \left[ H^* \left( \widehat{A}_{\pi}; \Ftwo \right) \right]^{N_{\WD{n}} \left( \widehat{A}_{\pi} \right)} $ and 
\[
	 \Ftwo \left[ \bar{e}_2, \dots, \bar{e}_m \right] \otimes \left[ H^* \left( A_{(2)^{m_2} \cup \pi'}; \Ftwo \right) \right]^{N_{\WB{n-m_1}}(A_{(2)^{m_2} \cup \pi'})}.
\]
Moreover, the cohomological restriction from $ A_{(1)^m} $ to $ \widehat{A}_{(1)^m} $ is given by $ e_1 \mapsto 0 $ and $ e_i \mapsto \overline{e}_i $ if $ 2 \leq i \leq m $.
\item If $ m_1 = 0 $ and $ m_2 > 0 $, then $ \widehat{A}_{\pi} = A_{\pi} $. Identifying $ H^* \left( A_{(2)}; \Ftwo \right)^{\otimes^{m_2}} $ with $ \otimes_{i=1}^{m_2} \Ftwo \left[ x_i,y_i \right] $, we can define:
\[
		h_{m_2} = \sum_{\substack{ S \subseteq \left\{ 1, \dots, m_2 \right\}\\ |S| = 2l }} \prod_{i \in S} \left( x_i + y_i \right) \prod_{j \notin S} x_j
\]
Then $ [ H^* ( \widehat{A}_\pi; \Ftwo ) ]^{N_{\WD{n}} ( \widehat{A}_\pi )} $ is the free $ \left[ H^* \left( A_\pi; \Ftwo \right) \right]^{N_{\WB{n}} \left( A_\pi \right)} $-module with basis $ \left\{ 1, h_{m_2} \otimes 1_{H^*(A_{\pi'}; \Ftwo)} \right\} $.
\item If $ m_1 = m_2 = 0 $, then $ \widehat{A}_{\pi} = A_{\pi} $ and $ N_{\WD{n}} \left( A_{\pi} \right) = N_{\WB{n}} \left( A_{\pi} \right) $, hence:
\[
[ H^* ( \widehat{A}_{\pi}; \Ftwo ) ]^{N_{\WD{n}} ( \widehat{A}_{\pi} )} =
\left[ H^* \left( A_{\pi}; \Ftwo \right) \right]^{N_{\WB{n}} \left( A_{\pi} \right)}
\]
\end{itemize}
Moreover, if $ m_1 \not= 0 $ or $ m_2 \not= 0 $, then $ A_\pi $ is $ \WB{n} $-conjugate to $ A' $ if and only if $ \widehat{A}_{\pi} $ is $ \WD{n} $-conjugate to $ A' \cap \WD{n} $.
Conversely, if $ m_1 = m_2 = 0 $, then the $ \WB{n} $-conjugacy class of $ A_\pi $ contains exactly two $ \WD{n} $-conjugacy classes of elementary abelian $ 2 $-subgroups.
\end{theorem}

We now determine the restriction of our generators to the elementary abelian subgroups.

\begin{proposition} \label{prop:restriction eas}
Let $ n= 2^km $, for some $ k,m \geq 1 $. Let $ \pi $ be an admissible partition of $ n $. Let $ m_1 $ and $ m_2 $ be the multiplicities of $ 1 $ and $ 2 $ in $ \pi $. Then:
\begin{enumerate}
\item for every $ k \geq 1 $, if $ m_1 = m_2 = 0 $, then $ \gamma_{k,m}^+|_{A_\pi} = \gamma_{k,m}|_{A_{\pi}} $, $ \gamma_{k,m}^+|_{A_\pi^{s_0}} = 0 $, $ \gamma_{k,m}^-|_{A_\pi} = 0 $, $ \gamma_{k,m}^-|_{A_\pi^{s_0}} = \gamma_{k,m}|_{A_\pi^{s_0}} $
\item for every $ k \geq 2 $, if $ m_1 \not= 0 $ or $ m_2 \not= 0 $, or for $ k = 1 $ if $ m_1 \not= 0 $, then $ \gamma_{k,m}^{\pm}|_{\widehat{A}_{\pi}} = 0 $.
\item if $ m_1 = 0 $ but $ m_2 \not= 0 $, that is $ \pi = \left( 2 \right)^{m_2} \sqcup \pi' $, then the restriction of $ \gamma_{1,m}^+ $ (respectively $ \gamma_{1,m}^- $) to $ \widehat{A}_{\pi} = A_{(2)^{m_2}} \times A_{\pi'} $ is $ h_{m_2} \otimes \gamma_{1,m-m_2}|_{A_{\pi'}} $ (respectively $ ( d_1^{\otimes^{m_2}} + h_{m_2} ) \otimes \gamma_{1,m-m_2}|_{A_{\pi'}} $)
\item if $ \pi = (1)^{m_1} \sqcup \pi' $, then the restriction of $ \delta_{k:m}^0 $ to $ \widehat{A}_{\pi} = \widehat{A}_{(1)^{m_1}} \times A_{\pi'} $ is $ 1 \otimes (\delta_k \odot 1_{\WB{m-m_1}})|_{A_{\pi'}} + \sum_{i=2}^k \overline{e}_i \otimes ( \delta_{k-i} \odot 1_{\WB{m-m_1+i}} )|_{A_{\pi'}} $, with the convention that $ 1_{\WB{r}} = 0 $ when $ r < 0 $.
\end{enumerate}
\end{proposition}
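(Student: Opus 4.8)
The plan is to reduce every formula in the statement to the restrictions already computed for $B_n$ and $\Sigma_n$ in Propositions \ref{prop:restrizione gamma} and \ref{prop:restrizione delta}, using the structural maps $\pi \colon D_n \to \Sigma_n$, $i_\pm \colon \Sigma_n \to D_n$, the inclusion $j_n \colon D_n \hookrightarrow B_n$ with associated restriction $\rho \colon A_B \to A_D$, and the involution $\iota$, together with the explicit description of the subgroups $\widehat A_\pi$, of the maps between their cohomologies, and of their normalizer-invariant subrings provided by Theorem \ref{thm:subgroups D} and Proposition \ref{prop:subgroups B}. The basic tools I would use over and over are: restriction to an elementary abelian subgroup is a ring map and commutes with $\rho$, $\pi^*$ and $i_\pm^*$; by Theorem \ref{teo:Quillen iso} any restriction of a class of $H^*(D_n;\mathbb{Z}_2)$ to $\widehat A_\pi$ lies in the relevant normalizer-invariant subring (and, as those subrings embed into polynomial rings, $H^*(D_n;\mathbb{Z}_2)$ is reduced); and the identities $\gamma_{k,m}^+ + \gamma_{k,m}^- = \pi^*(\gamma_{k,m})$, $\delta_{k:m}^0 = \rho(\delta_k \odot 1_m)$, $i_+^*(\gamma_{k,m}^+) = \gamma_{k,m}$, $i_+^*(\gamma_{k,m}^-) = 0$, $\iota(\gamma_{k,m}^+) = \gamma_{k,m}^-$ from Proposition \ref{prop:gamma+-}.

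First I would treat the case $m_1 = m_2 = 0$, where $\widehat A_\pi = A_\pi$ and the $B_n$-conjugacy class of $A_\pi$ breaks into the two $D_n$-classes of $A_\pi$ and $A_\pi^{s_0}$. The sum $\gamma_{k,m}^+|_{A_\pi} + \gamma_{k,m}^-|_{A_\pi}$ equals $\gamma_{k,m}|_{A_\pi}$, which is given by Proposition \ref{prop:restrizione gamma}; since $\iota$ carries $A_\pi$ to $A_\pi^{s_0}$ and exchanges the two charges, it suffices to see that on one of the two representatives one of the two restrictions vanishes. The cleanest route is at the cochain level: the representative $g_{k,m}^+$ is supported on strata with leading coordinate $0$, while $g_{k,m}^-$ is, through the cochain homotopy equivalence of Lemma \ref{lem:CHE} and the description in Lemma \ref{lem:involuzione geometrica2}, the $s_0$-translate of such a cocycle, and the geometric model $B(i_+)[x_1,\dots,x_n]=[\varphi(x_1),\dots,\varphi(x_n)]$ (with $\varphi\colon\mathbb{R}\to(0,\infty)$) keeps all first coordinates positive whereas $B(i_-)$ reverses the first one; so whether $A_\pi$ is ``of $i_+$-type'' or ``of $i_-$-type'' is exactly what decides which charge survives on it. For $k\geq 2$ a cheaper alternative is to note that $\gamma_{k,m}^+\cdot\gamma_{k,m}^-$ restricts trivially to every elementary abelian subgroup (combining $i_\pm^*$ with reducedness), hence is $0$, so one of the restrictions to $A_\pi$ must vanish since $H^*(A_\pi;\mathbb{Z}_2)$ is a domain. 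This yields the four formulas of the first bullet, with the convention that $A_\pi$ denotes the representative on which $\gamma_{k,m}^+$ survives.

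Next come the vanishing cases (second bullet): for $k\geq 2$ with $m_1\neq 0$ or $m_2\neq 0$, and for $k=1$ with $m_1\neq 0$, I claim $\gamma_{k,m}^{\pm}|_{\widehat A_\pi}=0$. By the sum formula this reduces to showing that $\gamma_{k,m}|_{A_\pi}$ already dies under $\mathrm{res}\colon H^*(A_\pi)\to H^*(\widehat A_\pi)$; by Theorem \ref{thm:subgroups D} this restriction sends $e_1\mapsto 0$ on the $(1)^{m_1}$-block, and on a $(2)$-block with $k\geq 2$ it sends the relevant Dickson factor to zero, so inspection of the explicit tensor factor in Proposition \ref{prop:restrizione gamma} shows the image vanishes. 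The genuinely new computation is the mixed case $m_1=0$, $m_2\neq 0$ for $\gamma_{1,m}^{\pm}$ (third bullet). Here $\widehat A_\pi = A_{(2)^{m_2}}\times A_{\pi'}$ and $[H^*(\widehat A_\pi)]^{N_{D_n}}$ is free of rank $2$ over $[H^*(A_\pi)]^{N_{B_n}}$ on $\{1,h_{m_2}\}$; I would write $\gamma_{1,m}^+|_{\widehat A_\pi}= P+Q\,h_{m_2}$ with $P,Q$ in the $B_n$-invariant subring, compute $\iota(h_{m_2})=h_{m_2}+d_1^{\otimes m_2}$ directly from the defining formula for $h_{m_2}$, deduce $\gamma_{1,m}^-|_{\widehat A_\pi}=P+Q\,d_1^{\otimes m_2}+Q\,h_{m_2}$, and read off from the sum formula that $Q\,d_1^{\otimes m_2}=\gamma_{1,m}|_{A_\pi}=d_1^{\otimes m_2}\otimes\gamma_{1,m-m_2}|_{A_{\pi'}}$, hence $Q=\gamma_{1,m-m_2}|_{A_{\pi'}}$. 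The one remaining point, $P=0$, I would get by restricting $\gamma_{1,m}^+$ further to the $V$-subgroup $V_1^{\times m_2}\times(\text{the }V\text{-part of }A_{\pi'})$, which is $D_n$-conjugate into $i_-(\Sigma_n)$ and on which $\gamma_{1,m}^+$ therefore restricts to $0$. Finally, for $\delta_{k:m}^0=\rho(\delta_k\odot 1_m)$ I would combine Proposition \ref{prop:restrizione delta} with the Mackey expansion of $\mathrm{res}_{A_\pi}\circ\mathrm{tr}_{B_k\times B_m}$: summing over the ways the $(1)^{m_1}$-block of $\pi$ splits between the two factors produces a sum whose $i$-th term is a product of $i$ of the degree-one classes on the $(1)^{m_1}$-block times $(\delta_{k-i}\odot 1_{m-m_1+i})|_{A_{\pi'}}$, and restricting to $\widehat A_{(1)^{m_1}}$ via $e_1\mapsto 0$, $e_i\mapsto\bar e_i$ turns this into $\sum_{i=2}^k \bar e_i\otimes(\delta_{k-i}\odot 1_{m-m_1+i})|_{A_{\pi'}}$ (the $i\leq 1$ terms dying), in particular giving $\delta_{1:m}^0=0$.

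I expect the third paragraph to be the main obstacle: separating the two ``charges'' on the subgroups with $m_2\neq 0$, where the new invariant $h_{m_2}$ appears, requires both understanding the $\iota$-action on the rank-$2$ invariant module $\{1,h_{m_2}\}$ and pinning down which summand of the split $B_n$-restriction is the ``$h_{m_2}$-part''; the auxiliary restriction to a $V$-subgroup sitting inside $i_-(\Sigma_n)$ seems to be the cleanest device for the latter, but it rests on a careful account of how the $\widehat A_\pi$ sit relative to the two sections $i_\pm$. The $\delta_{k:m}^0$ computation is routine modulo somewhat lengthy double-coset bookkeeping, and one should cross-check the normalizer-invariance of each claimed answer against Theorem \ref{thm:subgroups D}.
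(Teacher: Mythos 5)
Your overall plan (reduce everything to the known $B_n$ and $\Sigma_n$ restrictions via $\pi$, $i_\pm$, $\rho$, $\iota$) is in the right spirit, but three of your four bullets have genuine gaps. For the second bullet your reduction proves too little: in those cases $\gamma_{k,m}|_{A_\pi}$ is already $0$ (a part equal to $1$, or to $2$ when $k\geq 2$, kills the $B$-restriction by Proposition \ref{prop:restrizione gamma}), so the identity $\bigl(\gamma_{k,m}^+ +\gamma_{k,m}^-\bigr)|_{\widehat A_\pi}=0$ only gives $\gamma_{k,m}^+|_{\widehat A_\pi}=\gamma_{k,m}^-|_{\widehat A_\pi}$ in characteristic $2$, not that both vanish; the paper instead gets the vanishing from the absence of coproduct components of $\gamma^\pm_{k,m}$ in $H^*(D_1)\otimes H^*(D_{n-1})$ and, for $k\geq 2$, in $H^*(D_2)\otimes H^*(D_{n-2})$, through which the restriction to $\widehat A_\pi$ factors. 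For the first bullet, note that when $m_1=m_2=0$ the groups $A_\pi$ contain the central factors $C_{k_i}$ (elements acting as $-\mathrm{id}$ on blocks), so they are not subconjugate to $i_\pm(\Sigma_n)$; hence neither the pullback along $B(i_\pm)$ nor an ``$i_+$-type versus $i_-$-type'' dichotomy determines their restrictions, and your cochain-level support argument is only a gesture. Your ``cheaper alternative'' is worse: $i_\pm^*$ only controls elementary abelian subgroups lying inside conjugates of $\Sigma_n$, so it does not show $\gamma^+\cdot\gamma^-$ restricts trivially everywhere --- indeed that cup relation is Lemma \ref{lem:relations cup}, which the paper deduces \emph{from} this proposition, so the argument is circular. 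The missing idea is the paper's: after reducing to a single part via the coproduct, $\gamma^\pm_{k,2^l}|_{A_{(2^n)}}$ is $N_{D_{2^n}}(A_{(2^n)})$-invariant, hence by degree count it is either $0$ or $d_{2^n-2^l}$, and only then does restricting to $A_{(2^n)}\cap\Sigma_{2^n}$ via $i_+^*$ (resp.\ $i_-^*$) decide which.

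In the third bullet your determination of $Q$ is plausible, but the step $P=0$ rests on a false claim: the subgroup $V_1^{\times m_2}\times(\mbox{$V$-part of }A_{\pi'})$ consists of permutation matrices, so it lies in $i_+(\Sigma_n)$ and $\gamma^+_{1,m}$ restricts to it as $\gamma_{1,m}$ does, namely to a nonzero product of Dickson invariants (for $\pi=(2)^{m}$ this is $d_1^{\otimes m}$); consequently this subgroup cannot be $D_n$-conjugate into $i_-(\Sigma_n)$ and the vanishing you invoke fails. The paper avoids all of this by first observing that restriction to $\widehat A_\pi$ factors through the iterated coproduct (Lemma \ref{lem:cop tr D}), reducing to single-part partitions, and then treating the only new base case directly via $A_{(2)}=D_2$, where $\gamma^+_{1,1}=h_1$ and $\gamma^-_{1,1}=d_1+h_1$; the $h_{m_2}$-formula then falls out of the even/odd sign bookkeeping in the coproduct rather than from undetermined coefficients. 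Your fourth bullet (double-coset bookkeeping together with $e_1\mapsto 0$, $e_i\mapsto\bar e_i$) is essentially sound and close to the paper's direct appeal to Proposition \ref{prop:restrizione delta} and Theorem \ref{thm:subgroups D}.
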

\begin{proof}
If $ \pi $ has more than $ 1 $ element and is different from $ (1,1,\dots,1) $, then the restriction to $ \widehat{A}_{\pi} $ or $ \widehat{A}_{\pi}^{s_0} $ factors through the coproduct. Thus, by applying the coproduct formulas of Proposition \ref{lem:cop tr D}, we can inductively reduce to these two cases.

We begin by assuming that $ \pi = (2^n) $ has only one element, and we prove the first statement.
If $ k \geq 1 $ and $ n \geq 2 $, the restriction of $ \gamma_{k,2^l}^\pm $ to $ A_{\pi} $ ($ n=k+l $) must be $ N_{\WD{2^n}} \left( A_{\pi} \right) $-invariant.
Hence, for degree reasons, it can be $ 0 $ or $ d_{2^n - 2^l} $. Since $ i_+^* ( \gamma_{k,2^l}^+ ) = \gamma_{k,2^l} $ (respectively $ i_+^* ( \gamma_{k,2^l}^- ) = 0 $) by Proposition \ref{prop:gamma+-}, its restriction to $ A_{\pi} \cap \Sigma_{2^n} $ must be the Dickson invariant of degree $ 2^n - 2^l $ (respectively $ 0 $).
This forces $ \gamma_{k,2^l}^+|_{A_{\pi}} = d_{2^n - 2^l} = \gamma_{k,2^l}|_{A_{\pi}} $ (respectively $ \gamma_{k,2^l}^-|_{A_{\pi}} = 0 $).
By essentially the same argument, considering $ i_- $ instead of $ i_+ $, we determine the restrictions to $ A_{\pi}^{s_0} $, proving the first point.

Claim number 2 is immediate from the fact that, if $ k \geq 1 $, there are no non-zero elements in $ H^*(\widehat{A}_{(1)^{2^kn}}; \Ftwo)^{N_{\WD{2^kn}}(\widehat{A}_{(1)^{2^kn}})} $ in the same degree of $ \gamma_{k,n}^\pm $, and that if $ k \geq 2 $ the coproduct of $ \gamma_{k,n}^\pm $ has no element in component 2.

To prove 3 when $ \pi = (2) $, we notice that $ A_{(2)} = \WD{2} $ and $ \gamma_{1,1}^+ $ can be identified with $ h_1 $, while $ \gamma_{1,1,}^- $ with $ d_1 + h_1 $.

By the coproduct formula for $ \delta_{k:m}^0 $, we have that  $ \delta_{k:m}^0|_{A_{(1)}^{k+m}} = e_k $. Thus, the last statement for $ \pi = (1,\dots,1) $ follows directly by combining Proposition \ref{prop:restrizione delta} and Theorem \ref{thm:subgroups D}.
\end{proof}

As in $ A_B $, the behavior of the restriction to maximal elementary abelian $ 2 $-subgroups with the cup product and coproduct is straightforward. We describe the relation between such restriction maps and the transfer product in the following proposition, which is the counterpart of Proposition \ref{lem:transfer abelian subgroups}.
\begin{proposition} \label{prop:transfer subgroups D}
Let $ x,y \in A_D $ be elements in positive component $ n $ and $ m $ respectively. Let $ \pi = (2^{k_1},\dots,2^{k_r}) $ be an admissible partition of $ n+m $. For all $ I \subseteq \{1,\dots,r\} $, write $ I = \{i_1,\dots,i_s\} $ with $ i_1 < \dots < i_s $ and let $ \pi_I = (2^{k_{i_1}},\dots,2^{k_{i_s}}) $. Then
\[
(x \odot y)|_{\widehat{A}_{\pi}} = \sum_{I,J} \tau_{I,J} (x|_{\widehat{A}_{\pi_I}} \otimes y|_{\widehat{A}_{\pi_J}} + \iota(x)|_{\widehat{A}_{\pi_I}} \otimes \iota(y)|_{\widehat{A}_{\pi_J}}),
\]
where the sum runs over all partition $ \{1,\dots,r\} = I \sqcup J $ of $ \{1,\dots,r\} $ into two subsets such that $ \sum_{i \in I} 2^{k_i} = n $ (and, consequently, $ \sum_{j \in J} 2^{k_j} = m $) and at least one between $ I $ and $ J $ does not contains any $ l \in \{1,\dots,r\} $ such that $ k_l = 0 $, and where $ \tau_{I,J} \colon H^*(\widehat{A}_{\pi_I}; \Ftwo) \otimes H^*(\widehat{A}_{\pi_J}; \Ftwo) \to H^*(\widehat{A}_{\pi}; \Ftwo) $ is the obvious permutation of tensor factors.
Moreover, if $ 1 \notin \pi $ and $ 2 \notin \pi $,
\[
(x \odot y)|_{\widehat{A}_{\pi}^{s_0}} = c_{s_0}^* \sum_{I,J} \tau_{I,J} \left( x|_{\widehat{A}_{\pi_I}} \otimes \iota(y)|_{\widehat{A}_{\pi_J}} + \iota(x)|_{\widehat{A}_{\pi_I}} \otimes y|_{\widehat{A}_{\pi_J}} \right),
\]
where $ I,J,\tau_{I,J} $ are as above, and $ c_{s_0}^* \colon H^*(\widehat{A}_{\pi}; \Ftwo) \to H^*(\widehat{A}_{\pi}^{s_0}; \Ftwo) $ is induced by the conjugation with $ s_0 $.
\end{proposition}
\begin{proof}
We cannot repeat the proof of Proposition \ref{lem:transfer abelian subgroups} because in $ A'_D $ the transfer product and the coproduct do not form a bialgebra. Therefore, we argue by considering Eilenberg's double coset formula associated with the two subgroups $ \WD{n} \times \WD{m} $ and $ \widehat{A}_\pi $ of $ \WD{n+m} $.
We preliminarily fix some notation.
Let $ P_{\pi} $ be the partition of the set $ \{1,\dots, n+m\} $ given by
\[
P_{\pi} = \{\{1,\dots,2^{k_1}\},\{2^{k_1}+1,\dots,2^{k_1}+2^{k_2}\},\dots,\{\sum_{l=1}^{r-1}2^{k_l}+1,\dots, n+m\}\}.
\]
Moreover, let $ P_{j,\pi} = \{\sum_{l=1}^{j-1} 2^{k_l}+1,\dots, \sum_{l=1}^j 2^{k_l}\} $.

Assume that $ 1 \notin \pi $. A set of representatives for $ \WD{n+m}/(\WD{n} \times \WD{m}) $ is the set $ \Sh(n,m) \cdot \{1,t\} $, where $ \Sh(n.m) \subseteq \Sigma_{n+m} \hookrightarrow \WD{n+m} $ is the set of $ (n,m) $-shuffles, and $ t = s_0 \times s_0 \in \WB{n} \times \WB{m} $.
Note that $ \widehat{A}_{\pi} \subseteq (\sigma t^{\varepsilon})(\WD{n} \times \WD{m})(\sigma t^{\varepsilon})^{-1} $ if and only if $ \sigma(\{1,\dots,n\}) $ is a union of parts of $ P_{\pi} $. Being $ \widehat{A}_{\pi} $ abelian, these provide the only non-zero terms in the summation of the double coset formula.
Moreover, by inspecting the image of  $ \{1,\dots,n\} \subseteq \{1,\dots, n+m \} $ under the signed permutation action of $ \WD{n+m} \subseteq \WB{n+m} $, we see that if $ \sigma t^{\varepsilon} $, $ \sigma't^{\varepsilon'} $ are two coset representatives satisfying this condition, then $ \widehat{A}_{\pi}\sigma t^{\varepsilon} (\WD{n} \times \WD{m}) = \widehat{A}_{\pi} \sigma' t^{\varepsilon'} (\WD{n} \times \WD{m}) $ if and only if $ \sigma = \sigma' $ and $ \varepsilon = \varepsilon' $. 

Consequently, the double coset formula allows us to rewrite $ \rho^{\WD{n+m}}_{\widehat{A}_{\pi}}(x \odot y) $ as the sum
\[
 \sum_{\substack{I \subseteq \{1,\dots,r\}\\ \sum_{i \in I} 2^{k_i} = n}} \left( c_{\sigma_I}^* \rho^{\WD{n} \times \WD{m}}_{\widehat{A}_{\pi_I} \times \widehat{A}_{\pi_J}}(x \otimes y) \otimes + c_{\sigma_I}^* (c_{s_0}^* \otimes c_{s_0}^*) \rho^{\WD{n} \times \WD{m}}_{\widehat{A}_{\pi_I}^{s_0} \otimes \widehat{A}_{\pi_J}^{s_0}} \right),
\]
where $ \sigma_I $ is the unique $ (n,m) $-shuffle satisfying $ \sigma(\{1,\dots,n\}) = \bigcup_{i \in I} P_{i,\pi} $ and $  J = \{1,\dots,r\} \setminus I $.
The statement follows by observing that $ c^*_{\sigma_I} = \tau_{I,J} $ and that $ c_{s_0}^* \rho^{\WD{l}}_{\widehat{A}_{\pi'}^{s_0}} = \rho^{\WD{l}}_{\widehat{A}_{\pi'}} \iota $ for all $ l \geq 1 $ and $ \pi' $ admissible partition of $ l $.

The case of $ \widehat{A}_{\pi}^{s_0} $ where $ 1,2 \notin \pi $ is done similarly. If $ 1 \in \pi $, the same argument holds, but if $ \exists i \in I $ and $ \exists j \in J $ such that $ k_i = k_j = 0 $, then, interpreting the elements of $ \WD{n+m} $ as signed permutations, $ (p_i,-p_i)(p_j,-p_j) $ belongs to $ \widehat{A}_{\pi} $ but not to $ (\sigma_I t^{\varepsilon})(\WD{n} \times \WD{m})(\sigma_I t^{\varepsilon})^{-1} $, where $ P_{i,\pi} = \{p_i\} $ and $ P_{j,\pi} = \{p_j\} $. Thus, we need to restrict the summation only to partitions $ \{1,\dots,r\} = I \sqcup J $ in which all the occurrences of $ 1 $ in $ \pi $ belong to the same part.
\end{proof}

This result provides a way to detect the charge of a Hopf monomial via restriction to maximal elementary abelian $ 2 $-subgroups. We first fix preliminary notation.
\begin{definition}\label{def:subgroup decomposition}
With the notation of Theorem \ref{thm:subgroups D}, write $ H^*(\widehat{A}_{(2)};\Ftwo) = \Ftwo[x,y] $ and let $ z = x+y $. Let $ H_{A_{(2)}}^+ $ (respectively $ H_{A_{(2)}}^- $) be the vector subspace generated by elements of the form $ x^az^b $ where $ a > b $ (respectively $ b > a $).
If $ \pi $ is an admissible partition of $ n $, write $ \pi = (1)^{m_1} \cup (2)^{m_2} \cup \pi' $ where $ 1\notin \pi' $ and $ 2 \notin \pi' $.
For $ S \subseteq \{1,\dots,m_2\} $, we define $ H_{i,S} = H_{A_{(2)}}^+ $ if $ i \notin S $ and $ H_{A_{(2)}}^- $ if $ i \in S $.
Then we define:
\[
H_{\widehat{A}_\pi}^+ = \left\{ \begin{array}{ll}
0 & \mbox{if } m_1 > 0 \\
\bigoplus_{S \subset \{1,\dots,m_2\}, |S|=2k} \bigotimes_{i=1}^{m_2} H_{i,S} \otimes H^*(A_{\pi'}; \Ftwo) & \mbox{if } m_1 = 0
\end{array} \right. \text{ and}
\]
\[
H_{\widehat{A}_\pi}^- = \left\{ \begin{array}{ll}
0 & \mbox{if } m_1 > 0 \\
\bigoplus_{S \subset \{1,\dots,m_2\}, |S|=2k+1} \bigotimes_{i=1}^{m_2} H_{i,S} \otimes H^*(A_{\pi'}; \Ftwo) & \mbox{if } m_1 = 0
\end{array} \right.
\]
Moreover, if $ m_1 = m_2 = 0 $, we define $ H_{A_{\pi}^{s_0}}^+ = 0 $ and $ H_{A_{\pi}^{s_0}}^- = H^*(A_{\pi}^{s_0}; \Ftwo) $.
\end{definition}

\begin{proposition}\label{prop:restriction charge}
Referring to Definition \ref{def:subgroup decomposition}, we have that for every maximal elementary abelian $ 2 $-subgroup $ A = \widehat{A}_{\pi} $ or $ A = A_{\pi}^{s_0} $ of $ \WD{n} $, the restriction of a positively (respectively negatively) charged Hopf monomial in $ \mathcal{M}_D \cap H^*(\WD{n}; \Ftwo) $ to the cohomology of $ A_{(\pi)} $ belongs to $ H_A^+ $ (respectively $ H_A^- $).
\end{proposition}
\begin{proof}
Every positively charged gathered block $ b $ restricts to an element of $ H_A^+ $. Non-trivial computations arise only if $ b = (\delta_{2m:0}^0)^r (\gamma_{1,m}^+)^s $ with $ r \geq 0 $ and $ s > 0 $ and $ A = A_{(2)^m} $. In this case, with the notation of Theorem \ref{thm:subgroups D}, we observe that
\[
h_{m}^{2^k} = \sum_{\substack{ S \subseteq \left\{ 1, \dots, m \right\}\\ |S| = 2l }} \prod_{i \in S} z_i^{2^k} \prod_{j \notin S} x_j^{2^k},
\]
where $ z_i = x_i + y_i $. Thus $ h_m^{2^k} \in H_{(2)^m}^+ $. Let $ 2^k $ be the biggest power of $ 2 $ smaller than $ s $. $ h_m^{s-2^k} $ is a sum of pure tensors of the form $ w_1 \otimes \dots \otimes w_m $, where $ w_i $ is a monomial in $ x_i $ and $ z_i $ with total degree smaller than $ 2^k $. Therefore, $ h_m^s = h_m^{2^k} h_m^{s-2^k} $ still belongs to $ H^+_{(2)^m} $. The restriction of $ b $ to $ A_{(2)^m} $ is equal to $ \bigotimes_{i=1}^m (x_iz_i)^r h_m^s $, which belongs to $ H^+_{(2)^m} $ because multiplication by $ \bigotimes_{i=1}^m x_iz_i $ preserves $ H^+_{(2)^m} $.

We see the corresponding statement for negatively charged gathered blocks by noting that conjugation with $ s_0 $ exchanges $ H^+_A $ and $ H^-_{A^{s_0}} $.

In general, a positively (respectively negatively) charged Hopf monomial $ x $ is a transfer product of gathered blocks, all positively charged (respectively, all positively charged except one). Consequently, Proposition \ref{prop:transfer subgroups D} yields the statement for $ x $.
\end{proof}

We can finally complete our relations for $ A'_D $ by providing the proofs of the two leftover propositions of Subsection \ref{sec:RelAdd}.

\begin{proof}[Proof of Proposition \ref{lem:transfer mezzo}]
Let $ b $ be a positively charged gathered block in $ A_D $ and $ x \in A_D $. From Lemma \ref{lem:involution on structural maps} and the definition of $ \Delta' $ we deduce that $ \Delta(b) = \Delta'(b) + (\iota \otimes \iota) \Delta'(b) $, and that  $ \Delta(\iota(b)) = (\id \otimes \iota + \iota \otimes \id)\Delta'(b) $. During this proof, we assume, by convention, that $ x|_{A_{\pi}} = 0 $ when $ x \in H^*(\WD{n}; \Ftwo) $ and $ \pi $ is not an admissible partition of $ n $.
Let $ \pi = (2^{k_1},\dots,2^{k_r}) $ and $ \pi' = (2^{h_1},\dots,2^{h_s}) $ be admissible partitions of some integers.
From Proposition \ref{prop:transfer subgroups D}, we deduce that:
\begin{align*}
&\left[(\odot \otimes \odot)(\id \otimes \tau \otimes \id)(\Delta' \otimes \Delta)(b \otimes x)\right]|_{\widehat{A}_{\pi} \times \widehat{A}_{\pi'}} \\
&= \sum_{\substack{I \sqcup J = \{1,\dots,r\}\\ I' \sqcup J' = \{1,\dots,s\}}} \tau_{I,I',J,J'} \Big( \Delta'(b) \otimes \Delta(x) + (\id \otimes \iota)\Delta'(b) \otimes (\id \otimes \iota)\Delta(x) \\
&+ (\iota \otimes \id)\Delta'(b) \otimes (\iota \otimes \id)\Delta(x) + (\iota \otimes \iota)\Delta'(b) \otimes (\iota \otimes \iota)\Delta(x) \Big)|_{\widehat{A}_{\pi_I} \times \widehat{A}_{\pi'_{I'}} \times \widehat{A}_{\pi_J} \times \widehat{A}_{\pi'_{J'}}} \\
&= \sum_{\substack{I \sqcup J = \{1,\dots,r\}\\ I' \sqcup J' = \{1,\dots,s\}}} \tau_{I,I',J,J'} \Big( (\id + \iota \otimes \iota)\Delta'(b) \otimes \Delta(x) \\
&+ (\id + \iota \otimes \iota)\Delta'(\iota(b)) \otimes \Delta(\iota(x)) \Big)|_{\widehat{A}_{\pi_I} \times \widehat{A}_{\pi'_{I'}} \times \widehat{A}_{\pi_J} \times \widehat{A}_{\pi'_{J'}}}\\
&= \sum_{\overline{I} \sqcup \overline{J} = \{1,\dots,r+s\}} (b \otimes x + \iota(b) \otimes \iota(x))|_{\widehat{A}_{(\pi \sqcup \pi')_{\overline{I}}} \times \widehat{A}_{(\pi \sqcup \pi')_{\overline{J}}}} \\
&= \left[\Delta(b \odot x)\right]|_{\widehat{A}_{\pi} \times \widehat{A}_{\pi'}}
\end{align*}
In this equalities we used the identities of Lemma \ref{lem:involution on structural maps} to perform the substitutions $ (\iota \otimes \iota)\Delta(x) = \Delta(x) $ and $ (\id \otimes \iota) \Delta(x) = (\iota \otimes \id)\Delta(x) = \Delta(\iota(x)) $. $ \pi \sqcup \pi' $ is assumed to be $ (2^{k_1},\dots,2^{k_r},2^{h_1},\dots,2^{h_s}) $. $ I = \overline{I} \cap \{1,\dots,r\} $ and $ J = \overline{J} \cap \{1,\dots,r\} $, while $ I' $ and $ J' $ are $ \overline{I} \cap \{r+1,\dots,r+s\} $ and $ \overline{J} \cap \{r+1,\dots,r+s\} $ suitably shifted. The sum should be over all $ I,J,I',J' $ such that at least one between $ I $ and $ J $ does not contain an $ l $ such that $ k_l = 0 $ and at least one between $ I' $ and $ J' $ does not contain an $ l $ such that $ h_l = 0 $. However, since the restriction of positively charged gathered blocks is zero on elementary abelian $ 2 $-subgroups corresponding to admissible partitions containing $ 1 $, we can restrict the sum only to the terms for which $ \forall i \in I: k_i \not= 0 $ and $ \forall i' \in I': h_i \not= 0 $. This condition is equivalent to $ \overline{I} $ not containing $ 1 $, and we can, once again, restrict the last sum only to these terms and get the last equality.
\end{proof}

\begin{proof}[Proof of Proposition \ref{lem:relations cup}]
Using Proposition \ref{prop:restriction eas}, the newly proved Lemma \ref{lem:transfer mezzo}, Proposition \ref{prop:transfer subgroups D}, and the fact that cup products commute with restrictions, we check that the desired identity hold when restricted to maximal elementary abelian $ 2 $-subgroups. Then Theorem \ref{teo:Quillen iso} yields the relations in $ A_D $.
\end{proof}

\section{Proof of the main theorems} \label{sec:proof}

We devote this section to the proofs of the presentation theorems for $ A_B $ and $ A_D $. They will be proved by comparing restrictions to elementary abelian $ 2 $-subgroups and exploiting Theorem \ref{teo:Quillen iso}. We will separate two technical lemmas from the proofs for the sake of clarity of exposition.

We first provide a proof for our structure theorem for $ A_B $.

\begin{lemma} \label{lem:intersection abelian subgroups B}
Let $ k > 0 $. The kernel of the restriction map
\[
H^*(A_{(2^k)}; \Ftwo)^{N_{\WB{2^k}}(A_{(2^k)})} \to H^*(A_{(2^k)} \cap A_{(2^{k-1},2^{k-1})}; \Ftwo)
\]
is the ideal generated by $ d_{2^k-1} $.
\end{lemma}
\begin{proof}
From Swenson's description of $ A_{\pi} $, stated as in Proposition \ref{prop:subgroups B}, we can identify $ A_{(2^k)} $ with the image of the diagonal embedding $ \id \times d \colon \Sigma_2 \times V_{k-1} \to \Sigma_2 \wr V_{k-1} \rightarrow \WB{2^k} $. Its intersection with the product $ A_{(2^{k-1},2^{k-1})} = V_{k-1} \times V_{k-1} $ is identified with the subgroup $ V_{k-1} \subseteq \Sigma_2 \times V_{k-1} $, embedded diagonally in $ \WB{2^k} $.

The restriction to this subgroup maps $ f_{2^k} $ to $ (f_{2^{k-1}})^2 $, $ d_{2^k - 2^l} $ to $ (d_{2^{k-1} - 2^{l-1}})^2 $ if $ l > 0 $, and $ d_{2^k - 1} $ to $ 0 $. This is known, but we sketch a proof for completeness. If we chose bases $ \{x,y_1,\dots,y_k\} $ of $ H^1(A_{(2^k)}; \Ftwo) $ and $ \{x,y_1,\dots,y_{k-1}\} $ of $ H^1(A_{(2^{k-1})}; \Ftwo) $ as in Subsection \ref{subsec:res B}, we have that the restriction is given by $ x \mapsto x $, $ y_i \mapsto y_i $ if $ 1 \leq i < k $ and $ y_k \mapsto 0 $. The polynomial $ F_k(t) = \prod_{v \in H^1(V_k; \Ftwo)} (t+v) $ in $ H^*(V_k; \Ftwo)[t] $ restricts to $ (F_{k-1}(t))^2 $.
Since $ f_{2^k} = F_k(x) $, we deduce the formula for $ f_{2^k} $. The identities for $ d_{2^k-2^l} $ are obtained from this by using the classical identity $ F_k(t) = \sum_{i=0}^k t^{2^i} d_{2^k-2^i} $.
\end{proof}

\begin{proof}[Proof of Theorem \ref{teo:Bn}]
Let $ A'_B $ be the Hopf ring generated by $ \gamma_{k,m} $ and $ \delta_m $ with the desired relations. Since the relations mentioned above hold in $ A_B $, there exists an obvious morphism $ \varphi \colon A'_B \rightarrow A_B $.

We need to fix a total ordering $ \leq $ on the set $ \mathcal{P}_n $ of admissible partitions of $ n $ such that, for all $ \pi,\pi' \in \mathcal{P}_n $,  $ \pi' > \pi $ if $ \pi' $ is a refinement of $ \pi $. In other words, $ \leq $ extends the partial ordering given by refinement.
Let $ b $ be a non-trivial gathered block in $ A_B $. There exist unique non-negative integers $ n,m $ such that $ b = \prod_{i=1}^n \gamma_{i,2^{n-i}m}^{a_i} \delta_{2^nm}^{a_0} $ with $ a_n \not= 0 $. We consider the partition of $ 2^nm $ $ \pi_b = \left( 2^n, \dots, 2^n \right) $.
Given $ x = b_1 \odot \dots \odot b_r \in \mathcal{M}_B $, let $ \pi_x = \sqcup_{i=1}^r \pi_{b_i} $.
As a consequence of Proposition \ref{prop:restrizione gamma}, Propositions \ref{prop:restrizione delta} and \ref{lem:transfer abelian subgroups}, $ x|_{A_{\pi}} \not= 0 $ implies that $ \pi_x > \pi $.
Explicitly, if $ b = \prod_{i=1}^n \gamma_{i,2^{n-i}m}^{a_i} \delta_{2^nm}^{a_0} $, we have:
\[
	b|_{A_{\pi_b}} = \left( f_{2^n}^{a_0} \prod_{i=1}^n d_{2^n-2^{n-i}}^{a_i} \right)^{\otimes^m}
\]
For any $ x = b_1 \odot \dots \odot b_r \in \mathcal{M}_B $, $ x|_{A_{\pi_n}} $ is the symmetrization of $ \bigotimes_{i=1}^r b_i|_{A_{\pi_{b_i}}} $.
Given a partition $ \pi $, let $ \mathcal{M}_{\pi} $ be the set of elements $ x \in \mathcal{M}$ such that $ \pi_x = \pi $.

We first prove that $ \varphi $ is injective. We proceed by contradiction, and we assume that there exists a non-trivial sum $ \sum_i x_i $ of elements of $ \mathcal{M}_B $ that is $ 0 $ when restricted to every elementary abelian $ 2 $-subgroup.
Let $ \pi $ be maximal among the set of partitions $ \left\{ \pi_{x_i} \right\}_i $.
Since, by the explicit calculation above, the restrictions of the elements of $ \mathcal{M}_{\pi} $ to $ A_{\pi} $ are linearly independent, this gives a contradiction.

To prove surjectivity, it is sufficient, by Theorem \ref{teo:Quillen iso}, to prove that an element $ \alpha $ of the Quillen group $ \mathcal{F}^*_{\WB{n}} $ can be written as the image via $ q_{\WB{n}} $ of a linear combination of elements of $ \mathcal{M}_B $.
Note that such $ \alpha $ is determined by its values $ \alpha_{\pi} $ on the maximal abelian $ 2 $-subgroups $ A_{\pi} $.
Let $ \overline{\pi}_{\alpha} = \max \left\{ \pi \in \mathcal{P}_n: \alpha_{\pi} \not= 0 \right\} $ with respect to the chosen linear ordering. We write $ \overline{\pi}_{\alpha} = \left( 2^{k_1}, \dots, 2^{k_r} \right) $.
We proceed by induction on $ \overline{\pi}_{\alpha} $.
$ \alpha_{\overline{\pi}_{\alpha}} $ must be invariant with respect to the action of the normalizer $ N_{\WB{n}}(A_{\overline{\pi}_{\alpha}}) $. By Swenson's description of these invariant subalgebras stated in Proposition \ref{prop:subgroups B}, it is a sum of elements $ \sum_i c_{i,1} \otimes \dots \otimes c_{i,r} $, with $ c_{i,j} = \prod_{l=1}^{k_j-1} d_{2^{k_j}-2^{k_j-l}}^{a_{i,j,l}} f_{2^{k_j}}^{a_{i,j,0}} \in H^* \left( A_{(2^{k_j})}; \Ftwo \right) $. We must have $ a_{i,j,k_j} \not= 0 $ for all $ i $ and $ j $. Otherwise, we can define a partition $ \pi' $ obtained from $ \overline{\pi}_{\alpha} $ by substituting $ 2^{k_j} $ with two parts both equal to $ 2^{k_j-1} $ and observe that, by Lemma \ref{lem:intersection abelian subgroups B}, we must have $ \alpha_{\overline{\pi}_\alpha}|_{A_{\overline{\pi}_{\alpha}} \cap A_{\pi'}} \not= 0 $. Thus $ \alpha_{\pi'} \not= 0 $ and this would contradict the maximality of $ \overline{\pi}_{\alpha} $.
By our calculations above, since $ \alpha_{\overline{\pi}_{\alpha}} $ must be invariant by permutations of tensor factors, this condition guarantees the existence of an element $ x $ in the linear span of $ \mathcal{M}_{\overline{\pi}_{\alpha}} $ such that $ x|_{A_{\overline{\pi}_\alpha}} = \alpha_{\overline{\pi}_{\alpha}} $.
This reduces the statement to $ \alpha' = \alpha + q_{\WB{n}}(x) $ for which, by construction, $ \overline{\pi}_{\alpha'} < \overline{\pi}_{\alpha} $, and completes the induction argument.
\end{proof}

We now focus on the presentation of $ A_D $.

\begin{lemma} \label{lem:intersection abelian subgroups D}
Let $ \mathcal{M}_2 \subseteq \mathcal{M}_D $ be the set of Hopf monomials in $ A'_D $ whose constituent gathered blocks are all of the form $ (\delta_{2k:0}^0)^r (\gamma_{1,k}^{\pm})^s $ with $ r \geq 0 $ and $ s > 0 $, or of the form $ (\delta_{2:0}^0)^a $ with $ a \geq 0 $.
Then, for all $ m \geq 0 $, $ \mathcal{M}_2 \cap H^*(\WD{2m}; \Ftwo) $ restricts to a linearly independent set in $ H^*(\widehat{A}_{(2)^m}; \Ftwo) $. Moreover, the image of $ \mathcal{M}_2 $ in the cohomology of $ \widehat{A}_{(2)^m} $ generates the kernel of the restriction $ \rho_{2,1} \colon H^*(\widehat{A}_{(2)^m}; \Ftwo)^{N_{\WD{2m}}(\widehat{A}_{(2)^m})} \to H^*(\widehat{A}_{(1)^4 \cup (2)^{m-2}} \cap \widehat{A}_{(2,2)}; \Ftwo) $.
\end{lemma}
\begin{proof}
Note that, due to Theorem \ref{thm:subgroups D} and Proposition \ref{prop:restriction eas}, the Hopf monomials in $ \mathcal{M}_2 \cap H^*(\WD{2}; \Ftwo) $ restrict to linearly independent elements in $ H^*(\widehat{A}_{(2)}; \Ftwo) $.
Therefore, to prove the linear independence claim for $ m > 1 $, it is enough to check that the restrictions of the elements of $ \mathcal{M}_2 \cap H^*(\WD{2m}; \Ftwo) $ to $ H^*(\WD{2}^m; \Ftwo) $ (which is a component $ \Delta_{(2)^m} $ of the coproduct) are linearly independent.
Let $ \mathcal{F} $ be the weight filtration on $ A'_D $ provided by Definition \ref{def:filtration D}. It is enough to prove that this set is linearly independent when working in the associated graded spaces $ \gr_{\mathcal{F}}(A'_D) $ and $ \gr_{\mathcal{F}}(H^*(\WD{2}^m; \Ftwo)) $.
In this setting, the image of a gathered block $ b_{l,\underline{t}}^+ \in \mathcal{M}_2 $ (respectively $ b_{l,\underline{t}}^- \in \mathcal{M}_2 $) under $ \gr_{\mathcal{F}}(\Delta_{(2)^m}) $ is $ \sum_{\varepsilon_1,\dots,\varepsilon_l} \bigotimes_{i=1}^m b_{1,\underline{t}}^{\varepsilon_i} $, where the sum is over all $ l $-tuples $ (\varepsilon_1,\dots,\varepsilon_l) $ with $ \varepsilon_i \in \{+,-\} $ and the cardinality of the set $ \{i: 1 \leq i \leq l, \varepsilon_i = - \} $ is even (respectively odd).
Combining this with Proposition \ref{lem:transfer mezzo}, we check the claim directly.

By Propositions \ref{prop:restriction eas} and \ref{prop:transfer subgroups D}, every element of $ \mathcal{M}_2 $ restricts to $ 0 $ on $ \widehat{A}_{\pi} $ whenever $ 1 \in \pi $. Therefore, it is contained in the kernel of $ \rho_{2,1} $.
We now prove the opposite inclusion.
With the notation of Theorem \ref{thm:subgroups D}, we write $ H^*(A_{(2)^m}; \Ftwo)^{N_{\WD{2m}}(A_{(2)^m})} = (\Ftwo[f_2,d_1]^{\otimes m})^{\Sigma_m} \{1,h_m\} $. We note that $  A_{(2)^m} \cap \widehat{A}_{(1)^4 \cup (2)^{m-2}} = A_{(2)^m} \cap A_{(1)^4 \cup (2)^{m-2}} $. Moreover, $ h_m = \gamma_{1,m}^+|_{A_{(2)^m}} $ is $ 0 $ when restricted to $ A_{(2)^m} \cap \widehat{A}_{(1)^4 \cup (2)^{m-2}} $. Therefore, Lemma \ref{lem:intersection abelian subgroups B} implies that $ \ker(\rho_{2,1}) $ is the ideal generated by $ h_m $ and $ d_1^{\otimes m} = \rho_{2,1}(\gamma_{1,m}^+ + \gamma_{1,m}^-)|_{A_{(2)^m}} $.
Finally, the generators belong to the image of $ \mathcal{M}_2 $, the linear subspace generated by $ \mathcal{M}_2 $ is a $ \cdot $-subalgebra by our formulas in $ A'_D $, and restriction maps preserve cup products. These remarks complete the proof.
\end{proof}

\begin{proof}[Proof of Theorem \ref{teo:Dn}]
Let $ A''_D $ be the almost Hopf ring generated by elements of the form $ \delta_{n:m}^0 $, $ \gamma_{k,m}^\pm $, and $ 1^- $ with the desired relations.
Let $ \varphi \colon A''_D \to A'_D $ be the obvious morphism. We also consider the $ \Ftwo $-vector space $ A'''_D $ with basis $ \mathcal{M}_D $. By our relations for $ A'_D $, $ \mathcal{M}_D $ generates $ A''_D $. Thus, there is a surjective linear map $ \varphi' \colon A'''_D \to A''_D $. To prove that $ \varphi' $ is an isomorphism it is enough to prove that $ \varphi'' = \varphi' \varphi $ is so. Since in component $ 0 $ this is obvious, we can consider only positive components and replace $ A'_D $ with $ A_D $. For technical reasons, we consider the set $ \mathcal{M}'_D $, which differ from $ \mathcal{M}_D $ by replacing neutral gathered blocks with elements of the form $ \rho\left(\prod_{i=2}^n (\delta_i \odot 1_{n-i})^{k_i} \right) $ for $ k_2,\dots, k_n \geq 0 $. As shown in Lemma \ref{lemma:0}, this corresponds, at the level of $ A'''_D $, to perform a change of basis. Hence, it does not affect the argument.

We adapt the argument used in the proof of Theorem \ref{teo:Bn}. We define $ \pi_x $ for $ x \in \mathcal{M}'_D $ as we did for $ A_B $, with the only difference that gathered blocks of the form $ b = (\delta_{2:0}^0)^m $ have $ \pi_b = (2) $, because $ (1,1) $ does not define a maximal elementary abelian subgroup in $ \WD{2} $. It is still true that $ x|_{\widehat{A}_{\pi}} = 0 $ unless $ \pi_x $ is a refinement of $ \pi $. We extend refinement of admissible partitions to a total ordering $ \leq $, and we use the same argument by induction on $ \leq $ adopted for $ A_B $.
Our choice of the new basis $ \mathcal{M}'_D $ makes evident that for all admissible partitions $ \pi $ the set $ \mathcal{M}'_\pi = \{ x \in \mathcal{M}'_D: \pi_x = \pi \} $ restricts to a linearly independent set in the cohomology of $ \widehat{A}_{\pi} $ when $ 1 \in \pi $, and Lemma \ref{lem:intersection abelian subgroups D} guarantees that this is true if $ 1 \notin \pi $ and $ 2 \in \pi $. Hence, the injectivity part works verbatim.
We need to adapt the surjectivity argument for admissible partitions $ \pi $ such that $ 1 \notin \pi $ and $ 2 \in \pi $ (in all other cases, nothing changes). In these cases, we use Lemma \ref{lem:intersection abelian subgroups D} instead of Lemma \ref{lem:intersection abelian subgroups B} to carry on the proof.
\end{proof}

\section{Steenrod algebra action}

This section is devoted to the calculation of the Steenrod algebra action on $ A_B $ and $ A_D $.
We first observe that, since the coproducts and transfer products are induced by (stable) maps, they satisfy a Cartan formula with respect to Steenrod squares. In other words, $ A_B $ and $ A_D $ are almost-Hopf rings over the Steenrod algebra.
Thus it is sufficient to determine the action of the Steenrod squares on the generators $ \delta_{2^n} $, $ \gamma_{k,2^n} $, $ \delta_{n:m}^0 $, and $ \gamma_{k,2^n}^\pm $.

\begin{definition}[from \cite{Sinha:12}]
We define the following notions:
\begin{itemize}
\item the \emph{height} ($\height$) of a gathered block in $ A_B $ or $ A_D $ is the number of generators that are cup-multiplied to obtain it, and the height of a Hopf monomial $ x = b_1 \odot \dots \odot b_r $ is $ \max_{i=1}^r \height \left( b_i \right) $
\item the \emph{effective scale} ($\effsc$) of a gathered block in the cohomology of $ \WB{n} $ (respectively $ \WD{n} $) is the least $ l $ such that $ n/2^l $ is an integer and its restriction to $ \WB{2^l}^{n/2^l} $ (respectively $ \WD{2^l}^{n/2^l} $) is non-zero, and the effective scale of a Hopf monomial $ x = b_1 \odot \dots \odot b_r $ as $ \min_{i=1}^r \effsc \left( b_i \right) $
\item a \emph{full-width} monomial is a Hopf monomial in $ A_B $ (respectively $ A_D $) of which no constituent block is of the form $ 1_{\WB{n}} $ (respectively $ 1_{\WD{n}} $)
\end{itemize}
\end{definition}

\begin{theorem}{\rm cf \cite[Theorem 8.3, page 191]{Sinha:12}}$\mspace{9mu}$ \label{teo:Steenrod B}
Let $ k,n \geq 1 $ and $ i \geq 0 $. Then, in $ A_B $, the following formulas hold:
\begin{itemize}
\item $ \Sq^i \left( \gamma_{k,2^n} \right) $ is the sum of all the full-width monomials $ x \in \mathcal{M}_B $ of degree $ 2^{n+k} - 2^n + i $ with $ \height \left( x \right) \leq 2 $ and $ \effsc \left( x \right) \geq k $ in which generators of the form $ \delta_k $ do not appear
\item $ \Sq^i \left( \delta_{2^n} \right) $ is the sum of all the full-width monomials $ x \in \mathcal{M}_B $ of degree $ 2^n + i $ with $ \height \left( x \right) \leq 2 $ and $ \effsc \left( x \right) \geq 1 $ such that a generator of the form $ \delta_k $ appears in every constituent gathered block of $ x $
\end{itemize}
\end{theorem}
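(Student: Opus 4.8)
The plan is to reduce the computation to the action of $\Sq^i$ on the Hopf-ring generators and then to treat the $\gamma$- and $\delta$-generators by two different mechanisms. First I would record that $A_B$ is a Hopf ring over the mod $2$ Steenrod algebra: the cup product obeys the Cartan formula; the coproduct $\Delta$ is induced by the maps of classifying spaces $B(B_n)\times B(B_m)\to B(B_{n+m})$ and is therefore natural for Steenrod operations; and the transfer product $\odot$ is induced by stable transfer maps (composed with the Künneth isomorphism), which commute with stable cohomology operations. Consequently $\Sq^i$ of a cup product or of a transfer product is governed by Cartan-type formulas, so it suffices to determine $\Sq^i(\gamma_{k,2^n})$ and $\Sq^i(\delta_{2^n})$.

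For the classes $\gamma_{k,2^n}$ I would argue by naturality. By Proposition \ref{prop:SB} the split epimorphism $\pi\colon B_{2^{n+k}}\to\Sigma_{2^{n+k}}$ induces an injective Hopf-ring homomorphism $\pi^*\colon A_\Sigma\to A_B$, and, being induced by a group homomorphism, $\pi^*$ commutes with $\Sq^i$. Moreover $\pi^*$ carries the additive basis of $A_\Sigma$ bijectively onto the sub-basis of $\mathcal{M}$ consisting of those Hopf monomials in which no $\delta$-generator occurs, and it preserves both height and effective scale: the latter because restriction to a subgroup $B_{2^l}^{2^{n+k-l}}$ equals $\pi^*$ applied to restriction to $\Sigma_{2^l}^{2^{n+k-l}}$, and $\pi^*$ is (componentwise) injective. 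Since $\pi^*(\gamma_{k,2^n})=\gamma_{k,2^n}$, the first formula in the statement is exactly the image under $\pi^*$ of the corresponding formula in $A_\Sigma$, which is \cite[Theorem 8.3]{Sinha:12}.

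For the classes $\delta_{2^n}$ I would start from the identity $\delta_N=w_N(\eta)$, the top Stiefel--Whitney class of the rank-$N$ bundle $\eta$ over $B(B_N)$. The Wu formula for a bundle whose dimension equals its rank (equivalently, the Thom-class relation already used in the proof of Proposition \ref{prop:interpretazione geometrica}) gives
\[
  \Sq^i(\delta_N)\;=\;w_i(\eta)\cdot\delta_N
\]
for all $i\ge 0$, so in particular $\Sq^i(\delta_N)=0$ for $i>N$. It then remains to identify $w_i(\eta)$ inside $A_B$ and to expand the cup product on the right. I would identify $w_i(\eta)$ by Theorem \ref{teo:Quillen iso}: restricting to a maximal elementary abelian $2$-subgroup $A_\pi$ with $\pi=(2^{k_1},\dots,2^{k_r})$, the reflection representation splits as in the proof of Proposition \ref{prop:restrizione delta}, whence $w(\eta)|_{A_\pi}=\prod_{j}w\bigl(U_{2^{k_j}}|_{A_{(2^{k_j})}}\bigr)$; the Dickson identity $\prod_{y\in H^1(V_k)}(T+y)=\sum_{s=0}^{k}c_{k,s}T^{2^s}$, evaluated at $T=1+x$, yields $w\bigl(U_{2^{k}}|_{A_{(2^k)}}\bigr)=1+\sum_{s=0}^{k-1}d_{2^k-2^s}+f_{2^k}$, where $d_{2^k-2^s}=\gamma_{k-s,2^s}|_{A_{(2^k)}}$ by Proposition \ref{prop:restrizione gamma} and $f_{2^k}=\delta_{2^k}|_{A_{(2^k)}}$ by Proposition \ref{prop:restrizione delta}. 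Comparing degree-$i$ parts then identifies $w_i(\eta)$ with an explicit sum of full-width Hopf monomials of degree $i$ in $A_B$, and the distributivity of $\cdot$ over $\odot$ turns $w_i(\eta)\cdot\delta_{2^n}$ into a sum of full-width monomials of degree $2^n+i$ and height $\le 2$ in which a $\delta$-generator is attached to each constituent block, as in the statement.

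The main obstacle is this last matching step. One has, for every admissible $\pi=(2^{k_1},\dots,2^{k_r})$, that $\Sq^i(\delta_{2^n})|_{A_\pi}=\sum_{i_1+\dots+i_r=i}\bigotimes_j\Sq^{i_j}(f_{2^{k_j}})$, where each factor $\Sq^{i_j}(f_{2^{k_j}})$ equals $f_{2^{k_j}}$ (if $i_j=0$), $f_{2^{k_j}}d_{2^{k_j}-2^s}$ (if $i_j=2^{k_j}-2^s$), $f_{2^{k_j}}^2$ (if $i_j=2^{k_j}$) or $0$; this must be matched, term by term, against the restriction of the proposed sum of Hopf monomials, which symmetrizes over all ways of distributing the parts of $\pi$ among the constituent blocks. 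This is the same bookkeeping performed for the $\gamma$-case in \cite{Sinha:12}, and the point requiring care is checking that the side conditions on height and on which generators are allowed to occur cut out precisely the monomials that appear; the degenerate case $i=0$ (where $\Sq^0(\delta_{2^n})=\delta_{2^n}$) and the smallest values of $i$ are best verified directly.
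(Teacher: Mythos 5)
Your proposal is correct and follows essentially the same route as the paper: the Cartan-formula reduction to generators, the pullback of the $\gamma$-formula from $A_\Sigma$ along $\pi^*$ citing \cite[Theorem 8.3]{Sinha:12}, and for $\delta_{2^n}$ the Wu formula $\Sq^i(\delta_{2^n})=w_i(U_{2^n})\cdot\delta_{2^n}$ followed by identifying $w_i(U_{2^n})$ with an explicit sum of Hopf monomials through restriction to the subgroups $A_\pi$ and Theorem \ref{teo:Quillen iso}. The only cosmetic difference is that the paper writes this sum out explicitly as the class $u_i$ (a transfer product of single generators $\gamma_{r,j_r}$, $\delta_{j_n}$, $1_{B_{j_0}}$) before cup-multiplying by $\delta_{2^n}$, which is exactly the matching step you flag as needing care.
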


\begin{proof}
The calculation for $ \Sq^i \left( \gamma_{k,2^n} \right) $ is an obvious consequence of \cite[Theorem 8.3, page 191]{Sinha:12}.
As regarding $ \Sq^i \left( \delta_{2^n} \right) $, since $ \delta_{2^n} $ is the top-dimensional Stiefel-Whitney class of the reflection representation $ U_{2^n} $ by Proposition \ref{prop:geometry generators}, by Wu's formula $ \Sq^i \left( \delta_{2^n} \right) = w_i \left( U_{2^n} \right) \delta_{2^n} $.
Defining, by convention, $ \gamma_{k,0} = 1 $, let:
\[
	u_i = \sum_{ \substack{j_0, \dots,j_n \geq 0, \sum_{r=1}^{n-1} 2^r j_r + j_n + j_0 = 2^n \\ \sum_{r=1}^{n-1} \left( 2^r -1 \right) j_r + j_n = i}} \bigodot_{r=1}^{n-1} \gamma_{r,j_r} \odot \delta_{r_n} \odot 1_{\WB{j_0}}
\]
We computed the restriction of $ w_i \left( U_{2^n} \right) $ to the maximal elementary abelian subgroups $ A_\pi $ in the proof of Proposition \ref{prop:restrizione delta}. It coincides with the restriction of $ u_i $ by our previous calculations based on Proposition \ref{prop:restrizione gamma}. Thus:
\[
	\Sq^i \left( \delta_{2^n} \right) = w_i \left( U_{2^n} \right) \delta_{2^n} = u_i \delta_{2^n}
\]
and this class is exactly the sum of all the desired Hopf monomials $ x $.
\end{proof}

As regarding the calculation of the Steenrod squares on the generators of $ A_D $, We observe that the calculation for $ \Sq^i \left( \delta_{n:m} \right) $ is implicit in Theorem \ref{teo:Steenrod B} since $ \delta_{n:m} = \rho \left( \delta_n \odot 1_m \right) $ and $ \rho $ commute with Steenrod operations.
Thus we only need to consider generators of the form $ \gamma_{k,n}^\pm $.

\begin{theorem} \label{teo:Steenrod D}
Let $ k,n \geq 1 $ and $ i \geq 0 $. Then, in $ A_D $, $ \Sq^i ( \gamma_{k,n}^+ ) $ (respectively $ \Sq^i ( \gamma_{k,n}^- ) $) is the sum of all the full-width monomials $ x \in \mathcal{B}^+ $ (respectively $ x \in \mathcal{B}^- $) of degree $ 2^{n+k} - 2^n + i $ with $ \height \left( x \right) \leq 2 $ and $ \effsc \left( x \right) \geq l $ in which generators of the form $ \delta_{n:m} $ do not appear.
\end{theorem}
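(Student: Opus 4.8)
The plan is to reduce to a statement about the generators $\gamma_{k,2^{n}}^{\pm}$ and then exploit the homomorphisms $i_+^{*}$, $i_-^{*}$ and $\iota$ together with detection by elementary abelian $2$-subgroups, exactly in the spirit of the proof of Theorem \ref{teo:Steenrod B} and of Lemma \ref{lem:relations cup}. First I would record that, since $\Delta$, $\odot$ and $\cdot$ are induced by (stable) maps, $A_D$ is an almost-Hopf ring over the Steenrod algebra, so the Cartan formula holds for all three operations; combined with the identity $\gamma_{k,a}^{\pm}\odot\gamma_{k,b}^{\pm}=\binom{a+b}{a}\gamma_{k,a+b}^{\pm}$ of Lemma \ref{lem:cop tr D}, this reduces the computation of $\Sq^i$ on $\gamma_{k,m}^{\pm}$ to the case in which $m=2^{n}$ is a power of $2$ (write $m$ in binary; the relevant binomial coefficients are then odd), for which the degree in the statement reads $2^{n+k}-2^{n}+i$. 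Moreover $\iota$ is induced by the conjugation $c_{s_0}$, hence commutes with every $\Sq^i$, satisfies $\iota(\gamma_{k,2^{n}}^{+})=\gamma_{k,2^{n}}^{-}$, and maps $\mathcal{B}^{+}$ bijectively onto $\mathcal{B}^{-}$ while preserving degree, height, effective scale, full-width-ness and the property of containing no generator $\delta_{n:m}^0$; so the formula for $\gamma_{k,2^{n}}^{-}$ follows from that for $\gamma_{k,2^{n}}^{+}$, and it suffices to identify $\Sq^i(\gamma_{k,2^{n}}^{+})$.

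Next I would pin down the part of $\Sq^i(\gamma_{k,2^{n}}^{+})$ that is not killed by $i_+^{*}$. Since $i_+\colon\Sigma_m\to D_m$ is a group homomorphism, $i_+^{*}$ commutes with Steenrod squares, and $i_+^{*}(\gamma_{k,2^{n}}^{+})=\gamma_{k,2^{n}}$ by Proposition \ref{prop:gamma+-}; hence $i_+^{*}(\Sq^i\gamma_{k,2^{n}}^{+})=\Sq^i\gamma_{k,2^{n}}$ in $A_\Sigma$, which by the $\gamma$-part of Theorem \ref{teo:Steenrod B} (i.e. its $A_\Sigma$ incarnation) is the sum of the $\gamma$-only full-width Hopf monomials of degree $2^{n+k}-2^{n}+i$ with height $\le 2$ and effective scale $\ge k$. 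On the other hand $i_+^{*}$ annihilates every basis element of $\mathcal{B}^{0}$ and $\mathcal{B}^{-}$, as well as every $\mathcal{B}^{+}$ basis element whose underlying Hopf monomial in $A_B$ contains some generator $\delta_m$: indeed $i_+^{*}(\gamma^{-})=0$, while the composite $j^{*}=i_+^{*}\circ\rho\colon A_B\to A_\Sigma$ is a Hopf-ring homomorphism that kills every $\delta_m$, because the reflection representation of $B_m$ restricted to $\Sigma_m$ splits off a trivial summand and hence has vanishing top Stiefel--Whitney class. Finally $i_+^{*}$ is injective on the span of the remaining ("$\delta$-free") $\mathcal{B}^{+}$ basis elements, since these map to the corresponding $\gamma$-only basis elements of $A_\Sigma$. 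Comparing the two descriptions, I deduce that the component of $\Sq^i(\gamma_{k,2^{n}}^{+})$ lying outside $\ker i_+^{*}$ is exactly the sum $S'$ of the full-width $x^{+}\in\mathcal{B}^{+}$ of degree $2^{n+k}-2^{n}+i$ with $\height(x^{+})\le 2$, $\effsc(x^{+})\ge k$ and no $\delta_{n:m}^0$, i.e. the claimed answer.

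It remains to rule out a spurious contribution lying in $\ker i_+^{*}$, i.e. to prove $\Sq^i(\gamma_{k,2^{n}}^{+})=S'$. By Theorem \ref{teo:Quillen iso} it is enough to check that $\Sq^i(\gamma_{k,2^{n}}^{+})$ and $S'$ have the same restriction to every $\widehat{A}_{\pi}$ and every $\widehat{A}_{\pi}^{s_0}$. Since restriction commutes with $\Sq^i$, the restrictions of the left-hand side are computed by applying $\Sq^i$ via the Cartan formula, inside the (known) cohomology rings of these subgroups, to the restrictions of $\gamma_{k,2^{n}}^{+}$ given in Proposition \ref{prop:restriction eas}; the restrictions of $S'$ are controlled by Lemma \ref{lem:monomi con segno}. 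On every $\widehat{A}_{\pi}$ with $1\in\pi$ (and, when $k\ge 2$, also when $2\in\pi$) and on every $\widehat{A}_{\pi}^{s_0}$ with $1,2\notin\pi$, both sides restrict to $0$; on $A_{\pi}$ with $1,2\notin\pi$ both restrict to $\Sq^i$ of a product of Dickson invariants, and the equality there is the already-established $A_\Sigma$ statement transported along the first case of Proposition \ref{prop:restriction eas}. The genuinely new computation — and the main obstacle — is the case $k=1$ with $1\notin\pi$ but $2\in\pi$: one must compute $\Sq^i$ of the classes $h_{m_2}\otimes(\gamma_{1,m-m_2}|_{A_{\pi'}})$ appearing in Proposition \ref{prop:restriction eas}, using the Cartan formula together with the explicit Steenrod action on the classes $h_{m_2}\in H^{*}(A_{(2)^{m_2}};\mathbb{Z}_2)$, and match the result term-by-term with the sum of restrictions of the skyline diagrams defining $S'$. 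This is a finite, if intricate, verification, wholly analogous to the restriction computations underlying Lemma \ref{lem:relations cup}; once it is carried out, injectivity of the Quillen map (Theorem \ref{teo:Quillen iso}) forces every coefficient of a $\mathcal{B}^{0}$ element, of a $\delta$-containing $\mathcal{B}^{+}$ element, and of a $\mathcal{B}^{-}$ element in $\Sq^i(\gamma_{k,2^{n}}^{+})$ to vanish, completing the proof.
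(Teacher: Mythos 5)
Your overall skeleton (reduce to detection on elementary abelian $2$-subgroups via Theorem \ref{teo:Quillen iso}, handle the easy partitions, compare with the $A_B$ statement) is sound, but the proof has a genuine gap precisely where the content of the theorem lies: the case $k=1$, $1\notin\pi$, $2\in\pi$. There you must show that $\Sq^i$ applied to $h_{m_2}\otimes\bigl(\gamma_{1,m-m_2}|_{A_{\pi'}}\bigr)$ agrees with the restriction of the proposed answer $S'$, and you explicitly defer this as ``a finite, if intricate, verification'' without carrying it out or indicating why it works; but this is not a routine check (it is a family of identities indexed by $i$, $m_2$ and $\pi'$), and without it the argument does not rule out contributions from $\mathcal{B}^0$, from $\mathcal{B}^-$, or from $\delta$-containing elements of $\mathcal{B}^+$, which all lie in $\ker i_+^*$. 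The paper closes exactly this point with a short structural argument you are missing: one proves by induction, using $h_n = h_{n-1}\otimes h_1 + \bigl(d_1^{\otimes^{n-1}}+h_{n-1}\bigr)\otimes\bigl(d_1+h_1\bigr)$, that $\Sq^i(h_n)$ is a \emph{positive} class; hence $\Sq^i(\gamma_{k,n}^+)$ has positive and $\Sq^i(\gamma_{k,n}^-)$ negative restriction to elementary abelian $2$-subgroups, while their sum is $\rho\bigl(\Sq^i(\gamma_{k,n})\bigr)$. The uniqueness of the decomposition of a class into a positive-restriction and a negative-restriction part (again Theorem \ref{teo:Quillen iso}, as in the proof of Lemma \ref{lem:transfer mezzo}) then forces $\Sq^i(\gamma_{k,n}^\pm)=\bigl(\Sq^i(\gamma_{k,n})\bigr)^\pm$, and Theorem \ref{teo:Steenrod B} gives the stated list of monomials. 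Any honest completion of your deferred verification would essentially have to reprove this positivity statement, so it is the missing key idea rather than a detail.

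Two secondary points. First, your intermediate claims about $i_+^*$ — that it kills every element of $\mathcal{B}^0$ and $\mathcal{B}^-$ and is injective on the span of the $\delta$-free elements of $\mathcal{B}^+$, sending them to the corresponding monomials of $A_\Sigma$ — are not automatic: $i_+^*$ commutes with cup products, but its interaction with the transfer product requires a double coset (Cartan--Eilenberg) computation for $\Sigma_{n+m}\backslash D_{n+m}/(D_n\times D_m)$ that you do not supply; knowing $i_+^*(x^+)+i_+^*(x^-)=j^*(x)$ does not by itself split the two summands. In the paper's argument this whole $i_+^*$ analysis is unnecessary. Second, the reduction to $m=2^n$ via the binary expansion and the Cartan formula is harmless but also not needed: the positivity argument treats all $n$ at once.
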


\begin{proof}
We recall that Definition \ref{def:subgroup decomposition} provides, for all maximal elementary abelian $ 2 $-subgroup $ A \subseteq \WD{2n} $, subspaces $ H^+_A $ and $ H^-_A $ of the cohomology of $ A $.
A direct calculation shows that $ \Sq^i \left( h_n \right) \in H^+_{(2)^n} $. Since restrictions preserve the Steenrod squares, $ \Sq^i(\gamma_{1,n}^+) $ is mapped to an element of $ H^+_A $ for all maximal elementary abelian $ 2 $-subgroup $ A \subseteq \WD{2n} $ and all choices of $ i $ and $ n $. Similarly, the restriction of $ \Sq^i(\gamma_{1,n}^-) $ to every such subgroup $ A $ lies in $ H^-_A $.
Let $ x^+ $ (respectively $ x^- $) be the sum of all the positively (respectively negatively) charged Hopf monomials considered in the statement.
By Proposition \ref{prop:restriction charge}, the restriction of $ x^+ $ (respectively $ x^- $) belongs to $ H^+_A $ (respectively $ H^-_A $).

Moreover $ \Sq^i ( \gamma_{k,n}^+ ) + \Sq^i ( \gamma_{k,n}^- ) = \rho \left( \Sq^i ( \gamma_{k,n} ) \right) $.
Consequently, Theorem \ref{teo:Steenrod B} implies that $ \Sq^i ( \gamma_{k,n}^+ ) + \Sq^i ( \gamma_{k,n}^- ) = x^+ + x^- $.

Since $ H^+_A \cap H^-_A = 0 $ for all $ A $, the two facts observed above guarantee that $ \Sq^i (\gamma_{k,n}^+ ) = x^+ $ and $ \Sq^i (\gamma_{k,n}^-) = x^- $.
\end{proof}

\appendix
\section{Independent proof of Proposition \ref{prop:basis D}}

This appendix contains an independent proof of the fact that $ \mathcal{M}_D $ is an additive basis for the almost-Hopf ring over $ \Ftwo $ presented as in Theorem \ref{teo:Dn}.

We consider the vector space $ V $ over $ \Ftwo $ with linear basis $ \mathcal{M}_D $. There is an obvious linear epimorphism from $ V $ to this almost-Hopf ring. To prove that this is an isomorphism, it is enough to verify that our basic relations extend to an almost-Hopf ring structure on $ V $.

The weight filtration of Definition \ref{def:filtration D} extends to a filtration on $ V $ preserved by this linear map.
We define linear maps $ \odot, \cdot \colon V \otimes V \to V $ and $ \Delta \colon V \to V $ providing a Hopf ring structure satisfying the relations of Theorem \ref{teo:Dn}, and making this linear map a morphisms of almost-Hopf rings. This condition implies that $ V \to A'_D $ is an isomorphism because $ A'_D $ is presented by such generators and relations.

We construct such operations recursively on weight. In weight $ 0 $, we define the structural morphisms as follows. For all $ x \in \mathcal{M}_B \subseteq A_B $, if $ x $ contains a gathered block in $ \tilde{\mathcal{B}}^0 $, then we define $ \Delta(x^0) = \sum_i x_i^{'0} \otimes x_i^{''0} $, where $ \Delta(x) = \sum_i x_i' \otimes x_i'' $ in $ A_B $, with the convention that $ (1_0)^0 = 1^+ + 1^- $. If $ x \in \tilde{\mathcal{B}}^c $, then we define $ \Delta(x^\pm) = \sum_i ( x_i^{'+} \otimes x_i^{''\pm} + x_i^{'-} \otimes x_i^{''\mp}) $, where $ \Delta(x) = \sum_i x_i' \otimes x_i'' $ in $ A_B $.
If $ x,y \in \mathcal{M}_B $ we can write in $ A_B $, using the Hopf monomial basis, $ x \odot y = \lambda z $ for some $ \lambda \in \Ftwo $ and $ z \in \mathcal{M}_B $. If at least a gathered block appearing in $ x $ belongs to $ \tilde{\mathcal{B}}^0 $, the same is true for $ z $ (or the coefficient $ \lambda $ is $ 0 $), and we define $ x^0 \odot y^{\pm}  = \lambda z^0 $, or $ x^0 \odot y^0 = 0 $ depending on whether $ y \in \tilde{\mathcal{B}}^c $ or not. Otherwise, define $ x^{\pm} \odot y^{\pm} $ as $ \lambda z^+ $ or $ \lambda z^- $, where we choose the sign according to the multiplication rule.
Define the $ \cdot $ product of Hopf monomials with the same charge, or of a charged and a neutral Hopf monomials, via the same rule described in $ A_B $ by giving the resulting elements of $ \mathcal{M}_D $ the same charges of the factor. Let the $ \cdot $ product of Hopf monomials with opposite charges be $ 0 $. The corresponding morphisms in $ A_B $ are so, and the relations of Theorem \ref{teo:Dn} not involving $ \gamma_{1,n}^\pm $ are satisfied. Hence, the maps defined this way are (co)associative and (co)commutative. The almost-Hopf ring axioms and the identity of Proposition \ref{lem:transfer mezzo} on the weight $ 0 $ part follow from the Hopf ring structure of $ A_B $ and the definition of the structural maps.

Then, let $ n \geq 0 $ and assume that $ \odot $ and $ \cdot $ are defined on the subspace of $ V \otimes V $ generated by elements $ x \otimes y \in \mathcal{M}_D \otimes \mathcal{M}_D $ such that $ w(x) + w(y) \leq n $ and $ \Delta $ is defined on the subspace of $ V $ generated by elements $ x \in \mathcal{M}_D $ of weight $ w(x) \leq n $. To perform the recursion step, we fix the lexicographic ordering on profiles of charged gathered blocks (indeed, any total order on profiles works). We also let $ \pi \colon V \to V $ be the map that sends every $ x^0 \in \mathcal{B}^0 $ to $ 0 $, every $ x^- \in \mathcal{B}^- $ to $ 0 $, and every $ x^+ \in \mathcal{B}^+ $ to itself.
We define the coproduct $ \Delta(x) $ for $ x \in \mathcal{M}_D $ such that $ w(x) = n+1 $ as follows:
\begin{itemize}
\item If $ x = b^{\pm} $ is a gathered block (necessarily charged) with profile $ (t_0,\dots,t_k) $, $ k > 1 $, we define $ \Delta(x) $ via the equation 2 of Lemma \ref{lem:transfer coproduct basis D}.
\item If $ x = b^\pm $ is a gathered block with profile $ \underline{t} = (c,d) $, then $ x = b_{k,(c,d)}^\pm = (\delta_{2k:0}^0)^c (\gamma_{1,k}^\pm)^d $ for some $ c \geq 0 $ and $ d > 0 $. We can assume that $ x $ is positively charged (we deduce the negatively charged case by applying $ 1^- \odot \_ $), and we define
\begin{align*}
\Delta(x) &= \sum_{l=0}^k \left( b_{l,(c,d)}^+ \otimes b_{k-l,(c,d)}^\pm + b_{l,(c,d)}^- \otimes b_{k-l,(c,d)}^\mp \right) \\
&+ \sum_{l=1}^{k-1}\sum_{a=1}^{d-1} \left( \begin{array}{c} d \\ a \end{array} \right) \Big( b_{l,(c,a)}^+ (\gamma_{1,l}^-)^{d-a} \otimes b_{k-l,(c,a)}^{\pm} (\gamma_{1,k-l}^\mp)^{d-a} \Big).
\end{align*}
The addends of the second summation, since identity 2 of Lemma \ref{lem:transfer coproduct basis D} are computed recursively.
\item Suppose $ x $ is not a gathered block. In that case, let $ b^+ $ being the smallest gathered block in the lexicographic order among those constituting $ x $ and in which a generator of the form $ \gamma_{1,k}^\pm $ appears with a non-zero exponent. We write $ x = b^+ \odot x' $, and we define $ \Delta(x) = (\odot \otimes \odot) (\pi \otimes \tau \otimes \id) (\Delta \otimes \Delta) (b^{\pm} \otimes x') $ using Proposition \ref{lem:transfer mezzo}, where $ \Delta(x') $ is defined because $ w(x') \leq n $ and $ \Delta(b^\pm) $ is defined either by recursion or as in the previous point.
\end{itemize}

$ x \odot y $ for $ w(x) + w(y) = n+1 $ is defined as follows:
\begin{itemize}
\item If $ x $ and $ y $ are both neutral Hopf monomials, we let $ x \odot y = 0 $.
\item If $ x $ is a neutral gathered block and $ y = b_{i,\underline{t}}^{\pm} $, we let $ x \odot y = x \odot b_{i,\underline{t}}^+ $, coherently with identity 4 of the Lemma above.
\item If $ x $ and $ y $ are charged gathered blocks that have the same profile $ \underline{t} = (t_0,t_1,\dots,t_n) $ with $ n>1 $, we define $ x \odot y $ by imposing identity 3 of Lemma \ref{lem:transfer coproduct basis D}.
\item If $ x $ and $ y $ are charged gathered blocks with the same profile $ \underline{t} = (t_0,t_1) $, it is enough, due to equation 5 of Lemma \ref{lem:transfer coproduct basis D}, to consider the case in which $ x $ and $ y $ are positively charged. Then $ x = b_{k,\underline{t}}^+ = (\delta_{2k:0}^0)^{t_0} (\gamma_{1,k}^+)^{t_1} $ and $ y = b_{l,\underline{t}}^+ = (\delta_{2l:0}^0)^{t_0} (\gamma_{1,l}^+)^{t_1} $, and we define
\begin{align*}
&x \odot y = \left( \begin{array}{c} k+l \\ k \end{array} \right) b_{k+l,\underline{t}}^+ \\
&+ \gamma_{1,k+l}^+ \cdot \left(b_{k,(t_0,t_1-1)}^+ \odot b_{l,(t_0,t_1-1)}^+ - \left( \begin{array}{c} k+l \\ k \end{array} \right) b_{k+l,(t_0,t_1-1)}^+ \right) \\
&+ \left(b_{k,(t_0,t_1-2)}^+ \cdot (\delta_{2:0}^0 \odot (\gamma_{1,k-1}^+)^2) \right) \odot \left( b_{l,(t_0,t_1-2)}^+ \cdot (\delta_{2:0}^0 \odot (\gamma_{1,l-1}^+)^2) \right),
\end{align*}
with the convention that $ b_{a,(u_0,u_1)}^+ = 0 $ if $ u_1 = 0 $.
In the second row, the term $ b_{k,(t_0,t_1-1)}^+ \odot b_{l,(t_0,t_1-1)}^+ - \left( \begin{array}{c} k+l \\ k \end{array} \right) b_{k+l,(t_0,t_1-1)}^+ $ is written recursively as a linear combination in $ \mathcal{M}_D $. Since identity 3 of Lemma \ref{lem:transfer coproduct basis D} holds modulo lower weight, we can compute the resulting cup product with $ \gamma_{1,k+l}^+ $ in $ F_n $, where it is already defined. Similarly, we calculate the expression of the last addend recursively.
\item In all the other cases, we proceed similarly to $ A_B $. We merge charged gathered blocks in $ x \odot y $ having the same profile using the inductive formulas of the previous points and use the linearity of $ \odot $ to write the result as a linear combination of elements of $ \mathcal{M}_D $.
\end{itemize}

$ x \cdot y $ for $ w(x) + w(y) = n+1 $ can be computed as follows:
\begin{itemize}
\item If both $ x $ and $ y $ are gathered blocks with the same charge (positive or negative), then $ x \cdot y $ is naturally a gathered block.
\item If $ x = b_{k,\underline{t}}^+ $ and $ y = b_{h,\underline{u}}^- $ are gathered blocks with opposite charges, of profile $ \underline{t} = (t_0,\dots,t_a) $ and $ \underline{u} = (u_0,\dots,u_b) $ respectively, then we proceed as follows. We let $ x \cdot y = 0 $ if $ a > 1 $ or $ b > 1 $. If $ a = 1 $ and $ b = 1 $, then we impose relation 2 of Proposition \ref{lem:relations cup}, and we let
\[
x \cdot y = b_{k,(t_0,t_1-1)}^+ b_{k,(u_0,u_1-1)}^- \left( (\gamma_{k-1,1}^+)^2 \odot \delta_{2:0}^0 \right),
\]
where we calculate the product recursively.
\item If $ x $ and $ y $ are both gathered blocks, of which $ x $ neutral and $ y $ charged, define $ x \cdot y $ by writing $ y = y' \cdot \gamma_{k,1}^\pm $, and letting $ x \cdot y = (x \cdot y') \cdot (\gamma_{k,1}^\pm) $, where $ x \cdot y' $ is computed recursively. Its $ \cdot $ product with $ \gamma_{k,1}^\pm $ is determined either by one of the previous points or by imposing relation $ 5 $ of Proposition \ref{lem:relations cup} (for products with neutral gathered blocks).
\item If either $ x $ or $ y $ is not a single gathered block, then $ x \cdot y $ is reduced to lower weight calculations or the previous points by imposing Hopf ring distributivity.
\end{itemize}

These maps satisfy by construction all the relations of Theorem \ref{teo:Dn}, possibly except Proposition \ref{lem:transfer mezzo}. Note that our definitions are compatible with all these relations. For instance, $ \Delta(\gamma_{1,n}^+) \Delta(\gamma_{1,n}^-) = \Delta( (\gamma_{1,n-1}^+)^2 \odot \delta_{2:0}^0) $.

By induction on $ n $, it is possible to deduce the validity of the almost-Hopf ring axioms and the equation of Proposition \ref{lem:transfer mezzo} on the part of weight $ \leq n $. For instance:
\begin{itemize}
\item $ \odot $ is commutative. Moreover, it is associative on elements of $ \mathcal{M}_D $ that do not possess charged gathered blocks with the same profile.
\item The statement of Proposition \ref{lem:transfer mezzo} can be proved as follows: consider a charged gathered block $ b $ and an element $ x \in \mathcal{M}_D $. Write $ x = b' \odot x' $ where $ b' $ is the smallest (with respect to the lexicographic order on profiles) gathered block in $ x $ containing a generator $ \gamma_{1,k}^\pm $. If $ x $ does not include a gathered block with the same profile as $ b $, then the profile of $ b $ is either smaller or bigger than the profile of $ b' $. In the first case, the identity holds by definition, in the second case, from the almost-Hopf ring axioms on the part of lower weight and the recursive definition of the coproduct in $ V $, we deduce that both $ \Delta(b \odot x) $ and $ (\odot \otimes \odot) (\pi \otimes \tau \otimes \id) (\Delta \otimes \Delta)(b \otimes x) $ are equal to
\[
(\odot^{(3)} \otimes \odot^{(3)}) \tau_{(2453)} \Delta^{\otimes 3}(b \otimes b' \otimes x),
\]
where $ \tau $ indicates permutation of tensor factors and $ \odot^{(3)} \colon V^{\otimes 3} \to V $  denotes the iterated transfer product (which is commutative and associative on the relevant classes by induction hypothesis and by the previous point).

If $ x $ contains a gathered block with the same profile as $ b $, then, by induction on the number of transfer factors appearing, we reduce the computation to the case in which $ x $ is itself a gathered block. In this particular case, non-trivial calculations only arise when the profile of $ b $ is of the form $ \underline{t}= (t_0,t_1) $, and we can further assume that both $ x $ and $ b $ are positively charged.
A direct calculation shows that
\[
\Delta (\gamma_{1,k}^+) \cdot \Delta( b_{k,(t_0,t_1-1)}^+) = \Delta(b_{k,\underline{t}}^+),
\]
where we shortened $ (\cdot \otimes \cdot) (\id \otimes \tau \otimes \id) \Delta^{\otimes 2} $ with $ \Delta \cdot \Delta $.
Hence, from the almost-Hopf ring axioms applied to lower weight elements of $ V $ and from the definition of $ \Delta $ and $ \odot $ in $ V $, we deduce that
\begin{align*}
\Delta(b_{k,\underline{t}} \odot b_{l,\underline{t}}) &= \Delta(\gamma_{1,k+l}^+) \cdot \Delta(b_{k,(t_0,t_1-1)}^+ \odot b_{l,(t_0,t_1-1)}^+) \\
&+ \Delta ( b_{k,(t_0,t_1-1)}^+ \gamma_{1,k}^- \odot b_{l,(t_0,t_1-1)}^+ \gamma_{1,l}^- ).
\end{align*}
Moreover, we note that, by induction, Remark \ref{rem:transfer coproduct on basis} holds in $ V $ for elements of total weight at most $ n $, using a map $ \tilde{\Delta} \colon \bigoplus_{a+b=n} F_a \otimes F_b \to \tilde{V} $ defined as explained there ($ F_* $ denotes the weight filtration on $ V $).
Therefore, it is enough to check that
\begin{align*}
\tilde{\Delta}(b_{k,\underline{t}} \otimes b_{l,\underline{t}}) &= \Delta(\gamma_{1,k+l}^+) \cdot \tilde{\Delta}(b_{k,(t_0,t_1-1)}^+ \odot b_{l,(t_0,t_1-1)}^+) \\
&+ \tilde{\Delta} ( b_{k,(t_0,t_1-1)}^+ \gamma_{1,k}^- \otimes b_{l,(t_0,t_1-1)}^+ \gamma_{1,l}^- ).
\end{align*}
This is, once again, verified with a direct computation using the inductive identity
\[
\Delta(b_{k,(t_0,t_1-1)}^+ \gamma_{1,k}^-) = \Delta(b_{k,(t_0,t_1-2)}^+) \cdot \Delta(\gamma_{1,k}^+ \gamma_{1,k}^-) = \Delta(b_{k,(t_0,t_1-1)}^+) \cdot \Delta(\gamma_{1,k}^-)
\]
arising from the fact that $ \gamma_{1,k}^+ \gamma_{1,k}^- $ has weight less than $ 2k $.
\item The coproduct $ \Delta $ is associative and commutative. This claim follows from a similar argument by induction on the number of $ \odot $-factors on a Hopf monomial $ x \in \mathcal{M}_D $.
\item $ \cdot $ is commutative on gathered blocks by definition. In general, $ \cdot $ is defined by imposing Hopf ring distributivity and $ \Delta $ is cocommutative. Therefore, $ \cdot $ is commutative on all Hopf monomials. Moreover, the associativity property $ (x \cdot y) \cdot z = x \cdot ( y \cdot z) $ holds when $ x $, $ y $, and $ z $ are charged gathered blocks or powers of $ \delta_{2k:0}^0 $. When they have all the same charge, the statement is obvious. In the other cases, both the triples cup products are $ 0 $ unless only generators of the form $ \delta_{2k:0}^0 $ and $ \gamma_{1,k}^\pm $ appear as factors. The definition of the mixed-charged cup products, in this case, immediately allows reducing the identity to lower weight calculations. Similarly, a direct computation shows that $ \Delta(x) \cdot \Delta(y) = \Delta(x \cdot y) $ when $ x $ and $ y $ are charged gathered blocks or powers of $ \delta_{2k:0}^0 $.
\item By definition, the Hopf ring distributivity identity $ x \cdot (y \odot z) = \sum_i x_i'y \odot x_i''z $, with $ \Delta(x) = \sum_i x_i' \odot x_i'' $, holds if $ y $ and $ z $ do not have a common profile among their constituent gathered blocks and do not both belong to $ \mathcal{B}^0 $. Therefore, it is enough to check that this statement is true only when $ y $ and $ z $ are gathered blocks, either both neutral or both charged with the same profile (in this case, we can assume that they are both positively charged up to applying $ 1^- \odot \_ $).
Furthermore, suppose $ x $ is a transfer product of at least two gathered blocks. In that case, an application of Proposition \ref{lem:transfer mezzo} (whose statement in $ V $ is proved above), rephrased as in Remark \ref{rem:transfer coproduct on basis}, reduces the desired equality to lower weight calculations.

Hence, we can assume without loss of generality that $ x $, $ y $, and $ z $ are gathered blocks, with $ y $ and $ z $ either both neutral or having the same profile.
If  $ y $ and $ z $ are neutral, $ x \cdot (y \odot z) = 0 $. $ \sum_i x_i'y \odot x_i''z $ is zero either because the transfer product of neutral gathered blocks is zero (if $ x $ is neutral) or because both $ \odot $ and $ \Delta $ are $ (1^- \odot \_) \otimes (1^- \odot \_) $-invariant, and cup product with a neutral gathered block preserves the action of the involution $ 1^- \odot \_ $ (and thus each addend appears twice if $ x $ is charged).
Otherwise, due to our definition of $ \cdot $ on $ V $, non-trivial calculations arise only if $ y = b_{k,\underline{t}}^+ $ and $ z = b_{l,\underline{t}}^+ $ for a profile of the form $ \underline{t} = (t_0,t_1) $.
We first note that the identity holds when $ x = \gamma_{1,k+l}^+ $ by construction (this is a rephrasing of the recursive definition of $ b_{k,(t_0,t_1+1)}^+ \odot b_{l,(t_0,t_1+1)}^+ $). Similarly, from the definition of $ b_{k,\underline{t}}^+ \odot b_{l,\underline{t}}^+ $ we deduce that
\begin{align*}
&\delta_{2(k+l):0}^0 (b_{k,\underline{t}}^+ \odot b_{l,\underline{t}}^+) = \delta_{2(k+l):0}^0 \left[\gamma_{1,k+l}^+ (b_{k,(t_0,t_1-1)}^+ \odot b_{l,(t_0,t_1-1)}^+) \right] \\
&+ \delta_{2(k+l):0}^0 \left( b_{k,(t_0,t_1-2)}^+ (\delta_{2:0}^0 \odot (\gamma_{1,k-1}^+)^2) \odot b_{l,(t_0,t_1-2)}^+ (\delta_{2:0}^0 \odot (\gamma_{1,l-1}^+)^2) \right).
\end{align*}
In the cases where $ x $ is a positively charged gathered block of profile $ \underline{u} = (u_0,u_1) $ or a power of $ \delta_{2(k+l):0}^0 $, these facts reduce, by a second induction argument on $ t_1 $, the desired identity to lower weight calculations. Note that the implementation of the induction step here requires the properties of $ \cdot $ observed in the previous point. The base case of this induction is $ \underline{t} = (0,1) $, i.e. $ y = \gamma_{1,k}^+ $ and $ z = \gamma_{1,l}^+ $. The statement in this particular case reduces to the previous point and the particular cases $ x = \gamma_{1,k+l}^+ $ and $ x = \delta_{2(k+l):0}^0 $, by splitting $ x $, in all other cases, as the cup product of two smaller factors and argue inductively.

If $ x = \gamma_{1,k+l}^- $, then the definition of $ b_{k,\underline{t}}^+ \odot b_{l,\underline{t}}^+ $, combined with the relation $ \gamma_{1,k+l}^- \gamma_{1,k+l}^+ = \delta_{2:0}^0 \odot (\gamma_{1,k+l-1}^+)^2 $, reduces once again the calculation to elements of lower weight in $ V $.
The statement also holds trivially if $ x = \gamma_{r,\frac{2(k+l)}{2^r}}^\pm $ for some power of $ 2 $ dividing $ 2(k+l) $, with $ r \geq 2 $. These facts imply, by a second induction argument on the number of cup product factors in $ x $, the desired identity if $ x $ is a charged gathered block or a power of $ \delta_{2(k+l):0}^0 $.
Finally, if $ x $ is a neutral gathered block different from a power of $ \delta_{2(k+l):0}^0 $, writing down the definition of $ b_{k,\underline{t}}^+ \odot b_{l,\underline{t}}^+ $ reduced the equality to previous cases and Hopf ring distributivity for lower weight elements.
\item The associativity of $ \cdot $ holds, as already observed, for charged gathered blocks. We reduce the associativity for all gathered blocks, possibly neutral, to lower weight calculations. We achieve this through Hopf ring distributivity, the definition of the cup product between a charged and a neutral gathered block, and relation 5 of Proposition \ref{lem:relations cup}. We deduce the general case from Hopf ring distributivity.
\item The compatibility between $ \cdot $ and $ \Delta $ making $ (V,\cdot,\Delta) $ a bialgebra can be checked on gathered blocks thanks to Hopf ring distributivity. We already noticed that it holds for charged gathered blocks. The statement for all gathered blocks, possibly neutral, is obtained inductively by writing down the definition and using the fact that relation 5 of Proposition \ref{lem:relations cup} holds.
\end{itemize}

\clearpage

\bibliographystyle{plain}
\bibliography{bibliografia}

\end{document}